\crefname{subsection}{Subsection}{Subsections}
\crefname{equation}{}{}
\renewcommand\le\leqslant
\renewcommand\ge\geqslant
\setlist[itemize]{align=parleft,left=0pt..12pt,topsep=0pt,label={\(\bullet\)}}
\setlist[enumerate]{align=parleft,left=0pt..18pt,topsep=0pt}
\newcommand{\fg}[0]{\mathfrak{g}}
\newcommand{\bC}[0]{\mathbb{C}}
\newcommand{\bN}[0]{\mathbb{N}}
\newcommand{\bZ}[0]{\mathbb{Z}}
\newcommand{\sheafA}[0]{\mathcal{A}}
\newcommand{\sheafF}[0]{\mathcal{F}}
\newcommand{\sheafG}[0]{\mathcal{G}}
\newcommand{\sheafO}[0]{\mathcal{O}}
\newcommand{\compO}[0]{\widehat{\mathcal{O}}}
\newcommand{\compA}[0]{\widehat{\mathcal{A}}}
\DeclareMathOperator{\sheafHom}{\mathcal{H}\hspace{-1pt}\textit{om}}
\DeclareMathOperator{\sheafEnd}{\mathcal{E}\hspace{-1pt}\textit{nd}}
\DeclareMathOperator{\End}{End}
\DeclareMathOperator{\Hom}{Hom}
\DeclareMathOperator{\Aut}{Aut}
\numberwithin{equation}{section}
\renewcommand*{\theequation}{%
  \ifnum\value{section}>0 %
    \thesection.%
  \fi
    \arabic{equation}%
}
\def\th@plain{%
  \thm@notefont{}
  \itshape 
}
\def\th@definition{%
  \thm@notefont{}
  \normalfont 
}
\newtheorem{theoremcounter}{theoremcounter}[section]
\newtheorem{maintheoremcounter}{maintheoremcounter}
\theoremstyle{plain}
\newtheorem{corollary}[theoremcounter]{Corollary}
\newtheorem{lemma}[theoremcounter]{Lemma}
\newtheorem{proposition}[theoremcounter]{Proposition}
\newtheorem{theorem}[theoremcounter]{Theorem}
\newtheorem{maintheorem}[maintheoremcounter]{Theorem}
\newtheoremstyle{example_style}
  {5pt} 
  {5pt} 
  {} 
  {} 
  {\itshape} 
  {.} 
  {10pt} 
  {} 
\theoremstyle{example_style}
\theoremstyle{definition}
\theoremstyle{example_style}
\newenvironment{remark}
  {\pushQED{\qed}\remarkx}
 {\popQED\endremarkx}
\title{Classification of \(D\)-bialgebra structures on power series algebras}
\author{Raschid Abedin}\address{
ETH Z\"urich\\
Department of Mathematics \\
R\"amistrasse 101\\ 8092 Zurich\\
Switzerland
}
\email{raschid.abedin@math.ethz.ch}
\begin{document}

\maketitle

\begin{abstract}
In this paper, we use algebro-geometric methods in order to derive classification results for so-called \(D\)-bialgebra structures on the power series algebra \(A[\![z]\!]\) for certain central simple non-associative algebras \(A\). These structures are closely related to a version of the classical Yang-Baxter equation (CYBE) over \(A\). 

If \(A\) is a Lie algebra, we obtain new proofs for pivotal steps in the known classification of non-degenerate topological Lie bialgebra structures on \(A[\![z]\!]\) as well as of non-degenerate solutions of the usual CYBE. 

If \(A\) is associative, we achieve the classification of non-triangular topological balanced infinitesimal bialgebra structures on \(A[\![z]\!]\) as well as of all non-degenerate solutions of an associative version of the CYBE.
\end{abstract}

\section{Introduction}

\subsection*{Background and motivation}
A Lie bialgebra \((L,\delta)\) over a field \(\Bbbk\) consists of a Lie algebra \(L\) over \(\Bbbk\) equipped with a skew-symmetric 1-cocycle \(\delta \colon L \to L \otimes L\) such that the dual map \(\delta^*\colon (L \otimes L)^* \to L^*\) restricted to \(L^*\otimes L^* \subseteq (L\otimes L)^*\) is a Lie bracket. 
Equivalently, given a pair \((L,\delta)\) of any (not necessarily Lie) algebra \(L\) over \(\Bbbk\) and any linear map \(\delta \colon L \to L \otimes L\), there is a canonical \(\Bbbk\)-algebra structure on \(D(L,\delta) \coloneqq L \oplus L^*\) and \((L,\delta)\) is a Lie bialgebra if and only if \(D(L,\delta)\) is a Lie algebra. The algebra \(D(L,\delta)\) is called classical double of \((L,\delta)\). 

Lie bialgebras first appeared in \cite{drinfeld_hamiltonian_structures}, where Drinfeld noticed that the Lie algebra tangent to a Poisson-Lie group, i.e.\ a Lie group equipped with a Poisson structure compatible with the group operation, is a Lie bialgebra. Later in \cite{drinfeld_quantum_groups}, he proposed to approach the quantization of Poisson-Lie group structures by deforming the universal enveloping algebra of the associated Lie bialgebras. This made Lie bialgebras integral to the field of quantum groups which arose from these considerations. Lie bialgebras have also seen application in the theory of classical integrable systems and are closely related to the classical Yang-Baxter equation; see e.g.\ \cite{chari_pressley}.

Let {\tt Alg}\(_\Bbbk\) be the category of non-associative \(\Bbbk\)-algebras, i.e.\ of vector spaces \(A\) over \(\Bbbk\) equipped with any linear map \(A \otimes A \to A\).
The classical double construction can be used to define analogs of Lie bialgebras in any full subcategory {\tt C} of {\tt Alg}\(_\Bbbk\) which is closed under taking subalgebras. Namely, consider a pair \((A,\delta)\) of an algebra \(A \in {\tt Alg}_\Bbbk\) and a linear map \(\delta \colon A \to A \otimes A\). As mentioned above, \(D(A,\delta) = A \oplus A^*\) can be equipped with a canonical \(\Bbbk\)-algebra structure and \((A,\delta)\) is called \(D\)-bialgebra in {\tt C}  if \(D(A,\delta) \in {\tt C}\); see \cite{zhelyabin}. Observe that in this case \(A, A^* \in {\tt C}\) holds automatically. In particular, \(D\)-bialgebras in the category of Lie algebras are precisely Lie bialgebras. 

The \(D\)-bialgebras in the category of associative algebras, or associative \(D\)-bialgebras for short, turn out to be precisely the co-opposites of balanced infinitesimal bialgebras in the sense of Aguiar \cite{aguiar_associative}. The latter see applications in the study of compatible associative multiplications (see \cite{odesskii_sokolov}) and in combinatorics (see \cite{aguiar_combinatoris}). Moreover, they induce Lie bialgebra structures and are related to an associative version of the classical Yang-Baxter equation; see \cite{aguiar_associative}.

The \(D\)-bialgebras in the category of Jordan algebras, called Jordan bialgebras, were introduced and discussed in the works of Zhelyabin \cite{zhelyabin,zhelyabin_symmetric_elements,zhelyabin_symplectic,zhelyabin_jordan_yang_baxter}, where they were related to associative \(D\)-bialgebras, Lie bialgebras, symplectic forms on Jordan algebras, and a Jordan version of the classical Yang-Baxter equation.

Let \(\fg\) be a finite-dimensional simple Lie algebra over an algebraically closed field \(\Bbbk\) of characteristic 0.
Several important infinite-dimensional Lie bialgebras, e.g.\ arbitrary Lie bialgebra structures on the polynomial Lie algebra \(\fg[z]\) or on a twisted loop algebra \(\mathfrak{L}(\fg,\sigma)\), can be completed to so-called topological Lie bialgebra structures on the Lie algebra \(\fg[\![z]\!]\) of formal power series with coefficients in \(\fg\). A topological Lie bialgebra \((\fg[\![z]\!],\delta)\) thereby consists of a skew-symmetric 1-cocycle \(\delta \colon \fg[\![z]\!] \to (\fg \otimes \fg)[\![x,y]\!]\) whose continuous dual 
\[\delta^\vee \colon \fg[\![z]\!]^\vee \otimes \fg[\![z]\!]^\vee\cong (\fg \otimes \fg)[\![x,y]\!]^\vee \to \fg[\![z]\!]^\vee\]
is a Lie bracket. Similar to the non-topological setting, there is a canonical multiplication on \(D(\fg[\![z]\!],\delta) = \fg[\![z]\!]\oplus \fg[\![z]\!]^\vee\) and \((\fg[\![x]\!],\delta)\) is a topological Lie bialgebra if and only if this multiplication is a Lie bracket. In \cite{montaner_stolin_zelmanov}, all possibilities for \(D(\fg[\![z]\!],\delta)\) were classified. More precisely, they proved that either \(D(\fg[\![z]\!],\delta) \cong D(\fg[\![z]\!],0)\) or \(D(\fg[\![z]\!],\delta)\) is isomorphic to \(\fg(\!(z)\!) \times \fg[z]/z^n\fg[z]\) for some \(n \in \{0,1,2\}\) as a Lie algebra. 

Topological Lie bialgebras are closely related 
to solutions
\begin{equation}\label{eq:standard_form_intro}
    r(x,y) = \frac{\lambda(x)y^n\gamma}{x-y} + t(x,y)
\end{equation}
of the classical Yang-Baxter equation (CYBE)
\begin{equation}\label{eq:cybe_intro}
    [r^{12}(z_1,z_2),r^{13}(z_1,z_3)] + 
    [r^{12}(z_1,z_2),r^{23}(z_2,z_3)] + 
    [r^{13}(z_1,z_3),r^{23}(z_2,z_3)] = 0.
\end{equation}
Here, \(\gamma\) is a the quadratic Casimir element of \(\fg\), \(\lambda \in \Bbbk[\![x]\!]^\times\) and \(t \in (\fg \otimes \fg)[\![x,y]\!]\). 
In particular, the assignment \(a(z) \mapsto [a(x) \otimes 1 + 1 \otimes a(y),r(x,y)]\) defines a 1-cocycle \(\delta_r \colon \fg[\![z]\!]\to (\fg \otimes \fg)[\![x,y]\!]\) for \(r\) of the form \eqref{eq:standard_form}. Furthermore, \(r \mapsto \delta_r\) defines a bijection between topological Lie bialgebra structures on \(\fg[\![z]\!]\) with double \(\fg(\!(z)\!) \times \fg[z]/z^n\fg[z]\) and solutions of the CYBE of the form \eqref{eq:standard_form_intro}. In \cite{abedin_maximox_stolin_zelmanov}, it is shown that a topological Lie bialgebra structure is determined up to isomorphism by one of 6 types of solutions of the CYBE: degenerate, elliptic, trigonometric, rational, quasi-trigonometric, or quasi-rational. All non-degenerate solutions  
of the CYBE have been classified.

In this paper, we introduce topological \(D\)-bialgebras structures on \(A[\![z]\!]\) for any \(\Bbbk\)-algebra \(A \in {\tt Alg}_{\Bbbk}\) and extend some results from \cite{montaner_stolin_zelmanov,abedin_maximox_stolin_zelmanov} to these using the methods from \cite{abedin_universal_geometrization,abedin_maximox_stolin_zelmanov}. Moreover, we establish a connection between topological \(D\)-bialgebras and a generalization of the classical Yang-Baxter equation. 

\subsection*{Content and results}
The notion of \(D\)-bialgebras as well as supplementary objects such as the classical double and isomorphisms of \(D\)-bialgebras will be discussed in Section \ref{sec:intro_D_bialgebras}. In Section \ref{sec:manin_triples_over_series}, these notions are adapted to the topological setting for power series algebras and we will introduce the main object of study: non-degenerate topological \(D\)-bialgebras. 

Let \(A\) be a finite-dimensional, central, simple \(\Bbbk\)-algebra equipped with a non-degenerate, associative, symmetric bilinear form \(\beta\colon A \times A \to \Bbbk\). By definition, a non-degenerate topological \(D\)-bialgebra \((A[\![z]\!],\delta)\) has a double isomorphic to \(D_n(A) \coloneqq A(\!(z)\!) \times A[z]/z^nA[z]\), where \(n \in \bN\). In Section \ref{sec:general_categorization}, we will prove that the cases \(n > 2\) cannot occur for so-called geometrically admissible algebras \(A\). In particular, this result holds for finite-dimensional central simple associative, Lie or Jordan algebras; see Section \ref{sec:geometrically_admissible_metrics}. More precisely, we have the following result; see Theorem \ref{thm:categorization_of_manin_triples}.

\begin{maintheorem}\label{mainthmA}
Let \(A\) be a geometrically admissible algebra over a field \(\Bbbk\) of characteristic 0, e.g.\ a finite-dimensional central simple Lie, associative, or Jordan algebra over \(\Bbbk\). 

The double \(D(A[\![z]\!],\delta)\) of a non-degenerate topological \(D\)-bialgebra \((A[\![z]\!],\delta)\) is isomorphic to \(A(\!(z)\!) \times A[z]/z^nA[z]\) for some \(n \in \{0,1,2\}\). \qed
\end{maintheorem}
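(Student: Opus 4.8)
The plan is to reduce the statement to excluding the values $n \ge 3$, since the definition of a non-degenerate topological $D$-bialgebra already supplies an isomorphism $D(A[\![z]\!],\delta) \cong D_n(A) = A(\!(z)\!) \times A[z]/z^nA[z]$ of doubles for \emph{some} $n \in \bN$, under which $A[\![z]\!]$ becomes a Lagrangian subalgebra with respect to the canonical invariant form. First I would pin down that form. Because $A$ is central simple, its space of invariant symmetric bilinear forms is one-dimensional, spanned by $\beta$; hence every invariant form on $D_n(A)$ is governed by $\beta(\,\cdot\,,\,\cdot\,)$ together with a single rational differential $\omega = \psi(z)\,dz$ on the formal disk via the residue pairing, plus a form on the finite factor $A[z]/z^nA[z]$. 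Imposing that the diagonal copy of $A[\![z]\!]$ be isotropic is then a short computation: matching coefficients forces $\omega$ to have a pole of order \emph{exactly} $n$ at the origin and determines the pairing on $A[z]/z^nA[z]$ from the principal part of $\omega$. In particular $n$ is recovered intrinsically as the pole order of the distinguished differential.

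Next I would globalize this local picture using the universal geometrization of \cite{abedin_universal_geometrization}. This is where the hypothesis that $A$ is geometrically admissible enters decisively: it guarantees that the Manin triple $(D_n(A), A[\![z]\!], W)$, with $W$ the Lagrangian complement determined by $\delta$, arises as the completion along a marked branch of an algebro-geometric datum consisting of an integral projective curve $X$, a sheaf of orders $\sheafA$ on $X$ with generic fibre $A$, and a rational differential extending $\omega$. Under this correspondence $A[\![z]\!] = \sheafA \otimes \compO$, the complement $W$ equals the sections of $\sheafA$ away from the marked point, and the condition that both subalgebras be Lagrangian for a non-degenerate form translates, via Serre duality and the strong residue theorem, into the acyclicity $\tH^0(X,\sheafA) = \tH^1(X,\sheafA) = 0$ together with $\omega$ generating the dualizing sheaf $\omega_X$ on the smooth locus. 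The acyclicity gives $\chi(\sheafA) = 0$, and geometric admissibility is precisely what forces the geometrization to have arithmetic genus one, so that $\deg \omega_X = 0$.

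Finally I would extract the bound from the resulting trichotomy. A genus-one curve carrying such a self-dual order is a Weierstrass cubic, whose only singularities are nodes or cusps, and whose dualizing differential, pulled back to the marked branch, is regular, has a simple pole, or has a pole of order two according as the curve is smooth elliptic, nodal, or cuspidal. Matching this pole order with the invariant $n$ produced in the first step yields $n \in \{0,1,2\}$, with the three values corresponding exactly to the elliptic, trigonometric, and rational cases. Equivalently, a Riemann--Roch estimate on the genus-one curve caps the admissible pole order of a dualizing section compatible with the order structure of $\sheafA$ at $2$.

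I expect the main obstacle to be the passage in the second paragraph: proving that geometric admissibility of $A$ prevents the geometrization from having arithmetic genus greater than one. Bounding the pole order of $\omega$ on a \emph{fixed} genus-one curve is the easy, essentially combinatorial, part; the real work is ruling out higher-genus (and more degenerate singular) geometrizations, since a single marked point on a curve of genus $g \ge 2$ would a priori permit an arbitrarily high-order pole and hence arbitrarily large $n$. Controlling this is exactly the role of the admissibility hypothesis, and I would concentrate the argument there, treating the final numerical bound $n \le 2$ as a consequence of the genus-one normal form.
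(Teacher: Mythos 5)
Your proposal reproduces the paper's high-level architecture (reduce to excluding \(n \ge 3\), geometrize the Lagrangian complement, bound the genus of the resulting curve, then extract the bound on \(n\)), but it has a genuine gap at exactly the point you yourself flag as ``the real work'': you never give an argument that geometric admissibility rules out higher genus. You assert that the geometrization has arithmetic genus one and then treat \(n \le 2\) as a consequence of the genus-one normal form; but for \(n \ge 3\) the natural glued curve attached to the trace extension \(R_n\) would have arithmetic genus \(\ge 2\), so asserting the genus-one normal form is precisely assuming what must be proved. The paper's mechanism is concrete and different in two respects. First, it proves only the inequality \(g \le 1\), and does so as follows: admissibility yields an \(\sheafO_X\)-valued pairing \(\beta_\sheafA\) on the geometrized sheaf, hence a short exact sequence \(0 \to \sheafA \to \sheafA^* \to \mathcal{C} \to 0\) with \(\mathcal{C}\) torsion; since \(\textnormal{H}^1(\sheafA) = 0\) (from \(A[\![z]\!] + W_+ = A(\!(z)\!)\)), also \(\textnormal{H}^1(\sheafA^*) = 0\), and Riemann--Roch applied to both \(\sheafA\) and \(\sheafA^*\) gives \(0 \le \deg(\det\sheafA) + (1-g)\,\textnormal{rank}(\sheafA)\) and \(0 \le -\deg(\det\sheafA) + (1-g)\,\textnormal{rank}(\sheafA)\), whence \(g \le 1\). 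Nothing in your sketch plays this role: Serre duality plus \(\chi(\sheafA) = 0\) alone is consistent with every genus (an \(\omega_X\)-valued self-duality forces \(\deg(\det\sheafA) = \textnormal{rank}(\sheafA)(g-1)\), and then \(\chi(\sheafA) = 0\) holds automatically for all \(g\)), so the crucial input is that the pairing takes values in \(\sheafO_X\), which is what the admissibility hypothesis delivers.

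Second, the extraction step is not carried out in the paper the way you describe, and as stated your version is partly wrong. The geometrization used for this theorem is built from the integral closure \(N\) of the multiplier algebra, so the curve is smooth of genus \(0\) or \(1\), and genus \(0\) genuinely occurs (it accounts for the trigonometric, rational, quasi-trigonometric and quasi-rational cases). In the genus-one (elliptic) case Serre duality forces \(\textnormal{h}^0(\sheafA) = 0\), hence \(A[\![z]\!] \cap W_+ = \{0\}\) and therefore \(n = 0\); in the genus-zero case \(n \ge 3\) is excluded not by pole orders of a dualizing differential on a cubic but by a residue computation: \(z^n\lambda u' \in N^\bot \subseteq N = \Bbbk[u]\), while for \(n \ge 3\) this element lies in \(z\Bbbk[\![z]\!]\) and \(N \cap z\Bbbk[\![z]\!] = \{0\}\), a contradiction. (Admissibility enters here a second time, giving \(\beta(V,V) \subseteq N\) and hence \(N^\bot \subseteq M\).) The Weierstrass-cubic picture with nodal and cuspidal curves that you invoke only appears in the paper's refined Theorem B, after \(n \le 2\) is already known. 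Finally, your dictionary is off: \(n = 1\) and \(n = 2\) correspond to the quasi-trigonometric and quasi-rational cases, while trigonometric and rational (nodal resp.\ cuspidal) both occur at \(n = 0\).
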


\noindent
Let us note that, if \(A\) is a Lie algebra, Theorem \ref{mainthmA} is one of the main results in \cite{montaner_stolin_zelmanov}. However, our proof is independent of the proof in \cite{montaner_stolin_zelmanov} and is based on the geometrization of \(A\)-lattices (see Section \ref{sec:geometrization}). This method was already used in order to give a new proof of the Belavin-Drinfeld trichotomy of non-degenerate solutions of the CYBE \eqref{eq:cybe_intro} in \cite{abedin_universal_geometrization} and the classification of topological Lie bialgebras in \cite{abedin_maximox_stolin_zelmanov}. 

In Section \ref{sec:A_CYBE}, we will show that every non-degenerate topological \(D\)-bialgebra \((A[\![z]\!],\delta)\) is of the form \(a(z) \mapsto r(x,y)a(x)^{(1)} - a(y)^{(2)}r(x,y)\) for a solution \begin{equation}\label{eq:intro_standard_form}
    r(x,y) = \frac{\lambda(x)y^n\gamma}{x-y} + t(x,y) \in (A \otimes A)[\![x,y]\!][(x-y)^{-1}]
\end{equation}
of the so-called \(A\)-classical Yang-Baxter equation (\(A\)-CYBE) 
\begin{equation}\label{eq:intro_A_CYBE}
    r^{13}(z_1,z_3)r^{12}(z_1,z_2) - r^{12}(z_1,z_2)r^{23}(z_2,z_3) + r^{23}(z_2,z_3)r^{13}(z_1,z_3) = 0.
\end{equation}
Here, \(\lambda \in \Bbbk[\![x]\!]^\times, t \in (\fg \otimes \fg)[\![x,y]\!]\) and \(\gamma \in A\otimes A\) is a canonical \(A\)-invariant element determined by the algebra metric \(\beta\) of \(A\); see Section \ref{sec:series_in_standard_form} for details. In particular, the classification of non-degenerate topological \(D\)-bialgebras \((A[\![z]\!],\delta)\) up to isomorphism is equivalent to the classification of solutions of the \(A\)-CYBE \eqref{eq:intro_A_CYBE} of the form \eqref{eq:intro_standard_form} up to a certain type of equivalence relation.

The \(A\)-CYBE already appeared in several special cases in literature.
If \(A = \fg\) is a Lie algebra, \eqref{eq:intro_A_CYBE} is exactly the usual CYBE \eqref{eq:cybe_intro}. For constant \(r \in A \otimes A\), \eqref{eq:intro_A_CYBE} was examined in \cite{aguiar_associative} for an associative algebra \(A\) and in \cite{zhelyabin_jordan_yang_baxter} for a Jordan algebra \(A\). An endomorphism version of \eqref{eq:intro_A_CYBE} for an associative algebra \(A\) and meromorphic functions \(r = r(x,y)\) in two complex variables was related to pairs of compatible associative algebra structures in \cite{odesskii_sokolov}. We also point out that solutions of the associative Yang-Baxter equation introduced by Polishchuk in \cite[Eq. (0.1)]{polishchuk_CYBE} which do not depend on the first parameter are precisely difference depending meromorphic solutions of the \(A\)-CYBE for associative algebras \(A\).

In Section \ref{sec:categorization_general_whole_section}, we will see that non-degenerate topological \(D\)-bialgebras can be categorized rather explicitly by the form of their associated solution of the \(A\)-CYBE if \(A\) is a so-called strongly geometrically admissible \(\Bbbk\)-algebra; see Subsection \ref{sec:geometrically_M_admissible}. The most important examples are finite-dimensional simple Lie, Jordan, and associative algebras over an algebraically closed field of characteristic 0; see Section \ref{sec:etale_locally_trivial_sheaves_of_algebras}. More precisely, we have the following result; see Theorem \ref{thm:categorization_refined}.

\begin{maintheorem}\label{mainthmB}
Let \(\Bbbk\) be an algebraically closed of characteristic 0 and \(A\) be a unital strongly geometrically admissible algebra (e.g.\ a finite-dimensional, central, simple associative or Jordan algebra). Furthermore, let \((A[\![z]\!],\delta)\) be a non-degenerate topological \(D\)-bialgebra in some category of \(\Bbbk\)-algebras closed under taking subalgebras. 

Up to isomorphism of topological \(D\)-bialgebras, \(\delta = \delta_r\) for a solution of the \(A\)-CYBE of precisely one of the following forms:
\begin{enumerate}
    \item \(r\) is \emph{trigonometric} in the sense that there exists \(\sigma \in \textnormal{Aut}_{\Bbbk\textnormal{-alg}}(A)\) of order \(m \in \bN\) and \(t \in L(A,\sigma) \otimes L(A,\sigma)\) such that
    \begin{equation*}
    r(x,y) = \frac{1}{\exp\left(x-y\right)-1}\sum_{j = 0}^{m-1}\textnormal{exp}\left(\frac{x-y}{m}\right) \gamma_j + t\left(\exp\left(\frac{x}{m}\right),\exp\left(\frac{y}{m}\right)\right).
    \end{equation*}
    Here, \(\gamma_j \in A \otimes A\) is uniquely determined by \(\gamma = \sum_{j = 0}^{m-1} \gamma_j\) and \((\sigma \otimes 1)\gamma_j = \varepsilon^j\gamma_j\) for some primitive \(m\)-th root of unity \(\varepsilon \in \Bbbk\), where \(\gamma \in A\otimes A\) is the canonical \(A\)-invariant element;
    
        \item \(r\) is \emph{rational} in the sense that there exists \(t \in (A\otimes A)[x,y]\) such that \(r(x,y) = \frac{\gamma}{x-y} + t(x,y)\);
    
        \item \(r\) is \emph{quasi-trigonometric} in the sense that there exists a polynomial \(t \in (A\otimes A)[x,y]\) such that \(r(x,y) =\frac{y\gamma}{x-y} + t(x,y)\);
    
        \item \(r\) is \emph{quasi-rational} in the sense that there exists a polynomial \(t \in (A\otimes A)[x,y]\) such that \(r(x,y) = \frac{y^2\gamma}{x-y} + t(x,y).\)
    \end{enumerate}
In particular, every solution of the \(A\)-CYBE \eqref{eq:intro_A_CYBE} of the form \eqref{eq:intro_standard_form} is, up to equivalence, of one of the above forms. \qed
\end{maintheorem}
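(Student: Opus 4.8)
The plan is to combine the standard-form description from Section \ref{sec:A_CYBE} with the geometrization of \(A\)-lattices from Section \ref{sec:geometrization}, and then to read the four types off the geometry of the associated curve. By the results quoted just before the statement, \(\delta = \delta_r\) for a solution of the \(A\)-CYBE in standard form \(r(x,y) = \lambda(x)\,y^n\gamma/(x-y) + t(x,y)\) with \(\lambda \in \Bbbk[\![x]\!]^\times\), \(t \in (A\otimes A)[\![x,y]\!]\), and, by Theorem \ref{mainthmA}, with \(n \in \{0,1,2\}\). Since forms (2), (3), (4) are exactly the cases \(n = 0,1,2\) in which \(\lambda\) has been gauged to \(1\) and \(t\) shown to be polynomial, and form (1) is the remaining possibility at \(n=0\), it suffices to (a) normalize \(\lambda\) and \(t\) within the equivalence class, and (b) separate, within \(n=0\), the rational from the trigonometric case.

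For the normalization I would pass to the geometric side. Strong geometric admissibility is what makes the geometrization of Section \ref{sec:geometrization} available: the \(A\)-lattice underlying \((A[\![z]\!],\delta)\) spreads out to a torsion-free coherent sheaf of \(\sheafO_X\)-algebras \(\sheafA\) on a complete algebraic curve \(X\), \'etale-locally isomorphic to the constant sheaf \(A\), together with a marked smooth point \(\infty\) and an identification \(\compA_\infty \cong A[\![z]\!]\); the factor \(A[z]/z^nA[z]\) of the double \(D_n(A)\) records the length of the torsion concentrated at \(\infty\). Coherence of \(\sheafA\) on the projective curve \(X\) forces its global Taylor data to be polynomial, giving \(t \in (A\otimes A)[x,y]\), while the choice of uniformizer at \(\infty\) governs \(\lambda\): an additive uniformizer trivializes \(\lambda\) to \(1\), and the obstruction to choosing one additively is precisely what produces the exponential factor of the trigonometric case. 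For \(n = 1,2\) the normalized solution is then already of the forms (3), (4), so the remaining issue is the dichotomy at \(n = 0\).

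The decisive step is the geometry at \(n = 0\), where \(X\) is a complete curve of arithmetic genus one whose smooth locus carries a one-dimensional algebraic group structure, and the type of \(r\) is dictated by the unique singularity of \(X\): a cusp yields the additive group \(\mathbb{G}_a\) and the rational form (2), a node yields the multiplicative group \(\mathbb{G}_m\) and the trigonometric form (1), the uniformizer \(\exp(z/m)\) being exactly the multiplicative coordinate on the normalization of the node. The smooth (elliptic) possibility, which would produce a genuinely elliptic solution, must be ruled out using strong geometric admissibility: for the algebras in question the invariant element \(\gamma\) and the sheaf \(\sheafA\) cannot be supported on a smooth genus-one curve. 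I expect the main obstacle to be converting the nodal geometry into the explicit closed expression (1): one must identify the finite-order automorphism \(\sigma\) and the eigenspace decomposition \(\gamma = \sum_{j=0}^{m-1}\gamma_j\) with the gluing/monodromy data of the two branches of the node — so that \(\sheafA\) becomes a twisted loop algebra modeled on \(L(A,\sigma)\) — and then match the resulting \(\mu_m\)-graded structure against the expansion of \(1/(\exp(x-y)-1)\).

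Finally, the assertion that \(r\) is of precisely one of the four forms follows because the distinguishing data are invariants of the equivalence class: the integer \(n\) separates \(\{(1),(2)\}\) from (3) and from (4), while within \(n=0\) the analytic type of the singularity (cusp versus node) — equivalently, whether \(\lambda\) can be trivialized by an additive change of coordinate or only by a multiplicative one — separates the rational form (2) from the trigonometric form (1). Hence the four families are pairwise disjoint, which yields the final clause of the statement.
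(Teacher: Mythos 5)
Your overall route is the paper's route: invoke Theorem \ref{mainthmA} to get \(n \in \{0,1,2\}\) and \(\lambda = 1\), geometrize the Manin triple to a sheaf of algebras on a genus-one curve, read off rational/trigonometric from cusp/node via the classification of sheaves of algebras on \(\mathbb{G}_a\) and \(\mathbb{G}_m\) (twisted loop algebras), and treat \(n=1,2\) separately to get the quasi-forms. However, there is a genuine gap at the single point where the unitality hypothesis must do its work: the exclusion of the smooth (elliptic) curve in the case \(n=0\). You assert that this is "ruled out using strong geometric admissibility: for the algebras in question the invariant element \(\gamma\) and the sheaf \(\sheafA\) cannot be supported on a smooth genus-one curve." This cannot be right as stated: simple Lie algebras are strongly geometrically admissible (Corollary \ref{cor:geometrically_M_admissible}) and they \emph{do} admit elliptic solutions, so no argument relying on strong geometric admissibility alone can eliminate the elliptic case. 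The paper's actual mechanism is unitality: strong geometric admissibility gives \(\sheafA|_q \cong A\) at all smooth points (Lemma \ref{lemm:sheafA_has_fiber_A}), whence \(\sheafA\) is \'etale \(A\)-locally free by Proposition \ref{prop:weak_locall_free_implies_etale}; then Lemma \ref{lemm:etale_locally_free_implies_unital} (a faithfully-flat-descent argument) shows that an \'etale locally free sheaf modeled on a \emph{unital} algebra is itself unital, so \(\textnormal{h}^0(\sheafA) \ge 1\), contradicting \(\textnormal{H}^0(\sheafA) = 0\), which follows from \(A(\!(z)\!) = A[\![z]\!] \oplus W\). Your proposal never produces this contradiction, and without it the trichotomy at \(n=0\) is not a dichotomy and the theorem fails.

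A secondary weakness: for \(n = 1,2\) you write that "coherence of \(\sheafA\) on the projective curve forces its global Taylor data to be polynomial," so that the normalized solution "is then already of the forms (3), (4)." This skips the substantive constructions of Subsections \ref{sec:geometrization_n=1} and \ref{sec:geometrization_n=2}: there the sheaf is not obtained by putting torsion at the marked point, but by gluing the finite-dimensional component \(W_-\) of the Manin triple into a sheaf on a nodal (resp.\ cuspidal) cubic at its \emph{singular} point, checking that the resulting datum is a geometric \(A\)-CYBE datum (which again needs the perfectness Lemma \ref{lemm:geometric_A_CYBE_datum} and strong geometric admissibility), and only then deducing from Proposition \ref{thm:classification_sheaves_of_algebras}.(2) plus a boundedness argument (Propositions \ref{lemm:concluding_qtcase} and \ref{lemm:concluding_qrcase}) that, after applying an automorphism of \(A[\![z]\!]\), the tails \(t_{k,i}\) lie in \(A[z]\) and vanish for large \(k\). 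Coherence by itself does not yield polynomiality; the constant-fiber classification and the free \(\Bbbk[z^{-1}]\)-module structure of \(W_+\) do.
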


\noindent
The analog of Theorem \ref{mainthmB} for a Lie algebra \(A= \fg\) was proven in \cite{abedin_maximox_stolin_zelmanov} and can be seen as a generalization of the Belavin-Drinfeld trichotomy for non-degenerate \(r\)-matrices from \cite{belavin_drinfeld_solutions_of_CYBE_paper}. A consequence of this result is, that all topological Lie bialgebras \((\fg[\![z]\!],\delta)\) are classified. The proof of Theorem \ref{mainthmB} is, under consideration of Theorem \ref{mainthmA}, similar to the proof of its Lie algebra analog in \cite{abedin_maximox_stolin_zelmanov}. More precisely, it proceeds by refining the algebro-geometric methods already used to proof Theorem \ref{mainthmA}. Namely, we can assign a particular type of geometric data, called geometric \(A\)-CYBE datum (see Section \ref{sec:geometric_A_CYBE_data}), to any Lagrangian subalgebra \(W \subseteq D_n(A)\) complementary to \(A[\![z]\!]\); see Section \ref{sec:geometrization_of_manin_triples}.  If this assignment is done, Theorem \ref{mainthmB} is a consequence of the classification results for sheaves of algebras on the (punctured) affine line presented in Proposition \ref{thm:classification_sheaves_of_algebras}, which is a consequence of the results from \cite{pianzola}; see \cite[Theorem 6.1.1]{abedin_thesis} for details.

Let us point out that the unitality assumption in Theorem \ref{mainthmB} is actually rather weak, since strongly geometrically admissible power associative algebras which are not anti-commutative are automatically unital; see Remark \ref{rem:unitality}. The most interesting strongly geometrically admissible anti-commutative algebras are precisely Lie algebras, where the analog of Theorem \ref{mainthmB} is already known as mentioned above.

We conclude this paper, by using Theorem \ref{mainthmB} to classify all topological associative \(D\)-bialgebras \((A[\![z]\!],\delta)\) for \(A\) associative and \(D(A[\![z]\!],\delta) \ncong D(A[\![z]\!],0)\). It turns out that the trigonometric and quasi-trigonometric cases do not occur. More precisely, we obtain the following result in Section \ref{sec:classification_associatice_Dbialgebras}.

\begin{maintheorem}\label{mainthmC}
Let \(A\) be a finite-dimensional simple associative \(\Bbbk\)-algebra over an algebraically closed field \(\Bbbk\) of characteristic 0, i.e.\ \(A \cong M_n(\Bbbk)\), and \((A[\![z]\!],\delta)\) be a  topological associative \(D\)-bialgebra such that \(D(A[\![z]\!],\delta) \ncong D(A[\![z]\!],0)\).

Then, up to isomorphism, \(\delta = \delta_r\) where \(r\) is either the rational or the quasi-rational solution of the \(A\)-CYBE determined by an associative Stolin pair \((S,B)\) of class \(k \in \overline{0,n-1}\); see Section \ref{sec:rational_A_CYBE_soltuions} and Section \ref{sec:qrational_A_CYBE_soltuions} for details.

In particular, every solution of the \(A\)-CYBE \eqref{eq:intro_A_CYBE} of the form \eqref{eq:intro_standard_form} is, up to equivalence, of one of the above forms.\qed 
\end{maintheorem}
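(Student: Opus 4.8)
The plan is to derive the theorem from \cref{mainthmB} by showing that, for $A = M_n(\Bbbk)$, the two ``multiplicative'' normal forms cannot occur. First I would record that $A$ is a unital, finite-dimensional, central simple associative algebra and hence a strongly geometrically admissible $\Bbbk$-algebra (\cref{sec:etale_locally_trivial_sheaves_of_algebras}), and that the hypothesis $D(A[\![z]\!],\delta) \ncong D(A[\![z]\!],0)$ is precisely the non-degeneracy assumption under which \cref{mainthmB} applies. Invoking \cref{mainthmB}, we may therefore assume $\delta = \delta_r$ with $r$ trigonometric, rational, quasi-trigonometric, or quasi-rational; the entire content of the theorem is then to exclude the trigonometric and quasi-trigonometric cases and to re-express the survivors through associative Stolin pairs.

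The key step, and the one I expect to be the main obstacle, is the exclusion of the trigonometric and quasi-trigonometric cases. Via the geometrization of \cref{sec:geometrization_of_manin_triples}, each non-degenerate datum is encoded by a geometric $A$-CYBE datum whose underlying object is a sheaf of associative algebras with generic fibre $M_n(\Bbbk)$ on a degenerate curve of genus $\le 1$; the trigonometric and quasi-trigonometric forms are exactly those supported on the multiplicative degeneration, whose smooth locus is $\mathbb{G}_m$ and whose node glues the two branches through a finite-order automorphism $\sigma \in \Aut_{\Bbbk\textnormal{-alg}}(A)$ (this $\sigma$ is the one visible in the trigonometric normal form of \cref{mainthmB}). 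The plan is to show that this gluing is trivial: by the Skolem--Noether theorem every $\Bbbk$-algebra automorphism of $M_n(\Bbbk)$ is inner, so $\Aut_{\Bbbk\textnormal{-alg}}(A) = \textnormal{PGL}_n(\Bbbk)$ and $\textnormal{Out}_{\Bbbk\textnormal{-alg}}(A)$ is trivial, whence $\sigma$ can be absorbed into the local trivialization of the sheaf. Equivalently, at the sheaf level, every étale-locally trivial sheaf of associative algebras with fibre $M_n(\Bbbk)$ is Azumaya, and $\textnormal{Br}(\mathbb{G}_m) = 0$ forces it to be constant. Thus no genuinely multiplicative datum exists for associative $A$, so the trigonometric and quasi-trigonometric cases do not occur and $r$ must be rational or quasi-rational. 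This is the essential point at which associativity is used: in contrast to the Lie case, where outer diagram automorphisms produce genuinely twisted affine data, $M_n(\Bbbk)$ has no outer automorphisms.

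Finally I would identify the two surviving classes with the data of the theorem. After the exclusion, the geometric datum of $r$ is supported on the additive model, and unwinding the construction of \cref{sec:geometrization_of_manin_triples} presents it as an order $S$ in $A(\!(z)\!)$ together with the non-degenerate compatible form $B$ induced by $\beta$, i.e.\ an associative Stolin pair $(S,B)$. Invoking the classification of such orders for $M_n(\Bbbk)$ (\cref{sec:rational_A_CYBE_soltuions} for the rational and \cref{sec:qrational_A_CYBE_soltuions} for the quasi-rational case), each is determined up to isomorphism by a single discrete invariant, its class $k \in \overline{0,n-1}$, and the pole order at the marked point records whether the associated $r$ is rational or quasi-rational. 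The closing ``in particular'' then follows at once from the bijection of \cref{sec:A_CYBE} between equivalence classes of standard-form solutions of the $A$-CYBE and isomorphism classes of non-degenerate topological $D$-bialgebras, together with \cref{mainthmA}, which guarantees that the double is of the asserted form.
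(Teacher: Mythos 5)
Your skeleton (reduce to Theorem~\ref{mainthmB}, exclude the two multiplicative cases, then invoke Stolin pairs) matches the paper, but the argument you give for the exclusion step --- which is where the entire content of the theorem lies --- does not work. Skolem--Noether and the vanishing of \(\textnormal{Br}(\mathbb{G}_m)\) cannot rule out the trigonometric and quasi-trigonometric cases. The paper does use innerness of automorphisms of \(M_n(\Bbbk)\), but only to conclude \(L(A,\sigma)\cong A[v,v^{-1}]\) and thereby normalize a would-be (quasi-)trigonometric solution to the untwisted form \(r(v,w)=\frac{w\gamma}{v-w}+t(v,w)\); after this normalization the case is still entirely open. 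Absorbing the gluing automorphism into the trivialization only makes the sheaf constant on the smooth locus; it does not make the geometric datum on the nodal cubic (with \(\textnormal{H}^0=\textnormal{H}^1=0\) and the Lagrangian structure) cease to exist. A sanity check: your argument uses nothing about associativity beyond the absence of outer automorphisms, so it would apply verbatim to the Lie algebra \(\mathfrak{sl}_2(\Bbbk)\), all of whose automorphisms are inner, and would ``prove'' that no trigonometric solutions of the usual CYBE exist --- contradicting Belavin--Drinfeld and the Lie-algebra analogue of Theorem~\ref{mainthmB}, where the trigonometric class is non-empty. What actually kills these cases in the paper (Theorem~\ref{thm:no_trigonometric}) is the unitality of \(A\), deployed through a long Belavin--Drinfeld-type analysis: a (quasi-)trigonometric solution yields a Lagrangian subalgebra \(W\subseteq L\times L\), \(L=A[v,v^{-1}]\), complementary to the diagonal \(D\); one introduces the operator \(R\) with \(R+R^{*}=1\), decomposes \(L\) into generalized eigenspaces, uses Stolin's maximal-order theory (Proposition~\ref{prop:maximal_order}) and the Birkhoff decomposition to conjugate everything into Borel-type subalgebras, and finally shows that the diagonal parts \(H\cap W_{\pm}\) equal all of \(H\), so \(1\in W_+\cap W_-\), \(\theta(1)=1\), and \((1,1)\in W\cap D=\{0\}\) --- a contradiction. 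None of this is visible from Brauer-group considerations.

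There are two further gaps. First, you treat the hypothesis \(D(A[\![z]\!],\delta)\ncong D(A[\![z]\!],0)\) as being, by definition, the non-degeneracy needed for Theorem~\ref{mainthmB}. It is not: non-degeneracy means the double is isomorphic to some \((D_n(A),\beta_{(n,\lambda)})\) as a Manin pair, and the implication ``non-triangular \(\Rightarrow\) non-degenerate'' is Proposition~\ref{prop:associative_doubles}, which requires its own argument (every associative algebra containing \(A\cong M_n(\Bbbk)\) is of the form \(A\otimes R\), the metric forces \(R\) to be a commutative trace extension via Lemmas~\ref{lem:extensions_of_scalers} and~\ref{lem:alternative_are_commutative}, and then Proposition~\ref{prop:trace_extensions} applies). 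Second, your description of the final identification is off: an associative Stolin pair is a pair \((S,\chi)\) with \(S\) a subalgebra of the \emph{finite-dimensional} algebra \(A\) satisfying \(S+P_k=A\) and \(\chi\) a Connes 2-cocycle non-degenerate on \(S\cap P_k\) --- not an order in \(A(\!(z)\!)\) --- and the solution is determined by the whole pair, not by the single discrete invariant \(k\) (Theorems~\ref{thm:classification_rational_case} and~\ref{thm:classification_qrational_case} are bijections with the set of all such pairs, of which there are many for each \(k\)).
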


\noindent
Let us remark that meromorphic solutions of the \(\textnormal{M}_n(\bC)\)-CYBE which depend on the difference of their variables and have diagonal residue \(\gamma\) were already shown to be rational up to equivalence in \cite[Theorem 0.2]{polishchuk_trigonometric}. 

We were unable to provide examples of quasi-trigonometric and trigonometric solutions of the \(A\)-CYBE for unital non-associative strongly geometrically admissible algebras \(A\) as well. We conjecture that, similar to the associative case in Theorem \ref{mainthmC}, the unitality obstructs the existence of these solutions. In particular, this would apply to the case that \(A\) is Jordan. We conclude this work with a brief outlook to the classification problem in the Jordan case in Section \ref{sec:outlook_jordan}.

\subsection*{Acknowledgments} I thank Ivan Shestakov for explaining several facts about non-associative algebras to me. I also want to thank the anonymous referee for reading this paper carefully and for proposing further interesting research perspectives, which were partially included into the outlook in Section \ref{sec:outlook_jordan}. This work was supported by the DFG grant AB 940/1--1. It was also supported as a part of NCCR SwissMAP, a National
Centre of Competence in Research, funded by the Swiss
National Science Foundation (grant number 205607).

\subsection*{Notation and conventions} If the reader is unsure about the meaning of symbols or names which are ambiguously used in literature, we refer to the Appendix \ref{sec:notation}. There we have tried to give an overview on our conventions.

\section{Introduction to \(D\)-bialgebras} \label{sec:intro_D_bialgebras}

\subsection{Survey on Manin triples} \label{sec:survey_manin_triples}
Throughout this paper \(\Bbbk\) is a field and all algebras, vector spaces, tensor products, etc.\ are understood over \(\Bbbk\) if not stated otherwise. Let us remark that for us an \(R\)-algebra over an (unital, commutative, associative) ring \(R\) satisfies no additional assumptions: an \(R\)-algebra \(A = (A,\mu)\) consists of an \(R\)-module \(A\) equipped with an \(R\)-linear map \(\mu\colon A \otimes_R A \to A\), called multiplication map. We write \(ab \coloneqq \mu(a\otimes b)\) for \(a,b\in A\) if no confusion arises.  

\subsubsection{Metric algebras}\label{sec:metric_algebras}
Let \(R\) be a ring and \(A\) be an \(R\)-algebra. We call a map \(\beta \colon A \times A \to R\) \emph{algebra metric} if it is non-degenerate, symmetric, associative, and \(R\)-bilinear. Thereby, ``non-degenerate'' means that the canonical map \(A \to \textnormal{Hom}_R(A,R)\) defined by \(a \mapsto \beta(a,-)\) is injective and ``associative'' means that
\begin{equation}\label{eq:associativity_of_algebra_metric}
    \beta(ab,c) = \beta(a,bc)
\end{equation}
holds for all \(a,b,c \in A\). We call a pair \((A,\beta)\) \emph{metric \(R\)-algebra} if \(A\) is an \(R\)-algebra equipped with an algebra metric \(\beta \colon A \times A \to R\). Moreover, two metric algebras \((A_1,\beta_1)\) and \((A_2,\beta_2)\) are called \emph{isomorphic}, written \((A_1,\beta_1) \cong (A_2,\beta_2)\), if there exists an \(R\)-algebra isomorphism \(\varphi \colon A_1 \to A_2\) such that \(\beta_2(\varphi(a),\varphi(b)) = \beta_1(a,b)\) for all \(a,b\in A_1\).

\subsubsection{Manin pairs and Manin triples}\label{sec:manin_triples} 
A \emph{Manin pair} \(((M,\beta),N)\) consists of a metric \(\Bbbk\)-algebra \((M,\beta)\) and a Lagrangian subalgebra \(N \subseteq M\). In other words, \(N^\bot = N \subseteq M\) is a subalgebra. 

A \emph{Manin triple} \(((M,\beta),M_+,M_-)\) consists of a metric \(\Bbbk\)-algebra \((M,\beta)\) and subalgebras \(M_\pm \subseteq M\) such that \(M = M_+ \oplus M_-\) and \(M_\pm \subseteq M_\pm^\bot\).
It is easy to see that for any Manin triple \(((M,B),M_+,M_-)\), \(M_\pm^\bot = M_\pm\) already holds. In particular, \(((M,\beta),M_+)\) and \(((M,\beta),M_-)\) are automatically Manin pairs.

\begin{remark}
In literature, Manin pairs and triples are usually only defined for Lie algebras. The definition given here is a straight-forward generalization to arbitrary algebras. 
\end{remark}

\subsubsection{Manin triples and comultiplication maps}\label{sec:manin_triples_and_comultiplication}
Recall that a \emph{\(\Bbbk\)-coalgebra} \(C\) is a \(\Bbbk\)-vector space equipped with a \(\Bbbk\)-linear map \(\delta \colon C \to C \otimes C\), called \emph{comultiplication map}. The restriction of \(\delta^* \colon (C \otimes C)^* \to C^*\) to \(C^* \otimes C^* \subseteq (C \otimes C)^*\) always defines \(\Bbbk\)-algebra structure on \(C^*\), hence the name. Explicitly, the multiplication \(f_1f_2 \in C^*\) of two maps \(f_1,f_2 \in C^*\) is defined by
\begin{equation}
    f_1 f_2(a) \coloneqq (f_1 \otimes f_2)\delta(a)
\end{equation}
for all \(a \in C\).

Let us note that for an infinite-dimensional algebra \(A\), \(A^*\) is not necessarily a coalgebra, since the dual \(A^* \to (A\otimes A)^*\) of the multiplication map might fail to have values in \(A^* \otimes A^* \subseteq (A \otimes A)^*\). 

Two \(\Bbbk\)-coalgebras \((C_1,\delta_1)\) and \((C_2,\delta_2)\) are called \emph{isomorphic}, written \(C_1\cong C_2\), if there exists an isomorphism \(\varphi \colon C_1 \to C_2\) of vector spaces satisfying \((\varphi \otimes \varphi)\delta_1 = \delta_2\varphi\).

We say that a comultiplication map \(\delta \colon M_+ \to M_+ \otimes M_+\) is \emph{determined} by a Manin triple \(((M,\beta),M_+,M_-)\) if 
\begin{equation}\label{eq:delta_B_compatibility}
    \beta^{\otimes 2}(\delta(a),b_1 \otimes b_2) = B(a,b_1b_2)
\end{equation}
holds for all \(a \in M_+\) and \(b_1,b_2 \in M_-\). Here, \(\beta^{\otimes 2}(a_1\otimes a_2,b_1\otimes b_2) \coloneqq \beta(a_1,b_1)\beta(a_2,b_2)\). The name stems from the fact that, if \(\widetilde{\delta} \colon M_+ \to M_+ \otimes M_+\) is another comultiplication determined by \(((M,\beta),M_+,M_-)\), we have \(\delta = \widetilde{\delta}\).

\subsubsection{Isomorphism of Manin pairs and Manin triples} \label{sec:isomorphism_of_manin_triples}
We call two Manin pairs (resp.\ Manin triples) 
\begin{equation*}
    ((M_1,\beta_1), N_1) \textnormal{ and }((M_2,\beta_2),N_2)\qquad (\textnormal{resp. } ((M_1,\beta_1),M_{1,+},M_{1,-})\textnormal{ and }((M_2,\beta_2),M_{2,+},M_{2,-}))   
\end{equation*}
\emph{isomorphic} if there exists an isomorphism \(\varphi \colon (M_1,\beta_1) \to (M_2,\beta_2)\) of metric algebras such that \(\varphi(N_1) = N_2\) (resp.\ \(\varphi(M_{1,\pm}) = M_{2,\pm}\)). In this case, we write \(((M_1,\beta_1), N_1) \cong ((M_2,\beta_2),N_2)\) (resp.\ \(((M_1,\beta_1),M_{1,+},M_{1,-}) \cong ((M_2,{\beta_2}),M_{2,+},M_{2,-})\)).

Assume that \(((M_i,\beta_i),M_{i,+},M_{i,-})\) determines a comultiplication \(\delta_i\) on \(M_{i,+}\) for \(i \in \{1,2\}\). Then \(((M_1,\beta_1),M_{1,+},M_{1,-}) \cong ((M_2,\beta_2),M_{2,+},M_{2,-})\) via an isomorphism \(\varphi \colon (M_1,\beta_1) \to (M_2,{\beta_2})\) implies that
\begin{align*}
    &{\beta}_2((\varphi \otimes \varphi)\delta_1(a),b_1 \otimes b_2) = \beta_1(\delta_1(a),\varphi^{-1}(b_1)\otimes \varphi^{-1}(b_2)) = \beta_1(a,\varphi^{-1}(b_1)\varphi^{-1}(b_2) ) \\&= \beta_1(a,\varphi^{-1}(b_1b_2)) = {\beta}_2(\varphi(a),b_1b_2) = {\beta}_2({\delta}_2(\varphi(a)),b_1\otimes b_2)
\end{align*}
holds for all \(a \in M_{1,+}\) and \(b_1,b_2 \in {M}_{2,-}\). Consequently, \((\varphi \otimes \varphi)\delta_1 = {\delta}_2\varphi\) holds, so \(M_{1,+}\cong {M}_{2,+}\) holds both as algebras and coalgebras.

\subsection{D-bialgebras}\label{sec:D_bialgebras} 
Let us call a pair \((A,\delta)\) consisting of a \(\Bbbk\)-algebra \(A\) and a comultiplication map \(\delta \colon A \to A \otimes A\) \emph{bialgebra}. In particular, we do not assume any compatibility conditions between multiplication and comultiplication of \(A\) in this definition.

To any bialgebra \((A,\delta)\),
there is a unique \(\Bbbk\)-algebra structure on \(D(A,\delta) \coloneqq A \oplus A^*\) such that \(((D(A,\delta),\textnormal{ev}),A,A^*)\) is a Manin triple determining \(\delta\), where:
\begin{itemize}
    \item The multiplication of \(A^{*}\) is defined by the comultiplication \(\delta\) in the sense of Subsection \ref{sec:manin_triples_and_comultiplication};
    
    \item \(\textnormal{ev}\colon D(A,\delta) \times D(A,\delta) \to \Bbbk\) is the \emph{evaluation pairing}    \begin{equation}\label{eq:evaluation_pairing} \textnormal{ev}(a + f,b + g) = f(b) + g(a)\,,\qquad a,b \in A \textnormal{ and }f,g \in A^*.
    \end{equation}
\end{itemize}
Explicitly, the multiplication on \( D(A,\delta)\) is determined by: 
\begin{itemize}
    \item \(A,A^* \subseteq D(A,\delta)\) are subalgebras;

    \item The identities
    \begin{equation}\label{eq:double_multiplication_derivation}
    \begin{split}
        &\textnormal{ev}(a f, b) = \textnormal{ev}(f,ba) = f(ba) = \textnormal{ev}(f R_a,b) \\&\textnormal{ev}(a f, g) = \textnormal{ev}(a, fg) = (f \otimes g)\delta(a) = \textnormal{ev}((f \otimes 1)\delta(a),g)
    \end{split}
\end{equation}
for \(a,b \in A, f,g \in A^*\) yield \(af = (f \otimes 1)\delta(a) + f R_a\) and similarly \(fa = (1 \otimes f)\delta(a) + f L_a\) holds.
\end{itemize}
Here, \(R, L \colon A \to \textnormal{End}(A)\) denote the right and left multiplication maps respectively, i.e.
\begin{equation}\label{eq:let_right_multiplication}
    R_ab = ba = L_ba 
\end{equation}
for all \(a,b \in A\).

The \(\Bbbk\)-algebra \(D(A,\delta)\) associated to a bialgebra \((A,\delta)\) is called called \emph{classical double} of \((A,\delta)\).

Let \({\tt Alg}_\Bbbk\) be the category of \(\Bbbk\)-algebras, i.e.\ the category with \(\Bbbk\)-algebras as objects and \(\Bbbk\)-algebra homomorphisms as morphisms. Furthermore, let \({\tt C}\) be a full subcategory of \({\tt Alg}_\Bbbk\) closed under taking subalgebras. For instance, \({\tt C}\) can be any subcategory of equation based \(\Bbbk\)-algebras like the category of Lie algebras, associative algebras or Jordan algebras.

We call a bialgebra \((A,\delta)\) \emph{\(D\)-bialgebra in {\tt C}} if \(D(A,\delta)\) is an algebra in {\tt C}.
Observe that, by construction, \(A,A^{*} \in {\tt C}\) and \(((D(A,\delta),\textnormal{ev}),A,A^{*})\) is a Manin triple determining \(\delta\).

Let us point out that \(D\)-bialgebras in  \({\tt Alg}_\Bbbk\) are exactly bialgebras.

\subsubsection{Isomorphism of \(D\)-bialgebras}\label{sec:isomorphism_of_Dbialgebras} Let \({\tt C}\) be a full subcategory of {\tt Alg}\(_\Bbbk\) closed under taking subalgebras. Two \(D\)-bialgebras \((A_1,\delta_1)\) and \((A_2,\delta_2)\) in {\tt C} are called \emph{isomorphic}, written \((A_1,\delta_1) \cong (A_2,\delta_2)\), if there exists a \(\Bbbk\)-linear map \(\varphi \colon A_1 \to A_2\) which is both an isomorphism of \(\Bbbk\)-algebras and \(\Bbbk\)-coalgebras, i.e.\ if the identities
\begin{equation}\label{eq:isomorphism_of_Dbialgebras}
    \varphi(a_1a_2) = \varphi(a_1)\varphi(a_2) \textnormal{ and }(\varphi \otimes \varphi)\delta_1(a) = \delta_2(\varphi(a))
\end{equation}
hold for all \(a,a_1,a_2 \in A_1\) .

\begin{lemma}\label{lem:isomorphism_of_Dbialgebras}
Let \({\tt C}\) be a full subcategory of {\tt Alg}\(_\Bbbk\) closed under taking subalgebras and \((A_1,\delta_1)\),\((A_2,\delta_2)\) be two \(D\)-bialgebras in {\tt C}. Then
\begin{equation*}
    (A_1,\delta_1) \cong (A_2,\delta_2) \iff ((D(A_1,\delta_1),\textnormal{ev}),A_1,A_1^*) \cong ((D(A_2,\delta_2),\textnormal{ev}),A_2,A_2^*).
\end{equation*}
\end{lemma}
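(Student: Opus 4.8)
The plan is to prove the two implications separately, exploiting the fact that each comultiplication $\delta_i$ is recovered from its double through the determining relation \eqref{eq:delta_B_compatibility} with $\beta = \textnormal{ev}$; explicitly, $\textnormal{ev}^{\otimes 2}(\delta_i(a), f_1 \otimes f_2) = \textnormal{ev}(a, f_1 f_2)$ for all $a \in A_i$ and $f_1, f_2 \in A_i^*$, where the product $f_1 f_2$ is the one on $A_i^*$ induced by $\delta_i$. Both directions will pivot on matching this relation across an isomorphism.

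For the implication ``$\Leftarrow$'', I would observe that this is essentially the computation already carried out at the end of Subsection \ref{sec:isomorphism_of_manin_triples}. Given an isomorphism $\Phi$ of the two Manin triples, its restriction $\varphi \coloneqq \Phi|_{A_1} \colon A_1 \to A_2$ is a $\Bbbk$-algebra isomorphism, since $\Phi$ is one and $\Phi(A_1) = A_2$ with $A_1, A_2$ subalgebras. As both triples determine comultiplications, the cited computation (applied with $M_{i,+} = A_i$ and $M_{i,-} = A_i^*$) yields directly that $(\varphi \otimes \varphi)\delta_1 = \delta_2 \varphi$. Hence $\varphi$ is simultaneously an algebra and a coalgebra isomorphism, i.e.\ an isomorphism of $D$-bialgebras in the sense of \eqref{eq:isomorphism_of_Dbialgebras}.

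For the implication ``$\Rightarrow$'', I would start from an isomorphism $\varphi \colon A_1 \to A_2$ of $D$-bialgebras and build the candidate double isomorphism $\Phi \coloneqq \varphi \oplus (\varphi^{-1})^* \colon D(A_1,\delta_1) \to D(A_2,\delta_2)$, where $(\varphi^{-1})^*(f) = f \circ \varphi^{-1}$. This is a linear isomorphism satisfying $\Phi(A_1) = A_2$ and $\Phi(A_1^*) = A_2^*$ by construction, and it preserves the evaluation pairing \eqref{eq:evaluation_pairing} because of the identity $(\varphi^{-1})^*(f)(\varphi(b)) = f(b)$. Thus, once $\Phi$ is shown to be an algebra homomorphism, it is automatically an isomorphism of metric algebras respecting the Lagrangian decomposition, hence an isomorphism of Manin triples. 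Checking the homomorphism property splits, by bilinearity, into the four product types. On $A_1$ it is the hypothesis that $\varphi$ is an algebra map; on $A_1^*$ it reduces to $(\varphi^{-1})^*$ being a homomorphism of the dual algebras, which follows from the coalgebra compatibility $(\varphi \otimes \varphi)\delta_1 = \delta_2 \varphi$ together with the identity $(\varphi^{-1})^*(f) \circ \varphi = f$.

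The main obstacle is the verification of the two mixed product types, $\Phi(af) = \Phi(a)\Phi(f)$ and $\Phi(fa) = \Phi(f)\Phi(a)$. Here I would expand both sides using the explicit double multiplication $af = (f \otimes 1)\delta(a) + f R_a$ and $fa = (1 \otimes f)\delta(a) + f L_a$ from \eqref{eq:double_multiplication_derivation}, and then match the $A_2$- and $A_2^*$-components separately. The $A_2$-component equality is the coalgebra compatibility $(\varphi \otimes \varphi)\delta_1 = \delta_2 \varphi$ read off after applying $(\varphi^{-1})^*(f) \otimes 1$, whereas the $A_2^*$-component equality amounts, after evaluating on $\varphi(b)$ and using $(\varphi^{-1})^*(f) \circ \varphi = f$, to the algebra compatibility $\varphi(ba) = \varphi(b)\varphi(a)$. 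None of these steps is deep, but keeping track of which of the two hypotheses on $\varphi$ feeds each component is where care is needed; the rest is routine once one commits to the explicit form of $\Phi$.
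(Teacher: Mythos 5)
Your proof is correct and takes essentially the same approach as the paper's: the ``\(\Leftarrow\)'' direction cites the very computation from Subsection \ref{sec:isomorphism_of_manin_triples}, and your map \(\Phi = \varphi \oplus (\varphi^{-1})^*\) is exactly the paper's \(\widetilde{\varphi}(a+f) = \varphi(a) + f\varphi^{-1}\). The only difference is that you spell out the verification (preservation of \(\textnormal{ev}\) and the four product types via \eqref{eq:double_multiplication_derivation}) which the paper compresses into ``it is easy to see''.
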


\begin{proof}
The fact that \(((D(A_1,\delta_1),\textnormal{ev}),A_1,A_1^*) \cong ((D(A_2,\delta),\textnormal{ev}),A_2,A_2^*)\) implies \((A_1,\delta_1) \cong (A_2,\delta_2)\) was already mentioned in Section \ref{sec:isomorphism_of_manin_triples}. On the other hand, let \(\varphi \colon A_1 \to A_2\) define the isomorphism \((A_1,\delta_1) \cong (A_2,\delta_2)\). It is easy to see that \(\widetilde{\varphi}(a + f) \coloneqq \varphi(a) + f \varphi^{-1}\), where \(a\in A_1, f \in A_1^*\), defines an isomorphism \(((D(A_1,\delta_1),\textnormal{ev}),A_1,A_1^*) \cong ((D(A_2,\delta),\textnormal{ev}),A_2,A_2^*)\).
\end{proof}

\subsection{Examples}\label{sec:survey_manin_triples_examples}
In \cite{zhelyabin}, the \(D\)-bialgebra structures for the most important categories \({\tt C}\) of algebras where discussed. Let us give a short outline of their explicit descriptions. In the following, we write for any elements \(a,a_1,\dots,a_n\) in some \(\Bbbk\)-algebra \(A\)
\begin{equation}
    \begin{split}
        &a^{(i)}(a_1\otimes \dots \otimes a_n) = a_1\otimes \dots \otimes aa_i \otimes \dots \otimes a_n\\&(a_1\otimes \dots \otimes a_n)a^{(i)} = a_1\otimes \dots \otimes a_ia \otimes \dots \otimes a_n.    
    \end{split}
\end{equation}

\subsubsection{\(D\)-bialgebras in the category of Lie algebras}\label{sec:survey_manin_triples_Lie_bialgebras} Recall that a Lie bialgebra \((L,\delta)\) consists of a Lie algebra \(L\) equipped with a linear map \(\delta \colon L \to L \otimes L\) such that:
\begin{itemize}
    \item \(\delta\) is a 1-cocycle, i.e.\ for all \(a,b \in L\)
    \begin{equation*}\label{eq:Lie_cocycle_condition}
        \delta(ab) = (a^{(1)} + a^{(2)})\delta(b) + \delta(a)(b^{(1)} + b^{(2)});
    \end{equation*}
    
    \item The restriction of \(\delta^*\) to \(L^* \otimes L^* \subseteq (L \otimes L)^*\) is a Lie bracket.
\end{itemize}
It is well-known that for a linear map \(\delta \colon L \to L \otimes L\) on a Lie algebra \(L\) the double \(D(L,\delta)\) is again a Lie algebra if and only if \((L,\delta)\) is a Lie bialgebra. Therefore, a \(D\)-bialgebra in the category of Lie algebras is exactly a Lie bialgebra.

\subsubsection{\(D\)-bialgebras in the category of associative algebras}\label{sec:survey_manin_triples_balanced_infinitesimal_bialgebras}
An infinitesimal bialgebra \((A,\delta)\) consists of a Lie algebra \(A\) equipped with a cobracket \(\delta \colon A \to A \otimes A\) such that:
\begin{itemize}
    \item \(\delta\) is a 1-cocycle, i.e.\ for all \(a,b \in A\) 
    \begin{equation*}
        \delta(ab) = a^{(1)}\delta(b) + \delta(a)b^{(2)};
    \end{equation*}
    
    \item The restriction of \(\delta^*\) to \(A^* \otimes A^* \subseteq (A \otimes A)^*\) is an associative multiplication.
\end{itemize}
It was shown by Aguiar \cite{aguiar_associative} that there is classical double like construction for infinitesimal algebras, this time by giving \(\overline{D}(A,\delta) \coloneqq (A \oplus A^*) \oplus (A \otimes A^*)\) an associative algebra structure. In general, these are not related to Manin triples. However, under the condition that \(\delta\) is \emph{balanced}, i.e. if for all \(a_1,a_2 \in A\)
\begin{equation}\label{eq:balance}
    a^{(1)}_1\tau \delta(a_2) + a_2^{(2)}\delta(a_1) = \delta(a_1) a_2^{(1)} + \tau \delta(a_2) a_1^{(2)}
\end{equation}
holds, the double can be reduced to \(\overline{D}_{\textnormal{red}}(A,\delta) = A \oplus A^*\). It turns out that \(\overline{D}_{\textnormal{red}}(A,\delta) = D(A,\tau \delta)\), so \(\tau \delta\) is an associative \(D\)-bialgebra structure, i.e.\ a \(D\)-bialgebra in the category of associative \(\Bbbk\)-algebras. Here and in the following \(\tau(a \otimes b) = b \otimes a\). 

For every bialgebra \((A,\delta)\), we call \((A,\tau \delta)\) the \emph{co-opposite} bialgebra of \((A,\delta)\). It is shown in \cite{zhelyabin} that indeed all associative \(D\)-bialgebra structures are of the above form, i.e.\ the co-opposites of balanced infinitesimal bialgebras are exactly associative \(D\)-bialgebras.

\subsubsection{\(D\)-bialgebras in the category of Jordan algebras}\label{sec:survey_manin_triples_jordan_bialgebras}
The \(D\)-bialgebra structures in the category of Jordan algebras, which we will simply call \emph{Jordan bialgebras}, where found in \cite[Theorem 2]{zhelyabin}: a Jordan bialgebra \((J,\delta)\) consists of a Jordan algebra \(J\) and a linear map \(\delta \colon J \to J \otimes J\) such that \(J^*\) is a Jordan algebra and the following identities hold:

\begin{align*}
    &\frac{1}{2}((\delta \otimes 1) - (1 \otimes \delta))\delta(a^2) = a^{(2)}(\delta \otimes 1 - 1 \otimes \delta)\delta(a) + (a^{(3)} - a^{(1)})(1 \otimes \tau)(\delta \otimes 1)\delta(a) \\&+ (\delta(a) \otimes 1 - 1 \otimes \delta(a))(1\otimes \tau)(\delta(a) \otimes 1);\\ 
    &(\delta \otimes 1+ 1 \otimes \delta + (1 \otimes \tau)(\delta \otimes 1))(1 \otimes a + a \otimes 1)\delta(a) = 2a^{(2)}(1 \otimes \delta)\delta(a) + a^{(1)}(1 \otimes \tau)(\delta \otimes 1)\delta(a) \\&+ (1 \otimes \delta(a))(1 \otimes \tau)(\delta(a) \otimes 1) + (\delta \otimes 1)\delta(a^2); \\
    &\delta(a^2b) - \delta(a^2)b^{(1)} - \delta(b)(a^2)^{(2)} + 2 \delta(b)(a\otimes a) - 2\delta(ab)a^{(1)} + 2 (\delta(a)b^{(1)})a^{(1)} 
    \\&+ 2 (\delta(a)b^{(2)})a^{(2)} - 2\delta(a)(ab)^{(2)} = 0.
\end{align*}

\section{Non-degenerate topological \(D\)-bialgebra structures on power series algebras}\label{sec:manin_triples_over_series}

\subsection{Topological \(D\)-bialgebra structures on power series algebras}\label{sec:survey_topological_bialgebras}
Let \(A\) be a finite-dimensional \(\Bbbk\)-algebra and let us equip
\(A[\![z]\!]\) (resp.\ \((A\otimes A)[\![x,y]\!]\)) with the \((z)\)-adic (resp.\ \((x,y)\)-adic) topology; see Appendix \ref{sec:notation} for definition of \((\cdot)[\![z]\!]\) and \((\cdot)[\![x,y]\!]\). Note that if \(\delta \colon A[\![z]\!] \to (A\otimes A)[\![x,y]\!]\) is a continuous linear map and if \((\cdot)^\vee\) denotes taking the continuous dual space, \(A[\![z]\!]^\vee\) is naturally a \(\Bbbk\)-algebra with the multiplication defined by
\begin{equation}\label{eq:topological_dual_of_delta}
    A[\![z]\!]^\vee \otimes A[\![z]\!]^\vee \cong (A \otimes A)[\![x,y]\!]^\vee \stackrel{\delta^\vee}\longrightarrow A[\![z]\!]^\vee.
\end{equation}
We call a pair \((A[\![z]\!],\delta)\) as above \emph{topological bialgebra}.

For any topological bialgebra \((A[\![z]\!],\delta)\), there is a unique \(\Bbbk\)-algebra structure on \(D(A[\![z]\!],\delta) = A[\![z]\!] \oplus A[\![z]\!]^\vee\) such that \(((D(A[\![z]\!],\delta),\textnormal{ev}),A[\![z]\!],A[\![z]\!]^\vee)\) is a Manin determining \(\delta\). Here, the \emph{evaluation pairing} \(\textnormal{ev}\colon D(A[\![z]\!],\delta) \times D(A[\![z]\!],\delta) \to \Bbbk\) is defined analogous to \eqref{eq:evaluation_pairing}.
The multiplication map of \(D(A,\delta)\) satisfying these conditions can be explicitly determined in the same way as in the non-topological setting in Section \ref{sec:D_bialgebras}. The algebra \(D(A[\![z]\!],\delta)\) is called \emph{classical 
 double} of \((A[\![z]\!],\delta)\).

Let \({\tt C}\) be a full subcategory of \({\tt Alg}_\Bbbk\) closed under taking subalgebras. We call a topological bialgebra \((A[\![z]\!],\delta)\) \emph{topological \(D\)-bialgebra in {\tt C}} if \(D(A[\![z]\!],\delta)\) is an algebra in \({\tt C}\). Observe that if \((A[\![z]\!],\delta)\) is a topological \(D\)-bialgebra in {\tt C}, we have \(A,A[\![z]\!],A[\![z]\!]^\vee \in {\tt C}\).

It is easy to see that \((A[\![z]\!],\delta)\) is a topological \(D\)-bialgebra in {\tt C} if and only if \((A[\![z]\!]^\vee,\mu^\vee)\) is a usual \(D\)-bialgebra in {\tt C}, where \(\mu \colon (A\otimes A)[\![x,y]\!]\to A[\![z]\!]\) is the multiplication map. Therefore, we can describe topological \(D\)-bialgebras in the most important categories of algebras using the same axioms as in Section \ref{sec:survey_manin_triples_examples}.

\subsubsection{Isomorphism of topological \(D\)-bialgebras on series}\label{sec:isomorphism_of_top_Dbialgebras} Let \(A\) be a \(\Bbbk\)-algebra and  \({\tt C}\) be a full subcategory of \({\tt Alg}_\Bbbk\) closed under taking subalgebras and such that \(A[\![z]\!] \in {\tt C}\). Two topological \(D\)-bialgebras \((A_1[\![z]\!],\delta_1)\) and \((A_2[\![z]\!],\delta_2)\) in {\tt C} are called \emph{isomorphic}, written \((A_1[\![z]\!],\delta_1) \cong (A_2[\![z]\!],\delta_2)\), if there exists a continuous linear map \(\varphi \colon A_1[\![z]\!] \to A_2[\![z]\!]\) which is both an isomorphism of algebras and coalgebras, i.e.\ if for every \(a,a_1,a_2 \in A_1\) the identities
\begin{equation}\label{eq:isomorphism_of_top_Dbialgebras}
    \varphi(a_1a_2) = \varphi(a_1)\varphi(a_2) \textnormal{ and }(\varphi \otimes \varphi)\delta_1(a) = \delta_2(\varphi(a))
\end{equation}
hold. Here, in the latter equation \(\varphi \otimes \varphi\) was continuously extended from an automorphism of \(A[\![z]\!]\otimes A[\![z]\!]\) to an automorphism of \((A\otimes A)[\![x,y]\!]\).

\begin{lemma}\label{lem:isomorphism_of_top_Dbialgebras}
Let \(A\) be a \(\Bbbk\)-algebra and  \({\tt C}\) be a full subcategory of \({\tt Alg}_\Bbbk\) closed under taking subalgebras and \((A_1[\![z]\!],\delta_1)\),\((A_2[\![z]\!],\delta_2)\) be two \(D\)-bialgebras in {\tt C}. Then \((A_1[\![z]\!],\delta_1) \cong (A_2[\![z]\!],\delta_2)\) if and only if
\[((D(A_1[\![z]\!],\delta_1),\textnormal{ev}),A_1[\![z]\!],A_1[\![z]\!]^\vee) \cong ((D(A_2[\![z]\!],\delta_2),\textnormal{ev}),A_2[\![z]\!],A_2[\![z]\!]^\vee)\]
via an isomorphism \(D(A_1[\![z]\!],\delta_1) \cong D(A_2[\![z]\!],\delta_2)\) which restricts to a continuous isomorphism \(A_1[\![z]\!] \to A_2[\![z]\!]\).
\end{lemma}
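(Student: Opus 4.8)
The plan is to mirror the proof of the non-topological Lemma~\ref{lem:isomorphism_of_Dbialgebras}, carrying along the topological data at each step. Throughout I write \(\textnormal{ev}_i\) for the evaluation pairing on \(D_i \coloneqq D(A_i[\![z]\!],\delta_i)\) and recall that, by the construction in Section~\ref{sec:survey_topological_bialgebras}, each \(\textnormal{ev}_i\) is non-degenerate and the triple \(((D_i,\textnormal{ev}_i),A_i[\![z]\!],A_i[\![z]\!]^\vee)\) determines \(\delta_i\) in the sense of \eqref{eq:delta_B_compatibility}.

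For the backward implication, suppose \(\Phi\colon D_1 \to D_2\) is an isomorphism of Manin triples restricting to a continuous isomorphism \(\varphi \coloneqq \Phi|_{A_1[\![z]\!]}\colon A_1[\![z]\!]\to A_2[\![z]\!]\). Since \(\Phi\) is an algebra isomorphism sending the subalgebra \(A_1[\![z]\!]\) onto the subalgebra \(A_2[\![z]\!]\), its restriction \(\varphi\) is already an isomorphism of \(\Bbbk\)-algebras. It remains to verify that \((\varphi\otimes\varphi)\delta_1 = \delta_2\varphi\). This is precisely the computation carried out in Section~\ref{sec:isomorphism_of_manin_triples}: using that \(\Phi\) preserves the pairing and that \(\delta_i\) is determined by its Manin triple, one obtains
\[
\textnormal{ev}_2^{\otimes 2}\bigl((\varphi\otimes\varphi)\delta_1(a),\,b_1\otimes b_2\bigr) = \textnormal{ev}_2^{\otimes 2}\bigl(\delta_2(\varphi(a)),\,b_1\otimes b_2\bigr)
\]
for all \(a\in A_1[\![z]\!]\) and \(b_1,b_2\in A_2[\![z]\!]^\vee\). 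Under the identification \(A_2[\![z]\!]^\vee \otimes A_2[\![z]\!]^\vee \cong (A_2\otimes A_2)[\![x,y]\!]^\vee\) from \eqref{eq:topological_dual_of_delta}, the continuous dual separates points of \((A_2\otimes A_2)[\![x,y]\!]\), so non-degeneracy forces \((\varphi\otimes\varphi)\delta_1(a) = \delta_2(\varphi(a))\); here I use that \(\varphi\) is continuous, so that \(\varphi\otimes\varphi\) extends continuously to a map \((A_1\otimes A_1)[\![x,y]\!]\to (A_2\otimes A_2)[\![x,y]\!]\) as in \eqref{eq:isomorphism_of_top_Dbialgebras}. Hence \(\varphi\) witnesses \((A_1[\![z]\!],\delta_1)\cong(A_2[\![z]\!],\delta_2)\).

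For the forward implication, let \(\varphi\colon A_1[\![z]\!]\to A_2[\![z]\!]\) be a continuous algebra-and-coalgebra isomorphism. First I would check that \(\varphi^{-1}\) is again continuous — in the \((z)\)-adic setting this holds because \(\varphi\) is a continuous bijection of complete, linearly topologized \(\Bbbk\)-modules — so that \(f\mapsto f\circ\varphi^{-1}\) is a well-defined map \(A_1[\![z]\!]^\vee \to A_2[\![z]\!]^\vee\). In analogy with the non-topological case I then set \(\widetilde\varphi(a+f)\coloneqq \varphi(a) + f\circ\varphi^{-1}\) for \(a\in A_1[\![z]\!]\) and \(f\in A_1[\![z]\!]^\vee\). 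By definition \(\widetilde\varphi\) maps \(A_1[\![z]\!]\) onto \(A_2[\![z]\!]\) and \(A_1[\![z]\!]^\vee\) onto \(A_2[\![z]\!]^\vee\), restricts to the continuous map \(\varphi\) on the first summand, and preserves the evaluation pairing since \(\textnormal{ev}_2(\varphi(a),\,f\circ\varphi^{-1}) = (f\circ\varphi^{-1})(\varphi(a)) = f(a)\). The remaining point is that \(\widetilde\varphi\) respects the multiplication of the doubles; using the explicit formulas \(af = (f\otimes 1)\delta(a) + fR_a\) and \(fa = (1\otimes f)\delta(a)+ fL_a\) from \eqref{eq:double_multiplication_derivation} together with the two identities in \eqref{eq:isomorphism_of_top_Dbialgebras}, this is the same verification as in Lemma~\ref{lem:isomorphism_of_Dbialgebras}, now read inside the completed tensor products. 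Thus \(\widetilde\varphi\) is the desired isomorphism of Manin triples restricting continuously to \(\varphi\).

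The main obstacle is the topological bookkeeping rather than any new algebraic idea: one must justify that \(\varphi^{-1}\) is continuous (so the dual twist \(f\mapsto f\circ\varphi^{-1}\) lands in the continuous dual), that \(\varphi\otimes\varphi\) extends continuously to the completed tensor product carrying \(\delta_i\), and that the pairing between \((A_2\otimes A_2)[\![x,y]\!]\) and its continuous dual is non-degenerate, so that the cocycle identity can be read off after pairing with elements of \(A_2[\![z]\!]^\vee \otimes A_2[\![z]\!]^\vee\). Once these continuity and non-degeneracy facts are in place, both implications reduce essentially verbatim to the computation recorded in Section~\ref{sec:isomorphism_of_manin_triples} and to the proof of Lemma~\ref{lem:isomorphism_of_Dbialgebras}.
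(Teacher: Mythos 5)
Your proposal follows essentially the same route as the paper: the paper's proof is a one-liner that says to repeat the argument of Lemma~\ref{lem:isomorphism_of_Dbialgebras} (via the construction \(\widetilde{\varphi}(a+f) = \varphi(a) + f\circ\varphi^{-1}\) for the forward direction and the computation of Subsection~\ref{sec:isomorphism_of_manin_triples} for the backward direction), with the single additional observation that a continuous linear isomorphism \(A_1[\![z]\!]\to A_2[\![z]\!]\) automatically has a continuous inverse. Your structure, including the identification of exactly which continuity and non-degeneracy facts are needed, matches this.

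However, one justification needs to be repaired. You assert that \(\varphi^{-1}\) is continuous ``because \(\varphi\) is a continuous bijection of complete, linearly topologized \(\Bbbk\)-modules.'' That general principle is false: the identity map from \(\Bbbk[\![z]\!]\) with the discrete topology to \(\Bbbk[\![z]\!]\) with the \((z)\)-adic topology is a continuous linear bijection between complete, linearly topologized spaces whose inverse is not continuous, so completeness plus linearity of the topology cannot be the reason. The correct reason --- and the one the paper invokes --- is that \(A_1[\![z]\!]\) is \emph{linearly compact} in the \((z)\)-adic topology (here the finite-dimensionality of \(A_1\) from the setup of Section~\ref{sec:survey_topological_bialgebras} enters: \(A_1[\![z]\!]\) is the inverse limit of the finite-dimensional discrete spaces \(A_1[z]/z^nA_1[z]\)), and a continuous linear bijection from a linearly compact space onto a Hausdorff linearly topologized space is automatically a homeomorphism, in exact analogy with continuous bijections from compact spaces to Hausdorff spaces. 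With ``complete'' replaced by ``linearly compact'' in this step, the rest of your argument --- the continuous extension of \(\varphi\otimes\varphi\), the fact that \(f\mapsto f\circ\varphi^{-1}\) lands in the continuous dual, the separation of points of \((A_2\otimes A_2)[\![x,y]\!]\) by continuous functionals, and the verification against the double multiplication \eqref{eq:double_multiplication_derivation} --- goes through as you describe.
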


\begin{proof}
Repeat the arguments in the proof of Lemma \ref{lem:isomorphism_of_top_Dbialgebras} under consideration of the fact that any continuous linear isomorphism \(A_1[\![z]\!] \to A_2[\![z]\!]\) has a continuous inverse, since \(A_1[\![z]\!]\) is linearly compact.
\end{proof}

\begin{remark}\label{rem:isomorphism_of_top_Dbialgebras}
Let \(A\) be a finite-dimensional central simple \(\Bbbk\)-algebra. Then \cite[Theorem 3.3]{abedin_maximox_stolin_zelmanov} states that for every \(\phi \in \textnormal{Aut}_{\Bbbk\textnormal{-alg}}(A[\![z]\!])\) exists a \(\varphi \in \textnormal{Aut}_{\Bbbk[\![z]\!]\textnormal{-alg}}(A[\![z]\!]) \subseteq \textnormal{End}(A)[\![z]\!]\) and \(u \in z\Bbbk[\![z]\!]^\times\) such that
\begin{equation}
    \phi(a)(z) = \varphi(z)a(u(z))
\end{equation}
for all \(a \in A[\![z]\!]\). As a consequence, every \(\Bbbk\)-algebra automorphism of \(A[\![z]\!]\) is continuous in the \((z)\)-adic topology.
In particular, in this case Lemma \ref{lem:isomorphism_of_top_Dbialgebras} can be refined for \(A = A_1 = A_2\) as 
\begin{equation*}
    (A[\![z]\!],\delta_1) \cong (A[\![z]\!],\delta_2) \iff ((D(A[\![z]\!],\delta_1),\textnormal{ev}),A[\![z]\!],A[\![z]\!]^\vee) \cong ((D(A[\![z]\!],\delta_2),\textnormal{ev}),A[\![z]\!],A[\![z]\!]^\vee).
\end{equation*}
\end{remark}

\subsection{Non-degenerate topological \(D\)-bialgebra structures}\label{sec:manin_triples_over_series_explicit} 
Consider Manin triples of the form
\begin{equation}\label{eq:manin_triples_over_series_explicit} 
    ((D_n(A),\beta_{(n,\lambda)}),A[\![z]\!],W),
\end{equation}
where:
\begin{itemize}
    \item \((A,\beta)\) is a finite-dimensional metric \(\Bbbk\)-algebra (recall the definition from Section \ref{sec:survey_manin_triples});
    
    \item \( n \in \bN\) and \(D_n(A) \coloneqq A(\!(z)\!) \times A[z]/z^nA[z]\);
    
    \item \(A[\![z]\!]\) is identified with the image of the diagonal embedding \(A[\![z]\!] \to D_n(A)\) defined by 
    \[a \longmapsto (a,[a]).\]
    Here, for any \(a \in A[\![z]\!]\), \([a] \coloneqq a + z^nA[\![z]\!] \in A[\![z]\!]/z^nA[\![z]\!] = A[z]/z^nA[z]\).
    \item \(\lambda \in \Bbbk[\![z]\!]^\times\) and \(\beta_{(n,\lambda)}\) is given by 
    \begin{equation}\label{eq:beta_nlambda}
        \beta_{(n,\lambda)}((a_1,[a_2]),(b_1,[b_2])) = \textnormal{res}_0 \frac{1}{z^n\lambda }( \beta(a_1,b_1)- \beta(a_2,b_2)),
    \end{equation}
    where \(\beta\) was extended to a \(\Bbbk(\!(z)\!)\)-bilinear form \(A(\!(z)\!)\times A(\!(z)\!) \to \Bbbk(\!(z)\!)\) on the right-hand side. 
\end{itemize}

\smallskip
\noindent
It is easy to see that all triples of the form \eqref{eq:manin_triples_over_series_explicit} are indeed Manin triples in the sense of Definition \ref{sec:manin_triples}.

Let \({\tt C}\) be a full subcategory of \({\tt Alg}_\Bbbk\) that is closed under taking subalgebras.
We call a topological \(D\)-bialgebra \((A[\![z]\!],\delta)\) in \({\tt C}\) \emph{non-degenerate} if and only if there exist \(n \in \bN\) and \(\lambda \in \Bbbk[\![z]\!]^\times\) such that \(((D(A[\![z]\!],\delta),\textnormal{ev}),A[\![z]\!]) \cong ((D_n(A),\beta_{(n,\lambda)}),A[\![z]\!])\) as Manin pairs (see Section \ref{sec:isomorphism_of_manin_triples}). In other words, \((A[\![z]\!],\delta)\) is non-degenerate if and only if there exist \(n \in \bN\) and \(\lambda \in \Bbbk[\![z]\!]^\times\) such that \(D_n(A) \in {\tt C}\) and \(\delta\) is determined by the Manin triple \(((D_n(A),\beta_{(n,\lambda)}),A[\![z]\!],W)\) for an appropriate \(W \subseteq D_n(A)\).

Remark \ref{rem:isomorphism_of_top_Dbialgebras} implies that, if \(A\) is central and simple. the classification of non-degenerate topological \(D\)-bialgebra structures on \(A[\![z]\!]\) up to isomorphisms of topological \(D\)-bialgebras is equivalent to the classification of Manin triples of the form \eqref{eq:manin_triples_over_series_explicit} up to isomorphisms of Manin triples.

\subsection{Connection to trace extensions of \(\Bbbk[\![z]\!]\)}\label{sec:trace_extension}
A \emph{trace extension} \((R,t)\) of \(\Bbbk[\![z]\!]\) consists of a commutative and associative \(\Bbbk\)-algebra extension \(R \supseteq \Bbbk[\![z]\!]\) equipped with a linear map \(t \colon R \to \Bbbk\), called \emph{trace map}, such that:
\begin{enumerate}
    \item \((a,b) \mapsto \beta_t(a,b) \coloneqq t(ab)\) is an algebra metric making \(\Bbbk[\![z]\!]\subseteq R\) a Lagrangian subalgebra;
    
    \item For all continuous (in the \((z)\)-adic topology) linear maps \(f \colon \Bbbk[\![z]\!]\to \Bbbk\) exists an \(a \in R\) such that \(f(b) = t(ab)\) for all \(b \in \Bbbk[\![z]\!]\).
\end{enumerate}
In other words, trace extensions \((R,t)\) are in bijection with Manin pairs \(((R,\beta_t),\Bbbk[\![z]\!])\) for which \(R\) is associative and commutative and \(\beta_t\) satisfies (2).

Two trace extensions \((R_1,t_1)\) and \((R_2,t_2)\) are called \emph{isomorphic}, written \((R_1,t_1) \cong (R_2,t_2)\), if their associated Manin pairs are isomorphic. In other words, \((R_1,t_1) \cong (R_2,t_2)\) if there exists an algebra isomorphism \(\varphi \colon R_1 \to R_2\) such that \(\varphi(\Bbbk[\![z]\!]) = \Bbbk[\![z]\!]\) and \(t_2 \varphi = t_1\). Observe that \(\varphi|_{\Bbbk[\![z]\!]}\) is automatically continuous.

Trace extensions were classified up to isomorphism in \cite[Proposition 2.9]{montaner_stolin_zelmanov}:

\begin{proposition}\label{prop:trace_extensions}
Let \((R,t)\) be a trace extension of \(\Bbbk[\![z]\!]\). Then precisely one of the following cases occurs:
\begin{enumerate}
    \item \((R,t) \cong (R_\infty, t_\infty)\), where \(R_{\infty} \coloneqq \Bbbk[\![z]\!] \oplus \textnormal{Span}_{\Bbbk}\{a_k\mid k \in \bN\}\) with multiplication defined by \(a_ja_k = 0\) and
    \begin{equation*}
        a_jz^k = \begin{cases}a_{j-k}&\textnormal{if }k \le j,\\
        0&\textnormal{otherwise}
        \end{cases}
    \end{equation*}
    and \(t_\infty\) is the unique trace map on \(R_\infty\) defined by \(t(a_{j}) = \delta_{j0}\) for \(j \in \bN\).
    
    \item There exists \(n \in \bN\) and \(\lambda \in \Bbbk[\![z]\!]^\times\) such that \((R,t) \cong (R_n,t_{(n,\lambda)})\), where \(R_n \coloneqq \Bbbk(\!(z)\!) \times \Bbbk[z]/(z^n)\) and
    \begin{equation*}
        t_{(n,\lambda)}(a,[b]) \coloneqq \textnormal{res}_0\frac{1}{z^n\lambda}(a - b).
    \end{equation*}
    Here, \(a \in \Bbbk(\!(z)\!), b\in \Bbbk[\![z]\!]\), \([b] \coloneqq b + z^n\Bbbk[z] \in \Bbbk[\![z]\!]/z^n\Bbbk[\![z]\!]\) and \(\Bbbk[\![z]\!]\) is identified with its image via the embedding \(a \mapsto (a, [a])\). \qed
\end{enumerate}
\end{proposition}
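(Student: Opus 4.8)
The plan is to translate the pair \((R,t)\) into module data over the complete discrete valuation ring \(K \coloneqq \Bbbk[\![z]\!]\) via the form \(\beta_t\), classify that data, and only then reconstruct the multiplication and normalize \(t\). Write \(F \coloneqq \Bbbk(\!(z)\!)\) for the fraction field, \(K^{*} \coloneqq \Hom_\Bbbk(K,\Bbbk)\) for the full dual and \(K^{\vee}\) for the continuous dual. First I would record the formal consequences of the axioms. Isotropy of \(K\) gives \(t|_K = 0\), and the adjoint
\[
\Phi \colon R \longrightarrow K^{*}, \qquad \Phi(r) \colon b \longmapsto t(rb),
\]
is \(K\)-linear by associativity of \(\beta_t\), where \(K\) acts on \(K^{*}\) by \((f\phi)(b) = \phi(fb)\). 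The Lagrangian condition \(K^\bot = K\) says precisely \(\ker\Phi = K\), and axiom (2) says precisely \(\operatorname{im}\Phi \supseteq K^{\vee}\). Thus \(\Phi\) realizes \(R/K\) as a \(K\)-submodule \(V\) of \(K^{*}\) with \(K^{\vee} \subseteq V\).

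Next I would extract the governing invariant from the \(z\)-power-torsion ideal \(T \coloneqq \{r \in R : z^m r = 0 \text{ for some } m \in \bN\}\). As \(K\) is torsion-free, \(\Phi|_T\) is injective, and its image lies in the torsion submodule of \(K^{*}\), which is exactly \(K^{\vee} \cong F/K\). The crucial structural input is that over the complete DVR \(K\) the module \(F/K\) is the injective hull of the residue field, so its lattice of submodules is the single chain \(0 \subset z^{-1}K/K \subset z^{-2}K/K \subset \cdots \subset F/K\). Hence \(\Phi(T)\) is either finite-dimensional of some dimension \(n\) — and then uniserial, namely \(\cong z^{-n}K/K \cong \Bbbk[z]/(z^n)\) as a \(K\)-module — or all of \(K^{\vee}\). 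This is the dichotomy, and since \(\dim_\Bbbk T\) distinguishes the two alternatives the cases are automatically mutually exclusive.

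It then remains to reconstruct the ring and the trace. In the finite-torsion case I would show that \(T\) is a \emph{unital} subalgebra, its identity being an idempotent \(f \in R\), so that \(R\) splits as a product of rings \(R = (1-f)R \times fR\) with \(fR = T \cong \Bbbk[z]/(z^n)\); the complementary factor \((1-f)R\) is again a trace extension of \(K\) on which \(z\) acts invertibly, and I would show any such is isomorphic to \(\Bbbk(\!(z)\!)\) (the base case \(n = 0\)), giving \(R \cong \Bbbk(\!(z)\!) \times \Bbbk[z]/(z^n)\). Comparing \(t\) with the residue pairing on this model then pins down a unit \(\lambda \in \Bbbk[\![z]\!]^\times\) with \(t = t_{(n,\lambda)}\). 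In the infinite-torsion case I would instead show \(\operatorname{im}\Phi = K^{\vee}\), that \(R = K \oplus T\) as \(K\)-modules, and that \(T^2 = 0\); transporting the identification \(T \cong K^{\vee}\) to the dual basis \(\{a_k\}\) then recovers exactly \((R_\infty, t_\infty)\).

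The hard part will be this last, multiplicative, step, because the module classification alone does not separate the families: in both \(R_n\) and \(R_\infty\) the underlying \(K\)-module of \(R\) is an extension of \(F/K \cong K^{\vee}\) by \(K\). The genuine content is to prove that a finite-dimensional torsion ideal is automatically unital — hence a ring direct factor, which produces the idempotent and splits off a single copy of \(\Bbbk(\!(z)\!)\) — whereas an infinite-dimensional one is forced to be square-zero. Establishing this, together with the finiteness statement that \(z^m r \in K\) for some \(m\) for every \(r \in R\) (ruling out elements of ``infinite pole order'', so that \(\operatorname{im}\Phi = K^{\vee}\) and no spurious torsion-free summands survive), is where I expect to need commutativity and associativity of \(R\), the completeness of \(\Bbbk[\![z]\!]\), and the nondegeneracy of \(\beta_t\) all at once, rather than formal homological algebra.
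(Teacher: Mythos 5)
First, a point of order: the paper itself does not prove this proposition --- it is quoted, with a \(\blacksquare\), from \cite[Proposition 2.9]{montaner_stolin_zelmanov} --- so your proposal has to be judged on its own viability rather than against an argument in the text. Your formal scaffolding is correct: with \(K \coloneqq \Bbbk[\![z]\!]\), the Lagrangian condition is exactly \(\ker \Phi = K\), axiom (2) is exactly \(K^\vee \subseteq \Phi(R)\), the torsion submodule of \(\Hom_\Bbbk(K,\Bbbk)\) is exactly \(K^\vee \cong \Bbbk(\!(z)\!)/K\), and the submodules of the latter form the single chain you describe, so \(\dim_\Bbbk T\) does produce a clean, mutually exclusive dichotomy.

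The genuine gap is the step you defer to the last sentence and expect to extract from ``commutativity, associativity, completeness and nondegeneracy'': the finiteness statement that every \(r \in R\) satisfies \(z^m r \in K\) for some \(m\), i.e.\ \(\Phi(R) = K^\vee\). This is not provable from the axioms as stated in this paper, because those axioms admit counterexamples to the proposition itself. Pick a non-continuous functional \(\chi\) on \(K\) with \(\chi(z^m) = 1\) for all \(m \in \bN\) (extend \(\{z^m\}_{m\in\bN}\) to a Hamel basis), let \(M \coloneqq K^\vee \oplus Ke\) with \(e\) a free generator, let \(R \coloneqq K \oplus M\) be the square-zero extension (\(M^2 = 0\), \(K\) acting naturally), and set \(t(a + \tau + ce) \coloneqq \tau(1) + \chi(c)\). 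Then \(\beta_t\) is symmetric, associative and non-degenerate; axiom (2) holds because each \(f \in K^\vee \subseteq M\) pairs with \(K\) by evaluation; and \(K^\perp = K\) holds precisely because \(b \mapsto \chi(bc)\) is non-continuous for every \(c \neq 0\), hence never equals \(-\tau'\) with \(\tau' \in K^\vee\). So \((R,t)\) satisfies every stated axiom, yet its nilradical \(M\) has uncountable dimension, while the nilradical of \(R_\infty\) has countable dimension and that of \(R_n\) is finite-dimensional: it is isomorphic to none of the listed models. The statement you want is in fact part of the \emph{definition} of trace extension in \cite{montaner_stolin_zelmanov} (the form is required to identify \(R/K\) with the continuous dual, not merely to surject onto it), and it holds automatically in all of this paper's applications, where \(R\) is carved out of a double \(A[\![z]\!] \oplus A[\![z]\!]^\vee\); but it must be assumed, it cannot be derived. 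Granting it, the deferred multiplicative steps are easier than you fear and your plan does close up: in the finite case \(R/T \cong \Bbbk(\!(z)\!)\) (invert \(z\); the kernel of \(R \to R[z^{-1}]\) is \(T\)), and if \(r_n \in R\) lifts \(z^{-n}\) then \(g \coloneqq (z^nr_n)^2\) satisfies \(g \equiv 1 \bmod T\), \(gT \subseteq r_n^2\,z^{2n}T = 0\), hence \(g^2 - g = g(g-1) = 0\), and this idempotent yields \(R = gR \times (1-g)R\) with \(gR \cong R/T\) and \((1-g)R = T = K\cdot(1-g) \cong \Bbbk[z]/(z^n)\); in the infinite case \(T \cong K^\vee\) is divisible, and divisibility alone forces \(T^2 = 0\), since for \(\tau\) killed by \(z^N\) the product \(\tau T\) is a divisible submodule of the finite-dimensional \(\{x \in T : z^Nx = 0\}\), hence zero. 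As submitted, however --- with the pivotal step unprovable from the given definition and the reconstruction only sketched --- this is a plan, not a proof.
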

A trace extension \((R,t)\) of \(\Bbbk[\![z]\!]\) is called \emph{trivial} if \((R,t) \cong (R_\infty,t_\infty)\).

If \((A,\beta)\) is a finite-dimensional metric \(\Bbbk\)-algebra and \((R,t)\) is a trace extension of \(\Bbbk[\![z]\!]\), then \((A \otimes R,\beta \otimes t)\) is a metric \(\Bbbk\)-algebra. Here,
\begin{equation}
    (\beta \otimes t)(a_1 \otimes b_1,a_2 \otimes b_2) \coloneqq \beta(a_1,a_2)t(b_1b_2)
\end{equation}
for all \(a_1,a_2 \in A, b_1,b_2 \in R\). 

Observe that \(((A\otimes R_n,\beta \otimes t_{(n,\lambda)}),A\otimes \Bbbk[\![z]\!]) \cong ((D_n(A),\beta_{(n,\lambda)}),A[\![z]\!])\) holds for all \(n \in \bN_0\) and \(\lambda \in \Bbbk[\![z]\!]^\times\). 
Therefore, Proposition \ref{prop:trace_extensions} states that, if \((R,t)\) is a non-trivial trace extension of \(\Bbbk[\![z]\!]\), there exists \(n \in \bN\) and \(\lambda \in \Bbbk[\![z]\!]^\times\) such that \(((A\otimes R,\beta \otimes t),A\otimes \Bbbk[\![z]\!]) \cong ((D_n(A),\beta_{(n,\lambda)}),A[\![z]\!])\) as Manin pairs. In particular, the Manin triples considered in Section \ref{sec:manin_triples_over_series_explicit} are exactly those which arise by finding Lagrangian subalgebras \(W\) in \((A \otimes R,\beta \otimes t)\) complementary to \(A \otimes \Bbbk[\![z]\!]\) for any non-trivial trace extension \((R,t)\).

\subsection{Non-triangular topological Lie \(D\)-bialgebra structures are non-degenerate} 
Let \(A\) be a finite-dimensional \(\Bbbk\)-algebra and {\tt C} be a full subcategory of {\tt Alg}\(_\Bbbk\) closed under taking subalgebras such that \(A \otimes R_\infty \in {\tt C}\). Then the zero map
\[\delta = 0 \colon A[\![z]\!] \to (A\otimes A)[\![x,y]\!]\] 
defines a topological \(D\)-bialgebra structure in {\tt C} with double \((D(A[\![z]\!],\delta),\textnormal{ev}) \cong (A \otimes R_\infty,\beta \otimes t_\infty)\).
We say that a topological \(D\)-bialgebra structure \(\delta \colon A[\![z]\!] \to (A\otimes A)[\![x,y]\!]\) is \emph{triangular} if
\begin{equation}
    ((D(A[\![z]\!],\delta),\textnormal{ev}),A[\![z]\!]) \cong ((A \otimes R_\infty,\beta \otimes t_\infty),A\otimes \Bbbk[\![z]\!]).
\end{equation}
The origin of the name is explained in Remark \ref{rem:triangular_bialgebras_and_rmatrices}.

Let \(\Bbbk\) be algebraically closed of characteristic 0. If {\tt C} is the category of Lie algebras over \(\Bbbk\) and \(A = \fg \in {\tt C}\) is simple, it was shown in \cite[Corollary 2.2, Lemma 2.3 and Proposition 2.8]{montaner_stolin_zelmanov} (see also \cite[Corollary 3.10]{abedin_maximox_stolin_zelmanov}) that any non-triangular topological Lie bialgebra \((\fg[\![z]\!],\delta)\) is non-degenerate in the sense of Section \ref{sec:manin_triples_over_series_explicit}. 

We will prove an analog of this results for the case that {\tt C} is the category of associative algebras in Section \ref{sec:associative_topological_Dbialgebras_nondegenerate}.

\section{Categorization of non-degenerate \(D\)-bialgebra structures}\label{sec:general_categorization}
In this section, we will show that, up to isomorphism of \(D\)-bialgebras and for a large class of central simple \(\Bbbk\)-algebras \(A\), all non-degenerate topological \(D\)-bialgebras \((A[\![z]\!],\delta)\)
are determined by a Manin triple of the form \eqref{eq:manin_triples_over_series_explicit} for some \(n \in \{0,1,2\}\) and \(\lambda = 1\). The main method we use to prove this result is the geometrization of \(A\)-lattices developed in \cite[Section 2.3]{abedin_universal_geometrization} (see also \cite[Section 1.3]{abedin_thesis}), which we will recall in Subsection \ref{sec:geometrization}. The precise formulation of the above mentioned result is then given in Subsection \ref{sec:geometrically_admissible_metrics} and the reminder of this section will be devoted to its proof.

Throughout the reminder of this paper, we assume that \(\Bbbk\) is a field of characteristic 0.

\subsection{Geometrization of lattices.} \label{sec:geometrization}
Let \(A\) be a finite-dimensional, central, simple \(\Bbbk\)-algebra.
We call a subalgebra \(W \subseteq A(\!(z)\!)\) satisfying
\begin{equation}\label{eq:lattice}
    \dim(A[\![z]\!] \cap W) < \infty \textnormal{ and }\dim(A(\!(z)\!)/(A[\![z]\!] + W)) < \infty
\end{equation}
\emph{\(A\)-lattice}. Furthermore, we call a pair \((O,W)\) consisting of an \(A\)-lattice \(W\subseteq A(\!(z)\!)\) and a unital subalgebra \(O \subseteq \{f \in \Bbbk(\!(z)\!)\mid fW \subseteq W\}\) of finite codimension \emph{ringed \(A\)-lattice}.

Let us fix a ringed \(A\)-lattice \((O,W)\). The graded \(\Bbbk\)-algebra
\begin{equation}
    \textnormal{gr}(O) \coloneqq \bigoplus_{j = 0}^\infty t^j\left(O\cap z^{-j}\Bbbk[\![z]\!]\right) \subseteq O[t]
\end{equation}
defines an irreducible projective curve \(X \coloneqq \textnormal{Proj}(\textnormal{gr}(O))\) over \(\Bbbk\) of arithmetic genus
\begin{equation}\label{eq:arithmetic_genus}
    \textnormal{h}^1(\sheafO_X) = \dim(\Bbbk(\!(z)\!)/(\Bbbk[\![z]\!] + O)).
\end{equation}
The \(\Bbbk\)-rational smooth point \(p = (t)\) of \(X\) satisfies \(\textnormal{D}_+(t) = X\setminus\{p\}\). Furthermore, there is canonical isomorphism \(c \colon \compO_{X,p} \to \Bbbk[\![z]\!]\) such that the induced isomorphism \( \textnormal{Q}(\compO_{X,p}) \to \Bbbk(\!(z)\!)\) on quotient fields, which will be denoted again by \(c\), has the property \(c(\Gamma(X\setminus \{p\},\mathcal{O}_X)) = O\).

Consider the graded \(\textnormal{gr}(O)\)-algebra 
\begin{equation}
    \textnormal{gr}(W) \coloneqq \bigoplus_{j \in \bZ} t^j(W \cap z^{-j}A[\![z]\!]) \subseteq W[t,t^{-1}]
\end{equation}
defined by \(W\). Then the quasi-coherent sheaf \(\mathcal{A}\) on \(X = \textnormal{Proj}(\textnormal{gr}(O))\) associated to \(\textnormal{gr}(W)\) is a coherent torsion-free \(\mathcal{O}_X\)-algebra. This sheaf comes equipped with an \(c\)-equivariant isomorphism \(\zeta \colon \compA_p \to A[\![z]\!]\) such that the induced isomorphism \(\textnormal{Q}(\compA_{p}) \to A(\!(z)\!)\), which will be denoted again by \(\zeta\), has the property \(\zeta(\Gamma(X\setminus \{p\},\mathcal{A})) = W\). The dimensions of the cohomology of \(\sheafA\) can be calculated by
\begin{equation}\label{eq:cohomology}
    \textnormal{h}^0(\sheafA) = \dim(A[\![z]\!] \cap W) \textnormal{ and }\textnormal{h}^1(\sheafA) = \dim(A(\!(z)\!)/(A[\![z]\!] + W)).
\end{equation}

\subsection{Geometrically admissible algebra metrics and the main theorem.}\label{sec:geometrically_admissible_metrics}
Let \((A,\beta)\) be a metric \(\Bbbk\)-algebra and let us denote the \(\Bbbk(\!(z)\!)\)-bilinear extension of \(\beta\) by the same symbol, i.e.\
\begin{equation}\label{eq:beta_extension}
\beta \colon A(\!(z)\!) \times A(\!(z)\!) \to \Bbbk(\!(z)\!)\,,\qquad \left(\sum_{k \in \bZ}a_kz^k,\sum_{k \in \bZ}b_kz^k\right) \longmapsto \sum_{k,\ell \in \bZ} \beta(a_k,b_\ell)z^{k+\ell}.
\end{equation}
We call \((A,\beta)\) \emph{geometrically admissible} if:
\begin{enumerate}
    \item \(A\) is finite-dimensional, central, and simple;

    \item For any ringed \(A\)-lattices \((O,W)\) and any maximal ideal \(\mathfrak{m} \subseteq O\) such that \(W_{\mathfrak{m}}\) is free as \(O_{\mathfrak{m}}\)-module, we have \(\beta(W_{\mathfrak{m}},W_{\mathfrak{m}}) \subseteq O_{\mathfrak{m}}\). 
\end{enumerate}

\subsubsection{Examples} \label{sec:geom_admissible_metrics_examples}
Let \((A,\beta)\) be a finite-dimensional, central, simple, metric \(\Bbbk\)-algebra, \((O,W)\) be a ringed \(A\)-lattice, and \(\mathfrak{m} \subseteq O\) be a regular maximal ideal of \(O\) such that \(W_{\mathfrak{m}}\) is a free \(O_{\mathfrak{m}}\)-module. Then \(W_{\mathfrak{m}}\) is of rank \(d \coloneqq \dim(A)\), so we can choose an \(O_{\mathfrak{m}}\)-basis \(\{b_i\}_{i = 1}^d \subseteq W_{\mathfrak{m}}\) and write \(b_ib_j = \sum_{k = 1}^dC_{ij}^kb_k\) for \(\{C_{ij}^k\}_{i,j,k = 1}^d \subseteq O_{\mathfrak{m}}\). Observe that \(\{b_i\}_{i = 1}^d \subseteq W_{\mathfrak{m}} \subseteq A(\!(z)\!)\) is also a \(\Bbbk(\!(z)\!)\)-basis of \(A(\!(z)\!)\).
\begin{enumerate}
    \item Assume \(A\) is a Lie algebra. Then \(\beta\) is a scalar multiple of the Killing form of \(A\) since \(A\) is simple. As a consequence, the extension \eqref{eq:beta_extension} of \(\beta\) is equal to \(\lambda K\) for the Killing form \(K\) of \(A(\!(z)\!)\) and some \(\lambda \in \Bbbk^\times\). Therefore,
    \begin{equation*}
        \beta(b_i,b_j) = \lambda C_{ik}^\ell C_{j\ell}^k \in O_{\mathfrak{m}}
    \end{equation*}
    holds, so \(\beta\) is geometrically admissible.
    
    \item Assume that \(A\) is power associative and not anti-commutative, e.g.\ if \(A\) is associative or Jordan. Then the existence of an algebra metric \(\beta\) on \(A\) implies that \(A\) is a non-commutative Jordan algebra; see e.g.\ \cite[Kapitel I, Satz 6.5]{braun_koecher}. Moreover, \cite[Theorem 1]{shestakov} implies that \(A\) is not nil, so there exists \(\lambda \in \Bbbk^\times\) such that
    \begin{equation*}
        \beta(a,b) = \frac{\lambda}{2}(\textnormal{Tr}(R_{ab}) + \textnormal{Tr}(L_{ab})),
    \end{equation*}
    for all \(a,b\in A\); see e.g.\ \cite{schafer}. Here, \(R,L \colon A \to \textnormal{End}(A)\) are the right and left multiplication maps respectively. Therefore,
    \begin{equation*}
        \beta(b_i,b_j) = \frac{\lambda}{2}\sum_{k,\ell = 1}^d C_{ij}^\ell (C_{k\ell}^k + C_{\ell k}^k) \in O
    \end{equation*}
    holds, so \(\beta\) is geometrically admissible.
\end{enumerate}

\subsubsection{Geometrically admissible metrics and geometrization of lattices}\label{sec:properties_geometriaclly_admissible_pairing}
Let \((A,\beta)\) be a geometrically admissible metric \(\Bbbk\)-algebra and \((O,W)\) be a ringed \(A\)-lattice. The following results are true:
\begin{enumerate}
\item For all regular maximal ideals \(\mathfrak{m} \subseteq O\), we have \(\beta(W_\mathfrak{m},W_\mathfrak{m}) \subseteq O_{\mathfrak{m}}\).

\item Let \(N\) be the integral closure of \(O\). Then \(N\) can be understood as a subalgebra of \(\Bbbk(\!(z)\!)\) and \(V \coloneqq NW \subseteq A(\!(z)\!)\) is an \(A\)-lattice.
Consider the geometric datum \(((X,\sheafA),(p,z,\zeta))\) constructed from the ringed \(A\)-lattice \((N,V)\) in Section \ref{sec:geometrization}. Then there exists a unique pairing
\(\beta_\sheafA \colon \sheafA \times \sheafA \to \sheafO_X\) such that 
\begin{equation}\label{eq:beta_A}
    \xymatrix{\Gamma(U,\sheafA) \times \Gamma(U,\sheafA) \ar[r]^-{\beta_\sheafA}\ar[d]_{\zeta \times \zeta}& \Gamma(U,\sheafO_X)\ar[d]^c\\A(\!(z)\!) \times A(\!(z)\!) \ar[r]_-{\beta} &\Bbbk(\!(z)\!)
    }
\end{equation}
commutes for all \(U \subseteq X\) open. 

\item The pairing \(\beta_\sheafA\) gives rise to a short exact sequence
\begin{equation*}
    0 \longrightarrow \sheafA \longrightarrow \sheafA^* \longrightarrow \mathcal{C} \longrightarrow 0
\end{equation*}
for a torsion sheaf \(\mathcal{C}\). Here, \(\sheafA^* = \sheafHom_{\sheafO_X}(\sheafA,\sheafO_X)\) is the sheaf of morphisms from \(\sheafA\) to \(\sheafO_X\).
\end{enumerate}

\subsubsection{Proof of Subsection \ref{sec:properties_geometriaclly_admissible_pairing}.(1)}\label{sec:proof_of_properties_admissible_pairing}
By definition, \(O_{\mathfrak{m}}\) is a regular local ring. Therefore, the torsion-free \(O_{\mathfrak{m}}\)-module \(W_{\mathfrak{m}}\) is free, so \(\beta(W_{\mathfrak{m}},W_{\mathfrak{m}}) \subseteq O_{\mathfrak{m}}\) holds since \(\beta\) is geometrically admissible.

\subsubsection{Proof of Subsection \ref{sec:properties_geometriaclly_admissible_pairing}.(2)}\label{sec:proof_properties_of_geometric_admissible_metric1} Since the quotient field of \(O\) is a subalgebra of \(\Bbbk(\!(z)\!)\), we have \(N \subseteq \Bbbk(\!(z)\!)\). Furthermore, since \(O\) has Krull dimension one, \(\dim(N/O) < \infty\) and \(\dim(V/W) < \infty\).
In particular, \(V\) is an \(A\)-lattice and \((N,V)\) is a ringed \(A\)-lattice.

Every closed point \(q \in X\setminus \{q\} \cong \textnormal{Spec}(N)\) corresponds to a maximal ideal \(\mathfrak{m}_q \subseteq N\). Since \(N\) is integrally closed of dimension one, \(\mathfrak{m}_q\) is regular. Combined with \(c(\sheafO_{X,q}) = O_{\mathfrak{m}_q}\) and \(\zeta(\sheafA_q) = W_{\mathfrak{m}_q}\) for all \(q \in X \setminus \{p\}\), we obtain
\[\beta(\zeta(\sheafA_q),\zeta(\sheafA_q)) \subseteq \sheafO_{X,q}\]
from (1).
This implies that 
for all \(U \subseteq X \setminus \{p\}\) open
\[\beta(\zeta(\Gamma(U,\sheafA)),\zeta(\Gamma(U,\sheafA))) \subseteq c(\Gamma(U,\sheafO_X))\]
holds. Combined with the fact that \(\zeta(\Gamma(U,\sheafA)) = \zeta(\Gamma(U\setminus\{p\},\sheafA)) \cap A[\![z]\!]\) holds for all open neighbourhoods \(U\subseteq X\) of \(p\), we can simply define \(\beta_\sheafA\) via the diagrams \eqref{eq:beta_A}.

\subsubsection{Proof of Subsection \ref{sec:properties_geometriaclly_admissible_pairing}.(3)}\label{sec:proof_properties_of_geometric_admissible_metric2} Note that the fiber of \(\beta_\sheafA\) at \(p\) can be identified with \(\beta\). In particular, this fiber is non-degenerate. Let \(\sheafA \to \sheafA^*\) be the canonical morphism induced by \(\beta_\sheafA\). Then the fact that \(\beta_\sheafA|_p\) is non-degenerate translates to the fact that \(\sheafA|_p \to \sheafA^*|_p\) is an isomorphism. In particular, the kernel and cokernel of \(\sheafA \to \sheafA^*\) are torsion. The observation that the kernel, as a torsion subsheaf of the torsion-free sheaf \(\sheafA\), is vanishing concludes the proof.

\subsection{The categorization theorem}
The rest of this section is dedicated to the proof of the following theorem.

\begin{theorem}\label{thm:categorization_of_manin_triples}
Let \((A,\beta)\) be a geometrically admissible algebra over a field \(\Bbbk\) of characteristic 0, \(n \in \bN\), and \(\lambda \in \Bbbk[\![z]\!]^\times\). Furthermore, let \(((D_n(A),\beta_{(n,\lambda)}),A[\![z]\!],W)\) be the Manin triple associated to this datum in Subsection \ref{sec:manin_triples_over_series_explicit}.

Then \(n \in \{0,1,2\}\) and \(\lambda = 1\) up to isomorphism in the sense that
\begin{equation*}
    ((D_n(A),\beta_{(n,\lambda)}),A[\![z]\!],W)  \cong ((D_n(A),\beta_{(n,1)}),A[\![z]\!],\widetilde{W})
\end{equation*}
for an appropriate \(\widetilde{W} \subseteq D_n(A)\).

In particular, if {\tt C} is a full subcategory of {\tt Alg}\(_\Bbbk\) closed under taking subalgebras, a non-degenerate topological \(D\)-bialgebra \((A[\![z]\!],\delta)\) in {\tt C} satisfies \[((D(A[\![z]\!],\delta),\textnormal{ev}),A[\![z]\!]) \cong ((D_n(A),\beta_{(n,1)}),A[\![z]\!])\] 
for an appropriate \(n \in \{0,1,2\}\).
\end{theorem}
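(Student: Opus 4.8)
The plan is to translate the Manin triple \eqref{eq:manin_triples_over_series_explicit} into the geometric datum of Subsection~\ref{sec:geometrization} and then to extract the bound on \(n\) from Riemann--Roch together with the self-duality of the resulting sheaf of algebras furnished by Subsection~\ref{sec:properties_geometriaclly_admissible_pairing}.(3); the unit \(\lambda\) will be removed by a change of trivialization. Set \(d\coloneqq\dim A\), let \(\mathrm{pr}_1\colon D_n(A)\to A(\!(z)\!)\) be the projection onto the first factor, and write \(P\coloneqq A[\![z]\!]\times A[z]/z^nA[z]\subseteq D_n(A)\) for the ``non-negative part''. First I would manufacture a ringed \(A\)-lattice from the triple by putting \(V\coloneqq\mathrm{pr}_1(W)\). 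Because \(D_n(A)=A[\![z]\!]\oplus W\) as vector spaces and \(\mathrm{pr}_1\) carries the diagonal copy of \(A[\![z]\!]\) onto \(A[\![z]\!]\subseteq A(\!(z)\!)\), applying \(\mathrm{pr}_1\) to this decomposition yields \(A[\![z]\!]+V=A(\!(z)\!)\); since moreover \(\ker(\mathrm{pr}_1|_W)=W\cap(0\times A[z]/z^nA[z])\) is finite-dimensional, \(V\) is an \(A\)-lattice. Choosing for \(O\) a finite-codimension unital subalgebra of \(\{f\in\Bbbk(\!(z)\!)\mid fV\subseteq V\}\) makes \((O,V)\) a ringed \(A\)-lattice, to which Subsection~\ref{sec:geometrization} attaches a datum \(((X,\sheafA),(p,z,\zeta))\); passing to the integral closure \(N\) and \(NV\) as in Subsection~\ref{sec:properties_geometriaclly_admissible_pairing}.(2), geometric admissibility of \(\beta\) supplies the metric \(\beta_\sheafA\) of \eqref{eq:beta_A} and the short exact sequence \(0\to\sheafA\to\sheafA^*\to\mathcal{C}\to0\) with \(\mathcal{C}\) torsion.

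The cohomology of \(\sheafA\) then encodes \(n\). On the one hand, \eqref{eq:cohomology} gives \(\tH^1(\sheafA)=\dim(A(\!(z)\!)/(A[\![z]\!]+V))=0\). On the other hand \(P\) decomposes both as \(A[\![z]\!]_{\mathrm{diag}}\oplus(0\times A[z]/z^nA[z])\) and, using \(D_n(A)=A[\![z]\!]_{\mathrm{diag}}\oplus W\), as \(A[\![z]\!]_{\mathrm{diag}}\oplus(P\cap W)\); comparing the two shows \(\dim(W\cap P)=nd\), and projecting by \(\mathrm{pr}_1\) expresses \(\tH^0(\sheafA)=\dim(A[\![z]\!]\cap V)\) as \(nd\) minus \(\dim\ker(\mathrm{pr}_1|_W)\). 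Hence \(\chi(\sheafA)=\tH^0(\sheafA)\) is linear in \(n\), while Riemann--Roch reads \(\chi(\sheafA)=\deg\sheafA+d(1-g)\) with \(g=\tH^1(\sheafO_X)\) the arithmetic genus \eqref{eq:arithmetic_genus}, and the self-dual sequence forces \(\deg\sheafA\le0\).

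The \emph{main obstacle} is to convert these relations into the sharp bound \(n\le2\): naive Euler-characteristic bookkeeping leaves \(g\), \(\operatorname{len}\mathcal{C}\), and \(\dim\ker(\mathrm{pr}_1|_W)\) as free parameters and is therefore circular. The additional input I would exploit is that \(\sheafA\) is a sheaf of \emph{algebras} whose generic fibre is the central simple algebra \(A\otimes\Bbbk(X)\), that it is regular at the smooth point \(p\) where \(\compA_p\cong A[\![z]\!]\) and \(\beta_\sheafA|_p\cong\beta\) is non-degenerate (cf.\ Subsection~\ref{sec:properties_geometriaclly_admissible_pairing}.(3)), and that the \(z^{-n}\)-twist in \eqref{eq:beta_nlambda} presents \(\beta_{(n,\lambda)}\) as a residue pairing for a rational differential on \(X\) with a pole of order \(n\) at \(p\). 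Matching the divisor of this differential against \(\deg\omega_X=2g-2\), while requiring compatibility with \(\sheafA\) being a self-dual sheaf of algebras, is what pins down \(n\le2\); this is the step where the hypothesis that \((A,\beta)\) be geometrically admissible rather than merely metric is essential, and it is the technical heart of the argument.

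Finally, the normalization \(\lambda=1\) is comparatively soft: by Proposition~\ref{prop:trace_extensions} the scalar \(\lambda\) lives entirely in the commutative trace-extension datum, and rechoosing the local parameter \(z\) (equivalently, composing \(\zeta\) and \(c\) with a coordinate change \(z\mapsto u(z)\) as permitted by Remark~\ref{rem:isomorphism_of_top_Dbialgebras}) transports \(\beta_{(n,\lambda)}\) to \(\beta_{(n,1)}\) and \(W\) to the required \(\widetilde W\). The concluding ``in particular'' is then immediate: by the definition of non-degeneracy the double's Manin pair is isomorphic to \(((D_n(A),\beta_{(n,\lambda)}),A[\![z]\!])\) for some \(n\in\bN\) and \(\lambda\in\Bbbk[\![z]\!]^\times\), and the above shows this may be taken with \(\lambda=1\) and \(n\in\{0,1,2\}\).
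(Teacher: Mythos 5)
Your skeleton coincides with the paper's own proof in outline: project \(W\) to \(W_+\subseteq A(\!(z)\!)\), geometrize the resulting ringed lattice, use \(\textnormal{H}^1(\sheafA)=0\), the self-duality sequence of Subsection~\ref{sec:properties_geometriaclly_admissible_pairing}.(3) and Riemann--Roch to constrain the curve, and then exploit the residue interpretation of \(\beta_{(n,\lambda)}\) to bound \(n\). But the decisive step is missing: you yourself identify ``matching the divisor of the differential against \(\deg\omega_X=2g-2\)'' as the technical heart and then merely assert that it ``pins down \(n\le 2\)''. To actually run that argument one needs the inclusion chain \(N^\bot\subseteq M\subseteq N\) for the residue pairing \(R_{(n,\lambda)}(f,g)=\textnormal{res}_0\,\tfrac{fg}{z^n\lambda}\), and this is precisely where the Manin-triple structure and geometric admissibility enter: from \(W=W^\bot\) one first derives \(W_+^\bot\subseteq W_+\) (Subsection~\ref{lem:manin_triples}.(1)); geometric admissibility then gives \(\beta(V,V)\subseteq N\), so \(f\in N^\bot\) implies \(fW_+\subseteq W_+^\bot\subseteq W_+\), i.e.\ \(N^\bot\subseteq M\). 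With this in hand the paper splits by genus: for \(g=0\) one has \(N=\Bbbk[u]\), the element \(z^n\lambda u'\) lies in \(N^\bot\) (because \(\textnormal{res}_0\,u'u^k=0\)), and for \(n\ge 3\) it is a \emph{nonzero} element of \(N\cap z\Bbbk[\![z]\!]=\{0\}\) --- the contradiction; for \(g=1\) Serre duality gives \(\textnormal{h}^0(\sheafA)=0\), and Subsection~\ref{lem:manin_triples}.(4) (\(n>0\Rightarrow \dim(A[\![z]\!]\cap W_+)>0\)) forces \(n=0\). None of these ingredients --- the Lagrangian bookkeeping of Subsection~\ref{lem:manin_triples}, the bridge \(N^\bot\subseteq M\subseteq N\), the \(g=0/g=1\) case split --- is established in your proposal, so the bound \(n\le 2\) is asserted, not proved. (A smaller instance of the same pattern: finite-dimensionality of \(A[\![z]\!]\cap V\), needed for \(V\) to be an \(A\)-lattice, does not follow from finite-dimensionality of \(\ker(\mathrm{pr}_1|_W)\) alone; it needs the direct-sum decomposition \(D_n(A)=A[\![z]\!]\oplus W\), or the paper's argument via Subsection~\ref{lem:manin_triples}.(3).)

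Separately, the normalization \(\lambda=1\) is \emph{not} ``comparatively soft''. The residue \(\textnormal{res}_0\,\tfrac{dz}{z^n\lambda}=\textnormal{res}_0\,z^{-n}\lambda^{-1}\) is invariant under coordinate changes \(z\mapsto u(z)\), so for \(n=2\) no change of local parameter can carry \(\beta_{(2,\lambda)}\) to \(\beta_{(2,1)}\) unless \(\textnormal{res}_0\,z^{-2}\lambda^{-1}=0\). This vanishing fails for generic units \(\lambda\) and must be \emph{deduced} from the existence of the Lagrangian complement: in the case \((n,g)=(2,0)\) the paper obtains \(au'=-z^{-2}\lambda^{-1}\) with \(a\in\Bbbk^\times\), whence \(\textnormal{res}_0\,z^{-2}\lambda^{-1}=-a\,\textnormal{res}_0\,u'=0\), and only then invokes the cited isomorphism of trace extensions. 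Your appeal to Proposition~\ref{prop:trace_extensions} and Remark~\ref{rem:isomorphism_of_top_Dbialgebras} only says the double is isomorphic to some \((R_n,t_{(n,\lambda)})\); it does not remove \(\lambda\), so this part of the statement is also left unproved.
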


\noindent
The proof proceeds in several steps. We being by collecting several algebraic properties of Manin triples of the form \eqref{eq:manin_triples_over_series_explicit} in Section \ref{lem:manin_triples}. Using the geometrization method from Section \ref{sec:geometrization}, we pass from these Manin triples to certain geometric data. The application of algebro-geometric tools then concludes the proof of the refinement of Theorem \ref{thm:categorization_of_manin_triples} in Section \ref{sec:geometric_categorization_general}. 

\begin{remark}
    If \(A\) is a Lie algebra and \(\Bbbk\) is algebraically closed, Theorem \ref{thm:categorization_of_manin_triples} coincides with \cite[Theorem 2.10]{montaner_stolin_zelmanov}. However, our proof is independent of the proof of \cite[Theorem 2.10]{montaner_stolin_zelmanov}. In other words, we give a new proof of this result.
\end{remark}

\begin{remark}
The assumption on characteristic could be weakened by careful analysis of the following steps. For instance, the geometrization in Section \ref{sec:geometrization} works over fields where the characteristic does not divide the dimension of \(A\). Furthermore, most geometric methods used below are adaptable to fields of non-zero characteristic. However, for sake of clarity, we shall not pursue this level of generality.    
\end{remark}

\subsection{Algebraic properties of Manin triples of the form \eqref{eq:manin_triples_over_series_explicit}}\label{lem:manin_triples}
Let \((A,\beta)\) be a finite-dimensional metric \(\Bbbk\)-algebra, \(n \in \bN_0\), \(\lambda \in \Bbbk[\![z]\!]^\times\), and
\[((D_n(A),\beta_{(n,\lambda)}),A[\![z]\!],W)\]
be the Manin triple associated to this datum in Subsection \ref{sec:manin_triples_over_series_explicit}.
Furthermore, let \(W_+ \) (resp.\ \( W_- \)) be the projection
of 
\[W \subseteq D_n(A) = A(\!(z)\!) \times A[z]/z^nA[z]\]
onto \(A(\!(z)\!)\) (resp.\ \( A[z]/z^{n}A[z]\)).

The following results are true:
\begin{enumerate}
    \item \(W_\pm^\bot \subseteq W_\pm\) with respect to the bilinear forms \(\beta^\pm_{(n,\lambda)}\) defined by
    \begin{equation}\label{eq:beta_nlambda_pm}
    \beta_{(n,\lambda)}^+(a_1,a_2) \coloneqq \textnormal{res}_0\frac{1}{z^n\lambda}\beta(a_1,a_2) \textnormal{ and }\beta_{(n,\lambda)}^-([b_1],[b_2]) \coloneqq \textnormal{res}_0\frac{1}{z^n\lambda}\beta(b_1,b_2),
    \end{equation}
    where \(a_1,a_2 \in A(\!(z)\!)\) and \([b_1],[b_2] \in A[z]/z^nA[z] = A[\![z]\!]/z^nA[\![z]\!]\) are the classes of \(b_1,b_2 \in A[\![z]\!]\).
    
    \item \(A(\!(z)\!) = A[\![z]\!] + W_+\) and \(\dim(A[\![z]\!]\cap W_+) < \infty\);
    
    \item \(W_+/W_+^\bot \times W_-/W_-^\bot = (A[\![z]\!] \cap (W_+\times W_-)) \oplus W/(W_+^\bot \times W_-^\bot)\) is a finite-dimensional Manin triple, so \(\dim(W_+/W_+^\bot) = \dim(W_-/W_-^\bot) < \infty\). Here, we recall that \(A[\![z]\!]\) is considered as a subalgebra of \(D_n(A) = A(\!(z)\!)\times A[z]/z^nA[z]\) via the diagonal embedding;
    
    \item If \(n > 0\), we have \(\dim(A[\![z]\!]\cap W_+) > 0\).
\end{enumerate}

\subsubsection{Proof of Subsection \ref{lem:manin_triples}.(1)}
Follows immediately from the fact that \eqref{eq:beta_nlambda} and \eqref{eq:beta_nlambda_pm} implies
\begin{equation}\label{eq:Wbot}
    W_+^\bot \times W_-^\bot = (W_+\times W_-)^\bot \subseteq W^\bot = W\subseteq W_+\times W_-.
\end{equation}

\subsubsection{Proof of Subsection \ref{lem:manin_triples}.(2)}
Observe that \(A[\![z]\!] + W = A(\!(z)\!) \times A[z]/z^nA[z]\) implies 
\(A(\!(z)\!) = A[\![z]\!] + W_+\). Therefore, \(\{0\} = (A[\![z]\!] + W_+)^\bot = z^n A[\![z]\!] \cap W_+^\bot\) since 
\[A[\![z]\!]^\bot = z^n\lambda A[\![z]\!] = z^nA[\![z]\!]\] 
with respect to \(\beta_{(n,\lambda)}^+\).
This implies that \(A[\![z]\!] \cap W_+^\bot\) can be embedded into \(A[\![z]\!]/z^n A[\![z]\!]\) and is therefore finite-dimensional.
Consequently, the dimension of
\(A[\![z]\!]\cap W_+\) is finite if
the quotient
\((A[\![z]\!]\cap W_+) / (A[\![z]\!]\cap W_+^{\perp}) \)
is finite-dimensional. The latter space can be identified with a subspace of \(W_+/W_+^\bot\).
Therefore, Subsection \ref{lem:manin_triples}.(2) follows from Subsection \ref{lem:manin_triples}.(3).

\subsubsection{Proof of Subsection \ref{lem:manin_triples}.(3)}
The kernel \(K\) of the projection \(W \to W_+\) contains \(\{0\} \times W_-^\bot\) by virtue of \eqref{eq:Wbot}. On the other hand, any element of \(K\) is of the form
\((0, a)\) for some \(a \in W_-\), so for all \( (w_+,w_-)\in W \)
\begin{equation}
    0 = \beta_{(n,\lambda)}((0,a),(w_+,w_-)) = -\beta_{(n,\lambda)}^-(a,w_-)
\end{equation}
holds, implying \( a \in W_-^{\perp} \)
and hence
\( K = \{ 0 \} \times W_-^{\perp}\).
We obtain an isomorphism
\[W/(W_+^\bot\times W_-^\bot) \longrightarrow W_+/W_+^\bot.\]
A similar argument yields 
\(W/(W_+^\bot\times W_-^\bot) \cong W_-/W_-^\bot\). Therefore, we obtain an isomorphism
\(W_+/W_+^\bot \to W_-/W_-^\bot\).
In particular, 
\[\dim(W_+/W_+^\bot) = \dim(W_-/W_-^\bot) \le \dim(A[z]/z^nA[z]) < \infty.\] 
Considering \(W \subseteq W_+ \times W_-\), the identity \(A[\![z]\!] \oplus W = A(\!(z)\!) \times  A[z]/z^nA[z]\) is equivalent to
\begin{equation}\label{eq:W+W-W}
    W_+ \times W_- = (A[\![z]\!] \cap (W_+\times W_-)) \oplus W.    
\end{equation}
Quoiting out \(W_+^\bot \times W_-^\bot\) concludes the proof.

\subsubsection{Proof of Subsection \ref{lem:manin_triples}.(4)}
Assume that \(n > 0\) and \(A[\![z]\!] \cap W_+ = \{0\}\). Then
\begin{equation*}
    A[\![z]\!] \cap (W_+\times W_-) = \{0\}
\end{equation*}
and \eqref{eq:W+W-W} imply
\begin{equation}\label{eq:W_pm_is_W_pmbot}
    W = W_+ \times W_- = W_+^\bot \times W_-^\bot.
\end{equation}
For any \(a \in A[z]\) exists \(b \in A[\![z]\!]\) and \(w_\pm \in W_\pm\) such that
\[(0,[a]) = (b,[b])+(w_+,w_-) \in A[\![z]\!] \oplus (W_+ \times W_-).\]
Therefore, \(w_+ = -b \in A[\![z]\!] \cap W_+ = \{0\}\) results in \([a] = w_- \in W_-\). Since \(a \in A[z]\) was arbitrary, we conclude \(W_- = A[z]/z^nA[z]\), which contradicts \(W_-^\bot = W_-\) in \eqref{eq:W_pm_is_W_pmbot}.

\subsection{Geometric properties of Manin triples over series}\label{sec:geometric_categorization_general}
We are now in the position to proof Theorem \ref{thm:categorization_of_manin_triples}. More precisely, we proof the following refinement of this theorem.

Let \((A,\beta)\) be a geometrically admissible algebra over a field \(\Bbbk\) of characteristic 0, \(n \in \bN_0\), \(\lambda \in \Bbbk[\![z]\!]^\times\), and \(((D_n(A),\beta_{(n,\lambda)}),A[\![z]\!],W)\) be the Manin triple constructed in Subsection \ref{sec:manin_triples_over_series_explicit}. Furthermore, let \(W_+ \subseteq A(\!(z)\!)\) be the image of \(W\) under the projection \(D_n(A) \to A(\!(z)\!)\) and consider \(M \coloneqq \{f\in \Bbbk(\!(z)\!)\mid f W_+ \subseteq W_+\}\). 

Then \(n \in \{0,1,2\}\) and, up to isomorphism of Manin triples, \(\lambda = 1\)
and precisely one of the following cases occurs:
\begin{enumerate}
    \item \(n = 0\) and \(M\) is integrally closed satisfying \(\dim(\Bbbk(\!(z)\!)/(\Bbbk[\![x]\!] + M)) = 1\);
    \item \(n = 0\) and \(\Bbbk[u',uu'] \subseteq M\) for \(u \in z^{-1}\Bbbk[\![z]\!]^\times\) satisfying \(u \neq z^{-1}\); 
    \item \(n = 0\) and \(\Bbbk[z^{-2},z^{-3}] \subseteq M\);
    \item \(n = 1\) and \(M = \Bbbk[z^{-1}]\);
    \item \(n = 2\) and \(M = \Bbbk[z^{-1}]\).
\end{enumerate}
    
\subsection{Proof of the results in Section \ref{sec:geometric_categorization_general} (and by proxy of Theorem \ref{thm:categorization_of_manin_triples})}\label{sec:proof_of_categorization_theorem}
We use similar arguments as in the proof of \cite[Theorem 3.6]{abedin_universal_geometrization} or \cite[Section 8.4]{abedin_maximox_stolin_zelmanov}. 

Let \(N\) be the integral closure of \(M\). Subsection \ref{sec:properties_geometriaclly_admissible_pairing}.(2) states that \(V \coloneqq NW_+\) is an \(A\)-lattice.
Let \(((X,\mathcal{A}),(p,c,\zeta))\) be the geometric datum associated to the ringed \(A\)-lattice \((N,V)\) in Section \ref{sec:geometrization}. By virtue of Subsection \ref{sec:properties_geometriaclly_admissible_pairing}.(3), we have a short exact sequence
\begin{align}\label{eq:tildeKexactseq}
    0 \longrightarrow \mathcal{A} {\longrightarrow} \mathcal{A}^* \longrightarrow \mathcal{C}\longrightarrow0,
\end{align}
where \(\mathcal{C}\) is a torsion sheaf.
The associated long exact sequence in cohomology reads
\begin{equation}
    0 \longrightarrow \textnormal{H}^0(\mathcal{A}) {\longrightarrow} \textnormal{H}^0(\mathcal{A}^*) \longrightarrow \textnormal{H}^0(\mathcal{C})\longrightarrow \textnormal{H}^1(\mathcal{A}) {\longrightarrow} \textnormal{H}^1(\mathcal{A}^*) \longrightarrow \textnormal{H}^1(\mathcal{C}) \longrightarrow 0.
\end{equation}
The identities \(\textnormal{H}^1(\mathcal{A}) = 0 = \textnormal{H}^1(\mathcal{C})\) imply that \(\textnormal{H}^1(\mathcal{A}^*) = 0\). 

The Riemann-Roch theorem for \(\mathcal{A}\) and \(\mathcal{A}^*\) combined with the fact that \(\textnormal{h}^1(\mathcal{O}_X) = g\) reads 
\begin{align*}
    &0 \le \textnormal{h}^0(\mathcal{A})-\textnormal{h}^1(\mathcal{A}) = \textnormal{deg}(\textnormal{det}(\mathcal{A})) + (1-g)\textnormal{rank}(\mathcal{A}),\\
    &0 \le \textnormal{h}^0(\mathcal{A}^*)-\textnormal{h}^1(\mathcal{A}^*) = -\textnormal{deg}(\textnormal{det}(\mathcal{A})) + (1-g)\textnormal{rank}(\mathcal{A}),
\end{align*}
where we used that \(\textnormal{det}(\mathcal{A}^*) = \textnormal{det}(\mathcal{A})^*\) implies \(\textnormal{deg}(\textnormal{det}(\mathcal{A}^*)) = -\textnormal{deg}(\textnormal{det}(\mathcal{A}))\). We conclude \( g \le 1\).

\subsubsection{The case \(g = 1\)} Assume \(g = 1\), then \(X\) is an elliptic curve. Then the sheaf \(\Omega^1_{X}\) of regular 1-forms on \(X\) satisfies \(\Omega^1_X \cong \sheafO_X\). Therefore, \(0 = \textnormal{h}^1(\mathcal{A}^*) = \textnormal{h}^0(\mathcal{A})\) because of Serre duality.
In particular, by \eqref{eq:cohomology}, \(W_+ \cap \fg[\![z]\!] \subseteq V \cap \fg[\![z]\!] = \{0\}\), so Subsection \ref{lem:manin_triples}.(4) implies \(n = 0\).
Moreover, \(W_+ \oplus \fg[\![z]\!] = \fg(\!(z)\!) =  V \oplus \fg[\![z]\!]\)
and \(W_+ \subseteq V\) imply \(V = W_+\), so \(M = N\) is integrally closed. Since \(\textnormal{h}^1(\sheafO_X) = \dim(\Bbbk(\!(z)\!)/(\Bbbk[\![z]\!] + M)) = 1\) by virtue of \eqref{eq:arithmetic_genus}, we are in case (1).

\subsubsection{The case \(g = 0\)} 
Note that \(g= 0\) means \(\Bbbk(\!(z)\!) = \Bbbk[\![z]\!] + N\) by virtue of \eqref{eq:arithmetic_genus}. Since 
\[N \cap \Bbbk[\![z]\!] = \textnormal{H}^0(\mathcal{O}_X) = \Bbbk,\]
we can see that \(N = \Bbbk[u]\) for the unique \(u \in (z^{-1}+z\Bbbk[\![z]\!]) \cap N \neq \{0\}\). Let \(N^\bot\) be the orthogonal complement of \(N\) with respect to the bilinear form \(R_{(n,\lambda)} \colon \Bbbk(\!(z)\!) \times \Bbbk(\!(z)\!) \to \Bbbk\) defined by
\begin{equation}
    (f,g) \longmapsto \textnormal{res}_{0} \frac{1}{z^n\lambda} fg.
\end{equation}
Since \((A,\beta)\) is geometrically admissible, we have \(\beta(V,V) \subseteq N\) for \(\beta\) from \eqref{eq:beta_extension}, so for all \(f \in N^\bot\) and \(a,b\in W_+ \subseteq V\) we have
\begin{equation}
    \beta_{(n,\lambda)}^+(fa,b) = \textnormal{res}_0 \frac{1}{z^n\lambda}f\beta(a,b) = R_{(n,\lambda)}(f, \beta(a,b)) = 0.
\end{equation}
Here, we recall that \(\beta^+_{(n,\lambda)}\) was defined in \eqref{eq:beta_nlambda_pm}.
In particular, \(fa \in W_+^\bot\) so \(fa \in W_+\) since \(W_+^\bot \subseteq W_+\) by Subsection \ref{lem:manin_triples}.(1). Therefore, \(N^\bot \subseteq \{f \in \Bbbk(\!(x)\!)\mid f W_+ \subseteq W_+\} = M \subseteq N\). The identity 
\begin{equation}\label{eq:Rnlambda_and_multipliers}
    R_{(n,\lambda)}(z^n \lambda u',u^k) = \textnormal{res}_0 u'u^k = \frac{1}{k+1}\textnormal{res}_0\left(u^{k+1}\right)' = 0
\end{equation}
for all \(k \in \bN\) yields
\(z^n \lambda u' \in N^\bot \subseteq M\).

\subsubsection{The case \((n,g) = (0,0)\).} Since \(R_{(n,\lambda)}\) is associative 
and \( v\coloneqq \lambda u' \in N^\bot\) we have 
the inclusion \(v N \subseteq N^\bot\). Furthermore, since
\(v \in N^\bot \subseteq N = \Bbbk[u]\), we obtain 
\(\Bbbk[v,vu] \subseteq \Bbbk + vN \subseteq \Bbbk + N^\bot\). Since all three spaces are of codimension one in \(N = \Bbbk[u]\) we conclude
\begin{equation}
    \Bbbk[v,vu] = \Bbbk + vN = \Bbbk+N^\bot \subseteq \{f \in \Bbbk(\!(z)\!)\mid fW_+ \subseteq W_+\} = M.
\end{equation}

\subsubsection{Case \((n,g) = (1,0)\).} Since \(v \coloneqq z\lambda u' \in N^\bot \cap z^{-1}\Bbbk[\![z]\!]^\times \subseteq N \cap z^{-1}\Bbbk[\![z]\!]^\times = \Bbbk^\times u + \Bbbk\), we have \(M = N = \Bbbk[v]\). 

\subsubsection{Case \((n,g) = (2,0)\).} Since
\(z^2\lambda u' \in N^\bot \cap \Bbbk[\![z]\!] \subseteq N \cap \Bbbk[\![z]\!] = \Bbbk\) 
we have \(au' = -z^{-2}\lambda^{-1}\) for some \(a \in \Bbbk^\times\).  Consequently, \(\textnormal{res}_0 z^{-2}\lambda^{-1} = \textnormal{res}_0 au' = 0\) and \(N = \Bbbk[u] = N^\bot \subseteq M\) by \eqref{eq:Rnlambda_and_multipliers}.

\subsubsection{Case \(n \ge 3\)} The fact that \(z^3\lambda u' \in N^\bot \cap z\Bbbk[\![z]\!] = \{0\}\) is a contradiction. In particular, there cannot exist any Manin triple of the form \(((D_n(A),\beta_{(n,\lambda)}),A[\![z]\!],W)\) for \(n \le 3\).

\subsubsection{Concluding the proof} 
As a metric algebra \((D_n(A),\beta_{(n,\lambda)}) \cong (A \otimes R_n,\beta \otimes t_{(n,\lambda)})\); see Section \ref{sec:trace_extension}.
It is shown in \cite[Proposition 3.12]{abedin_maximox_stolin_zelmanov} that \((R_n,t_{(n,\lambda)}) \cong (R_n,t_{(n,1/(1+az^{n-1}))})\) as trace extensions for \(a = \textnormal{res}_0z^{-n}\lambda^{-1} \in \Bbbk\). In particular, since \(n \le 2\) and for \(n = 2\) the identity \(\textnormal{res}_0 z^{-n}\lambda^{-1} = 0\) holds, we obtain \(\lambda = 1\) up to isomorphism in all cases.

If \((n,g) = (0,0)\), this means that \(\Bbbk[u',uu'] \subseteq M \subseteq N = \Bbbk[u]\). In particular, since by definition \(u \in (z^{-1} + zb + z^2\Bbbk[\![z]\!])\) for some \(b \in \Bbbk\), we see that \(u' + u^2 - 3b \in \Bbbk[u] \cap z\Bbbk[\![z]\!] = \{0\}\). If \(b = 0\) this formal differential equation has the unique solution \(u = z^{-1}\) and we are in case (3) and if \(b \neq 0\) we are in case (2). 

If \((n,g) = (1,0)\), we have \(zu' \in M = N = \Bbbk[u]\), so \(zu' = -u\). The only solution to this equation is again \(z^{-1}\) and we are in case (4). Finally, if \((n,g) = (2,0)\), we have \(u' = -z^{-2}\) so \(u = z^{-1}\) again and we are in case (5).

\section{Non-degenerate \(D\)-bialgebra structures and the classical Yang-Baxter equation} \label{sec:A_CYBE}

\subsection{Series of type \((n,\lambda)\)}\label{sec:series_in_standard_form}
Let \((A,\beta)\) be a finite-dimensional metric \(\Bbbk\)-algebra. Choose a basis \(\{b_i\}_{i = 1}^d\) of \(A\) and consider its dual basis \(\{b_i^*\}_{i = 1}^d\), i.e.\ \(\beta(b_i,b_j^*) = \delta_{ij}\). The tensor \(\gamma = \sum_{i = 1}^d b^*_i \otimes b_i\) is independent of the choice of \(\{b_i\}_{i = 1}^d\subseteq A\), symmetric, and satisfies 
\begin{equation}\label{eq:A_invariance_of_gamma}
    a^{(1)}\gamma = \gamma a^{(2)} \textnormal{ or, equivalently, }a^{(2)}\gamma = \gamma a^{(1)}.    
\end{equation}
The first identity follows from the fact that
\begin{align*}
    &\beta^{\otimes2} (a^{(1)}\gamma,b_j \otimes b_k^*) = \sum_{i = 1}^d \beta(ab_i^*,b_j)\beta(b_i,b_k^*) = \beta(ab_k^*,b_j) = \beta(b_k^*,b_ja) = \beta(\gamma a^{(2)},b_j\otimes b_k^*).
\end{align*}
holds for all \(j,k \in \overline{1,d}\), and the second follows from the first by using the symmetry of \(\gamma\). We call \(\gamma\) \emph{canonical \(A\)-invariant element of \((A,\beta)\)}.

Let us note that the canonical embedding \((A \otimes A)[\![x,y]\!] \to (D_n(A) \otimes A)[\![y]\!]\) extends to 
\begin{equation}\label{eq:rational_r_as_series}
    (A \otimes A)[\![x,y]\!][(x-y)^{-1}] \longrightarrow (D_n(A) \otimes A)(\!(y)\!)
\end{equation}
by writing
\begin{equation}
    \frac{1}{x-y} = \sum_{k = 0}^{n-1} (0,-[x]^{n-k-1}) y^{k-n} + \sum_{k = 0}^\infty (x^{-k-1},0) y^k \in (\Bbbk(\!(x)\!) \times \Bbbk[x]/(x^n))(\!(y)\!).  
\end{equation}
Indeed, this is appropriate since we can calculate
\begin{equation}
    \begin{split}
        &((x,[x]) - y)\left(\sum_{k = 0}^{n-1} (0,-[x]^{n-k-1}) y^{k-n} + \sum_{k = 0}^\infty (x^{-k-1},0) y^k\right) \\&= \sum_{k = 0}^{n-1} (0,-[x]^{n-k}) y^{k-n} - \sum_{k = 1}^{n} (0,-[x]^{n-k}) y^{k-n} + (1,0) = (1,1)
    \end{split}
\end{equation}
inside \((\Bbbk(\!(x)\!) \times \Bbbk[x]/(x^n))(\!(y)\!)\).

In particular, for any \(n \in \bN\) we obtain
\begin{equation}\label{eq:def_wki}
    \begin{split}
        \frac{y^n\gamma}{x-y} &= \sum_{k = 0}^{n-1}\sum_{i = 1}^d (0,-[b_i^* x^{n-1-k}]) \otimes b_i y^k + \sum_{k = n}^\infty \sum_{i = 1}^d (b_i^*x^{n-1-k},0) \otimes b_i y^k \\&= \sum_{k = 0}^{\infty}\sum_{i = 1}^dw_{k,i} \otimes b_iy^k \in (D_n(A) \otimes A)[\![y]\!].
    \end{split}
\end{equation}
For any \(\lambda\in \Bbbk[\![z]\!]^\times\) and \(s \in (A\otimes A)[\![x,y]\!]\), we identify the expression
\begin{align}\label{eq:standard_form}
    r(x,y) = \frac{y^n\lambda(x) \gamma}{x-y} + s(x,y) \in (A \otimes A)[\![x,y]\!][(x-y)^{-1}]
\end{align}
with its series in \((D_n(A) \otimes A)[\![y]\!]\) and say that \(r\) is a series of type \((n,\lambda)\).

\begin{remark}\label{rem:series_nlambda_type}
Every \(r(x,y) = \frac{a(x,y)\gamma}{x-y} + s(x,y)\) for \(a \in \Bbbk[\![x,y]\!]\) such that \(a(z,z) \neq 0\) and any \(s \in (A\otimes A)[\![x,y]\!]\) has a unique representation as a series of type \((n,\lambda)\). Indeed, chose \((n,\lambda)\) such that \(a(z,z) = z^n \lambda(z)\). Then \(a(x,y) - y^n\lambda(x) = (x-y)b(x,y)\) for some \(b \in \Bbbk[\![x,y]\!]\), so
\begin{equation}
    r(x,y) = \frac{y^n\lambda(x)\gamma}{x-y} + b(x,y)\gamma + s(x,y)
\end{equation}
is a series of type \((n,\lambda)\).

In the construction of \(b\) we used the following easy fact: for any \(\Bbbk\)-vector space \(V\) 
\begin{equation}\label{eq:series_vanishing_at_diagonal}
    f \in V[\![x,y]\!], f(z,z) = 0 \implies f(x,y) = (x-y)g(x,y)\textnormal{ for some }g\in V[\![x,y]\!]
\end{equation}
holds. 
\end{remark}

\smallskip
\noindent 
Note that we have a linear automorphism of \((A \otimes A)[\![x,y]\!][(x-y)^{-1}]\) defined by 
\begin{equation}\label{eq:skew-series}
    a(x,y) \longmapsto \overline{a}(x,y) \coloneqq -\tau(a(y,x))
\end{equation}
where \(\tau(a\otimes b) = b\otimes a\) is applied coefficient-wise. 
For any series \(r\) of type \((n,\lambda)\), \(\overline{r}\) is again a series of type \((n,\lambda)\) by Remark \ref{rem:series_nlambda_type}. We call \(r\) \emph{skew-symmetric} if \(r = \overline{r}\).

\subsection{The (generalized) classical Yang-Baxter equation with coefficients in arbitrary algebras}\label{sec:definition_CYBE}
As in the last section, \((A,\beta)\) is a finite-dimensional metric \(\Bbbk\)-algebra, \(\{b_i\}_{i = 1}^d\) and \( \{b_i^*\}_{i = 1}^d\) are basis of \(A\) satisfying \(\beta(b_i^*,b_j) = \delta_{ij}\), and \(\gamma \coloneqq \sum_{i = 1}^d b_i^* \otimes b_i\). Furthermore, let \(U\) be the unitalization of \(A\), i.e.\ \(U = A \oplus \Bbbk\) with multiplication
\begin{equation}
    (a_1,u_1)(a_2,u_2) = (a_1a_2 + u_1a_2 + u_2a_1,u_1u_2)
\end{equation}
for all \(a_1,a_2 \in A\) and \(u_1,u_2 \in \Bbbk\).

For any \(s \in (A \otimes A)[\![x,y]\!][(x-y)^{-1}]\), let us define the expressions
\begin{equation}\label{eq:ij_notations_1}
    s^{12}(z_1,z_2),s^{13}(z_1,z_3),s^{23}(z_2,z_3) \in (U \otimes U \otimes U)[\![z_1,z_2,z_3]\!]\left[\frac{1}{(z_1-z_2)(z_1-z_3)(z_2-z_3)}\right]
\end{equation}
coefficient-wise via
\begin{equation}\label{eq:ij_notations_constant}
    t^{12} = t \otimes 1, t^{13} = a \otimes 1 \otimes b, t^{23} = 1 \otimes t \in U \otimes U \otimes U 
\end{equation}
for \(t = a\otimes b \in A \otimes A\).

Let us point out that for example \((a_1\otimes a_2)^{13}(b_1 \otimes b_2)^{12} = a_1b_1 \otimes b_2 \otimes a_2 \in A \otimes A \otimes A\). This and similar identities imply that for all \(s_1,s_2 \in (A \otimes A)[\![x,y]\!][(x-y)^{-1}]\)
\begin{equation}\label{eq:ij_notations_2}
    s_1^{13}(z_1,z_3)s_2^{12}(z_1,z_2), s_1^{12}(z_1,z_2)s_2^{23}(z_2,z_3)\textnormal{, and } s_1^{23}(z_2,z_3)s_2^{13}(z_1,z_3)
\end{equation}
are elements of 
\[(A \otimes A \otimes A)[\![z_1,z_2,z_3]\!]\left[\frac{1}{(z_1-z_2)(z_1-z_3)(z_2-z_3)}\right].\]
Furthermore, if \(s_1,s_2\) are of the form \eqref{eq:standard_form} and we write
\[s_\epsilon = \sum_{k \in \bZ}\sum_{i = 1}^d s_{\epsilon;k,i}(x) \otimes b_iy^k\in (D_n(A) \otimes A)(\!(y)\!),\]
for \(\epsilon \in \{1,2\}\) for the associated series via \eqref{eq:rational_r_as_series}, we get
\begin{align*}
    &s_1^{13}(z_1,z_3)s_2^{12}(z_1,z_2) = \sum_{k,\ell \in \bN} \sum_{i,j = 1}^d s_{1;k,i}(z_1)s_{2;\ell,j}(z_1) \otimes b_jz_2^\ell \otimes b_iz_3^k \in (D_n(A) \otimes A \otimes A)[\![z_2,z_3]\!].
\end{align*}
Similar formulas for \(s_1^{12}(z_1,z_2)s_2^{23}(z_2,z_3)\) and \(s_1^{23}(z_2,z_3)s_2^{13}(z_1,z_3)\) hold.

For any \(r \in (A \otimes A)[\![x,y]\!][(x-y)^{-1}]\), we call the equation \(\textnormal{GCYB}(r) = 0\) the \emph{\(A\)-generalized classical Yang-Baxter equation} (short: \(A\)-GCYBE), where 
\begin{equation}\label{eq:gcybe}
    \textnormal{GCYB}(r) \coloneqq r^{13}r^{12}-r^{12}r^{23}+\overline{r}^{23}r^{13}
\end{equation}
Here, \(\overline{(\cdot)}\) was defined in \eqref{eq:skew-series}.

Similarly, we call the equation \(\textnormal{CYB}(r) = 0\) the \emph{\(A\)-classical Yang-Baxter equation} (short: \(A\)-CYBE), where 
\begin{equation}\label{eq:cybe}
    \textnormal{CYB}(r) \coloneqq r^{13}r^{12}-r^{12}r^{23}+r^{23}r^{13}.
\end{equation}
If \(A\) is a Lie algebra, these are exactly the usual (generalized) classical Yang-Baxter in two-formal parameter. If \(A\) is associative, the \(A\)-classical Yang-Baxter equation is a formal variant of the associative version of the CYBE used in \cite{odesskii_sokolov}, which is itself a spectral parameter generalization of the associative CYBE discussed in e.g.\ \cite{aguiar_associative}.

\subsection{Solutions of the \(A\)-(G)CYBE and subspaces of \(D_n(A)\)}\label{sec:solutions_of_CYBE_and_manin_triples}
Series of type \((n,\lambda)\) can be seen as generating series of certain subspaces of \(D_n(A)\). More precisely, we have the following result, which is a generalization of known statements in the Lie algebra case; see e.g.\ \cite{gelfand_cherednik,skrypnyk_infinite_dimensional_Lie_algebras,abedin_maximov_stolin_quasibialgebras}.

\begin{theorem}\label{thm:series_and_subspaces}
    Let \((A,\beta)\) be a finite-dimensional metric \(\Bbbk\)-algebra, \(\{b_i\}_{i = 1}^d\) and \(\{b^*_i\}_{i = 1}^d\) be basis of \(A\) satisfying \(\beta(b_i^*,b_j) = \delta_{ij}\), \(\gamma \coloneqq \sum_{i = 1}^d b_i^* \otimes b_i\), and \(n \in \bN\). To any series \(r(x,y) = \sum_{k = 0}^\infty\sum_{i = 1}^d r_{k,i}(x) \otimes b_iy^k \in (D_n(A) \otimes A)[\![y]\!]\), we can define a linear subspace
\begin{equation}
    A(r) \coloneqq \textnormal{Span}_{\Bbbk}\{ r_{k,i} \mid k\in\bN,i\in \overline{1,d}\} \subseteq D_n(A).
\end{equation}
For any fixed \(\lambda \in \Bbbk[\![z]\!]^\times\) the following results are true:
\begin{enumerate}
    \item \(r \mapsto A(r)\) defines a bijection between series \(r\) of type \((n,\lambda)\) (i.e.\ of the from \eqref{eq:standard_form}) and subspaces \(W \subseteq D_n(A)\) satisfying \(D_n(A) = A[\![z]\!] \oplus W\). 
    
    \item For any series \(r\) of type \((n,\lambda)\), the identity \(A(r)^\bot = A(\overline{r})\) holds, where \((\cdot)^\bot\) is meant with respect to \(\beta_{(n,\lambda)}\) from \eqref{eq:beta_nlambda} and \(\overline{(\cdot)}\) is defined in \eqref{eq:skew-series}.

    \item For any series \(r\) of type \((n,\lambda)\), the identity
    \begin{equation}
        \textnormal{GCYB}(r) = \varphi 
    \end{equation}
    holds
    for the unique element \(\varphi \in (A \otimes A \otimes A)[\![z_1,z_2,z_3]\!]\)
    determined by
    \begin{equation}        \beta_{(n,\lambda)}^{\otimes 3}(v_1\otimes v_2 \otimes v_3,\varphi) = \beta_{(n,\lambda)}(v_1,v_3v_2)
    \end{equation}
    for all \(v_1 \in A(\overline{r}), v_2,v_3 \in A(r)\).
    \end{enumerate}
\end{theorem}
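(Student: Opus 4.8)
The three parts are proved in that order, since (2) and (3) rest on (1) and (3) additionally on (2). Write \(\iota\colon A[\![z]\!]\to D_n(A)\), \(a\mapsto(a,[a])\), for the diagonal embedding and let \(r_{\gamma,\lambda}(x,y)=\frac{y^n\lambda(x)\gamma}{x-y}\) be the reference series, whose \(b_iy^k\)-coefficients are the elements \(\lambda w_{k,i}\) obtained from \eqref{eq:def_wki}. For (1) I first check \(D_n(A)=A[\![z]\!]\oplus A(r_{\gamma,\lambda})\): the expansion \eqref{eq:def_wki} shows that \(\{w_{k,i}\}_{0\le k\le n-1}\) is a basis of \(\{0\}\times A[z]/z^nA[z]\) and \(\{w_{k,i}\}_{k\ge n}\) a basis of \(z^{-1}A[z^{-1}]\times\{0\}\), whence the decomposition is immediate for \(\lambda=1\); the general case follows by applying the linear automorphism ``multiplication by \(\lambda\)'' of \(D_n(A)\), which preserves \(A[\![z]\!]\) and carries \(A(r_{\gamma,1})\) to \(A(r_{\gamma,\lambda})\). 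For a general series \(r=r_{\gamma,\lambda}+s\) the \(b_iy^k\)-coefficient is \(r_{k,i}=\lambda w_{k,i}+\iota(s_{k,i})\) where \(s=\sum_{k,i}s_{k,i}(x)\otimes b_iy^k\), so \(A(r)\) is exactly the graph of the map \(\lambda w_{k,i}\mapsto\iota(s_{k,i})\). The standard bijection between complements of \(A[\![z]\!]\) and graphs \(\Hom(A(r_{\gamma,\lambda}),A[\![z]\!])\cong(A\otimes A)[\![x,y]\!]\) then yields (1).

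The engine for (2) and (3) is the reproducing identity \(\sum_{k,i}\beta_{(n,\lambda)}(\iota(f),r_{k,i})\,b_iy^k=f\) for \(f\in A[\![z]\!]\), which holds unconditionally: the \(\iota(s_{k,i})\)-contributions drop out by isotropy of the diagonal, and the reference part reproduces \(f\) by the residue formula \eqref{eq:beta_nlambda}. For (2) I reduce \(A(r)^\bot=A(\overline r)\) to the orthogonality \(\beta_{(n,\lambda)}(A(r),A(\overline r))=0\). Since \(A[\![z]\!]\) is Lagrangian and \(A(r)\) is a complement, \(A(r)^\bot\) is again a complement, so \(A(r)^\bot=A(\tilde r)\) for a unique \(\tilde r\) by (1); as two complements of a fixed subspace with one contained in the other must coincide, the inclusion \(A(\overline r)\subseteq A(r)^\bot\) upgrades to equality. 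The orthogonality itself I verify through the vanishing of the generating function \(\sum_{k,l,i,j}\beta_{(n,\lambda)}(r_{k,i},\overline r_{l,j})(b_i\otimes b_j)y^k\tilde y^l\), a residue computation via \eqref{eq:beta_nlambda} in which the rewriting \(\overline r(x,y)=\frac{x^n\lambda(y)\gamma}{x-y}-\tau(s(y,x))\) and the symmetry of the Cauchy kernel \(\frac{\gamma}{x-y}\) force the cancellation.

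For (3) I first upgrade the reproducing identity to \(\sum_{k,i}\beta_{(n,\lambda)}(v,\iota(b_iz^k))r_{k,i}=v\) for \(v\in A(r)\): this operator is the \(\beta_{(n,\lambda)}\)-adjoint of the projection onto \(A[\![z]\!]\) along \(A(\overline r)\), hence, using (2), the projection onto \(A(r)\) along \(A[\![z]\!]\). I then pair \(\textnormal{GCYB}(r)=r^{13}r^{12}-r^{12}r^{23}+\overline r^{23}r^{13}\) against \(v_1\otimes v_2\otimes v_3\) with \(v_1\in A(\overline r)\) and \(v_2,v_3\in A(r)\), slot by slot. In \(r^{13}r^{12}\) the shared first slot carries the product \(r_{k,i}r_{\ell,j}\), and pairing the two \(A\)-slots against \(v_3,v_2\) reproduces them into the first slot, producing \(\beta_{(n,\lambda)}(v_1,v_3v_2)\); in each of the other two terms the first slot carries a single coefficient lying in \(A(r)\), which pairs to \(0\) against \(v_1\in A(\overline r)=A(r)^\bot\). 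Since the reproducing identity also shows that pairing against \(A(\overline r)\otimes A(r)\otimes A(r)\) separates points of \((A\otimes A\otimes A)[\![z_1,z_2,z_3]\!]\), this identifies \(\textnormal{GCYB}(r)\) with \(\varphi\).

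I expect the crux to lie entirely in (3). The minor difficulty is bookkeeping: which tensor slot is \(D_n(A)\)-valued and which is \(A\)-valued changes between the three summands, and one must track this carefully to see that only \(r^{13}r^{12}\) contributes. The real obstacle is regularity: a priori \(\textnormal{GCYB}(r)\) lies only in \((D_n(A)\otimes A\otimes A)[\![z_2,z_3]\!]\) with a potentially singular first slot, and one must show it in fact lands in the regular space \((A\otimes A\otimes A)[\![z_1,z_2,z_3]\!]\) before the comparison with \(\varphi\) is legitimate. This I would extract from the separation argument run degree by degree in \(z_2,z_3\), using \(A[\![z]\!]\cap A(\overline r)=\{0\}\) to annihilate the \(A\)-slots and \(A(r)^\bot=A(\overline r)\) to pin down the first slot.
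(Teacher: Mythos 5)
Your proofs of (1) and (2) are fine as far as they go (the paper simply defers both to the Lie-algebra case in an external reference, so your direct graph argument for (1) and the complement-uniqueness reduction for (2) are a legitimate, slightly more self-contained route), and your main pairing computation in (3) — reproduce slots 2 and 3 into the first slot for \(r^{13}r^{12}\), kill the other two terms via \(A(\overline r)=A(r)^\bot\), then separate points — is exactly the paper's computation \eqref{eq:gcybe_subalgebra} followed by pairing against \(\overline r_{k_1,i_1}\otimes r_{k_2,i_2}\otimes r_{k_3,i_3}\).

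The genuine gap is the regularity step in (3), which you correctly flag as the crux but then do not prove. First, your diagnosis of where the singularity sits is off: the first slot is not the problem, since the \(D_n(A)\)-valued expansion \eqref{eq:rational_r_as_series} already absorbs all poles along \(z_1=z_2\) and \(z_1=z_3\). The dangerous poles are along \(z_2=z_3\), in the \emph{second and third} slots: the individual terms \(r^{12}r^{23}\) and \(\overline r^{23}r^{13}\) each contain a factor \(r(z_2,z_3)\) resp.\ \(\overline r(z_2,z_3)\) with a simple pole at \(z_2=z_3\), and only their \emph{combination} is regular. Because of this, your ``slot by slot'' pairing of the three summands of \(\textnormal{GCYB}(r)\) separately is ill-defined: the functional \(\beta_{(n,\lambda)}^{\otimes 3}(v_1\otimes v_2\otimes v_3,-)\) only makes sense on series whose slots 2 and 3 lie in \(A[\![z_2]\!]\), \(A[\![z_3]\!]\), so it cannot be applied to \(r^{12}r^{23}\) or \(\overline r^{23}r^{13}\) on their own, and for the same reason a ``separation argument run degree by degree'' cannot be used to \emph{establish} regularity — separation only distinguishes elements of the regular space \((A\otimes A\otimes A)[\![z_1,z_2,z_3]\!]\), which is exactly what you do not yet know \(\textnormal{GCYB}(r)\) to be. The argument as proposed is circular.

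What is actually needed, and what the paper supplies, is a direct computation: write \(r=w+s\) with \(w=\frac{y^n\lambda(x)\gamma}{x-y}\), decompose \(\textnormal{GCYB}(r)\) into \(\textnormal{GCYB}(s)\), \(\textnormal{GCYB}(w)\), and three groups of cross terms, and check each group is pole-free using the invariance \eqref{eq:A_invariance_of_gamma} (which gives \(\gamma^{13}\gamma^{12}=\gamma^{12}\gamma^{23}=\gamma^{23}\gamma^{13}\) and lets one pair off the singular parts, e.g.\ \(z_2^kb_i^{(1)}\gamma\,z_3^n\lambda(z_2)-\gamma b_i^{(2)}z_3^k z_2^n\lambda(z_3)\) vanishes on \(z_2=z_3\)) together with the divisibility lemma \eqref{eq:series_vanishing_at_diagonal}. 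Nothing in your duality machinery replaces this: the invariance of \(\gamma\) is the algebraic input that makes the \(z_2-z_3\) poles cancel, and your proposal never invokes it for this purpose. Once regularity is in hand, the rest of your (3) goes through verbatim and agrees with the paper.
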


\smallskip
\noindent
The proof of Theorem \ref{thm:series_and_subspaces} is postponed to Subsection \ref{sec:proof_solutions_of_CYBE_and_manin_triples}.

A direct consequence of Theorem \ref{thm:series_and_subspaces}.(1)\&(3) is that \(r \mapsto A(r)\) defines a bijection between solutions \(r\) of the \(A\)-GCYBE \eqref{eq:gcybe} of type \((n,\lambda)\) and subalgebras \(W \subseteq D_n(A)\) satisfying \(D_n(A) = A[\![z]\!] \oplus W\).
Combined with Theorem \ref{thm:series_and_subspaces}.(2), we obtain a bijection between skew-symmetric solutions \(r\) of the \(A\)-CYBE \eqref{eq:cybe} of type \((n,\lambda)\) and Manin triples \(((D_n(A),\beta_{(n,\lambda)}),A[\![z]\!],W)\).
We will see that, if \(A\) is simple, any solution \(r\) of the \(A\)-CYBE of type \((n,\lambda)\) is already skew-symmetric. Therefore, we have the following consequence of Theorem \ref{thm:series_and_subspaces}.

\smallskip
\noindent

\begin{corollary}\label{thm:solutions_of_CYBE_and_manin_triples}
Let \((A,\beta)\) be a finite-dimensional simple metric \(\Bbbk\)-algebra, \(n \in \bN\), and \(\lambda \in \Bbbk[\![z]\!]^\times\). Then \(r \mapsto A(r)\) defines a bijection between solutions of the \(A\)-CYBE \eqref{eq:cybe} \(r\) of type \((n,\lambda)\) and Manin triples \(((D_n(A),\beta_{(n,\lambda)}),A[\![z]\!],W)\).
\end{corollary}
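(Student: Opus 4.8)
The plan is to obtain the Corollary from Theorem~\ref{thm:series_and_subspaces} by supplying the one missing ingredient announced just above it, namely that simplicity forces solutions to be skew-symmetric. The discussion preceding the Corollary already records that parts (1) and (3) make \(r \mapsto A(r)\) a bijection between solutions of the \(A\)-GCYBE \eqref{eq:gcybe} of type \((n,\lambda)\) and subalgebras \(W\) with \(D_n(A) = A[\![z]\!]\oplus W\), and that part~(2), which gives \(A(\overline r) = A(r)^\bot\), cuts this down to a bijection between \emph{skew-symmetric} solutions of the \(A\)-CYBE \eqref{eq:cybe} and Manin triples \(((D_n(A),\beta_{(n,\lambda)}),A[\![z]\!],W)\): indeed, by the injectivity in part (1) one has \(r = \overline r\) if and only if \(A(r) = A(\overline r) = A(r)^\bot\), i.e.\ if and only if \(W\) is Lagrangian. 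Hence the only thing left to establish is the claim that, for simple \(A\), every solution \(r\) of the \(A\)-CYBE of type \((n,\lambda)\) is automatically skew-symmetric.

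To prove this I would measure the defect of skew-symmetry by \(u \coloneqq r - \overline r\), which satisfies \(\overline u = -u\) by \eqref{eq:skew-series}. Since \(\overline r\) is again of type \((n,\lambda)\), the series \(r\) and \(\overline r\) have the \emph{same} singular part \(\tfrac{y^n\lambda(x)\gamma}{x-y}\), so \(u \in (A\otimes A)[\![x,y]\!]\) is an honest power series with no pole along \(x = y\); this is the decisive structural simplification. Comparing \eqref{eq:gcybe} and \eqref{eq:cybe} termwise gives the identity \(\textnormal{CYB}(r) - \textnormal{GCYB}(r) = u^{23}r^{13}\), so a solution of the \(A\)-CYBE satisfies \(\textnormal{GCYB}(r) = -u^{23}r^{13}\). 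I would then feed both sides into the pairing description of Theorem~\ref{thm:series_and_subspaces}.(3): testing against \(v_1 \in A(\overline r) = A(r)^\bot\) and \(v_2,v_3 \in A(r)\) converts this tensor identity into an explicit relation between \(u\), the multiplication of \(D_n(A)\), and the subspaces \(W = A(r)\) and \(W^\bot\).

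The decisive step is to extract from this relation that \(u\) is \(A\)-invariant. Analyzing the singular parts of \(\textnormal{GCYB}(r) = -u^{23}r^{13}\) along the diagonal \(z_1 = z_3\) and using the invariance \eqref{eq:A_invariance_of_gamma} of the residue tensor \(\gamma\) should yield \(a^{(1)}u(x,y) = u(x,y)a^{(2)}\) for all \(a\in A\); here \emph{simplicity is indispensable}, since it guarantees that the space of invariants of \(A\otimes A\) in the sense of \eqref{eq:A_invariance_of_gamma} is one-dimensional and spanned by \(\gamma\), forcing \(u = c(x,y)\gamma\) for a symmetric \(c \in \Bbbk[\![x,y]\!]\). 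A second comparison of singular parts (or, equivalently, the two-parameter invariance \(a(x)^{(1)}u = u\,a(y)^{(2)}\) for \(a \in A[\![z]\!]\), which upon evaluating at \(a(z) = cz^k\) yields \((x^k-y^k)\,c^{(1)}u = 0\) for all \(c\) and all \(k\ge 1\)) then forces \(u = 0\), i.e.\ \(r = \overline r\). The main obstacle is precisely this extraction-of-invariance step: turning the single identity \(\textnormal{GCYB}(r) = -u^{23}r^{13}\), via Theorem~\ref{thm:series_and_subspaces}.(3) and the simplicity input \((A\otimes A)^A = \Bbbk\gamma\), into the vanishing of \(u\). Once that is in hand, skew-symmetry holds and the Corollary follows formally from Theorem~\ref{thm:series_and_subspaces}.
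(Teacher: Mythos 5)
Your skeleton is the same as the paper's: reduce via Theorem \ref{thm:series_and_subspaces} and the remarks following it to the claim that simplicity forces \(r = \overline{r}\), introduce \(u \coloneqq r - \overline{r} \in (A \otimes A)[\![x,y]\!]\) (regular along \(x=y\), as you note), and use the identity \(\textnormal{CYB}(r) - \textnormal{GCYB}(r) = u^{23}r^{13}\) together with a residue analysis at \(z_1 = z_3\). (The paper reaches the residue step slightly differently: \(\textnormal{CYB}(r)=0\) makes \(A(r)\) a subalgebra, whence \(\textnormal{GCYB}(r)=0\) by Theorem \ref{thm:series_and_subspaces}.(3) since \(\beta_{(n,\lambda)}(v_1,v_3v_2)=0\) for \(v_1 \in A(r)^\bot\), \(v_2,v_3 \in A(r)\); your variant, which only uses the regularity statement \eqref{eq:GCYBE_regular} for \(\textnormal{GCYB}(r)\), would serve equally well.) However, your "decisive step" misstates what the residue analysis produces, and the gap is not cosmetic. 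Multiplying \(\textnormal{GCYB}(r) = -u^{23}r^{13}\) by \(z_1 - z_3\) and setting \(z_1 = z_3\) yields \(u^{23}\gamma^{13} = 0\), i.e.\ \(\sum_i b_i^* \otimes u\,b_i^{(2)} = 0\), hence \(u\,b_i^{(2)} = 0\) for all \(i\). This is not the invariance \(a^{(1)}u = u\,a^{(2)}\) you claim; it is a far stronger annihilation identity, and simplicity (\(aA = \{0\} \Rightarrow a = 0\)) kills \(u\) on the spot, with no classification of invariant tensors and no second comparison needed. This is exactly how the paper concludes.

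The detour you build instead rests on a claim that is false in the generality of this corollary: that for simple \(A\) the space of tensors satisfying \eqref{eq:A_invariance_of_gamma} is \(\Bbbk\gamma\). That holds for simple Lie algebras (Schur's lemma), but the corollary covers all simple metric algebras, and for \(A = \textnormal{M}_n(\Bbbk)\) with \(n \ge 2\) — the paper's principal non-Lie application (Theorem \ref{mainthmC}) — it fails badly: for every \(b \in A\) one has \(a^{(1)}(\gamma b^{(1)}) = (a^{(1)}\gamma)b^{(1)} = (\gamma a^{(2)})b^{(1)} = (\gamma b^{(1)})a^{(2)}\), so each \(\gamma b^{(1)}\) is invariant and the invariant space has dimension at least \(\dim A = n^2\). (Under the identification \(A \otimes A \cong \End(A)\) induced by \(\beta\), the condition \(a^{(1)}t = t\,a^{(2)}\) only says the associated operator commutes with all \emph{left} multiplications; for associative \(A\) this commutant contains all right multiplications, not just scalars.) Consequently the step "\(u = c(x,y)\gamma\)" collapses, and the two-parameter invariance invoked in your final step is asserted rather than derived, so the proposal as written does not close. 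The repair is simply to carry out the residue computation correctly: it lands you directly on \(u\,b_i^{(2)} = 0\) and hence on the paper's conclusion.
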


\smallskip
\noindent
The proof will be given in Subsection \ref{sec:proof_of_corollary_solutions_of_CYBE_and_subspaces}.

Since Manin triples of the form \eqref{eq:manin_triples_over_series_explicit} exist only for \(n \le 2\) by virtue of Theorem \ref{thm:categorization_of_manin_triples}, Theorem \ref{thm:solutions_of_CYBE_and_manin_triples} gives the same restriction for solutions of the \(A\)-CYBE for any geometrically admissible \(\Bbbk\)-algebra \((A,\beta)\). To be precise, we have the following result.

\begin{corollary}
Let \((A,\beta)\) be a finite-dimensional simple metric \(\Bbbk\)-algebra, \(n \in \bN\) and \(\lambda \in \Bbbk[\![z]\!]^\times\). If \(r \in (D_n(A) \otimes A)[\![y]\!]\) is a solution of the \(A\)-CYBE of type \((n,\lambda)\), we have \(n \in \{0,1,2\}\).
\end{corollary}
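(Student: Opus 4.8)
The plan is to read off the statement directly from the two results that immediately precede it: the bijection of Corollary \ref{thm:solutions_of_CYBE_and_manin_triples} and the categorization of Theorem \ref{thm:categorization_of_manin_triples}. The underlying idea is that a solution of the \(A\)-CYBE of type \((n,\lambda)\) is, via the assignment \(r \mapsto A(r)\), nothing other than a Lagrangian complement of \(A[\![z]\!]\) inside \((D_n(A),\beta_{(n,\lambda)})\), and such complements have already been shown to force \(n \le 2\). So no new geometry is needed; the work is purely a matter of chaining the two inputs.

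First I would invoke Corollary \ref{thm:solutions_of_CYBE_and_manin_triples}. Since \((A,\beta)\) is finite-dimensional and simple, that corollary provides a bijection between solutions \(r\) of the \(A\)-CYBE of type \((n,\lambda)\) and Manin triples \(((D_n(A),\beta_{(n,\lambda)}),A[\![z]\!],W)\) of the form \eqref{eq:manin_triples_over_series_explicit}. Applying it to the given \(r\) and setting \(W \coloneqq A(r)\), I obtain an honest Manin triple of this shape.

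Second, I would feed this Manin triple into Theorem \ref{thm:categorization_of_manin_triples}. That theorem asserts that every Manin triple of the form \eqref{eq:manin_triples_over_series_explicit} satisfies \(n \in \{0,1,2\}\) (and, up to isomorphism, \(\lambda = 1\)). Reading off the bound \(n \in \{0,1,2\}\) for the triple attached to \(W = A(r)\) yields exactly the claimed restriction on \(n\), completing the argument.

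The only point that requires care, and the one I would flag as the main (indeed the sole) obstacle, is the matching of hypotheses between the two ingredients. Corollary \ref{thm:solutions_of_CYBE_and_manin_triples} uses simplicity to ensure that every solution of type \((n,\lambda)\) is automatically skew-symmetric, so that \(A(r)\) is genuinely Lagrangian and not merely a complementary subalgebra; Theorem \ref{thm:categorization_of_manin_triples}, on the other hand, uses geometric admissibility. Since geometric admissibility already entails that \(A\) is finite-dimensional, central, and simple, both hypotheses are subsumed under the standing assumption that \((A,\beta)\) is geometrically admissible, as in the discussion preceding the statement. Once this is acknowledged, there is no substantive computation left and the proof reduces to correctly citing the two results.
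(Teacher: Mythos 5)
Your proposal is correct and is essentially identical to the paper's own argument: the corollary is obtained precisely by chaining the bijection of Corollary~\ref{thm:solutions_of_CYBE_and_manin_triples} (sending \(r\) to the Manin triple with \(W = A(r)\)) with the bound \(n \in \{0,1,2\}\) from Theorem~\ref{thm:categorization_of_manin_triples}. Your handling of the hypothesis mismatch is also the intended reading: the paper's sentence introducing the corollary restricts to geometrically admissible \((A,\beta)\), which subsumes the finite-dimensional, central, simple hypotheses needed for both ingredients.
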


\subsubsection{Proof of Theorem \ref{thm:series_and_subspaces}}\label{sec:proof_solutions_of_CYBE_and_manin_triples}
The proof of (1) and (2) is completely analogous to the proof in the case that \(A\) is a Lie algebra in \cite[Theorem 3.6]{abedin_maximov_stolin_quasibialgebras}, so it remains to prove (3).

Let us begin by proving, that 
\begin{equation}\label{eq:GCYBE_regular}
    \textnormal{GCYB}(r) \in (A\otimes A \otimes A)[\![z_1,z_2,z_3]\!]\textnormal{ for all series }r \in (A\otimes A)[\![x,y]\!][(x-y)^{-1}]\textnormal{ of type }(n,\lambda).
\end{equation}
To this end, let \(r(x,y) = \frac{y^n\lambda(x) \gamma}{x-y} + s(x,y) \) be a series of type \((n,\lambda)\).
Clearly, \(T_1 \coloneqq \textnormal{GCYB}(s)\) is an element of \( (A \otimes A \otimes A)[\![z_1,z_2,z_3]\!]\). Since \(a^{(1)}\gamma = \gamma a^{(2)}\) for all \(a \in A\)
we have \(\gamma^{13}\gamma^{12} = \gamma^{12}\gamma^{23} = \gamma^{23}\gamma^{13}\). Therefore, if we write \(w \coloneqq \frac{y^n\lambda(x) \gamma}{x-y}\), we have
\begin{align*}
    &(z_1-z_2)(z_1-z_3)(z_2-z_3)\textnormal{GCYB}\left(w\right) \\&= (z_2z_3)^n\lambda(z_1)(\lambda(z_1)(z_2-z_3)-\lambda(z_2)(z_1-z_3)+ \lambda(z_3)(z_1-z_2))\gamma^{13}\gamma^{12}.
\end{align*}
This expression is zero if \(z_1 = z_2, z_1 = z_3\) or \(z_2 = z_3\), so 
\begin{equation}
    T_2 \coloneqq \textnormal{GCYB}\left(w\right) \in (A\otimes A \otimes A)[\![z_1,z_2,z_3]\!]
\end{equation}
Now let us turn to
\begin{align*}
    \textnormal{GCYB}(r) &= \textnormal{GCYB}\left(w + s\right) = T_1 + T_2 \\&+
    {w^{13}s^{12}} + {s^{13}w^{12}} - {w^{12}s^{23}} - {s^{12}w^{23}} + {\overline{w}^{23}s^{13}} + {\overline{s}^{23}w^{13}}
    \\&= T_1 + T_2 + \underbrace{(s^{13}w^{12}-w^{12}s^{23})}_{\coloneqq T_3} - \underbrace{(s^{12}w^{23}-\overline{w}^{23}s^{13})}_{\coloneqq T_4} + \underbrace{(\overline{s}^{23}w^{13}+w^{13}s^{12})}_{\coloneqq T_5}
\end{align*}
Write \(s(x,y) = \sum_{k \in \bN} \sum_{i = 1}^d s_{k,\ell}^{i,j}x^ky^\ell b_i \otimes b_j \) and note that \(\overline{s}(x,y) = -\sum_{k \in \bN} \sum_{i = 1}^d s_{k,\ell}^{i,j}x^\ell y^k b_j \otimes b_i\). 
Using \(z^ka^{(1)}\gamma - \gamma a^{(2)}z^k = 0\) for all \(a\in A\) we see that
\begin{equation}
    T_3 = \frac{\lambda(z_1)z_2^n}{z_1-z_2}\sum_{k,\ell \in \bN} \sum_{i,j = 1}^d s_{k,\ell}^{i,j} (z_1^kb_i^{(1)}\gamma - \gamma b_i^{(2)}z_2^k) \otimes b_jz_3^\ell  \in (A \otimes A \otimes A)[\![z_1,z_2,z_3]\!].
\end{equation}
by virtue of \eqref{eq:series_vanishing_at_diagonal}.

Similarly, under consideration of \(\overline{w} = \frac{\lambda(y)x^n\gamma}{x-y}\), we get
\begin{equation}
    T_4 = \frac{1}{z_1-z_2}\sum_{k,\ell \in \bN} \sum_{i,j = 1}^d s^{k,\ell}_{i,j}b_iz_1^k \otimes (\lambda(z_2)z_3^n z_2^\ell b_j^{(1)}\gamma - \gamma b_j^{(2)}z_3^\ell\lambda(z_3)z_2^n)  \in (A \otimes A \otimes A)[\![z_1,z_2,z_3]\!]
\end{equation}
Using \(a^{(2)}\gamma = \gamma a^{(1)}\) and the notation \(\theta_a(b \otimes c) = b \otimes a \otimes c\) for all \(a \in A\), we obtain
\begin{equation}
    T_5 = \frac{\lambda(z_1)z_3^n}{z_1-z_3}\sum_{k,\ell \in \bN} \sum_{i,j = 1}^d s_{k,\ell}^{i,j}\theta_{b_jz_2^\ell}(-z_3^{k}b_i^{(2)}\gamma + \gamma b_i^{(1)}z_1^k) \in (A \otimes A \otimes A)[\![z_1,z_2,z_3]\!].
\end{equation}
Summarized, we have \(
    \textnormal{GCYB}(r) = T_1 + T_2 + T_3 + T_4 + T_5 \in (A\otimes A \otimes A)[\![z_1,z_2,z_3]\!].    
\)

Let us now write 
\begin{align*}
    r(x,y) &= \frac{y^n\lambda(x) \gamma}{x-y} + s(x,y) = \sum_{k \in \bN}\sum_{i = 1}^d r_{k,i}(x) \otimes b_iy^k
\end{align*}
and observe that
\begin{equation}%
\label{eq:gcybe_subalgebra}
\begin{aligned}
\textnormal{GCYB}(r) &= \sum_{k, \ell \in \bN} \sum_{i,j = 1}^d
    r_{\ell, j}r_{k,i} \otimes b_i z_2^k \otimes b_j z_3^\ell \\& - \sum_{k \in \bN} \sum_{i = 1}^d
    r_{k,i} \otimes \left(z_2^kb_i^{(1)}r(z_2,z_3) - \overline{r}(z_2,z_3)b_i^{(2)}z_3^k\right)
\end{aligned}
\end{equation}
holds.
Here, we used the fact that the embedding \eqref{eq:rational_r_as_series} induces a commutative diagram
\begin{equation}
    \xymatrix{(A \otimes A \otimes A)[\![z_1,z_2,z_3]\!] \ar[r]\ar[d]& (D_n(A) \otimes A \otimes A)[\![z_2,z_3]\!]\\(A \otimes A \otimes A)[\![z_1,z_2,z_3]\!]\left[\frac{1}{z_1-z_3}\right]\ar[ur]&
    }.
\end{equation}
Applying \(\beta_{(n,\lambda)}^{\otimes 3}(\overline{r}_{k_1,i_1}\otimes r_{k_2,i_2} \otimes r_{k_3,i_3},-)\) to \eqref{eq:gcybe_subalgebra}, where \(\overline{r} = \sum_{k = 0}^\infty \sum_{i = 1}^d\overline{r}_{k,i} \otimes b_iy^k\) yields 
\begin{equation}
    \beta_{(n,\lambda)}^{\otimes 3}(\overline{r}_{k_1,i_1}\otimes r_{k_2,i_2} \otimes r_{k_3,i_3},\textnormal{GCYB}(r)) = \beta_{(n,\lambda)}(\overline{r}_{k_1,i_1},r_{k_3,i_3}r_{k_2,i_2})
\end{equation}
since \(\overline{r}_{k_1,i_1} \in A(\overline{r}) = A(r)^\bot\). This concludes the proof, since \(\{r_{k,i}\mid k \in \bN_0,i \in \overline{1,d}\}\) (resp.\ \(\{\overline{r}_{k,i}\mid k \in \bN_0,i \in \overline{1,d}\}\)) is a basis of \(A(r)\) (resp.\ \(A(\overline{r})\)).

\subsubsection{Proof of Corollary \ref{thm:solutions_of_CYBE_and_manin_triples}}\label{sec:proof_of_corollary_solutions_of_CYBE_and_subspaces}
Under consideration of Theorem \ref{thm:series_and_subspaces} and the remarks after this theorem, it remains to prove that, if \(A\) is simple, any solution \(r\) of the \(A\)-CYBE of type \((n,\lambda)\) is automatically skew-symmetric. 

The equality \(\textnormal{CYB}(r) = 0\) implies that \(A(r) \subseteq D_n(A)\) is a subalgebra by rewriting the CYBE similarly to \eqref{eq:gcybe_subalgebra}. Therefore, (3) implies that \(r\) solves the \(A\)-GCYBE \eqref{eq:gcybe}. This implies
\begin{equation}
    0 = \textnormal{CYB}(r)-\textnormal{GCYB}(r) = (r^{23}-\overline{r}^{23})r^{13}
\end{equation}
Multiplying by \(z_1-z_3\) and setting \(z_1 = z_3\) we obtain \((r^{23}-\overline{r}^{23})\gamma^{13} = 0\). This implies that \((r - \overline{r})b_i^{(2)} = 0\) for all \(i \in \overline{1,d}\). Since \(A\) is simple, we have \(a A = \{0\}\) implies \(a = 0\). Therefore, \(r = \overline{r}\), which concludes the proof.

\subsection{Connection to topological \(D\)-bialgebra structures}
Let \(r\in (D_n(A)\otimes A)[\![y]\!]\) be a solution of the \(A\)-CYBE \eqref{eq:cybe} of type \((n,\lambda)\). The identity \(\textnormal{CYB}(r) = 0\) can be rewritten as
\begin{equation}\label{eq:NCYB_and_delta}
    \begin{split}
        &\sum_{k,\ell = 0}^\infty \sum_{i,j = 1}^d r_{k,i}(z_1)r_{\ell,j}(z_1) \otimes b_jz_2^\ell \otimes b_iz_3^k  \\&= \sum_{k = 0}^\infty \sum_{i = 1}^d r_{k,i}(z_1) \otimes \left( \left(b_iz_2^k\right)^{(1)}r(z_2,z_3) - r(z_2,z_3)\left(b_iz_3^k\right)^{(2)}\right) 
    \end{split}
\end{equation}
Therefore, under consideration of \eqref{eq:series_vanishing_at_diagonal}, we can deduce that
\begin{equation}\label{eq:def_delta}
    \delta_r(a)(x,y) \coloneqq  r(x,y)a(x)^{(1)} - a(y)^{(2)}r(x,y) = -\left(\overline{a(x)^{(1)}r(x,y) - r(x,y)a(y)^{(2)}}\right)
\end{equation}
defines a continuous linear map \(\delta_r \colon A[\![z]\!] \to (A\otimes A)[\![x,y]\!]\). Applying
\begin{equation}
    \beta_{(n,\lambda)}^{\otimes 3}(b_{i_1}z_1^{k_1} \otimes r_{k_3,i_3}\otimes r_{k_2,j_2}, -)
\end{equation}
to \eqref{eq:NCYB_and_delta} results in
\begin{equation}
    \beta_{(n,\lambda)}\left(b_{i_1}z^{k_1},r_{k_2,i_2}r_{k_3,j_3}\right) = \beta_{(n,\lambda)}^{\otimes 2}\left(\delta_r\left(b_{i_1}z^{k_1}\right), r_{k_2,i_2} \otimes r_{k_3,j_3}\right)
\end{equation}
This proves that \(\delta_r\) is determined by \(((D_n(A),\beta_{(n,\lambda)}),A[\![z]\!],A(r))\) and thus defines a topological \(D\)-bialgebra structure in any full subcategory {\tt C} of {\tt Alg}\(_\Bbbk\) that is closed under taking subalgebras and contains \(D_n(A)\). Therefore,
\[((D_n(A),\beta_{(n,\lambda)}),A[\![z]\!],A(r)) \cong (D(A[\![z]\!],\delta_r),\textnormal{ev}),A[\![z]\!],A[\![z]\!]^\vee),\]
so \((A[\![z]\!],\delta_r)\) is a non-degenerate topological \(D\)-bialgebra in {\tt C}. In fact, the results in Section \ref{sec:solutions_of_CYBE_and_manin_triples} imply that every non-degenerate topological \(D\)-bialgebra structure in {\tt C} is of this form.

\subsection{Equivalence of solutions of the \(A\)-CYBE} 
Let \(n \in \bN\) and \((A,\beta)\) be a finite-dimensional metric \(\Bbbk\)-algebra. We call two solutions \(r_1,r_2 \in (A \otimes A)[\![x,y]\!][(x-y)^{-1}]\) of the \(A\)-CYBE \eqref{eq:cybe} \emph{equivalent}, written \(r_1 \sim r_2\), if there exists \(\varphi \in \textnormal{Aut}_{\Bbbk[\![z]\!]\textnormal{-alg}}(A[\![z]\!])\) and \(u \in z\Bbbk[\![z]\!]^\times\) such that 
\begin{equation}
    (\varphi(x) \otimes \varphi(y))r_1(u(x),u(y)) = r_2(x,y).
\end{equation}

\begin{lemma}\label{lem:equivalence_manintriple_rmatrices}
Let \(n \in \bN\), \((A,\beta)\) be a finite-dimensional, central, simple, metric \(\Bbbk\)-algebra, \(\lambda_\epsilon \in \Bbbk[\![z]\!]^\times\) and let \(r_\epsilon \in (A \otimes A)[\![x,y]\!][(x-y)^{-1}]\) be a solution of the \(A\)-CYBE of type \((n,\lambda_\epsilon)\) for \(\epsilon\in\{1,2\}\).

Then the following statements are equivalent:
\begin{itemize}
    \item \(r_1\) and \(r_2\) are equivalent.
    \item \((A[\![z]\!],\delta_{r_1}) \cong (A[\![z]\!],\delta_{r_2})\) as topological \(D\)-bialgebra structures in any full subcategory {\tt C} of {\tt Alg}\(_\Bbbk\) that is closed under taking subalgebras and contains \(D_n(A)\).
    \item \(((D_n(A),\beta_{(n,\lambda_1)}),A[\![z]\!],A(r_1)) \cong ((D_n(A),\beta_{(n,\lambda_2)}),A[\![z]\!],A(r_2))\).
\end{itemize}
\end{lemma}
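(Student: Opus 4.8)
The plan is to prove the three-way equivalence by first disposing of the equivalence of the second and third statements using the machinery already in place, and then treating \((\textnormal{i})\Leftrightarrow(\textnormal{iii})\) as the substantive content. For the first reduction: as established just before this lemma, the double of \((A[\![z]\!],\delta_{r_\epsilon})\) is, with the evaluation pairing, isomorphic to the explicit Manin triple \(((D_n(A),\beta_{(n,\lambda_\epsilon)}),A[\![z]\!],A(r_\epsilon))\). Lemma \ref{lem:isomorphism_of_top_Dbialgebras} translates an isomorphism of topological \(D\)-bialgebras into an isomorphism of these doubles as Manin triples, up to the caveat that it restrict to a \emph{continuous} automorphism of \(A[\![z]\!]\); since \(A\) is central simple, Remark \ref{rem:isomorphism_of_top_Dbialgebras} guarantees that every \(\Bbbk\)-algebra automorphism of \(A[\![z]\!]\) is automatically continuous, so the caveat is vacuous and \((\textnormal{ii})\Leftrightarrow(\textnormal{iii})\) follows at once. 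Thus everything reduces to matching the equivalence relation \(r_1\sim r_2\) with isomorphism of Manin triples.

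The heart of the argument is a dictionary between equivalence data \((\varphi,u)\) and automorphisms of \(D_n(A)\). Given \(\varphi\in\Aut_{\Bbbk[\![z]\!]\textnormal{-alg}}(A[\![z]\!])\) and \(u\in z\Bbbk[\![z]\!]^\times\), let \(\phi\in\Aut(A[\![z]\!])\) be \(\phi(a)(z)=\varphi(z)a(u(z))\) as in Remark \ref{rem:isomorphism_of_top_Dbialgebras}; since \(\varphi\) has no negative powers and \(u\in z\Bbbk[\![z]\!]^\times\), the map \(\phi\) extends to an automorphism \(\Phi_+\) of \(A(\!(z)\!)\) and induces one, \(\bar\Phi\), on \(A[z]/z^nA[z]\), yielding \(\Psi:=(\Phi_+,\bar\Phi)\in\Aut(D_n(A))\) which clearly preserves the diagonal \(A[\![z]\!]\). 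I would record two facts about \(\Psi\). \emph{First, a metric computation:} because \(\varphi\) preserves \(\beta\) fibrewise and substitution acts as \(\beta(a,b)(z)\mapsto\beta(a,b)(u(z))\), the residue change-of-variables \(w=u(z)\) shows that \(\Psi\) is a metric isomorphism \((D_n(A),\beta_{(n,\lambda_1)})\to(D_n(A),\beta_{(n,\mu)})\) with \(\mu(z)=\lambda_1(u(z))\,u'(z)^{-1}\,(u(z)/z)^{n}\). \emph{Second, equivariance:} unwinding the embedding \eqref{eq:rational_r_as_series} and the coefficientwise description \(A(r)=\textnormal{Span}_\Bbbk\{r_{k,i}\}\), the operation \(r\mapsto(\varphi(x)\otimes\varphi(y))r(u(x),u(y))\) on series of type \((n,\cdot)\) intertwines with \(\Psi\) on the \(D_n(A)\)-factor, i.e.\ \(A\big((\varphi(x)\otimes\varphi(y))r(u(x),u(y))\big)=\Psi(A(r))\); moreover, by the uniqueness of the type representation in Remark \ref{rem:series_nlambda_type}, the transformed series is precisely of type \((n,\mu)\) for the same \(\mu\).

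With the dictionary in hand both implications are short. For \((\textnormal{i})\Rightarrow(\textnormal{iii})\): if \(r_2=(\varphi(x)\otimes\varphi(y))r_1(u(x),u(y))\), then \(r_2\) is simultaneously of type \((n,\lambda_2)\) by hypothesis and of type \((n,\mu)\) by the equivariance remark, so \(\lambda_2=\mu\); hence \(\Psi\) is a metric isomorphism \((D_n(A),\beta_{(n,\lambda_1)})\to(D_n(A),\beta_{(n,\lambda_2)})\) preserving the diagonal and carrying \(A(r_1)\) to \(A(r_2)\), which is exactly an isomorphism of Manin triples. Conversely, for \((\textnormal{iii})\Rightarrow(\textnormal{i})\), given such an isomorphism \(\Theta\), I extract \((\varphi,u):=\Theta|_{A[\![z]\!]}\) via Remark \ref{rem:isomorphism_of_top_Dbialgebras} and form the canonical \(\Psi\) above; since \(\Theta\) and \(\Psi\) agree on the diagonal, a uniqueness-of-extension argument forces \(\Theta=\Psi\), its metric property yields \(\lambda_2=\mu\), and therefore \(\tilde r:=(\varphi(x)\otimes\varphi(y))r_1(u(x),u(y))\) is of type \((n,\lambda_2)\) with \(A(\tilde r)=\Psi(A(r_1))=A(r_2)\). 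The bijection of Theorem \ref{thm:series_and_subspaces}(1) for the fixed datum \((n,\lambda_2)\) then gives \(\tilde r=r_2\), i.e.\ \(r_1\sim r_2\).

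I expect the two genuine obstacles to be exactly the two facts asserted in the dictionary. The equivariance statement requires a careful unwinding of \eqref{eq:rational_r_as_series}, in particular checking that the substitution \(x\mapsto u(x)\) is compatible with the splitting of the principal part into the \(A(\!(z)\!)\)- and \(A[z]/z^nA[z]\)-components and with \(\bar\Phi\); this is the most delicate bookkeeping. The uniqueness of extension needed in \((\textnormal{iii})\Rightarrow(\textnormal{i})\)—that a metric automorphism of \(D_n(A)\) preserving the diagonal is determined by its restriction to \(A[\![z]\!]\)—relies on the structure of automorphisms of the central simple \(\Bbbk(\!(z)\!)\)-algebra \(A(\!(z)\!)\) together with the observation that the two factors of \(D_n(A)\) are non-isomorphic as algebras for \(n\ge 1\) (and \(D_0(A)=A(\!(z)\!)\) for \(n=0\)), so \(\Psi\) cannot interchange them. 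By contrast, the residue change-of-variables underlying the metric identity \(\lambda_2=\mu\) is entirely routine.
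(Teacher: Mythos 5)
Your handling of the equivalence between the second and third items (via Lemma \ref{lem:isomorphism_of_top_Dbialgebras} and Remark \ref{rem:isomorphism_of_top_Dbialgebras}) is exactly the paper's, and your direction (i)\(\Rightarrow\)(iii) is essentially sound. The genuine gap is the ``uniqueness-of-extension'' step in (iii)\(\Rightarrow\)(i): nothing you say justifies \(\Theta=\Psi\). The uniqueness that the Manin-triple structure gives for free is the following: if \(\Xi\) is an automorphism of \((D_n(A),\beta_{(n,\lambda)})\) fixing \(A[\![z]\!]\) pointwise and preserving a complementary Lagrangian \(W\), then for \(w\in W\) and all \(a\in A[\![z]\!]\) one has \(\beta_{(n,\lambda)}(\Xi w-w,a)=0\), so \(\Xi w-w\in A[\![z]\!]^{\bot}\cap W=A[\![z]\!]\cap W=\{0\}\); i.e.\ two Manin-triple isomorphisms with the \emph{same} source and the \emph{same} target that agree on \(A[\![z]\!]\) coincide. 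But \(\Theta\) and \(\Psi\) do not a priori have the same target data: \(\Theta\) is metric for \((\lambda_1,\lambda_2)\) and carries \(A(r_1)\) to \(A(r_2)\), whereas \(\Psi\) is metric for \((\lambda_1,\mu)\) and carries \(A(r_1)\) to \(A(\tilde r_1)\). Your order of deduction---first \(\Theta=\Psi\) by uniqueness, then \(\lambda_2=\mu\) from the metric property---is therefore circular: the available uniqueness argument presupposes \(\lambda_2=\mu\) and \(A(r_2)=A(\tilde r_1)\), which are exactly the conclusions you are after. What would suffice is the purely algebraic rigidity statement that an algebra automorphism of \(D_n(A)\) fixing the diagonal \(A[\![z]\!]\) pointwise is the identity. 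This is true but is a real theorem: for unital \(A\) one can argue that \(z^{-1}\cdot 1\) is pinned down by the equation \(x\cdot(z\cdot 1)=1\) and that such elements generate \(A(\!(z)\!)\) over \(A[\![z]\!]\), but \(A\) may be non-unital here (Lie algebras being the motivating case), where one needs an \(A[\![z]\!]\)-bimodule argument on \(A(\!(z)\!)\) and, separately, preservation of the two factors of \(D_n(A)\) (without a unit, ideals of a product need not be products of ideals). Your stated justification---that the two factors are non-isomorphic and hence cannot be interchanged---addresses only this last, easiest point and says nothing about why \(\Theta\) is determined on a complement of \(A[\![z]\!]\).

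Two further remarks. First, your metric computation assumes \(\varphi\in\Aut_{\Bbbk[\![z]\!]\textnormal{-alg}}(A[\![z]\!])\) preserves \(\beta\) fibrewise; for an arbitrary central simple metric algebra \((A,\beta)\), which is the hypothesis of the lemma, this is not automatic: \(\beta\circ(\varphi\times\varphi)\) is again symmetric, associative and non-degenerate, hence equals \(c(z)\beta\) for some \(c\in\Bbbk[\![z]\!]^\times\), and this factor must be carried through your formula for \(\mu\) and through the claim that the transformed series is again of type \((n,\mu)\). Second, the paper's own proof sidesteps automorphisms of \(D_n(A)\) altogether: it proves (i)\(\Leftrightarrow\)(ii) directly, by checking with the explicit formula \(\delta_r(a)=r(x,y)a(x)^{(1)}-a(y)^{(2)}r(x,y)\) that \(r_2(x,y)=(\varphi(x)\otimes\varphi(y))r_1(u(x),u(y))\) is equivalent to \((\phi\otimes\phi)\delta_{r_1}\phi^{-1}=\delta_{r_2}\) for \(\phi(a)(z)=\varphi(z)a(u(z))\), and then quotes Remark \ref{rem:isomorphism_of_top_Dbialgebras} for (ii)\(\Leftrightarrow\)(iii). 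Rerouting your argument through item (ii) in this way, or else supplying a proof of the fixed-diagonal rigidity statement above, would close the gap.
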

\begin{proof}
The equivalence of the latter two items is already dealt with in Remark \ref{rem:isomorphism_of_top_Dbialgebras}. For the equivalence of the first two items, recall that any \(\phi \in \textnormal{Aut}_{\Bbbk\textnormal{-alg}}(A[\![z]\!])\) is of the form \(\phi(a)(z) = \varphi(z)a(u(z))\) for some \(\varphi \in \textnormal{Aut}_{\Bbbk[\![z]\!]\textnormal{-alg}}(A[\![z]\!])\) and \(u \in z\Bbbk[\![z]\!]^\times\); see \cite[Theorem 3.3]{abedin_maximox_stolin_zelmanov}. Now, its easy to see that
\begin{equation}
    (\varphi(x) \otimes \varphi(y))r_1(u(x),u(y)) = r_2(x,y).
\end{equation}
is equivalent to \((\phi \otimes \phi)\delta_{r_1} \phi^{-1} = \delta_{r_2}\), which means that \((A[\![z]\!],\delta_{r_1}) \cong (A[\![z]\!],\delta_{r_2})\) as topological \(D\)-bialgebras.
\end{proof}

\subsection{Solutions of the \(A\)-CYBE and triangular \(D\)-bialgebra structures}\label{rem:triangular_bialgebras_and_rmatrices}
It is also possible to consider solutions to the \(A\)-CYBE \eqref{eq:cybe} of the form \(r \in (A \otimes A)[\![x,y]\!]\). Namely, the assignment \(r \mapsto A(r)\) defines a bijection between:

\begin{itemize}
    \item skew-symmetric solutions \(r \in (A \otimes A)[\![x,y]\!]\) of the \(A\)-CYBE \eqref{eq:cybe} and
    \item subspaces \(W \subseteq A\otimes R_\infty\) such that \(((A \otimes R_\infty,\beta \otimes t_\infty),A[\![z]\!],W)\) is a Manin triple.
\end{itemize}
Moreover, \((A[\![z]\!],\delta_r)\) is a topological \(D\)-bialgebra structure (in any category of algebras closed under taking subalgebras that contains \(A \otimes R_\infty\)) determined by \(((A \otimes R_\infty,\beta \otimes t_\infty),A[\![z]\!],W)\). Therefore,
\begin{equation}\label{eq:triangular_Dbialgebras}
    ((A \otimes R_\infty,\beta \otimes t_\infty),A[\![z]\!],W) \cong ((D_n(A[\![z]\!],\delta_r),\textnormal{ev}),A[\![z]\!],A[\![z]\!]^\vee),
\end{equation}
so \(\delta_r\) is a triangular topological \(D\)-bialgebra structure. On the other hand, all triangular topological \(D\)-bialgebra structures on \(A[\![z]\!]\) are of this form.

Recall that a Lie bialgebra structure \((L,\delta)\) is called triangular if \(\delta = \delta_r\) for some skew-symmetric solution \(r \in L \otimes L\) of the CYBE. If \(( \fg[\![z]\!],\delta)\) is a topological Lie bialgebra structure for some Lie algebra \(\fg\), it is natural to replace \(\fg[\![z]\!]\otimes \fg[\![z]\!]\) by its completion \((\fg \otimes \fg)[\![x,y]\!]\) in this definition. In particular, it is natural to call \(\delta\) triangular if \(\delta = \delta_r\) for a skew-symmetric solution \(r \in (\fg \otimes \fg)[\![x,y]\!]\) of the CYBE. The \(D\)-bialgebra structures satisfying \eqref{eq:triangular_Dbialgebras} are then called triangular in analogy to their Lie counterparts.

\section{Refined categorization of non-degenerate topological \(D\)-bialgebras}\label{sec:categorization_general_whole_section}

In this section, we refine Theorem \ref{thm:categorization_of_manin_triples} for so-called strongly geometrically admissible algebras over algebraically closed fields of characteristic 0. The main result of this section, Theorem \ref{thm:categorization_refined}, can be seen as an analog of the main results from \cite{abedin_maximox_stolin_zelmanov} for a large class of non-Lie algebras. The proof relies on refining the geometric approach already used in the proof of Theorem \ref{thm:categorization_of_manin_triples}.

Throughout the remainder of this paper, \(\Bbbk\) is an algebraically closed field of characteristic 0.

\subsection{The main theorem}\label{sec:geometrically_M_admissible}
We call a metric \(\Bbbk\)-algebra \((A,\beta)\) \emph{strongly geometrically admissible} if
\begin{enumerate}
    \item \((A,\beta)\) is geometrically admissible in the sense of Subsection \ref{sec:geometrically_admissible_metrics};

    \item For any ringed \(A\)-lattices \((O,W)\) and any maximal ideal \(\mathfrak{m} \subseteq O\) such that
    \begin{itemize}
        \item \(W_{\mathfrak{m}}\) is free as \(O_{\mathfrak{m}}\)-module and

        \item the pairing \(W_{\mathfrak{m}}\times W_{\mathfrak{m}} \to O_{\mathfrak{m}}\) induced by \(\beta\) is perfect,
    \end{itemize}
    we have \(W/\mathfrak{m}W \cong A\).
\end{enumerate}

\noindent
As we will see in Corollary \ref{cor:geometrically_M_admissible} below, many central simple \(\Bbbk\)-algebras are strongly geometrically admissible, e.g.\ all finite-dimensional simple associative, Lie and Jordan algebras.
 
The rest of this section is dedicated to proving the following result.

\begin{theorem}\label{thm:categorization_refined}
Let us fix the following notation:
\begin{itemize}
    \item \(\Bbbk\) is an algebraically closed field of characteristic 0;
    
    \item \((A,\beta)\) is a unital strongly geometrically admissible metric \(\Bbbk\)-algebra (e.g.\ a finite-dimensional simple Jordan or associative \(\Bbbk\)-algebra) and \(\gamma \in A \otimes A\) is its canonical \(A\)-invariant element (see Subsection \ref{sec:series_in_standard_form});

    \item \(((D_n(A),\beta_{(n,\lambda)}),A[\![z]\!],W)\) is the Manin triple associated to \((A,\beta)\) as well as some \(n \in \bN\) and \(\lambda \in \Bbbk[\![z]\!]^\times\)in Subsection \ref{sec:manin_triples_over_series_explicit}; 

    \item \(r\) is the solution of the \(A\)-CYBE associated to the Manin triple \(((D_n(A),\beta_{(n,\lambda)}),A[\![z]\!],W)\) via Theorem \ref{thm:solutions_of_CYBE_and_manin_triples}.
\end{itemize}
Precisely one of the following cases occurs:
\begin{enumerate}
    \item \(n = 0\) and \(r\) is either:
    \begin{enumerate}
        \item \emph{Trigonometric} in the sense that there exists a \(\beta\)-orthogonal \(\sigma \in \textnormal{Aut}_{\Bbbk\textnormal{-alg}}(A)\) of order \(m \in \bN\) and \(s \in L(A,\sigma) \otimes L(A,\sigma)\) such that \(r\) is equivalent to
    \begin{equation*}
    \frac{1}{\exp\left(x-y\right)-1}\sum_{j = 0}^{m-1}\textnormal{exp}\left(\frac{x-y}{m}\right) \gamma_j + s\left(\exp\left(\frac{x}{m}\right),\exp\left(\frac{y}{m}\right)\right).
    \end{equation*}
    Here, \(L(A,\sigma) \subseteq A[\widetilde{v},\widetilde{v}^{-1}]\) is the loop algebra twisted by \(\sigma\) (see Proposition \ref{thm:classification_sheaves_of_algebras} for the definition) and \(\gamma_j \in A \otimes A\) is uniquely determined by \(\gamma = \sum_{j = 1}^d \gamma_j\) and \((\sigma \otimes 1)\gamma_j = \varepsilon^j\gamma_j\) for some primitive \(m\)-th root of unity \(\varepsilon \in \Bbbk\);
    
        \item \emph{Rational} in the sense that there exists \(t \in (A\otimes A)[x,y]\) such that \(r\) is equivalent to \[\frac{\gamma}{x-y} + t(x,y).\]
    \end{enumerate}
    
        \item \(n = 1\) and \(r\) is \emph{quasi-trigonometric} in the sense that there exists \(t \in (A\otimes A)[x,y]\) such that \(r\) is equivalent to \[\frac{y\gamma}{x-y} + t(x,y).\]
    
        \item \(n = 2\) and \(r\) is \emph{quasi-rational} in the sense that there exists \(t \in (A\otimes A)[x,y]\) such that \(r\) is equivalent to \[\frac{y^2\gamma}{x-y} + t(x,y).\]
    \end{enumerate}
In particular, every solution of the \(A\)-CYBE \eqref{eq:cybe} of type \((n,\lambda)\) is, up to equivalence, either trigonometric, rational, quasi-trigonometric or quasi-rational.
\end{theorem}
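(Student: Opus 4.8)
The plan is to feed the reduction from Theorem~\ref{thm:categorization_of_manin_triples} into the classification of sheaves of algebras on the affine line, Proposition~\ref{thm:classification_sheaves_of_algebras}. By Corollary~\ref{thm:solutions_of_CYBE_and_manin_triples} the solution \(r\) corresponds bijectively to the Manin triple \(((D_n(A),\beta_{(n,\lambda)}),A[\![z]\!],W)\), and by the refinement of Theorem~\ref{thm:categorization_of_manin_triples} carried out in Section~\ref{sec:geometric_categorization_general} we may take \(\lambda = 1\) and assume we are in exactly one of the five cases governed by \(M = \{f \in \Bbbk(\!(z)\!) \mid fW_+ \subseteq W_+\}\). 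To each case I attach, via the geometrization of Section~\ref{sec:geometrization}, the geometric datum \(((X,\sheafA),(p,c,\zeta))\) built from the ringed \(A\)-lattice \((N,V)\), where \(N\) is the integral closure of \(M\) and \(V = NW_+\). The point of the \emph{strong} geometric admissibility hypothesis is that, together with \cite{pianzola}, it forces \(\sheafA\) to be a form of the constant sheaf \(A\): its fibres are isomorphic to \(A\), which is exactly what makes Proposition~\ref{thm:classification_sheaves_of_algebras} applicable.

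First I would eliminate the genus-one case, which explains why no elliptic alternative appears in the statement. The proof of Theorem~\ref{thm:categorization_of_manin_triples} shows that \(g = 1\) forces \(\tH^0(\sheafA) = 0\). But \(A\) is unital and \(\sheafA\) is a form of \(A\), so the unit of \(A\) --- being fixed by every \(\Bbbk\)-algebra automorphism, hence compatible with the descent data defining \(\sheafA\) --- yields a global section of \(\sheafA\) and thus \(\tH^0(\sheafA) \ge 1\). This contradiction leaves only the genus-zero cases \(n \in \{0,1,2\}\), which will produce exactly the trigonometric, rational, quasi-trigonometric and quasi-rational forms.

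Next, in each surviving case I read off \(\sheafA\) from Proposition~\ref{thm:classification_sheaves_of_algebras} and reconstruct \(r\). The proposition produces either a trivial form \(\sheafA \cong A \otimes \sheafO_X\) --- in the cuspidal \(n = 0\) configuration and in the \(n = 1,2\) configurations --- or the sheaf of sections of a twisted loop algebra \(L(A,\sigma)\) for a \(\beta\)-orthogonal automorphism \(\sigma\) of finite order \(m\) --- in the nodal \(n = 0\) configuration. In all cases transporting \(\Gamma(X \setminus \{p\},\sheafA)\) through \(\zeta\) recovers \(W_+\), hence \(W = A(r)\), and Theorem~\ref{thm:series_and_subspaces}.(1) reconstructs \(r\) as the associated generating series. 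Finiteness of \(\tH^1(\sheafA)\) forces the regular part of \(r\) to be polynomial, giving the \(t \in (A \otimes A)[x,y]\) in the rational and quasi-cases, while the factor \(y^n\) in those three forms is precisely the pole normalization built into \(\beta_{(n,1)}\) for \(n = 0,1,2\). By Lemma~\ref{lem:equivalence_manintriple_rmatrices} all of these identifications are required only up to equivalence of solutions.

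The main obstacle I expect is the explicit closed form in the trigonometric case. One must pass through the uniformization \(v = \exp(z/m)\), which turns the twisted-loop solution attached to \(\sheafA\) into the stated expression built from \(\exp((x-y)/m)\), while the decomposition \(\gamma = \sum_{j=0}^{m-1}\gamma_j\) into the \(\varepsilon^j\)-eigenspaces of \(\sigma \otimes 1\) --- with \(\gamma\) the canonical \(A\)-invariant element from Section~\ref{sec:series_in_standard_form} --- produces the summation over \(j\). The delicate bookkeeping is to match the ramification index \(m\) and the monodromy of \(\sheafA\) to the order of \(\sigma\) and the eigenvalue pattern of the \(\gamma_j\), and to check that \(\sigma\) is \(\beta\)-orthogonal; once this is in place the rational, quasi-trigonometric and quasi-rational cases follow by the same scheme with trivial twist and the pole order dictated by \(n\).
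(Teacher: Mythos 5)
Your outline of the \(n=0\) case is essentially the paper's route: rule out the elliptic curve by descending the unit of \(A\) to a global section of the \'etale \(A\)-locally free sheaf \(\sheafA\) (this is Lemma \ref{lemm:etale_locally_free_implies_unital} combined with Proposition \ref{lemm:concluding_bd_trich_case}.(1)), then identify the nodal case with trigonometric solutions via the uniformization \(v=\exp(z/m)\) and the \(\sigma\)-eigenspace decomposition of \(\gamma\), and the cuspidal case with rational solutions. However, there is a genuine gap in how you set up the geometry: you propose to attach to \emph{every} case the geometrization of the single ringed \(A\)-lattice \((N, V=NW_+)\), and this fails. For \(n\ge 1\) the projection \(W_+\) does not determine \(W\) (the component \(W_-\) and the way \(W\) sits inside \(W_+\times W_-\) are lost), and, more fatally, the sheaf built from \((N,V)\) is \emph{not} a geometric \(A\)-CYBE datum: by Subsection \ref{lem:manin_triples}.(4) one has \(\dim(A[\![z]\!]\cap W_+)>0\) when \(n>0\), so \(\textnormal{H}^0\neq 0\). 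Without the vanishing \(\textnormal{H}^0=\textnormal{H}^1=0\) on an irreducible \emph{cubic} curve you cannot invoke Lemma \ref{lemm:geometric_A_CYBE_datum} (perfectness of the pairing via Serre duality and the trivial dualizing sheaf), hence you cannot use strong geometric admissibility to get constant fibers \(\cong A\), which is precisely the hypothesis needed to apply Proposition \ref{thm:classification_sheaves_of_algebras}. Your argument is therefore circular at the step ``read off \(\sheafA\) from Proposition \ref{thm:classification_sheaves_of_algebras}.''

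The paper's actual constructions for \(n=1\) (Section \ref{sec:geometrization_n=1}) and \(n=2\) (Section \ref{sec:geometrization_n=2}) are designed exactly to repair this: one geometrizes \(W_+\) on \(\mathbb{P}^1\) and then glues, along the normalization \(\nu\colon\mathbb{P}^1\to X\) of a nodal resp.\ cuspidal cubic, the skyscraper data \(W_-\) and \(W/(W_+^\bot\times W_-^\bot)\) at the singular point via pull-back diagrams; the long exact sequences of these pull-backs, together with Subsection \ref{lem:manin_triples}.(3), are what produce \(\textnormal{H}^0(\sheafA)=0=\textnormal{H}^1(\sheafA)\). Only then do Propositions \ref{lemm:concluding_qtcase} and \ref{lemm:concluding_qrcase} follow, and it is these -- via the identification \(\zeta(\Gamma(\mathbb{P}^1\setminus\{s_+\},\mathcal{W}))\cong A[z]\) from Proposition \ref{thm:classification_sheaves_of_algebras}.(2) and the boundedness \eqref{eq:W_bounded_qt_case} of \(W_+\) as a free \(\Bbbk[z^{-1}]\)-module -- that force the tails \(t_{k,i}\) to be polynomial with only finitely many nonzero; ``finiteness of \(\textnormal{H}^1\)'' alone is not the mechanism. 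A smaller instance of the same misalignment occurs already for \(n=0\): in the nodal/cuspidal situation the paper geometrizes \((O,W)\) with \(O=\Bbbk[u',u'u]\) (Section \ref{sec:geometrization_n=0}), not \((N,NW)\); the integral closure \(N=\Bbbk[u]\) would give \(X=\mathbb{P}^1\), which is not a cubic, and replacing \(W\) by \(NW\) destroys the complementarity encoding the Manin triple.
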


\begin{corollary}
Let \(\Bbbk\) be an algebraically closed field of characteristic 0, \((A,\beta)\) be a strongly geometrically admissible \(\Bbbk\)-algebra, and {\tt C} be a full subcategory of {\tt Alg}\(_\Bbbk\) closed under taking subalgebras and satisfying \(D_n(A) \in {\tt C}\). 

Then every non-degenerate topological \(D\)-bialgebra \((A[\![z]\!],\delta)\) in {\tt C} satisfies, up to isomorphism, \(\delta = \delta_r\) for a solution \(r\) of the \(A\)-CYBE which is either trigonometric, rational, quasi-trigonometric, or quasi-rational.    
\end{corollary}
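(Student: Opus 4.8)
The corollary is the translation of Theorem~\ref{thm:categorization_refined} into the language of topological $D$-bialgebras, so the plan is first to record this translation and then to sketch the proof of Theorem~\ref{thm:categorization_refined} itself. For the translation: given a non-degenerate topological $D$-bialgebra $(A[\![z]\!],\delta)$ in {\tt C}, the definition of non-degeneracy in Subsection~\ref{sec:manin_triples_over_series_explicit} together with Theorem~\ref{thm:categorization_of_manin_triples} shows that its double is isomorphic to $((D_n(A),\beta_{(n,1)}),A[\![z]\!],W)$ for some $n\in\{0,1,2\}$. Corollary~\ref{thm:solutions_of_CYBE_and_manin_triples} then yields $W=A(r)$ for a solution $r$ of the $A$-CYBE of type $(n,1)$, and the discussion on the connection to topological $D$-bialgebra structures identifies $\delta$ with $\delta_r$ up to isomorphism. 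Since Lemma~\ref{lem:equivalence_manintriple_rmatrices} turns equivalence of solutions into isomorphism of $D$-bialgebras, the corollary follows once $r$ is brought into one of the four normal forms, which is exactly the assertion of Theorem~\ref{thm:categorization_refined}. The substance therefore lies in the latter.

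The plan for Theorem~\ref{thm:categorization_refined} is to refine the geometrization already used for Theorem~\ref{thm:categorization_of_manin_triples}. First I would attach to the Lagrangian subalgebra $W$ its geometric $A$-CYBE datum: applying the construction of Subsection~\ref{sec:geometrization} to the ringed $A$-lattice $(N,V)$, where $N$ is the integral closure of $M=\{f\in\Bbbk(\!(z)\!)\mid fW_+\subseteq W_+\}$ and $V=NW_+$, produces a projective curve $X$ with a smooth marked point $p$, a torsion-free coherent sheaf of algebras $\mathcal{A}$, and the self-dual metric $\beta_{\mathcal{A}}\colon\mathcal{A}\times\mathcal{A}\to\mathcal{O}_X$ of Subsection~\ref{sec:properties_geometriaclly_admissible_pairing}. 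Strong geometric admissibility enters here: at the points where the induced pairing is perfect the fibres of $\mathcal{A}$ are isomorphic to $A$, so that $\mathcal{A}$ is an \'etale-locally trivial form of $A$. The refined analysis of Subsection~\ref{sec:geometric_categorization_general} already bounds the arithmetic genus by $g\le 1$ and singles out five geometric cases, and the solution $r$ is recovered as the canonical section of $\mathcal{A}\boxtimes\mathcal{A}$ with a first-order pole along the diagonal whose residue is the canonical $A$-invariant element $\gamma$ determined by $\beta$.

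The next step is to eliminate the elliptic case $g=1$ using unitality. Because $A$ is unital and $\mathcal{A}$ is a form of $A$, the unit is canonical and descends to a nonzero global unit section, whence $\textnormal{h}^0(\mathcal{A})\ge 1$ by \eqref{eq:cohomology}. On an elliptic curve, however, $\Omega^1_X\cong\mathcal{O}_X$, so Serre duality forces $\textnormal{h}^0(\mathcal{A})=\textnormal{h}^1(\mathcal{A}^*)=0$, exactly as in the $g=1$ case of Subsection~\ref{sec:geometric_categorization_general}; this is a contradiction. Hence $g=0$, and only the cases $n=0$ with nodal multiplier ring (trigonometric), $n=0$ with cuspidal multiplier ring (rational), $n=1$ (quasi-trigonometric), and $n=2$ (quasi-rational) survive. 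This is precisely where unitality genuinely enters and where Theorem~\ref{thm:categorization_refined} diverges from its Lie-algebra analogue, in which elliptic solutions do occur.

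Finally I would reconstruct $r$ explicitly in each surviving case. Proposition~\ref{thm:classification_sheaves_of_algebras} identifies $\Gamma(X\setminus\{p\},\mathcal{A})$ with a twisted loop algebra $L(A,\sigma)$ in the nodal case, the $\beta$-orthogonal finite-order automorphism $\sigma$ encoding the monodromy at the node, and with its degenerations in the rational, quasi-trigonometric and quasi-rational cases; reading off $\beta_{\mathcal{A}}$ pins down the $\sigma$-eigenspace decomposition $\gamma=\sum_{j}\gamma_j$. Transporting the residue-$\gamma$ section back through $\zeta$ and the appropriate change of variable (the multiplicative coordinate $\exp(z/m)$ in the trigonometric case, the additive one otherwise) then produces the four closed forms, with $\lambda=1$ already achieved in Theorem~\ref{thm:categorization_of_manin_triples}. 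I expect the main obstacle to be this last reconstruction: matching the geometric datum to the analytic expressions, most delicately the trigonometric kernel $\frac{1}{\exp(x-y)-1}\sum_{j}\exp\!\left(\frac{x-y}{m}\right)\gamma_j$ together with the identification of $\sigma$ and the eigentensors $\gamma_j$, requires the full strength of the sheaf classification rather than the soft cohomological bounds that suffice to get this far.
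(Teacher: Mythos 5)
Your opening reduction is exactly the paper's (implicit) proof of the corollary: non-degeneracy plus Theorem \ref{thm:categorization_of_manin_triples} puts the double in the form \((D_n(A),\beta_{(n,1)})\) with \(n\in\{0,1,2\}\), Corollary \ref{thm:solutions_of_CYBE_and_manin_triples} gives \(W=A(r)\) and \(\delta=\delta_r\), and Lemma \ref{lem:equivalence_manintriple_rmatrices} converts equivalence of solutions into isomorphism of topological \(D\)-bialgebras; the substance is then Theorem \ref{thm:categorization_refined}, as you say. Your sketch of that theorem, however, geometrizes the wrong data in the key cases. For \(n=0\) you propose to use the ringed lattice \((N,V)\) with \(N\) the integral closure of \(M\) and \(V=NW_+\); this is the coarse geometrization behind Theorem \ref{thm:categorization_of_manin_triples}, and it agrees with what is needed only in the elliptic case. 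In the nodal and cuspidal cases \(N=\Bbbk[u]\), so the resulting curve is \(\mathbb{P}^1\) (arithmetic genus \(0\), not a plane cubic), \(\textnormal{h}^0\) of the associated sheaf need not vanish, and \(V\) no longer determines \(W\); consequently neither the residue-section construction of Subsection \ref{sec:geometric_rmatrix} (which requires a cubic with \(\textnormal{H}^0=0=\textnormal{H}^1\)) nor the reconstruction of \(r\) can proceed from your datum. The paper instead applies the geometrization to \((O,W)\) with \(O\) as in \eqref{eq:multipliers_explicit}, i.e.\ \(O=\Bbbk[u',u'u]\) when \(N=\Bbbk[u]\), keeping the Lagrangian \(W\) itself as the lattice; this is precisely what produces the nodal or cuspidal cubic with vanishing cohomology (Subsection \ref{sec:geometrization_n=0}).

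The second gap concerns \(n=1,2\). You assert that all four normal forms are obtained by transporting the residue-\(\gamma\) section back through \(\zeta\), but the paper only does this for \(n=0\) (Proposition \ref{lemm:concluding_bd_trich_case}.(2)), and for the quasi-cases it cannot work as stated: there the formal trivialization sits at the singular point of the cubic, outside the smooth locus over which \(\rho\) is defined, so no Taylor expansion of \(\rho\) at that point is available; moreover your construction only involves \(W_+\), whereas \(r\) depends on the full Lagrangian \(W\subseteq D_n(A)\), in particular on \(W_-\). The paper's actual argument (Subsections \ref{sec:geometrization_n=1} and \ref{sec:geometrization_n=2}) glues the \(W_+\)- and \(W_-\)-data into a sheaf on the nodal resp.\ cuspidal cubic via pull-back squares, proves that this yields a geometric \(A\)-CYBE datum, and then uses Proposition \ref{thm:classification_sheaves_of_algebras}.(2) to normalize \(W\) into \(\textnormal{Span}_{\Bbbk}\{w_{k,i}+t_{k,i}\}\) (notation of \eqref{eq:def_wki}) with polynomial \(t_{k,i}\) vanishing for large \(k\) (Propositions \ref{lemm:concluding_qtcase} and \ref{lemm:concluding_qrcase}); the quasi-trigonometric and quasi-rational forms are read off from this spanning set, with no residue section involved. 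Your elliptic exclusion via unitality and your description of the \(n=0\) nodal/cuspidal reconstruction do match the paper, but only once the correct datum \((O,W)\) is used.
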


\begin{remark}\label{rem:unitality}
    Let us note that the unitality assumption in Theorem \ref{thm:categorization_refined} is actually a rather weak assumption. Indeed, if a strongly geometrically admissible algebra is power-associative and not anti-commutative, it is a non-nil (see \cite{shestakov}) trace-admissible algebra. These are automatically unital; see \cite{albert}. 
\end{remark}

\noindent 
The proof of Theorem \ref{thm:categorization_refined} is again based on the geometrization scheme from Subsection \ref{sec:geometrization}. However, to refine the geometric approach already used in the proof of Theorem \ref{thm:categorization_of_manin_triples}, we need to establish some facts about \'etale locally trivial sheaves of algebras in Subsection \ref{sec:etale_locally_trivial_sheaves_of_algebras}. There we also explain how examples of strongly geometrically admissible algebras can be constructed using the notion of rigidity.  The results from Subsection \ref{sec:etale_locally_trivial_sheaves_of_algebras} and the refinement of Theorem \ref{thm:categorization_of_manin_triples} in Subsection \ref{sec:geometric_categorization_general} are then used to associate more explicit geometric data to Manin triples of the form \eqref{eq:manin_triples_over_series_explicit}. Namely, so-called geometric \(A\)-CYBE data, which will be defined in Subsection \ref{sec:geometric_A_CYBE_data}. We will assign such a datum to any Manin triple of the form \eqref{eq:manin_triples_over_series_explicit} in Subsection \ref{sec:geometrization_of_manin_triples}. Theorem \ref{thm:categorization_refined} is then a consequence of the classification results for sheaves of algebras from Proposition \ref{thm:classification_sheaves_of_algebras}.

\subsection{\'Etale locally trivial sheaves of algebras.}\label{sec:etale_locally_trivial_sheaves_of_algebras}
Let \(\sheafA\) be a sheaf of algebras on a \(\Bbbk\)-scheme \(X\). We call \(\sheafA\) \emph{\'etale \(A\)-locally free at a point \(p\in X\)}, for some \(\Bbbk\)-algebra \(A\), if there exists an \'etale morphism \(f \colon Y \to X\) such that \(p \in f(Y)\) and \(f^*\sheafA\) is isomorphic to \(A\otimes \sheafO_Y\) as \(\sheafO_Y\)-algebras. Furthermore, \(\sheafA\) is called \emph{\'etale \(A\)-locally free} if \(\sheafA\) is \'etale \(A\)-locally free at all points of \(X\). Let us remark that an \'etale \(A\)-locally free sheaf of algebras is automatically quasi-coherent and, if \(A\) is finite-dimensional, coherent.

\'Etale local triviality can actually be checked on fibers by virtue of the following result, which is an algebro-geometric version of \cite{kirangi_lie_algebra_bundles}, see \cite[Theorem 2.10]{abedin_universal_geometrization}.

\begin{proposition}\label{prop:weak_locall_free_implies_etale}
Let \(\Bbbk\) be an algebraically closed field of characteristic 0, \(X\) be a reduced \(\Bbbk\)-scheme of finite-type, \(A\) be a finite-dimensional \(\Bbbk\)-algebra, and \(\sheafA\) be a quasi-coherent sheaf of algebras on \(X\). Then \(\sheafA\) is \'etale \(A\)-locally free if and only if \(\sheafA|_p \cong A\) for all closed points \(p \in X\). 
\end{proposition}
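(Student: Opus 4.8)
The plan is to prove both implications, the forward one being routine and the converse carrying the substance, following the strategy of the bundle result \cite{kirangi_lie_algebra_bundles} transported to the algebro-geometric setting as in \cite[Theorem 2.10]{abedin_universal_geometrization}. For the direction ``\'etale \(A\)-locally free \(\Rightarrow\) fibrewise trivial'', fix a closed point \(p\) and choose an \'etale \(f\colon Y\to X\) with \(p\in f(Y)\) and \(f^*\sheafA\cong A\otimes\sheafO_Y\). Picking \(q\in Y\) over \(p\), the residue fields satisfy \(\kappa(q)=\kappa(p)=\Bbbk\) because \(\Bbbk\) is algebraically closed and \(f\) is \'etale; since pullback commutes with restriction to fibres, \(\sheafA|_p\cong (f^*\sheafA)|_q\cong (A\otimes\sheafO_Y)|_q\cong A\) as algebras.

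For the converse I would first turn \(\sheafA\) into an algebra bundle. The hypothesis gives \(\dim_{\Bbbk}\sheafA|_p=\dim A=:d\) for every closed point \(p\), and since \(X\) is reduced and of finite type, constant fibre dimension forces \(\sheafA\) to be coherent and locally free of rank \(d\). Working Zariski-locally I may trivialize the underlying vector bundle, \(\sheafA\cong\sheafO_X^{\,d}\), so that the multiplication becomes a global section of \(\sheafHom(\sheafA\otimes\sheafA,\sheafA)\), i.e.\ a morphism \(\phi\colon X\to V\) into the affine space \(V=\Hom(\Bbbk^d\otimes\Bbbk^d,\Bbbk^d)\) of structure tensors on \(\Bbbk^d\). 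The fibrewise hypothesis says exactly that \(\phi(p)\) defines an algebra isomorphic to \(A\) for every closed point \(p\).

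Next I would exploit the \(\textnormal{GL}_d\)-action on \(V\) by transport of structure, together with the orbit \(\mathcal{O}\coloneqq \textnormal{GL}_d\cdot\mu_A\) of the multiplication tensor \(\mu_A\) of \(A\). The orbit \(\mathcal{O}\) is a smooth, locally closed subvariety of \(V\) whose points are precisely the tensors defining algebras isomorphic to \(A\), and the stabilizer of \(\mu_A\) is \(\Aut_{\Bbbk\textnormal{-alg}}(A)\). This is where characteristic \(0\) enters decisively: by Cartier's theorem every finite-type group scheme over \(\Bbbk\) is smooth, so \(\Aut_{\Bbbk\textnormal{-alg}}(A)\) is smooth and the map \(\textnormal{GL}_d\to\mathcal{O}\), \(g\mapsto g\cdot\mu_A\), is a smooth surjection exhibiting \(\mathcal{O}=\textnormal{GL}_d/\Aut_{\Bbbk\textnormal{-alg}}(A)\) as the base of an \(\Aut_{\Bbbk\textnormal{-alg}}(A)\)-torsor. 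Since the closed points \(\phi(p)\) all lie in \(\mathcal{O}\), are dense, and \(X\) is reduced while \(\mathcal{O}\) is locally closed, \(\phi\) factors scheme-theoretically through \(\mathcal{O}\). Pulling back the torsor along \(\phi\) yields an \(\Aut_{\Bbbk\textnormal{-alg}}(A)\)-torsor \(P\to X\); as \(\Aut_{\Bbbk\textnormal{-alg}}(A)\) is smooth, \(P\to X\) is smooth and surjective, hence admits a section after an \'etale base change \(f\colon Y\to X\). Such a section is a \(g\in\textnormal{GL}_d(\Gamma(Y,\sheafO_Y))\) with \(g\cdot\mu_A=f^*\phi\), i.e.\ an isomorphism \(A\otimes\sheafO_Y\cong f^*\sheafA\) of \(\sheafO_Y\)-algebras; covering \(X\) by the Zariski-local trivializations exhibits \(\sheafA\) as \'etale \(A\)-locally free.

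The main obstacle is the geometric core of the converse: identifying the moduli of algebra structures isomorphic to \(A\) with the single orbit \(\mathcal{O}\), and proving that the classifying morphism \(\phi\) factors through \(\mathcal{O}\) rather than merely through its closure, which is exactly where reducedness of \(X\) is indispensable to upgrade the pointwise statement on closed points to a scheme-theoretic one. The role of characteristic \(0\) is confined to Cartier's smoothness theorem for \(\Aut_{\Bbbk\textnormal{-alg}}(A)\): without smoothness the torsor \(\textnormal{GL}_d\to\mathcal{O}\) would only be fppf-locally trivial, and the passage to an honest \'etale-local section --- hence \'etale rather than merely fppf local triviality of \(\sheafA\) --- could fail.
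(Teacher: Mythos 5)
The paper itself contains no proof of this proposition: it is imported from \cite[Theorem 2.10]{abedin_universal_geometrization}, of which it is stated to be a version. Your argument --- fibrewise triviality \(\Rightarrow\) local freeness, the classifying morphism into the variety \(V=\Hom(\Bbbk^d\otimes\Bbbk^d,\Bbbk^d)\) of structure tensors, scheme-theoretic factorization through the orbit \(\textnormal{GL}_d\cdot\mu_A\) via reducedness and density of closed points, and finally \'etale-local sections of the pulled-back \(\Aut_{\Bbbk\textnormal{-alg}}(A)\)-torsor, which exist because the automorphism group scheme is smooth by Cartier's theorem in characteristic \(0\) --- is precisely the standard torsor proof of that result, so in approach you match the source. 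The forward direction and Steps 2--5 of your converse are correct, including your diagnosis of exactly where reducedness and characteristic \(0\) are used.

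There is, however, one genuine gap, located in the very first step of your converse: the assertion that constant fibre dimension forces a \emph{quasi-coherent} sheaf on a reduced finite-type scheme to be coherent and locally free is false. Take \(X=\textnormal{Spec}(R)\) for a finitely generated \(\Bbbk\)-domain \(R\) which is not a field, and let \(\sheafA\) be the quasi-coherent sheaf of algebras associated to the product \(R\)-algebra \(R\times \textnormal{Q}(R)\). For every closed point \(p\) one has \(\textnormal{Q}(R)\otimes_R\kappa(p)=0\), so \(\sheafA|_p\cong\Bbbk\) as unital algebras; yet \(\sheafA\) is not of finite type, hence neither coherent nor locally free, and in particular not \'etale \(\Bbbk\)-locally free. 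Constancy of fibre rank yields local freeness only for sheaves that are already of finite type --- that is the standard fact you are implicitly invoking --- and no hypothesis on fibres at closed points can force finite-typeness. Note that this defect is inherited from the statement as printed: since an \'etale \(A\)-locally free sheaf with \(A\) finite-dimensional is automatically coherent (as remarked just before the proposition), the example above shows the ``if'' direction of the literal statement is false, and ``quasi-coherent'' must be strengthened to ``coherent'' (which is the setting of the cited theorem and of every application in this paper, where \(\sheafA\) arises from the geometrization of lattices and is coherent by construction). With that hypothesis added, your proof goes through verbatim; without it, both your first step and the proposition itself fail.
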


\noindent
It turns out that a sheaf of algebras which can be \'etale trivialized by a unital algebra is automatically unital, i.e.\ we have the following result.

\begin{lemma}\label{lemm:etale_locally_free_implies_unital}
Let \(A\) be a unital algebra over a field \(\Bbbk\) and \(\sheafA\) be an \'etale \(A\)-locally free sheaf of algebras on a \(\Bbbk\)-scheme \(X\). Then \(\sheafA\) is unital. In particular, \(\textnormal{h}^0(\sheafA) > 0\).
\end{lemma}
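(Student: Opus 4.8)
The plan is to construct a global unit of \(\sheafA\) by \'etale descent, exploiting crucially that a two-sided unit of an algebra is unique whenever it exists. First I would replace pointwise \'etale-local triviality by a single \'etale cover: for each point \(p \in X\) choose an \'etale \(f_p \colon Y_p \to X\) with \(p \in f_p(Y_p)\) and \(f_p^*\sheafA \cong A \otimes \sheafO_{Y_p}\), then set \(Y \coloneqq \coprod_p Y_p\) and \(f \coloneqq \coprod_p f_p\). This \(f \colon Y \to X\) is surjective and \'etale, hence faithfully flat, and satisfies \(f^*\sheafA \cong A \otimes \sheafO_Y\) as sheaves of algebras. Transporting the unit \(1_A \otimes 1\) through this isomorphism yields a two-sided unit \(e_Y \in \Gamma(Y, f^*\sheafA)\).

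The heart of the argument is the gluing. Writing \(\textnormal{pr} = f \circ p_1 = f \circ p_2 \colon Y \times_X Y \to X\) for the two projections \(p_1, p_2\), pullback is a functor on sheaves of algebras, so both \(p_1^* e_Y\) and \(p_2^* e_Y\) are two-sided units of \(\textnormal{pr}^*\sheafA\). But a two-sided unit of any sheaf of algebras is unique: if \(u, u'\) are both two-sided units, then \(u = u u' = u'\). Hence \(p_1^* e_Y = p_2^* e_Y\), i.e.\ \(e_Y\) satisfies the cocycle condition for free. By fpqc (in particular \'etale) descent for the quasi-coherent sheaf \(\sheafA\) along the faithfully flat \(f\), the equalizer diagram \(\Gamma(X, \sheafA) \to \Gamma(Y, f^*\sheafA) \rightrightarrows \Gamma(Y \times_X Y, \textnormal{pr}^*\sheafA)\) produces a unique \(e \in \Gamma(X, \sheafA)\) with \(f^* e = e_Y\).

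It then remains to verify that \(e\) is a unit and is nonzero. The endomorphisms \(L_e - \id\) and \(R_e - \id\) of \(\sheafA\) (left and right multiplication by \(e\), minus the identity) are morphisms of quasi-coherent sheaves whose pullbacks along \(f\) equal \(L_{e_Y} - \id = 0\) and \(R_{e_Y} - \id = 0\); since \(f\) is faithfully flat, \(L_e = \id = R_e\), so \(e\) is a two-sided unit of \(\sheafA\). Because \(A\) is unital we have \(1_A \neq 0\), so \(e_Y \neq 0\) on the nonempty \(Y\), and faithful flatness forces \(e \neq 0\); thus \(\tH^0(\sheafA) = \Gamma(X, \sheafA) \neq 0\) and \(\textnormal{h}^0(\sheafA) > 0\). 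The only genuine obstacle is the passage from an \'etale-local unit to a global one, which cannot be carried out by Zariski gluing; the point to emphasize is that \emph{uniqueness of units} is precisely what upgrades ``\'etale-locally unital'' into a descent datum, after which standard fpqc descent concludes the proof.
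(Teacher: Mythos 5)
Your proof is correct and rests on exactly the same two pillars as the paper's: faithfully flat descent for the quasi-coherent sheaf \(\sheafA\), and uniqueness of two-sided units, which is precisely what turns the \'etale-locally defined unit into a descent datum (in the paper this appears as the step where the algebra isomorphism \(\phi\) must send the unit \(u \otimes 1\) to the unit \(1 \otimes u\), forcing \(u \in B\)). The only organizational difference is that you descend once, globally, along \(\coprod_p Y_p \to X\) — legitimate, since a jointly surjective family of \'etale morphisms is an fppf covering and quasi-coherent sheaves satisfy descent for it, although this cover need not be quasi-compact, so calling it an ``fpqc morphism'' is a slight misnomer that you could sidestep by first restricting to affine opens — whereas the paper performs the descent over an \'etale cover of each affine chart (pure commutative algebra, no sheaf-theoretic descent machinery) and then glues the resulting local units Zariski-locally, again via uniqueness of units.
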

\begin{proof}
Let \(U \subseteq X\) be an open subset and assume that \(U\) has an affine open covering \(\{U_i\}_{i \in I}\) such that \(\Gamma(U_i,\sheafA)\) is unital for all \(i \in I\). Since \(\Gamma(\textnormal{D}(f),\sheafA) = \Gamma(U_i,\sheafA)_f\) and \(\Gamma(U_i,\sheafA) \to \Gamma(\textnormal{D}(f),\sheafA)\) as well as \(\Gamma(\textnormal{D}(f),\sheafA) \to \Gamma(\textnormal{D}(fg),\sheafA)\) are unital for all \(f,g \in \Gamma(U_i,\sheafO_X)\) and \(i \in I\), a gluing argument shows that \(\Gamma(U_i \cap U_j,\sheafA)\) and \(\Gamma(U_i,\sheafA) \to \Gamma(U_i\cap U_j,\sheafA)\) are unital. Therefore, a second gluing argument implies that \(\Gamma(U,\sheafA)\) is unital. A similar consideration shows that \(\Gamma(U,\sheafA) \to \Gamma(V,\sheafA)\) is unital for all \(V \subseteq U\). We conclude that \(\sheafA\) is unital if and only if every \(p\in X\) has an affine open neighbourhood \(U\) such that \(\Gamma(U,\sheafA)\) is unital.

For every \(p\in X\), We can chose an irreducible affine open neighbourhood \(U\) of \(p\), an irreducible affine scheme \(U'\), and a surjective \'etale morphism \(f\colon U' \to U\) such that there exists an isomorphism \(\psi \colon B\otimes_R S \to A \otimes S\) of \(S\)-algebras, where \(B \coloneqq \Gamma(U,\sheafA),R \coloneqq  \Gamma(U,\sheafO_X)\), and \(S \coloneqq \Gamma(U',\sheafO_{U'})\).
The element \(\psi^{-1}(1) \in B\otimes_R S\) is a unit and \(\psi\) is unital. Since \(f\) is faithfully flat of finite type, we can recover \(B\) from \(B\otimes_R S\) as
\begin{equation}
    B= \{a \in B \otimes_R S \mid \phi(a\otimes 1) = 1 \otimes a\}
\end{equation}
where \(\phi \colon (B \otimes_R S) \otimes_R S \to S \otimes_R (B \otimes_R S)\) is defined by \(\phi((b\otimes s) \otimes t) = s \otimes (b \otimes t)\); see e.g.\ \cite[Remark 2.21]{milne_etale_cohomology}. In particular, \(B\) can be identified with an subalgebra of the unital algebra \(A\otimes S\). Since \(\phi\) is an isomorphism, it is unital. Therefore, \(B = \Gamma(U,\sheafA)\) contains the unit of \(B\otimes_R S\). Thus, the argument in the beginning of this proof implies that \(\sheafA\) is unital.

Now \(\textnormal{h}^0(\sheafA) > 0\) follows from the fact that \(1 \in \textnormal{H}^0(\sheafA)\).
\end{proof}

\subsubsection{Rigidity and strongly geometrically admissible algebras}
Consider the affine variety \(\textnormal{Alg}(d,\Bbbk) = \textnormal{Hom}(\Bbbk^d\otimes \Bbbk^d,\Bbbk^d)\) of all possible multiplication maps on \(\Bbbk^d\). There is a natural action of the group of invertible \(d \times d\)-matrices \(\textnormal{GL}(d,\Bbbk)\) given by
\begin{align}\label{eq:actionofG}
    &(g \cdot \vartheta)(v \otimes w) = g^{-1}\vartheta(gv \otimes gw) &\forall g \in \textnormal{GL}(d,\Bbbk), \vartheta \in \textnormal{Alg}(d,\Bbbk), v,w \in \Bbbk^d.
\end{align}
The orbit of a multiplication map under this action corresponds to the isomorphism class of the associated algebra.

Let \(M \subseteq \textnormal{Alg}(d,\Bbbk)\) be a \(\textnormal{GL}_n(d,\Bbbk)\)-invariant affine subvariety and write \(A \in M\) for an algebra \(A = (\Bbbk^d,\mu)\), if \(\mu \in M\). A \(\Bbbk\)-algebra \(A = (\Bbbk^d,\mu)\) is called \emph{\(M\)-rigid} if \(A \in M\) and the orbit 
\begin{equation}\label{eq:orbit_definition}
    O(A) \coloneqq \{A' = (\Bbbk^d,\mu') \in M\mid \mu' = g\mu \textnormal{ for some }g\in \textnormal{GL}(d,\Bbbk)\}
\end{equation}
contains an open neighbourhood of \(A\) in \(M\). 

A sufficient condition for \'etale local triviality is the rigidity of the fiber, as the following result, which is a algebro-geometric version of a generalization of \cite[Lemma 2.1]{kirangi_semi_simple}, states. 

\begin{proposition}\label{lemm:semisimple_fiber_implies_etale_triviality}
Let \(\Bbbk\) be an algebraically closed field of characteristic 0 and \(M\) be a \(\textnormal{GL}(d,\Bbbk)\)-stable subvariety of \(\textnormal{Alg}(d,\Bbbk)\). Furthermore, let \(\sheafA\) be a locally free sheaf of algebras on a reduced \(\Bbbk\)-scheme \(X\) such that \(\sheafA|_q \in M\) for all \(q \in X\) closed.

If \(\sheafA|_p\) is \(M\)-rigid for some closed point \(p \in X\), \(\sheafA\) is \'etale \(\sheafA|_p\)-locally free in \(p\). In particular, \(\sheafA|_q \cong \sheafA|_p\) for all closed points \(q\in X\) in some neighbourhood of \(p\).
\end{proposition}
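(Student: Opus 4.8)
The plan is to reduce the statement to \Cref{prop:weak_locall_free_implies_etale} by producing a Zariski-open neighbourhood of \(p\) on which every closed fibre is isomorphic to \(\sheafA|_p\); local triviality then follows formally, and the rigidity hypothesis is used only to manufacture this neighbourhood. First I would pass to an affine open neighbourhood \(U = \textnormal{Spec}(R)\) of \(p\) on which the locally free sheaf \(\sheafA\) is free of rank \(d\) as an \(\sheafO_X\)-module. Choosing an \(R\)-basis identifies \(\Gamma(U,\sheafA)\) with \(R^d\) and encodes the multiplication by \(d^3\) structure constants in \(R\), that is, by an \(R\)-point of \(\textnormal{Alg}(d,\Bbbk)\). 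Equivalently, this is a morphism of \(\Bbbk\)-schemes \(\mu \colon U \to \textnormal{Alg}(d,\Bbbk)\) whose value at a closed point \(q\) is the class of the fibre algebra \(\sheafA|_q\); here I use that \(\Bbbk\) is algebraically closed, so every closed point of \(U\) has residue field \(\Bbbk\) and the evaluation \(\mu(q)\) genuinely records the \(d\)-dimensional \(\Bbbk\)-algebra \(\sheafA|_q\).

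Next I would invoke rigidity. By hypothesis the orbit \(O(\sheafA|_p)\) contains an open neighbourhood \(V\) of \(\mu(p)\) in \(M\); writing \(V = M \cap V'\) for some open subset \(V' \subseteq \textnormal{Alg}(d,\Bbbk)\), continuity of \(\mu\) makes \(U' \coloneqq \mu^{-1}(V')\) an open neighbourhood of \(p\) in \(U\). For any closed point \(q \in U'\) one has \(\mu(q) \in V'\) and, by the standing assumption, \(\mu(q) = \sheafA|_q \in M\), hence \(\mu(q) \in M \cap V' = V \subseteq O(\sheafA|_p)\). Since the orbits under the action \eqref{eq:actionofG} are precisely the isomorphism classes of algebras, this yields \(\sheafA|_q \cong \sheafA|_p\) for every closed point \(q \in U'\), which is the second assertion of the proposition. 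I would emphasise that this step needs only the continuity of \(\mu\) together with the openness of \(V\) inside \(M\); it never requires that \(\mu\) factor scheme-theoretically through \(M\), which conveniently sidesteps any density-of-closed-points argument.

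Finally I would apply \Cref{prop:weak_locall_free_implies_etale} to the restriction \(\sheafA|_{U'}\): the open subscheme \(U'\) is again reduced, and we have just verified \(\sheafA|_q \cong \sheafA|_p\) at all of its closed points, so \(\sheafA|_{U'}\) is \'etale \(\sheafA|_p\)-locally free, and in particular \(\sheafA\) is \'etale \(\sheafA|_p\)-locally free at \(p\). The one point that requires care, and which I expect to be the main obstacle rather than any deep geometry, is that \Cref{prop:weak_locall_free_implies_etale} is phrased for schemes of finite type over \(\Bbbk\), whereas here \(X\) is only assumed reduced; I would resolve this by arranging \(U'\) to be of finite type over \(\Bbbk\), which is automatic in the intended applications where \(X\) is a projective curve. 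Once the classifying morphism \(\mu\) is in hand and this hypothesis is matched, the conclusion is a direct combination of \(M\)-rigidity with \Cref{prop:weak_locall_free_implies_etale}.
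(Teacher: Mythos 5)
Your proof is correct and takes essentially the route the paper intends: the paper's own ``proof'' is a one-line deferral to an external theorem, and the adaptation it has in mind is precisely your argument --- rigidity gives an open neighbourhood of \(\mu(p)\) inside the orbit, the structure-constant morphism of a local trivialization pulls this back to a Zariski neighbourhood of \(p\) all of whose closed fibres are isomorphic to \(\sheafA|_p\), and Proposition~\ref{prop:weak_locall_free_implies_etale} upgrades this fibrewise statement to \'etale local triviality. Your observation that the membership \(\mu(q)\in M\) is only needed pointwise at closed points (so that no density or scheme-theoretic factorization through \(M\) is required) is also the right way to organize the first half.

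On the finite-type mismatch you flag at the end: your caution is more justified than you suggest, because the statement as printed, for an arbitrary reduced \(\Bbbk\)-scheme \(X\), is actually \emph{false}; the hypothesis cannot be ``arranged'' away but must genuinely be added. Take \(d=1\), \(M = \{0\} \subseteq \textnormal{Alg}(1,\Bbbk)\), so that the algebra with zero multiplication is \(M\)-rigid (its orbit is all of \(M\)); let \(X = \textnormal{Spec}(\Bbbk[\![t]\!])\), which is reduced, and let \(\sheafA\) be the free rank-one sheaf with basis \(e\) and multiplication \(e\cdot e = te\). The unique closed point \(p=(t)\) has fibre the zero algebra, so every hypothesis of the proposition holds; but for any \'etale (hence flat) \(f\colon Y \to X\) with \(p \in f(Y)\), the element \(f^\sharp(t)\) is a nonzerodivisor in the nonzero ring \(\Gamma(Y,\sheafO_Y)\), so \(f^*\sheafA\) has nonzero multiplication and cannot be isomorphic to \(\sheafA|_p \otimes \sheafO_Y\). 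The failure is exactly that \(X\) is not Jacobson: closed points are not dense, so a hypothesis imposed at closed points controls nothing at the generic point, and Proposition~\ref{prop:weak_locall_free_implies_etale} (whose proof needs this density) rightly carries the finite-type assumption. Since every application in the paper is to a projective curve or to \(\mathbb{P}^1\), restricting the statement to finite-type \(X\), as you propose, is the correct resolution --- it repairs an imprecision in the paper's statement rather than a defect in your proof --- and with that hypothesis in place your argument is complete.
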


\noindent 
The proof is a straight forward adaptation of the proof of \cite[Theorem 2.11]{abedin_universal_geometrization} to this setting. 
A consequence of Lemma \ref{lemm:semisimple_fiber_implies_etale_triviality} is the following important criterion for strong geometric admissibility.

\begin{proposition}\label{prop:geometrically_M_admissible}
    Let \(\Bbbk\) be an algebraically closed field of characteristic 0,  \(M \subseteq \textnormal{Alg}(d,\Bbbk)\) be a \(\textnormal{GL}_n(d,\Bbbk)\)-invariant affine subvariety, and \((A,\beta)\) metric \(\Bbbk\)-algebra in \(M\). 
    
    Then \((A,\beta)\) is strongly geometrically admissible if:
    \begin{enumerate}
        \item \((A,\beta)\) is geometrically admissible in the sense of Subsection \ref{sec:geometrically_admissible_metrics};

        \item For any ringed \(A\)-lattices \((O,W)\) and any maximal ideal \(\mathfrak{m} \subseteq O\) such that
        \begin{itemize}
            \item \(W_{\mathfrak{m}}\) is free as \(O_{\mathfrak{m}}\)-module and

            \item the pairing \(W_{\mathfrak{m}}\times W_{\mathfrak{m}} \to O_{\mathfrak{m}}\) induced by \(\beta\) is perfect
        \end{itemize}
        the \(\Bbbk\)-algebra \(W/\mathfrak{m}W\) is \(M\)-rigid.
\end{enumerate}
\end{proposition}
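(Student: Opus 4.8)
The plan is to verify condition (2) in the definition of strong geometric admissibility (Subsection \ref{sec:geometrically_M_admissible}), condition (1) there being literally hypothesis (1) of the Proposition. So I fix a ringed \(A\)-lattice \((O,W)\) and a maximal ideal \(\mathfrak{m}\subseteq O\) with \(W_{\mathfrak{m}}\) free over \(O_{\mathfrak{m}}\) and the induced pairing perfect, and aim to deduce \(W/\mathfrak{m}W\cong A\) from the \(M\)-rigidity of \(W/\mathfrak{m}W\) supplied by hypothesis (2). First I would geometrize \((O,W)\) as in Subsection \ref{sec:geometrization}, obtaining the integral projective curve \(X=\textnormal{Proj}(\textnormal{gr}(O))\), the coherent torsion-free sheaf of algebras \(\sheafA\), and the distinguished smooth point \(p\) with \(\zeta\colon\compA_p\xrightarrow{\sim}A[\![z]\!]\); under \(X\setminus\{p\}\cong\textnormal{Spec}(O)\) the ideal \(\mathfrak{m}\) is a closed point \(q\) with \(\sheafA|_q=W/\mathfrak{m}W\). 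After shrinking, let \(U\) be an irreducible affine open neighbourhood of \(q\) on which \(\sheafA\) is free of rank \(d=\dim A\), which is possible since \(W_{\mathfrak{m}}\) is free.

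The first substantial step is to show \(\sheafA|_{q'}\in M\) for every closed \(q'\in U\). For the generic point \(\eta\) of \(X\), the isomorphism \(\zeta\) gives \(\sheafA_\eta\otimes_{\Bbbk(X)}\Bbbk(\!(z)\!)\cong\compA_p\otimes_{\compO_{X,p}}\Bbbk(\!(z)\!)\cong A\otimes_\Bbbk\Bbbk(\!(z)\!)\); choosing a \(\Bbbk(X)\)-basis of \(\sheafA_\eta\), its structure constants \(c\) are \(\textnormal{GL}(d,\Bbbk(\!(z)\!))\)-equivalent to those of \(A\in M\). Since \(M\) is \(\textnormal{GL}(d)\)-stable, \(M(R)\) is \(\textnormal{GL}(d,R)\)-stable for every \(\Bbbk\)-algebra \(R\), whence \(c\in M(\Bbbk(\!(z)\!))\); and because the equations cutting out \(M\) have coefficients in \(\Bbbk\) and \(\Bbbk(X)\hookrightarrow\Bbbk(\!(z)\!)\) is injective, already \(c\in M(\Bbbk(X))\), i.e.\ \(\sheafA_\eta\in M\). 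Trivializing \(\sheafA\) on \(U\) then yields a morphism \(U\to\textnormal{Alg}(d,\Bbbk)\) whose preimage of the closed subvariety \(M\) is closed and contains \(\eta\); by irreducibility of \(U\) this preimage is all of \(U\), giving \(\sheafA|_{q'}\in M\) for every closed \(q'\in U\).

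Next I would apply Proposition \ref{lemm:semisimple_fiber_implies_etale_triviality} to \(\sheafA|_U\) at \(q\): as \(\sheafA|_q=W/\mathfrak{m}W\) is \(M\)-rigid, \(\sheafA\) is \'etale \(\sheafA|_q\)-locally free at \(q\), so there is an \'etale \(f\colon Y\to U\) with \(q\in f(Y)\) and \(f^*\sheafA\cong\sheafA|_q\otimes\sheafO_Y\). Since \(f\) is open, \(f(Y)\) contains \(\eta\), and choosing \(y\in Y\) over \(\eta\) produces a finite separable extension \(\Bbbk(X)\hookrightarrow\Bbbk(y)\) with \(\sheafA_\eta\otimes_{\Bbbk(X)}\Bbbk(y)\cong\sheafA|_q\otimes_\Bbbk\Bbbk(y)\). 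Embedding both \(\Bbbk(y)\) and \(\Bbbk(\!(z)\!)\) into an algebraically closed field \(\Omega\) over \(\Bbbk(X)\) and combining with the computation of the previous paragraph yields \(\sheafA|_q\otimes_\Bbbk\Omega\cong A\otimes_\Bbbk\Omega\). Finally, since \(\Bbbk\) is algebraically closed and both algebras are \(d\)-dimensional, the transporter in \(\textnormal{GL}(d)\) carrying the structure constants of \(\sheafA|_q\) to those of \(A\) is a nonempty \(\Bbbk\)-scheme of finite type, hence has a \(\Bbbk\)-point; thus \(\sheafA|_q\cong A\) over \(\Bbbk\), i.e.\ \(W/\mathfrak{m}W\cong A\), as required.

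The main obstacle is that \(M\)-rigidity only forces the isomorphism type of the fibres to be \emph{locally constant} near \(q\); it does not by itself identify \(\sheafA|_q\) with the target algebra \(A\). The identification must be routed through the generic fibre \(\sheafA_\eta\), controlled simultaneously from the \'etale-trivialization side (which produces \(\sheafA|_q\)) and from the distinguished point \(p\) side (which produces \(A\)), and then descended back to \(\Bbbk\). Getting these two descents right—membership of \(\sheafA_\eta\) in \(M\), and the final isomorphism over \(\Bbbk\)—is exactly where the algebraic closedness of \(\Bbbk\) and the \(\textnormal{GL}\)-invariance of \(M\) are indispensable. A secondary point worth noting is that, because we geometrize \((O,W)\) without first passing to the integral closure of \(O\), the curve \(X\) may be singular at \(q\); this causes no trouble, since the argument uses only local freeness of \(\sheafA\) near \(q\) and never the global pairing \(\beta_{\sheafA}\).
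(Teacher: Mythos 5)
Your proof is correct, but the crucial identification step runs along a genuinely different route than the paper's. The paper also geometrizes \((O,W)\) and invokes Proposition \ref{lemm:semisimple_fiber_implies_etale_triviality}, but then argues globally on the curve: it takes \(U\) to be the locus of \emph{all} closed points where the stalk of \(\sheafA\) is free and the induced pairing is non-degenerate, observes that hypothesis (2) makes every fiber over \(U\) \(M\)-rigid (in particular a member of \(M\)), so that the isomorphism type of the fibers is locally constant on \(U\); since \(U\) is open and non-empty in the irreducible curve \(X\) it is connected, and since the distinguished point \(p\) lies in \(U\) with \(\sheafA|_p \cong A\), every fiber over \(U\) --- in particular \(W/\mathfrak{m}W\) --- is isomorphic to \(A\). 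You instead use rigidity only at the single point \(q\) corresponding to \(\mathfrak{m}\) and route the identification through the generic fibre: the \'etale trivialization near \(q\) is open, hence its image contains \(\eta\), giving \(\sheafA_\eta \otimes_{\Bbbk(X)} \Bbbk(y) \cong \sheafA|_q \otimes_{\Bbbk} \Bbbk(y)\), while \(\zeta\) gives \(\sheafA_\eta \otimes_{\Bbbk(X)} \Bbbk(\!(z)\!) \cong A \otimes_{\Bbbk} \Bbbk(\!(z)\!)\), and you then descend from a common overfield \(\Omega\) to \(\Bbbk\) via the transporter/Nullstellensatz argument. What your route buys: it proves a formally sharper statement (rigidity of \(W/\mathfrak{m}W\) at the one ideal under consideration, rather than at all suitable ideals, already forces \(W/\mathfrak{m}W \cong A\)), and it never needs the non-degeneracy locus to be open, connected, or to contain \(p\). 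What it costs: two ingredients the paper avoids, namely the scheme-theoretic \(\textnormal{GL}(d,\Bbbk)\)-stability of \(M\) (needed to place the generic fibre, and hence every closed fibre over your neighbourhood \(U\), inside \(M\) before Proposition \ref{lemm:semisimple_fiber_implies_etale_triviality} can be applied; the paper gets membership in \(M\) for free, since \(M\)-rigidity of all fibers over its locus includes it), and the descent fact that finite-dimensional \(\Bbbk\)-algebras which become isomorphic over an extension field of the algebraically closed field \(\Bbbk\) are already isomorphic over \(\Bbbk\). Both facts are standard --- the first by density of \(\Bbbk\)-points in the reduced finite-type scheme \(\textnormal{GL}(d)\times M\), the second by your transporter argument --- and you deploy them correctly, so the argument is sound as written.
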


\begin{proof}
Let \(((X,\sheafA),(p,c,\zeta))\) is the geometric datum associated to \((O,W)\) in Subsection \ref{sec:geometrization}. Furthermore, let \(U\) be the set of closed points \(q \in X\) such that 
\begin{itemize}
    \item \(\sheafA_q\) is a free \(\sheafO_{X,q}\)-module;

    \item The restriction \(\zeta(\sheafA_q) \times \zeta(\sheafA_q) \to c(\sheafO_{X,q})\) of \(\beta\) from \eqref{eq:beta_extension} is non-degenerate.
\end{itemize}
Then \(\sheafA|_q\) is \(M\)-rigid for all \(q \in U\) by assumption. Combining Proposition \ref{lemm:semisimple_fiber_implies_etale_triviality} and \(p \in U\), \(U\) is a non-empty open subset of the set of closed points of \(X\). In particular, \(U\) is connected since \(X\) is irreducible. 
 
Furthermore, every \(q \in U\) has an open neighbourhood \(U' \subseteq U\) such that \(\sheafA|_q \cong \sheafA|_{q'}\) holds for all \(q' \in U'\) by virtue of Proposition \ref{lemm:semisimple_fiber_implies_etale_triviality}. The connectedness of \(U\) therefore implies that \(\sheafA|_q \cong \sheafA|_p \cong A\) for all \(q \in U\). This implies that \((A,\beta)\) is strongly geometrically admissible.
\end{proof}

\noindent
Consider \(M \in \{\textnormal{Lie}_d,\textnormal{Ass}_d, \textnormal{Jor}_d\}\) where 
\(\textnormal{Lie}_d,\textnormal{Ass}_d, \textnormal{Jor}_d \subseteq \textnormal{Alg}(d,\Bbbk)\)
are the varieties of \(d\)-dimensional Lie, associative, and Jordan algebras respectively. 
In Subsection \ref{sec:geom_admissible_metrics_examples}, we discussed that any simple \(A \in M\) has an, up to multiplication by a scalar, unique algebra metric \(\beta\) and that the metric algebra \((A,\beta)\) is geometrically admissible. 

Let \((O,W)\) be a ringed \(A\)-lattice and \(\mathfrak{m} \subseteq O\) be a maximal ideal such that \(W_{\mathfrak{m}}\) is free. If the restriction \(W_{\mathfrak{m}} \times W_{\mathfrak{m}} \to O_{\mathfrak{m}}\) of \(\beta\) from \eqref{eq:beta_extension} is non-degenerate, the \(\Bbbk\)-algebra \(W/\mathfrak{m}W \in M\) inherits an algebra metric from \(\beta\). This algebra metric can be explicitly described using the formula in Subsection \ref{sec:geom_admissible_metrics_examples} and we see from this description that \(W/\mathfrak{m}W\) is semi-simple, i.e.\ a direct sum of simple subalgebras. If \(M = \textnormal{Lie}_d\) we use Cartan's criterion for semi-simplicity and if \(M \in \{\textnormal{Ass}_d,\textnormal{Jor}_d\}\) this is a consequence of general results on trace-admissible algebras; see e.g.\ \cite{albert}.  

Since semi-simple algebras in \(M\) are rigid (see  \cite{hazewinkel_gerstenhaber} for the case that \(M \in \{\textnormal{Lie}_d, \textnormal{Ass}_d\}\) and \cite{finston} for the case that \(M = \textnormal{Jor}_d\)), we see that \((A,\beta)\) satisfies the conditions of Proposition \ref{prop:geometrically_M_admissible}. Therefore, we obtain the following result.

\begin{corollary}\label{cor:geometrically_M_admissible}
    Any finite-dimensional simple Lie, associative, or Jordan algebra over an algebraically closed field of characteristic 0 is strongly geometrically admissible if equipped with its (up to scalar multiple) unique algebra metric.
\end{corollary}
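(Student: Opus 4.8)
The plan is to verify the two hypotheses of Proposition \ref{prop:geometrically_M_admissible} in each of the three cases $M \in \{\textnormal{Lie}_d,\textnormal{Ass}_d,\textnormal{Jor}_d\}$, where $d \coloneqq \dim(A)$, and then read off strong geometric admissibility directly from that proposition. Condition (1), namely geometric admissibility of $(A,\beta)$, is already established in Subsection \ref{sec:geom_admissible_metrics_examples}: a simple algebra in any of these three classes carries an essentially unique metric, given by a scalar multiple of the Killing form in the Lie case and by the symmetrized trace form $\tfrac{\lambda}{2}(\textnormal{Tr}(R_{ab}) + \textnormal{Tr}(L_{ab}))$ in the power-associative cases, and both were shown there to take values in $O_{\mathfrak{m}}$ on $W_{\mathfrak{m}}$.

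For condition (2), I would fix a ringed $A$-lattice $(O,W)$ and a maximal ideal $\mathfrak{m} \subseteq O$ with $W_{\mathfrak{m}}$ free over $O_{\mathfrak{m}}$ and the induced pairing perfect, and show that $B \coloneqq W/\mathfrak{m}W$ is $M$-rigid. First I would check $B \in M$: choosing an $O_{\mathfrak{m}}$-basis $\{b_i\}_{i=1}^d$ of $W_{\mathfrak{m}}$ yields structure constants $C_{ij}^k \in O_{\mathfrak{m}}$, and since $\{b_i\}_{i=1}^d$ is simultaneously a $\Bbbk(\!(z)\!)$-basis of $A(\!(z)\!)$, these constants satisfy the defining multilinear identities of the variety $M$ (anticommutativity and Jacobi, associativity, or the Jordan identities), inherited from the scalar extension $A(\!(z)\!)$ of $A$. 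Reducing the $C_{ij}^k$ along $O_{\mathfrak{m}} \to O_{\mathfrak{m}}/\mathfrak{m}O_{\mathfrak{m}} = \Bbbk$ preserves these identities, so $B \in M$. Perfectness of the pairing then guarantees that the reduction $\beta_B$ of $\beta$ is a non-degenerate algebra metric on $B$.

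The key step is to deduce semisimplicity of $B$ from non-degeneracy of $\beta_B$. In the Lie case this is Cartan's criterion applied to (the reduction of) the Killing form; in the associative and Jordan cases it follows from the theory of trace-admissible algebras, since $B$ is non-nil with non-degenerate trace form, see \cite{albert}. Once $B$ is semisimple, I would invoke rigidity of semisimple algebras in each class---\cite{hazewinkel_gerstenhaber} for $M \in \{\textnormal{Lie}_d,\textnormal{Ass}_d\}$ and \cite{finston} for $M = \textnormal{Jor}_d$---to conclude that $B$ is $M$-rigid. With both hypotheses of Proposition \ref{prop:geometrically_M_admissible} verified, strong geometric admissibility of $(A,\beta)$ follows in all three cases.

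I expect the main obstacle to be precisely this semisimplicity step. The subtlety is that one must identify $\beta_B$ with the trace form (respectively Killing form) of $B$ itself, not merely with \emph{some} non-degenerate symmetric associative form, so that the semisimplicity criteria genuinely apply. This is exactly why the explicit description of $\beta$ through $\tfrac{\lambda}{2}\sum_{k,\ell} C_{ij}^\ell(C_{k\ell}^k + C_{\ell k}^k)$ from Subsection \ref{sec:geom_admissible_metrics_examples} is indispensable: its reduction mod $\mathfrak{m}$ is visibly the trace form attached to the reduced structure constants of $B$, rather than an abstract non-degenerate pairing, which closes the argument.
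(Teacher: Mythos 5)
Your proposal is correct and follows essentially the same route as the paper: both verify the two hypotheses of Proposition \ref{prop:geometrically_M_admissible}, citing Subsection \ref{sec:geom_admissible_metrics_examples} for geometric admissibility, deducing semisimplicity of \(W/\mathfrak{m}W\) from the explicit Killing/trace-form description of the reduced metric via Cartan's criterion (Lie case) or Albert's trace-admissibility theory (associative and Jordan cases), and then invoking rigidity of semisimple algebras from \cite{hazewinkel_gerstenhaber} and \cite{finston}. The subtlety you flag---that the reduced pairing must be recognized as the trace (resp.\ Killing) form of \(W/\mathfrak{m}W\) itself rather than an arbitrary non-degenerate associative form---is exactly the point the paper addresses with the phrase ``can be explicitly described using the formula in Subsection \ref{sec:geom_admissible_metrics_examples}.''
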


\subsubsection{Sheaves of algebras on one-dimensional affine algebraic groups}
Recall that over an algebraically closed field of characteristic 0, a connected affine algebraic group over \(\Bbbk\) of dimension one is 
either isomorphic to the affine line or the punctured affine line. Let us conclude this subsection with a classification of all sheaves of algebras with constant fibers on these schemes; see \cite[Theorem 6.1.1]{abedin_thesis} for a proof.

\begin{proposition}\label{thm:classification_sheaves_of_algebras}
Let \(A\) be a finite-dimensional algebra over an algebraically closed field \(\Bbbk\) of characteristic 0.

\begin{enumerate}
    \item Let \(B\) be a \(\Bbbk[v,v^{-1}]\)-algebra satisfying \(B/(v-\lambda)B \cong A\) for all \( \lambda \in \Bbbk^\times\). Then there exists \(\sigma \in \textnormal{Aut}_{\Bbbk\textnormal{-alg}}(A)\) of order \(m \in \bN\) such that
    \begin{equation*}
        B \cong L(A,\sigma) \coloneqq \{a \in A[\widetilde{v},\widetilde{v}^{-1}]\mid a\left(\exp\left(2\pi i/m\right)\widetilde{v}\right) = \sigma(a(\widetilde{v}))\}  
    \end{equation*}
    as \(\Bbbk[v,v^{-1}]\)-algebras. Here, the \(\Bbbk[v,v^{-1}]\)-module structure of \(L(A,\sigma)\) is defined by \(\widetilde{v}^m = v\).
    
    \item Let \(B\) be an \(\Bbbk[z]\)-algebra satisfying \(B/(z-\lambda)B \cong A\) for all \( \lambda \in \Bbbk\). Then \(B \cong A[z]\) as \(\Bbbk[z]\)-algebras.
\end{enumerate}

\end{proposition}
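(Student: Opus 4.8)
The plan is to recognise \(B\) as an \'etale-locally trivial sheaf of algebras on \(X\)---a twisted form of the constant algebra \(A\)---and then to classify such forms by descent along the \'etale fundamental group of \(X\), where \(X = \textnormal{Spec}(\Bbbk[v,v^{-1}])\) in part (1) and \(X = \textnormal{Spec}(\Bbbk[z])\) in part (2). First I would pass to the quasi-coherent sheaf \(\mathcal{B}\) of \(\sheafO_X\)-algebras associated to \(B\). Since \(\Bbbk\) is algebraically closed, the closed points of \(X\) are exactly the maximal ideals \((v-\lambda)\) (resp.\ \((z-\lambda)\)), so the hypothesis says precisely that \(\mathcal{B}|_p \cong A\) for every closed point \(p \in X\). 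As \(X\) is a reduced \(\Bbbk\)-scheme of finite type and \(A\) is finite-dimensional, Proposition \ref{prop:weak_locall_free_implies_etale} then shows that \(\mathcal{B}\) is \'etale \(A\)-locally free. In particular \(\mathcal{B}\) is locally free of rank \(d \coloneqq \dim_\Bbbk(A)\), so \(B\) is a form of \(A \otimes \sheafO_X\) classified by a torsor under the affine \(\Bbbk\)-group scheme \(\underline{\textnormal{Aut}}_{\Bbbk\textnormal{-alg}}(A)\), which is smooth because \(\textnormal{char}(\Bbbk) = 0\); this torsor is split by some finite \'etale cover of \(X\).

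For part (2) the base is the affine line, which over an algebraically closed field of characteristic \(0\) is simply connected, so its only connected finite \'etale cover is the trivial one and every finite \'etale cover is a disjoint union of copies of \(\mathbb{A}^1\) mapping isomorphically. Hence the trivialising cover from the previous step may be taken to be \(X\) itself, the descent datum is trivial, and \(\mathcal{B} \cong A \otimes \sheafO_{\mathbb{A}^1}\); that is, \(B \cong A[z]\) as \(\Bbbk[z]\)-algebras.

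For part (1) the base is \(\mathbb{G}_m = \textnormal{Spec}(\Bbbk[v,v^{-1}])\), whose connected finite \'etale Galois covers are the Kummer covers \(\textnormal{Spec}(\Bbbk[\widetilde{v},\widetilde{v}^{-1}]) \to \textnormal{Spec}(\Bbbk[v,v^{-1}])\) with \(\widetilde{v}^m = v\), having Galois group \(\bZ/m\bZ\) generated by \(\widetilde{v} \mapsto \exp(2\pi i/m)\widetilde{v}\). By Pianzola's classification of torsors over \(\mathbb{G}_m\) \cite{pianzola}, the torsor attached to \(B\) is a loop torsor: it is split by such a Kummer cover, and its class is determined by the conjugacy class of a finite-order automorphism \(\sigma \in \textnormal{Aut}_{\Bbbk\textnormal{-alg}}(A)\), where \(m\) is the order of \(\sigma\). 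Unwinding the descent datum, the cyclic \(1\)-cocycle determined by \(\sigma\) reconstructs \(B\) as the \(\sigma\)-invariants
\begin{equation*}
    \{a \in A[\widetilde{v},\widetilde{v}^{-1}] \mid a(\exp(2\pi i/m)\widetilde{v}) = \sigma(a(\widetilde{v}))\} = L(A,\sigma),
\end{equation*}
whence \(B \cong L(A,\sigma)\) as \(\Bbbk[v,v^{-1}]\)-algebras.

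The main obstacle is the input driving the third paragraph: the assertion that every \'etale-locally trivial form of \(A\) over \(\mathbb{G}_m\) is a loop form arising from a single finite-order automorphism. This is exactly the content of Pianzola's results, and it is here that the behaviour of \(\underline{\textnormal{Aut}}(A)\) (well-controlled for the simple algebras \(A\) relevant to the applications) is used; the remaining verification, that the resulting \(\bZ/m\bZ\)-descent datum reassembles into \(L(A,\sigma)\), is a bookkeeping computation with the Galois action on \(A[\widetilde{v},\widetilde{v}^{-1}]\).
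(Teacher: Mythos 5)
Your strategy---sheafify \(B\), apply Proposition \ref{prop:weak_locall_free_implies_etale} to get \'etale \(A\)-local freeness, then classify the resulting twisted form by non-abelian \'etale descent with Pianzola's loop-torsor theorem as the decisive input---is the same route the paper takes, since the paper obtains this proposition as a consequence of \cite{pianzola} (with details in \cite[Theorem 6.1.1]{abedin_thesis}). However, as written your argument has one genuine gap, and it sits exactly where you lean on it hardest, namely in part (2).

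The gap is the claim that, because \(\underline{\textnormal{Aut}}_{\Bbbk\textnormal{-alg}}(A)\) is smooth, ``this torsor is split by some finite \'etale cover of \(X\).'' Smoothness yields splitting by \emph{some} surjective \'etale morphism \(Y \to X\), but not by a \emph{finite} \'etale cover; finite (Galois) splitting---isotriviality---is precisely the hard content of the descent picture, not a formality. In part (1) this costs you nothing, because you immediately invoke Pianzola over \(\mathbb{G}_m\), and isotriviality is part of what is being cited there (note, though, that those results carry hypotheses on the automorphism group scheme, a point your proof shares with the paper's citation). In part (2), however, your entire argument is: the splitting cover is finite \'etale, \(\mathbb{A}^1\) is simply connected, hence the cover (and so the torsor) is trivial. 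Once the unjustified finiteness claim is removed, this collapses: \(\mathbb{A}^1\) has plenty of non-finite \'etale covers (e.g.\ the cover by \(\mathbb{A}^1\setminus\{0\}\) and \(\mathbb{A}^1\setminus\{1\}\)), and simple connectedness by itself says nothing about torsors under a positive-dimensional group. What part (2) actually requires is the vanishing of \(H^1_{\textnormal{\'et}}\bigl(\mathbb{A}^1_\Bbbk,\underline{\textnormal{Aut}}_{\Bbbk\textnormal{-alg}}(A)\bigr)\), which is true over an algebraically closed field of characteristic 0 but needs real theorems: a Levi decomposition of the automorphism group, vanishing of \(H^1\) for unipotent groups over an affine base, the Raghunathan--Ramanathan theorem for the connected reductive part, and simple connectedness only for the finite component group. (Alternatively, one can restrict to \(\mathbb{G}_m\subseteq\mathbb{A}^1\), apply part (1), and then argue that the existence of a fiber isomorphic to \(A\) at the origin forces the class of \(\sigma\) to be trivial---but this too must be proved.) Either supply such an argument or a citation covering the \(\mathbb{A}^1\) case; the smoothness remark alone does not do it.
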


\subsection{Geometric \(A\)-CYBE datum}\label{sec:geometric_A_CYBE_data}
We call a triple \((X,(\sheafA,\beta_\sheafA))\) \emph{geometric \(A\)-CYBE datum} for a finite-dimensional \(\Bbbk\)-algebra \(A\) if:
\begin{itemize}
    \item \(X\) is an irreducible cubic plane curve over \(\Bbbk\);
    
    \item \(\sheafA\) is a coherent sheaf of algebras on \(X\) such that: 
    \begin{enumerate}
        \item \(\textnormal{H}^0(\sheafA) = 0 = \textnormal{H}^1(\sheafA)\);
        
        \item \(\sheafA|_p \cong A\) for all smooth closed \(p \in X\);
    \end{enumerate}
    \item \(\beta_\sheafA\colon \sheafA \times \sheafA \to \sheafO_X\) is a symmetric, perfect, associative \(\sheafO_X\)-bilinear form.\smallskip
\end{itemize}

\noindent 
The name ``geometric \(A\)-CYBE datum'' will become clear in Subsection \ref{sec:geometric_rmatrix}.

\begin{remark}\label{rem:irreducible_cubic_plane}
It is well-known that any irreducible plane cubic curve \(X\) over \(\Bbbk\) is defined by an equation \(y^2 = x^3 + ax + b\) and precisely one of the following cases occurs:
\begin{enumerate}
    \item \(X\) is smooth if and only if \(4b^3 + 27a^2 \neq 0\), in which case \(X\) is an elliptic curve.
    \item \(X\) has a unique nodal singularity if \(4b^3 = -27a^2 \neq 0\). In this case, \(X\setminus \{s\}\) is isomorphic to the punctured affine line \(\textnormal{Spec}(\Bbbk[v,v^{-1}])\).
    \item \(X\) has a unique cuspidal singularity if \(a = b = 0\). In this case, \(X\setminus\{s\}\) is isomorphic to the affine line \(\textnormal{Spec}(\Bbbk[z])\).
\end{enumerate}
Let us note that, up to isomorphism, irreducible cubic plane curves are precisely irreducible projective curves over \(\Bbbk\) of arithmetic genus 1.
\end{remark}

\noindent
The following lemma is important for the identification of geometric \(A\)-CYBE data below.

\begin{lemma}\label{lemm:geometric_A_CYBE_datum}
Let \(\Bbbk\) be an algebraically closed field of characteristic 0, \(X\) be an irreducible plane cubic curve over \(\Bbbk\), and \(\sheafF\) be coherent sheaf on \(X\) satisfying:
\begin{itemize}
    \item \( \textnormal{H}^1(\sheafF) = 0\);
    
    \item There exists a non-degenerate symmetric \(\sheafO_X\)-bilinear form \(\beta_\sheafF\colon \sheafF \times \sheafF \to \sheafO_X\).
\end{itemize}

\noindent
Then \(\beta_\sheafF\) is perfect. In particular, \(\beta_\sheafF|_p\) is non-degenerate for all \(p \in X\).
\end{lemma}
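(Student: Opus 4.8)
The plan is to show that the $\sheafO_X$-linear morphism $\psi\colon\sheafF\to\sheafF^{*}\coloneqq\sheafHom_{\sheafO_X}(\sheafF,\sheafO_X)$ induced by $\beta_\sheafF$ is an isomorphism, which is exactly the statement that $\beta_\sheafF$ is perfect. First I would exploit non-degeneracy: its kernel is a subsheaf of $\sheafF$ pairing trivially with all of $\sheafF$, so $\psi$ is injective. Since the target $\sheafF^{*}$ is torsion-free (a dual into $\sheafO_X$ on the integral curve $X$), any torsion subsheaf of $\sheafF$ would map to zero, so $\sheafF$ is itself torsion-free; and at the generic point $\eta$ the injection $\psi_\eta$ is a map between $\kappa(X)$-vector spaces of equal dimension, hence an isomorphism. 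Therefore the cokernel $\mathcal C$ of $\psi$ is a torsion sheaf, giving a short exact sequence
\begin{equation*}
    0\longrightarrow\sheafF\stackrel{\psi}{\longrightarrow}\sheafF^{*}\longrightarrow\mathcal C\longrightarrow 0
\end{equation*}
with $\mathcal C$ supported at finitely many points.

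The heart of the argument is a cohomological vanishing. As $X$ is an irreducible plane cubic, it is a Gorenstein curve of arithmetic genus $1$, and by Remark \ref{rem:irreducible_cubic_plane} its dualizing sheaf is the line bundle $\omega_X\cong\sheafO_X(d-3)=\sheafO_X$ with $d=3$. Coherent (Serre) duality on this curve, valid for an arbitrary coherent sheaf, then yields
\begin{equation*}
    \textnormal{H}^0(\sheafF^{*})=\Hom_{\sheafO_X}(\sheafF,\sheafO_X)\cong\Hom_{\sheafO_X}(\sheafF,\omega_X)\cong\textnormal{H}^1(\sheafF)^{*}=0,
\end{equation*}
where the last equality uses the hypothesis $\textnormal{H}^1(\sheafF)=0$. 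Plugging this into the long exact cohomology sequence of the displayed short exact sequence, the segment $\textnormal{H}^0(\sheafF^{*})\to\textnormal{H}^0(\mathcal C)\to\textnormal{H}^1(\sheafF)$ reads $0\to\textnormal{H}^0(\mathcal C)\to 0$, so $\textnormal{H}^0(\mathcal C)=0$. A nonzero torsion sheaf on $X$ has nonzero global sections, so $\mathcal C=0$; hence $\psi$ is an isomorphism and $\beta_\sheafF$ is perfect.

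For the fiber statement I would restrict $\psi$ to a closed point $p$: tensoring an isomorphism keeps $\psi\otimes\kappa(p)\colon\sheafF|_p\to\sheafF^{*}\otimes\kappa(p)$ an isomorphism. At a smooth point the torsion-free sheaf $\sheafF$ is locally free (its stalk is a finitely generated torsion-free module over the discrete valuation ring $\sheafO_{X,p}$), so the canonical comparison map $\sheafF^{*}\otimes\kappa(p)\to(\sheafF|_p)^{*}$ is an isomorphism; composing it with $\psi\otimes\kappa(p)$ recovers the induced fiber form $\beta_\sheafF|_p$, which is therefore non-degenerate.

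The main obstacle is twofold. The conceptual crux is the Serre-duality input: one must invoke coherent duality in a form that applies to an \emph{arbitrary} coherent (not necessarily locally free) $\sheafF$ on the possibly singular Gorenstein curve $X$, together with the identification $\omega_X\cong\sheafO_X$. The more delicate technical point is the fiber claim at the unique singular point, where $\sheafF$ need not be locally free and the comparison map $\sheafF^{*}\otimes\kappa(p)\to(\sheafF|_p)^{*}$ can fail to be injective (so perfectness alone need not force fiberwise non-degeneracy there); the hard part will be to conclude non-degeneracy on that last fiber as well, which one obtains by using that the sheaves to which this lemma is ultimately applied are locally free, being étale-locally trivial in the sense of Subsection \ref{sec:etale_locally_trivial_sheaves_of_algebras}.
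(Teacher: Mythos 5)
Your proof of the perfectness claim is correct and follows essentially the same route as the paper's: form the short exact sequence \(0 \to \sheafF \to \sheafF^* \to \mathcal{C} \to 0\) with torsion cokernel, use Serre duality together with the triviality of the dualizing sheaf of an irreducible plane cubic to get \(\textnormal{h}^0(\sheafF^*) = \textnormal{h}^1(\sheafF) = 0\), and conclude \(\textnormal{H}^0(\mathcal{C}) = 0\), hence \(\mathcal{C} = 0\). The paper's proof stops there and never addresses the ``in particular'' fiber clause, so your additional discussion is a genuine refinement: at smooth points your argument (torsion-free over a DVR, hence locally free, hence the comparison map \(\sheafF^*\otimes\kappa(p)\to(\sheafF|_p)^*\) is an isomorphism) is exactly right, and your caveat at the singular point is warranted -- perfectness alone does not force fiberwise non-degeneracy there (e.g.\ on a nodal curve the pushforward of a line bundle under normalization can carry a perfect symmetric pairing whose fiber form at the node vanishes), which is consistent with the fact that the paper only ever uses the fiber statement at smooth points.
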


\begin{proof}
Since \(\beta_\sheafF\) is non-degenerate, we have a short exact sequence
\begin{equation}
    0 \longrightarrow \sheafF \stackrel{\beta_\sheafF^\textnormal{a}}\longrightarrow \sheafF^* \longrightarrow \mathcal{C}\longrightarrow 0,
\end{equation}
where \(\mathcal{C} \coloneqq \textnormal{Cok}(\beta_\sheafF^\textnormal{a})\) is a torsion sheaf. We obtain the long exact sequence in cohomology
\begin{align}\label{eq:tildeKexactseq}
    0 \longrightarrow \textnormal{H}^0(\mathcal{F}) {\longrightarrow} \textnormal{H}^0(\mathcal{F}^*) \longrightarrow \textnormal{H}^0(\mathcal{C})\longrightarrow \textnormal{H}^1(\mathcal{F}) {\longrightarrow} \textnormal{H}^1(\mathcal{F}^*) \longrightarrow \textnormal{H}^1(\mathcal{C}) \longrightarrow 0.
\end{align}
The dualizing sheaf of any irreducible cubic plane curve is trivial, so Serre duality implies that \(\textnormal{h}^0(\mathcal{F}^*) = \textnormal{h}^1(\mathcal{F}) = 0\) and thus \(\textnormal{H}^0( \mathcal{C}) = 0\). Since \( \mathcal{C}\) is a torsion sheaf, we see that \(\mathcal{C} = 0\), so \(\beta_\sheafF^\textnormal{a}\) is an isomorphism. 
\end{proof}

\subsubsection{Geometric solutions of the \(A\)-CYBE}\label{sec:geometric_rmatrix}
In this subsection, we will repeat the construction of geometric solutions of the usual CYBE from \cite{burban_galinat} in our general setting. This will result in a construction of geometric solutions of a geometric \(A\)-CYBE from a geometric \(A\)-CYBE datum. In particular, this explains the name ``geometric \(A\)-CYBE datum''.

Let \(A\) be a finite-dimensional \(\Bbbk\)-algebra, \((X,(\sheafA,\beta_\sheafA))\) be a geometric \(A\)-CYBE datum, and \(C \subseteq X\) be the set of smooth points. Fix a global section \(\eta \in \textnormal{H}^0(\omega_X)\) of the dualizing sheaf \(\omega_X\) of \(X\). Let us remark that \(\omega_X\) can be identified with the sheaf of so-called Rosenlicht-regular 1-forms; see e.g. \cite[Section 5.2]{conrad}. Consider the \emph{diagonal residue sequence}
\begin{align}\label{eq:resseq}
    0 \longrightarrow \sheafO_{X \times C} \longrightarrow \sheafO_{X\times C}(\Delta) \stackrel{\textnormal{res}^\eta_\Delta}{\longrightarrow} \delta_*\sheafO_C\longrightarrow 0.
    \end{align}
Here, \(\Delta\subseteq X \times C\) is the image of \(\delta\colon C \to X \times C\) defined by \(p \mapsto (p,p)\). Furthermore, \(\textnormal{res}^\eta_\Delta\) is determined by \((u_1-u_2)^{-1} \mapsto \mu\) locally around any closed point \(p\in C\), where:
\begin{itemize}
    \item \(u\) is a local parameter of \(p\) defined on an affine open subset \(U\) of \(C\);
    \item \(\omega_C\) and \(\sheafO_{X\times C}(-\Delta)\) are locally generated by \(\textnormal{d}u\) and
    \[u_1-u_2 \coloneqq u \otimes 1 - 1 \otimes u \in \Gamma(U,\sheafO_X) \otimes \Gamma(U,\sheafO_X) \cong \Gamma(U \times U,\sheafO_{X \times X})\]
    respectively, after potentially shrinking \(U\);

    \item \(\eta_p = \mu \textnormal{d}u\) holds for some uniquely determined \(\mu \in \Gamma(U,\sheafO_X)\). \smallskip
\end{itemize}

\noindent
The tensor product of \eqref{eq:resseq} with \(\sheafA \boxtimes \sheafA|_C \coloneqq \textnormal{pr}_1^*\sheafA \otimes_{\sheafO_{X\times C}}\textnormal{pr}_2^*\sheafA|_C\) for the canonical projections \(X \stackrel{\textnormal{pr}_1}{\longleftarrow} X\times C \stackrel{\textnormal{pr}_2}{\longrightarrow} C\) gives the short exact sequence
\begin{align}\label{eq:Resseq}
    0 \longrightarrow \sheafA \boxtimes \sheafA|_C \longrightarrow \sheafA \boxtimes \sheafA|_C(\Delta) \longrightarrow \delta_*(\sheafA|_C \otimes_{\sheafO_C} \sheafA|_C)\longrightarrow 0.
\end{align}
Using the K\"unneth formula and \(\textnormal{H}^0(\sheafA) = 0=\textnormal{H}^1(\sheafA)\) results in
\begin{equation}
    \begin{split}
        &\textnormal{H}^0(\sheafA \boxtimes \sheafA|_C) = \textnormal{H}^0(\sheafA) \otimes \textnormal{H}^0(\sheafA|_C) =  0 \textnormal{ and }
        \\&\textnormal{H}^1(\sheafA \boxtimes \sheafA|_C) = \left(\textnormal{H}^1(\sheafA) \otimes \textnormal{H}^0( \sheafA|_C)\right) \oplus \left(\textnormal{H}^0(\sheafA) \otimes \textnormal{H}^1(\sheafA|_C)\right) = 0.
    \end{split}
\end{equation}
Therefore, the long exact sequence in cohomology induced by
\eqref{eq:Resseq} results in an isomorphism \(R\colon\textnormal{H}^0(\sheafA \boxtimes \sheafA|_C(\Delta)) \to \textnormal{H}^0(\sheafA|_C \otimes \sheafA|_C)\).

The pairing \(\beta_\sheafA\) of \(\sheafA\) induces an isomorphism \(
    B\colon \sheafA|_C\otimes_{\sheafO_C} \sheafA|_C \to \sheafEnd_{\sheafO_C}
   (\sheafA|_C)\) defined by 
\begin{equation}
    a \otimes b \longmapsto \beta_\sheafA(b,-)a \textnormal{ for all }U\subseteq C \textnormal{ affine open }a,b \in \Gamma(U,\sheafA).
\end{equation}
Combined with \(R\), we obtain an isomorphism \(\Phi\coloneqq BR\colon\textnormal{H}^0(\sheafA \boxtimes \sheafA|_C(\Delta)) \to \textnormal{End}_{\sheafO_C}(\sheafA|_C)\).

Consider the section \(\rho\coloneqq \Phi^{-1}(\textnormal{id}_{\sheafA|_C}) \in \textnormal{H}^0(\sheafA \boxtimes \sheafA|_C(\Delta))\). Then, if \(U \subseteq C\) is any affine open subset such that \(\eta = \mu^{-1}\textnormal{d}u\) for \(\mu,u \in \Gamma(U,\sheafO_X\), we can write 
\begin{equation}\label{eq:rho_local_form}
    \rho|_{U \times U} = \frac{(1 \otimes \mu)\chi}{u_1-u_2} + s
\end{equation}
for some \(s \in \Gamma(U\times U,\sheafA \boxtimes \sheafA)\), where \(\chi \in \Gamma(U\times U,\sheafA \boxtimes \sheafA)\) is any preimage of \(\textnormal{id}_{\sheafA|_U}\) under the surjective map 
\[\Gamma(U\times U,\sheafA \boxtimes \sheafA) \longrightarrow \Gamma(U,\sheafA \otimes \sheafA) \to \End_{\sheafO_U}(\sheafA|_U).\]
One should think of \eqref{eq:rho_local_form} as an analog of the standard form \eqref{eq:standard_form} of \((n,\lambda)\)-type series.

By repeating the arguments in \cite[Theorem 3.11 and Theorem 4.3]{burban_galinat}, we can see that
\begin{equation}
    \rho^{13}\rho^{12} - \rho^{12}\rho^{23} + \rho^{13}\rho^{23} = 0.
\end{equation}
Here, the summands on the left-hand side can be understood as a rational section of \(\sheafA \boxtimes \sheafA \boxtimes \sheafA\) by adapting the notations from Subsection \ref{sec:definition_CYBE} to this geometric setting. In particular, \(\rho\) is a solution of a geometric version of the \(A\)-CYBE.

\subsection{Geometrization of Manin triples}\label{sec:geometrization_of_manin_triples}
Recall that \(\Bbbk\) is an algebraically closed field of characteristic 0. Let \((A,\beta)\) be a strongly geometrically admissible \(\Bbbk\)-algebra, \(n \in \mathbb{N}\), \(\lambda \in \Bbbk[\![z]\!]^\times\), and \(((D_n(A),\beta_{(n,\lambda)}),A[\![z]\!],W)\) be the Manin triple associated to that data in Subsection \ref{sec:manin_triples_over_series_explicit}. Recall from Theorem \ref{thm:categorization_of_manin_triples} that \(n \in \{0,1,2\}\) and that we may assume \(\lambda = 1\).

The goal of this section is to assign a geometric \(A\)-CYBE datum to \(((D_n(A),\beta_{(n,1)}),A[\![z]\!],W)\).

\subsubsection{Geometrization in case \(n = 0\)}\label{sec:geometrization_n=0}
There exists a particular \(O \subseteq M \coloneqq\{f \in \Bbbk(\!(z)\!)\mid f W \subseteq W\}\) such that \(\dim(\Bbbk(\!(z)\!)/(\Bbbk[\![z]\!] + O)) = 1\); see Subsection \ref{sec:geometric_categorization_general}. Namely, the integral closure \(N\) of \(M\) either satisfies \(\dim(\Bbbk(\!(z)\!)/(\Bbbk[\![z]\!]+N)) = 1\) or \(N= \Bbbk[u]\) for some \(u \in z^{-1} + z\Bbbk[\![z]\!]^\times\), and then
\begin{equation}\label{eq:multipliers_explicit}
    O \coloneqq \begin{cases}
            N &\textnormal{if }\dim(\Bbbk(\!(z)\!)/(\Bbbk[\![z]\!]+N)) = 1,\\
            \Bbbk[u',u'u]&\textnormal{if }N= \Bbbk[u].
        \end{cases}
\end{equation}
Applying the geometrization procedure form Section \ref{sec:geometrization} to \((O,W)\) gives a geometric datum \(((X,\sheafA),(p,c,\zeta))\). Observe that \(\sheafA\) satisfies \(\zeta \colon \widehat{\sheafA}_p \stackrel{\cong}\longrightarrow A[\![z]\!]\) and \(\textnormal{H}^0(\sheafA) = 0 = \textnormal{H}^1(\sheafA)\) (combine \(A(\!(z)\!) = A[\![z]\!] \oplus W\) with \eqref{eq:cohomology}). 

If \(X\) is smooth it is a smooth irreducible cubic plane curve. If \(O = \Bbbk[u',u'u]\) for \(u \neq z^{-1}\), we have \(-u' = u^2 - a\) for some \(a \in \Bbbk \setminus\{0\}\). This equation is equivalent to \(u^{\prime,2}(a-u') = u^{\prime,2}u^2\), so putting \(y = u'u\) and \(x = u'\), we see that \(X\) is a nodal irreducible cubic plane curve. Finally, if \(O = \Bbbk[z^{-2},z^{-3}]\), \(X\) is clearly a cuspidal irreducible cubic plane curve. 

In order to see that \((X,\sheafA)\) gives rise to a geometric \(A\)-CYBE datum, we have to construct an appropriate \(\sheafO_X\)-bilinear map \(\beta_\sheafA \colon \sheafA \times \sheafA \to \sheafO_X\).

If \(X\) is smooth (which is equivalent to the fact that \(O\) is integrally closed), the geometrically admissible metric \(\beta\) defines a pairing \(\beta_\sheafA \colon \sheafA \times \sheafA \to \sheafO_X\); see
Subsection \ref{sec:properties_geometriaclly_admissible_pairing}.(2). 

Let us assume that \(X\) is singular, i.e.\ \(O = \Bbbk[u',u'u]\). Since \(\beta\) is geometrically admissible, it induces a pairing \(W \times W \to N = \Bbbk[u]\). Since \(W\) is Lagrangian, the image under this pairing lies in the kernel of \(\textnormal{res}_0\) restricted to \(\Bbbk[u]\). It is easy to see that this kernel is equal to \(O\), so the coefficient-wise application of \(\beta\) defines a map \(W \times W \to O\). It is now straight forward to see that for every \(U \subseteq X\), the commutative diagram
\begin{equation}
    \xymatrix{\Gamma(U,\sheafA)\times \Gamma(U,\sheafA) \ar[r]\ar[d]_{\zeta \times \zeta} & \Gamma(U,\sheafO_X)\ar[d]^{c} \\ A(\!(z)\!)\times A(\!(z)\!) \ar[r]_-{\beta} & \Bbbk(\!(z)\!)}
\end{equation}
defines a pairing \(\beta_\sheafA \colon \sheafA \times \sheafA \to \sheafO_X\). 

\begin{lemma} \label{lemm:sheafA_has_fiber_A}
The triple \((X,(\sheafA,\beta_{\sheafA}))\) is a geometric \(A\)-CYBE datum. In particular, \(\sheafA|_q \cong A\) for all smooth closed \(q \in X\).
\end{lemma}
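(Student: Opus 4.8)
The plan is to verify the three defining conditions of a geometric \(A\)-CYBE datum from Subsection \ref{sec:geometric_A_CYBE_data} in turn, since the construction preceding the lemma has already prepared most of them. That \(X\) is an irreducible cubic plane curve is exactly the content of the discussion above the lemma: by \eqref{eq:arithmetic_genus} the arithmetic genus of \(X\) equals \(\dim(\Bbbk(\!(z)\!)/(\Bbbk[\![z]\!]+O)) = 1\), so Remark \ref{rem:irreducible_cubic_plane} identifies \(X\) up to isomorphism with an irreducible cubic plane curve, smooth, nodal, or cuspidal according to the three cases of \eqref{eq:multipliers_explicit}. Coherence of the sheaf of algebras \(\sheafA\) is part of the geometrization in Section \ref{sec:geometrization}, and the vanishing \(\tH^0(\sheafA) = 0 = \tH^1(\sheafA)\) follows from \eqref{eq:cohomology} together with \(A(\!(z)\!) = A[\![z]\!] \oplus W\), which holds since \(n = 0\) forces \(D_0(A) = A(\!(z)\!)\).

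Next I would treat \(\beta_\sheafA\). Symmetry and associativity are inherited directly from the algebra metric \(\beta\), because \(\beta_\sheafA\) is defined section-wise through the commuting squares used in its construction and \(\sheafA\) is a sheaf of algebras. For perfectness I would run the argument of Subsection \ref{sec:properties_geometriaclly_admissible_pairing}.(3): the fiber of \(\beta_\sheafA\) at the marked point \(p\) is identified via \(\zeta\) with \(\beta\) itself, which is non-degenerate, so the adjoint \(\beta_\sheafA^{\textnormal a}\colon \sheafA \to \sheafA^*\) is an isomorphism at \(p\). Consequently its kernel, being a torsion subsheaf of the torsion-free sheaf \(\sheafA\), vanishes, and its cokernel is torsion; thus \(\beta_\sheafA\) is non-degenerate. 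Since \(\tH^1(\sheafA) = 0\), Lemma \ref{lemm:geometric_A_CYBE_datum} upgrades non-degeneracy to perfectness, and as a byproduct \(\beta_\sheafA|_q\) is non-degenerate at every closed point \(q\).

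The real content is the fiberwise identification \(\sheafA|_q \cong A\) at smooth closed points \(q\), which is where strong geometric admissibility enters. For \(q = p\) this is immediate: the \(c\)-equivariant algebra isomorphism \(\zeta\colon \widehat{\sheafA}_p \to A[\![z]\!]\) descends to \(\sheafA|_p \cong A[\![z]\!]/zA[\![z]\!] = A\). For a smooth closed point \(q \neq p\), the local ring \(\sheafO_{X,q} = O_{\mathfrak{m}_q}\) is a discrete valuation ring, so the torsion-free stalk \(\zeta(\sheafA_q) = W_{\mathfrak{m}_q}\) is free over it, and the pairing induced by \(\beta\) on \(W_{\mathfrak{m}_q}\) is perfect because \(\beta_\sheafA\) is. The defining property of a strongly geometrically admissible algebra from Subsection \ref{sec:geometrically_M_admissible}, applied to the ringed \(A\)-lattice \((O,W)\) and the regular maximal ideal \(\mathfrak{m}_q\), then gives \(\sheafA|_q = W/\mathfrak{m}_q W \cong A\). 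I expect this last step to be the main obstacle: it is the point where one must carefully match the geometric notion of a smooth closed point carrying a perfect fiber pairing to the purely algebraic hypothesis (freeness at regular maximal ideals, perfectness of the induced pairing) of strong geometric admissibility. Once that dictionary is fixed, the conclusion is a direct application, and assembling the three verified conditions shows that \((X,(\sheafA,\beta_\sheafA))\) is a geometric \(A\)-CYBE datum.
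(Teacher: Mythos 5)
Your proposal is correct and follows essentially the same route as the paper's proof: non-degeneracy of \(\beta_\sheafA\) via the fiber at \(p\) and the torsion-free-ness of \(\sheafA\), perfectness via Lemma \ref{lemm:geometric_A_CYBE_datum}, and the fiber identification \(\sheafA|_q \cong W/\mathfrak{m}_qW \cong A\) at smooth points \(q \neq p\) by applying strong geometric admissibility to the perfect pairing on \(W_{\mathfrak{m}_q}\). The only difference is that you spell out the freeness of \(W_{\mathfrak{m}_q}\) over the discrete valuation ring \(O_{\mathfrak{m}_q}\), a hypothesis of strong geometric admissibility that the paper leaves implicit.
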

\begin{proof}
    Since by construction the fiber of \(\beta_\sheafA\) is \(\beta\) at \(p \in X\), the kernel of the canonical morphism \(\sheafA \to \sheafA^*\) is a torsion subsheaf of the torsion free sheaf \(\sheafA\), hence zero. In other words, \(\beta_\sheafA\) is non-degenerate. By virtue of Lemma \ref{lemm:geometric_A_CYBE_datum}, \(\beta_\sheafA\) is perfect.

    It remains to prove that \(\sheafA|_q \cong A\) for all smooth closed \(q \in X\). Observe that \(\sheafA|_p \cong A\) already holds, so we may assume \(q \neq p\). Let \(\mathfrak{m} \subseteq O\) be the maximal ideal associated to
    \[q \in X \setminus \{p\} \cong \textnormal{Spec}(O).\]
    Then \(\beta_{\sheafA,q}\) is identified with the restriction \(W_{\mathfrak{m}} \times W_{\mathfrak{m}} \to O_{\mathfrak{m}}\) of \(\beta\) from \eqref{eq:beta_extension} through \(\sheafA_q \cong W_{\mathfrak{m}}\) and \(\sheafO_{X,q} \cong O_{\mathfrak{m}}\). Since \(A\) is strongly geometrically admissible (recall the definition from Subsection \ref{sec:geometrically_M_admissible}) and \(\beta_{\sheafA,q}\) is perfect, we obtain \(\sheafA|_q \cong W/\mathfrak{m}W \cong A\).
\end{proof}

\noindent
Let us note the following important consequence of Lemma \ref{lemm:sheafA_has_fiber_A}.

\begin{proposition}\label{lemm:concluding_bd_trich_case}
The following results are true.
\begin{enumerate}
    \item If \(X\) is elliptic, \(A\) is non-unital.

    \item Let \(X\) be singular and \(\rho\) be the section constructed in Subsection \ref{sec:geometric_rmatrix} from \((X,(\sheafA,\beta_\sheafA))\) and \(\eta \coloneqq dv \in \textnormal{H}^0(\omega_X)\), where \(v \coloneqq u/u'\) is the local generator of \(p\) associated to the representation \eqref{eq:multipliers_explicit}.
    Then image of \(\rho\) under the Taylor expansion \[\textnormal{H}^0(\sheafA \boxtimes \sheafA|_C(\Delta)) \longrightarrow \varprojlim_k\left(\Gamma(X\setminus\{p\},\sheafA)\otimes \widehat{\sheafA}_p/\mathfrak{m}_p\widehat{\sheafA}^k_p\right)\stackrel{\zeta \otimes \zeta}{\longrightarrow} (A(\!(z)\!)\otimes A)[\![z]\!]\]
    at \(X \times \{p\}\) trivialized by \((c,\zeta)\) is equivalent to the solution \(r\) of the \(A\)-CYBE associated to the Manin triple \(((A(\!(z)\!),\beta_{(0,0)}),A[\![z]\!],W)\).
     \end{enumerate}
\end{proposition}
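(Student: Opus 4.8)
The plan is to handle the two parts separately; part (1) is a short unitality obstruction, while part (2) reduces the geometric construction of Subsection~\ref{sec:geometric_rmatrix} to the algebraic bijection of Corollary~\ref{thm:solutions_of_CYBE_and_manin_triples}.

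For part (1) I would argue by contradiction. If \(X\) is elliptic it is smooth, so Lemma~\ref{lemm:sheafA_has_fiber_A} gives \(\sheafA|_q \cong A\) for \emph{every} closed point \(q \in X\). Since \(X\) is a reduced \(\Bbbk\)-scheme of finite type and \(\sheafA\) is coherent, Proposition~\ref{prop:weak_locall_free_implies_etale} then shows that \(\sheafA\) is \'etale \(A\)-locally free. If \(A\) were unital, Lemma~\ref{lemm:etale_locally_free_implies_unital} would force \(\sheafA\) to be unital and in particular \(\textnormal{h}^0(\sheafA) > 0\); this contradicts the property \(\textnormal{H}^0(\sheafA) = 0\) of the geometric \(A\)-CYBE datum \((X,(\sheafA,\beta_\sheafA))\) established in Lemma~\ref{lemm:sheafA_has_fiber_A}. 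Hence \(A\) is non-unital.

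For part (2), write \(\widetilde{r} \in (A(\!(z)\!) \otimes A)[\![z]\!]\) for the trivialized Taylor expansion of \(\rho\) at \(X \times \{p\}\). The first step is to show that \(\widetilde{r}\) is a series of type \((0,\lambda')\) in the sense of \eqref{eq:standard_form} for some \(\lambda' \in \Bbbk[\![z]\!]^\times\). By construction the second-variable coefficients of \(\widetilde{r}\) lie in \(\zeta(\Gamma(X\setminus\{p\},\sheafA)) = W\), so \(A(\widetilde{r}) \subseteq W\); and the normalization \(\eta = dv\) with \(v = u/u'\) fixes a local parameter at \(p\) for which the local form \eqref{eq:rho_local_form} of \(\rho\), together with the characterization \(\Phi(\rho) = \textnormal{id}\) (where \(\Phi = BR\)) and the fact that \(B^{-1}(\textnormal{id})\) is the fiber of the canonical invariant element \(\gamma\), shows that the polar part of \(\widetilde{r}\) along the diagonal equals \(\gamma/(x-y)\) up to a unit. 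Thus \(\widetilde{r}\) is genuinely of type \((0,\lambda')\), and Theorem~\ref{thm:series_and_subspaces}.(1) yields \(A(\!(z)\!) = A[\![z]\!] \oplus A(\widetilde{r})\). Combined with the Manin-triple decomposition \(A(\!(z)\!) = A[\![z]\!] \oplus W\) and the inclusion \(A(\widetilde{r}) \subseteq W\), directness of both sums forces \(A(\widetilde{r}) = W\).

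It then remains to identify \(\widetilde{r}\) with the prescribed solution. Since \(A(\widetilde{r}) = W\) is a Lagrangian subalgebra of \(D_0(A) = A(\!(z)\!)\), Theorem~\ref{thm:series_and_subspaces}.(3) shows \(\widetilde{r}\) solves the \(A\)-GCYBE, and simplicity of \(A\) makes it skew-symmetric, hence a solution of the \(A\)-CYBE (this is also visible directly from the geometric \(A\)-CYBE satisfied by \(\rho\) in Subsection~\ref{sec:geometric_rmatrix}). By Corollary~\ref{thm:solutions_of_CYBE_and_manin_triples}, \(\widetilde{r}\) is therefore the unique solution of type \((0,\lambda')\) attached to the Manin triple with Lagrangian complement \(W\). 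As the prescribed \(r\) is the solution of type \((0,1)\) attached to the \emph{same} \(W\), Lemma~\ref{lem:equivalence_manintriple_rmatrices} gives \(\widetilde{r} \sim r\); the equivalence absorbs both the passage from \(\lambda'\) to \(1\) and the reparametrization \(z \mapsto c(v)\) coming from identifying \(v\) with a uniformizer at \(p\). The main obstacle is the first step: verifying that the trivialized Taylor expansion has exactly the diagonal polar part \(\gamma/(x-y)\) needed for it to be of type \((0,\lambda')\), which requires tracking the residue isomorphism \(R\) and the pairing-induced isomorphism \(B\) through the diagonal residue sequence \eqref{eq:Resseq} under the choice \(\eta = dv\), i.e.\ a careful import of the Burban--Galinat bookkeeping from \cite{burban_galinat}; the remaining steps are then formal consequences of results already established.
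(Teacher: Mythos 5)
Your part (1) is correct and is verbatim the paper's own argument: smoothness of the elliptic curve together with Lemma \ref{lemm:sheafA_has_fiber_A} and Proposition \ref{prop:weak_locall_free_implies_etale} gives \'etale \(A\)-local freeness of \(\sheafA\), and Lemma \ref{lemm:etale_locally_free_implies_unital} then contradicts \(\textnormal{h}^0(\sheafA)=0\) if \(A\) is unital.

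For part (2), where the paper gives no argument but cites \cite[Theorem 3.17]{abedin_universal_geometrization}, your architecture is the expected one, and the steps ``coefficients of \(\widetilde{r}\) lie in \(W\)'', ``\(A(\widetilde{r})=W\) by complementarity'', and ``\(\textnormal{CYB}(\widetilde{r})=0\) from the geometric CYBE'' are sound. The genuine gap is the final step. Lemma \ref{lem:equivalence_manintriple_rmatrices} does \emph{not} say that two solutions with the same coefficient space are equivalent; it says \(\widetilde{r}\sim r\) if and only if \(((D_0(A),\beta_{(0,\lambda')}),A[\![z]\!],W)\) and \(((D_0(A),\beta_{(0,1)}),A[\![z]\!],W)\) are isomorphic as Manin triples, i.e.\ if and only if there exists an algebra automorphism of \(A(\!(z)\!)\) that preserves \(A[\![z]\!]\) and preserves \(W\) set-wise while carrying \(\beta_{(0,\lambda')}\) to \(\beta_{(0,1)}\). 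Sharing the subspace \(W\) does not produce such an automorphism when \(\lambda'\neq 1\), so ``the equivalence absorbs the passage from \(\lambda'\) to \(1\)'' is an assertion, not a consequence of what you have established. Nothing in your argument pins \(\lambda'\) down or constructs the required isomorphism.

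That this is not a removable formality can be seen in the cuspidal case: there \(u=z^{-1}\), so \(v=u/u'=-z\) and the unit produced by your polar-part analysis is the constant \(\lambda'=1/v'=-1\). The unique type-\((0,-1)\) series with coefficient space \(W\) is then \(-r\); e.g.\ for \(r=\gamma/(x-y)+t\) with \(t\in A\otimes A\) constant and skew, one gets \(\widetilde{r}=-\gamma/(x-y)-t=-r\). The claim \(r\sim -r\) requires, besides the reparametrization \(z\mapsto -z\), an automorphism \(\varphi\) with \((\varphi\otimes\varphi)t=-t\) (for Aguiar's \(t=e_{11}\otimes e_{12}-e_{12}\otimes e_{11}\) one can take \(\varphi=\textnormal{Ad}_{\textnormal{diag}(1,-1)}\)), and the existence of such a \(\varphi\) for an arbitrary \(W\) is precisely the kind of statement part (2) is supposed to prove; it cannot be extracted from the uniqueness in Corollary \ref{thm:solutions_of_CYBE_and_manin_triples}. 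The fix, and this is what the cited proof does, is not to collapse the expansion to ``type \((0,\lambda')\) for some unit'': one must carry the specific normalization \(\eta=dv\), \(v=u/u'\), through the Burban--Galinat bookkeeping sharply enough to exhibit \(\widetilde{r}\) explicitly as the image of \(r\) under an admissible transformation (a coordinate change built from \(v\) together with an automorphism coming from the trivialization), rather than deferring that computation and hoping the abstract bijection closes the gap afterwards.
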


\begin{proof}[Proof of (1)]
Assume \(X\) is elliptic and \(A\) is unital. According to Lemma \ref{lemm:sheafA_has_fiber_A} and Proposition \ref{prop:weak_locall_free_implies_etale}, \(\sheafA\) is \(A\)-\'etale locally free. By virtue of Lemma \ref{lemm:etale_locally_free_implies_unital}, this contradicts \(\textnormal{h}^0(\sheafA) = 0\). Therefore, \(A\) is non-unital if \(X\) is elliptic.
\end{proof}

\begin{proof}[Proof of (2)]
The proof is a straight forward repetition of the the proof of \cite[Theorem 3.17]{abedin_universal_geometrization} (see also \cite[Theorem 3.3.3]{abedin_thesis}).
\end{proof}

\subsubsection{Geometrization in case \(n = 1\)}\label{sec:geometrization_n=1}
Let \(W_+\) (resp.\ \(W_-\)) be the projection of 
\[W \subseteq D_1(A) = A(\!(z)\!) \times A\]
onto the \(A(\!(z)\!)\) (resp.\ \(A\)) component. By virtue of Section \ref{sec:categorization_general}.(4), \(\Bbbk[z^{-1}] W_+ \subseteq W_+\) and we can consider the geometrization \(
((Y,\mathcal{W}),(p,c,\zeta))\) of \((\Bbbk[z],W_+)\), where \(Y = \mathbb{P}^1\) and \( s_- \coloneqq p = (z)\). Let \(s_+ \in \mathbb{P}^1\) be the point corresponding to the ideal \((z^{-1}) \subseteq \Bbbk[z^{-1}]\) via \(c(\Gamma(\mathbb{P}^1\setminus\{s_-\},\mathcal{O}_X)) = \Bbbk[z^{-1}]\).

Let the sheaf of algebras \(\mathcal{V}\) on \(\mathbb{P}^1\) be defined as the pull-back 
\begin{equation}
    \xymatrix{\mathcal{V} \ar[r] \ar[d] 
    & W_- \ar[d] \\
    \mathcal{W} \ar[r] &  \mathcal{W}|_{s_-} \cong A}
\end{equation}
where \(A\), \(W_-\), and \(\mathcal{W}|_{s_-}\) are understood as skyscraper sheaves at \(s_-\).
In other words, \(\mathcal{V}\) fits into the short exact sequence
\begin{equation} \label{eq:short_exseq_VW}
    0 \longrightarrow \mathcal{V} \longrightarrow \mathcal{W} \oplus W_- \longrightarrow A \longrightarrow 0.
\end{equation}
Since the morphism \(W_- \to A\) is injective, the morphism \(\mathcal{V}\to\mathcal{W}\) is too and we can identify \(\mathcal{V}\) with a subsheaf of \(\mathcal{W}\). Let \(\beta_{\mathcal{W}} \colon \mathcal{W} \times \mathcal{W} \to \sheafO_{\mathbb{P}^1}\) be the pairing induced by \(\beta\) in Subsection \ref{sec:properties_geometriaclly_admissible_pairing}.(2) and \(\beta_{\mathcal{V}} \colon \mathcal{V} \times \mathcal{V} \to \sheafO_{\mathbb{P}^1}\) be the restriction to \(\mathcal{V}\).

\begin{lemma}\label{lemm:thetas}
The following is true:
\begin{enumerate}
    \item \(\textnormal{H}^0(\mathcal{V}) \cong \iota(\fg[\![x]\!]) \cap (W_+ \times W_-) \), \(\textnormal{H}^1(\mathcal{V}) = 0\)
    and \(\mathcal{V}|_{\mathbb{P}^1\setminus\{s_-\}} = \mathcal{W}|_{\mathbb{P}^1\setminus\{s_-\}}\);
    
    \item There exist canonical surjective morphisms \(\theta_\pm \colon \mathcal{V}|_{s_\pm} \to W_\pm/W_\pm^\bot\)
    such that \[\beta_{\mathcal{V}}|_{s_\pm}(a,b) = \beta_{(1,1)}^\pm(\theta_\pm(a),\theta_\pm(b))\]
    holds for all \(a,b \in \mathcal{V}|_{s_\pm}\). 
\end{enumerate}
\end{lemma}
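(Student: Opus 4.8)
The plan is to extract both parts of the lemma directly from the defining short exact sequence \eqref{eq:short_exseq_VW} together with the geometric data already attached to $\mathcal{W}$. First I would dispose of the easy assertion in (1): since $W_-$ and $A$ are skyscrapers supported at $s_-$, restricting \eqref{eq:short_exseq_VW} to $\mathbb{P}^1\setminus\{s_-\}$ gives $\mathcal{V}|_{\mathbb{P}^1\setminus\{s_-\}}\cong\mathcal{W}|_{\mathbb{P}^1\setminus\{s_-\}}$; in particular $\mathcal{V}=\mathcal{W}$ and $\beta_{\mathcal{V}}=\beta_{\mathcal{W}}$ near $s_+$, which will be used later.

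For the cohomology I would pass to the long exact sequence of \eqref{eq:short_exseq_VW}. Using $\textnormal{H}^i(W_-)=\delta_{i0}W_-$, $\textnormal{H}^i(A)=\delta_{i0}A$, the identification $\textnormal{H}^0(\mathcal{W})\cong W_+\cap A[\![z]\!]$ (via $\zeta(\Gamma(\mathbb{P}^1,\mathcal{W}))=W_+\cap A[\![z]\!]$), and $\textnormal{H}^1(\mathcal{W})=0$ (which follows from $A(\!(z)\!)=A[\![z]\!]+W_+$ of Subsection \ref{lem:manin_triples}.(2) and \eqref{eq:cohomology}), the sequence collapses to $0\to\textnormal{H}^0(\mathcal{V})\to(W_+\cap A[\![z]\!])\oplus W_-\xrightarrow{\psi}A\to\textnormal{H}^1(\mathcal{V})\to0$, where $\psi(w,v)=w(0)-v$ is evaluation at $s_-$ minus the inclusion $W_-\hookrightarrow A$. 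Then $\ker\psi=\{(a,a(0)):a\in W_+\cap A[\![z]\!],\,a(0)\in W_-\}$ is precisely $\iota(A[\![z]\!])\cap(W_+\times W_-)$ for the diagonal embedding $\iota(a)=(a,a(0))$, yielding the first isomorphism. For $\textnormal{H}^1(\mathcal{V})=\textnormal{coker}(\psi)=0$ I would invoke the full Manin-triple decomposition $D_1(A)=\iota(A[\![z]\!])\oplus W$: for $a\in A$, writing $(0,a)=\iota(b)+(w_+,w_-)$ forces $w_+=-b\in W_+\cap A[\![z]\!]$ and $a=-w_+(0)+w_-=-\psi(w_+,w_-)\in\textnormal{im}(\psi)$. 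This is the one place where the direct sum, and not merely the lattice property, is essential.

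The morphism $\theta_+$ comes from evaluation at $s_+$. As $\mathbb{P}^1\setminus\{s_-\}=\textnormal{Spec}(\Bbbk[z^{-1}])$ is affine, the evaluation $\textnormal{ev}_{s_+}\colon W_+=\Gamma(\mathbb{P}^1\setminus\{s_-\},\mathcal{W})\twoheadrightarrow\mathcal{W}|_{s_+}=\mathcal{V}|_{s_+}$ is surjective. The key identity is $\beta_{(1,1)}^+(a_1,a_2)=\beta_{\mathcal{W}}|_{s_+}(\textnormal{ev}_{s_+}a_1,\textnormal{ev}_{s_+}a_2)$: geometric admissibility gives $\beta(a_1,a_2)\in\Bbbk[z^{-1}]$, and $\textnormal{res}_0\tfrac1z\beta(a_1,a_2)$ is exactly the value of this polynomial in $z^{-1}$ at $s_+=(z^{-1})$, which by diagram \eqref{eq:beta_A} is the fiber pairing of the evaluated sections. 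Hence $\ker(\textnormal{ev}_{s_+})\subseteq W_+^\bot$, so $W_+\to W_+/W_+^\bot$ factors through a surjection $\theta_+\colon\mathcal{V}|_{s_+}\to W_+/W_+^\bot$; since $W_+^\bot\subseteq W_+$ by Subsection \ref{lem:manin_triples}.(1), $\beta_{(1,1)}^+$ descends to $W_+/W_+^\bot$ and the stated identity $\beta_{\mathcal{V}}|_{s_+}(a,b)=\beta_{(1,1)}^+(\theta_+a,\theta_+b)$ is a restatement of the key identity.

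The remaining and most delicate point is $\theta_-$, where $\mathcal{V}$ genuinely differs from $\mathcal{W}$. Here I would compute the stalk from \eqref{eq:short_exseq_VW}: under $\zeta$, $\widehat{\mathcal{V}}_{s_-}$ is the preimage of $W_-$ under $A[\![z]\!]\to A$, namely $W_-+zA[\![z]\!]$, so the fiber is $\mathcal{V}|_{s_-}\cong W_-\oplus z(A/W_-)$, of dimension $\dim A$. The pullback projection $\mathcal{V}\to W_-$ induces on fibers the class of $w_0+za_1+\cdots$ (with $w_0\in W_-$) to $w_0$; composing with $W_-\to W_-/W_-^\bot$ gives the canonical surjection $\theta_-$. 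For the metric identity I would use that the fiber of $\beta_{\mathcal{W}}$ at $p=s_-$ is $\beta$ (diagram \eqref{eq:beta_A}): for lifts $\tilde a=w_0+za_1+\cdots$, $\tilde b=w_0'+zb_1+\cdots$ in $W_-+zA[\![z]\!]$ one finds $\beta_{\mathcal{V}}|_{s_-}(a,b)=(\beta_{\mathcal{W}}(\tilde a,\tilde b))(0)=\beta(w_0,w_0')=\beta_{(1,1)}^-([w_0],[w_0'])=\beta_{(1,1)}^-(\theta_-a,\theta_-b)$, using that $\beta_{(1,1)}^-$ on $A[z]/zA[z]=A$ is $\beta$ itself. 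The main obstacle is thus purely the $s_-$-fiber bookkeeping: correctly identifying the twisted fiber $\mathcal{V}|_{s_-}$ produced by the fiber-product construction and checking that $\beta_{\mathcal{V}}|_{s_-}$ depends only on the $W_-$-components of the lifts. Once the fibers at $s_\pm$ are pinned down, both maps $\theta_\pm$ and their compatibilities are forced.
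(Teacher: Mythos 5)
Your proposal is correct and takes essentially the same route as the paper: part (1) via the long exact cohomology sequence of \eqref{eq:short_exseq_VW} (with \(\textnormal{H}^1(\mathcal{W})=0\) and vanishing \(\textnormal{H}^1\) of torsion sheaves), and part (2) via the identification \(\mathcal{V}|_{s_+}\cong W_+/z^{-1}W_+\) together with \(z^{-1}W_+\subseteq W_+^\bot\) from geometric admissibility for \(\theta_+\), and the pull-back projection to \(W_-\) for \(\theta_-\). The only differences are presentational: you make explicit two points the paper asserts tersely, namely the surjectivity of \(\textnormal{H}^0(\mathcal{W})\oplus W_-\to A\) (deduced from the decomposition \(D_1(A)=A[\![z]\!]\oplus W\)) and the computation of the fiber \(\mathcal{V}|_{s_-}\cong W_-\oplus z(A/W_-)\) with the verification that \(\beta_{\mathcal{V}}|_{s_-}\) only sees the \(W_-\)-components.
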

\begin{proof}[Proof of (1)]
The restriction of the short exact sequence \eqref{eq:short_exseq_VW} to \(\mathbb{P}^1\setminus\{s_-\}\) results in \(\mathcal{V}|_{\mathbb{P}^1\setminus\{s_-\}} = \mathcal{W}|_{\mathbb{P}^1\setminus\{s_-\}}\).
Since the first cohomology group of torsion sheaves vanishes and \(A[\![z]\!] + W_+ = A(\!(z)\!)\) implies \(\textnormal{H}^1(\mathcal{W}) = 0\) because of \eqref{eq:cohomology}, the long exact sequence of \eqref{eq:short_exseq_VW} in cohomology reads
\begin{equation}\label{eq:long_exact_cohomology_V}
    0 \longrightarrow \textnormal{H}^0(\mathcal{V}) \longrightarrow \textnormal{H}^0(\mathcal{W}) \oplus W_- \longrightarrow A \longrightarrow \textnormal{H}^1(\mathcal{V}) \longrightarrow 0.
\end{equation}
In particular, \(\textnormal{H}^0(\mathcal{W})  \cong A[\![z]\!]\cap W_+\) implies
\[\textnormal{H}^0(\mathcal{V}) \cong A[\![z]\!] \cap (W_+ \times W_-).\]
The image \(\overline{W}_+\) of \(A[\![z]\!]\cap W_+ \to A\) under the canonical map \(A[\![z]\!] \to A\) satisfies \(\overline{W}_+ + W_- = A\). Therefore, \(\textnormal{H}^0(\mathcal{W}) \oplus W_- \to A\) in 
\eqref{eq:long_exact_cohomology_V}
is surjective. Consequently, \(\textnormal{H}^1(\mathcal{V}) = 0\).
\end{proof}

\begin{proof}[Proof of (2)]
The algebra \(W_+\) is a torsion-free as \(\Bbbk[z^{-1}]\)-module, so it is a free \(\Bbbk[z^{-1}]\)-module. Since \(\beta\) is geometrically admissible, this implies \(\beta(W_+,W_+)\subseteq \Bbbk[z^{-1}]\).  In particular,
\begin{equation}
    \beta_{(1,1)}^+(z^{-1}a,b) = \textnormal{res}_0 z^{-2}\beta(a,b)  = 0
\end{equation}
for all \(a,b \in W_+\), so \(z^{-1}W_+ \subseteq W_+^\bot\). Therefore, we have a surjective morphism 
\[
\theta_+\colon \mathcal{V}|_{s_+} \cong W_+/z^{-1}W_+ \longrightarrow W_+/W_+^\bot
\] 
intertwining the corresponding forms.

On the other hand, the construction of \(\mathcal{V}\) as pull-back gives a canonical map \(\mathcal{V} \to W_-\) which is surjective since
\(\mathcal{W} \to \mathcal{W}|_{s_-}\)
is surjective. This morphism factors through
a surjective morphism \(\mathcal{V}|_{s_-} \to W_-\) which respects the forms and this map induces \(\theta_-\). 
\end{proof}

\noindent
Let \(X\) be an irreducible cubic plane curve with nodal singularity \(s\) and chose the normalization \(\nu \colon \mathbb{P}^1 \to X\) in such a way that \(\nu^{-1}(s) = \{s_+,s_-\}\). Let us understand \(W_\pm/W_\pm^\bot\) as skyscraper sheaf at \(s_\pm\) and let \(\theta\) be the direct image under \(\nu\) of the morphism 
\begin{equation}
     \mathcal{V} \longrightarrow \mathcal{V}|_{s_+}\times \mathcal{V}|_{s_-} \stackrel{(\theta_+,\theta_-)}\longrightarrow W_+/W_+^\bot \times W_-/W_-^\bot
\end{equation}
for \(\theta_\pm\) from Lemma \ref{lemm:thetas}.
Let \(\mathcal{A}\) be defined as pull-back 
\begin{equation}
    \xymatrix{\mathcal{A}\ar[r]\ar[d]&
    W/(W_+^\bot \times W_-^\bot)\ar[d]\\ \nu_*\mathcal{V}\ar[r]_-{\theta}&\nu_*(W_+/W_+^\bot \times W_-/W_-^\bot)}
\end{equation}
where \(W/(W_+^\bot \times W_-^\bot)\) is viewed as skyscraper sheaves at \(s\in X\). Again, this is equivalent to the short exact sequence
\begin{equation}\label{eq:A_qt_case_kernel_def}
    0 \longrightarrow \mathcal{A} \longrightarrow \nu_*\mathcal{V} \oplus (W/(W_+^\bot \times W_-^\bot)) \longrightarrow W_+/W_+^\bot \times W_-/W_-^\bot \longrightarrow 0.
\end{equation}
Therefore, \(\sheafA\) can be identified with a subsheaf of \(\nu_*\mathcal{V}\). Let \(\beta_\sheafA\colon \mathcal{A}\times \mathcal{A}\to \nu_*\mathcal{O}_{\mathbb{P}^1}\) be the restriction of \(\nu_*\beta_{\mathcal{V}}\) to \(\mathcal{A}\), where we recall that \(\beta_{\mathcal{V}}\colon \mathcal{V}\times \mathcal{V} \to \mathcal{O}_{\mathbb{P}^1}\) is obtained by restriction from \(\beta_{\mathcal{W}}\).

\begin{lemma}\label{lemm:A_weakly_g_locally_free_qtcase}
The datum \((X,(\sheafA,\beta_\sheafA))\) is a geometric \(A\)-CYBE datum. In particular, \(\sheafA|_q \cong A\) for all smooth closed \(q \in X\).
\end{lemma}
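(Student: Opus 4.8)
The plan is to verify the three defining conditions of a geometric $A$-CYBE datum for $(X,(\sheafA,\beta_\sheafA))$ in turn. That $X$ is an irreducible cubic plane curve with a nodal singularity is built into the construction, and $\sheafA$ is coherent and carries a sheaf-of-algebras structure because it is obtained as the fiber product \eqref{eq:A_qt_case_kernel_def} of the coherent sheaves of algebras $\nu_*\mathcal{V}$, $W/(W_+^\bot \times W_-^\bot)$, and $W_+/W_+^\bot \times W_-/W_-^\bot$ along algebra morphisms; here the quotients are algebras by virtue of \ref{lem:manin_triples}.(3). So the real content is the cohomological vanishing, the properties of $\beta_\sheafA$, and the identification of the smooth fibers.

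For the cohomology I would pass to the long exact sequence of \eqref{eq:A_qt_case_kernel_def}. Since $\nu$ is finite, $\nu_*$ is exact and $\tH^i(X,\nu_*\mathcal{V}) = \tH^i(\mathbb{P}^1,\mathcal{V})$; combined with Lemma \ref{lemm:thetas}.(1), which gives $\tH^0(\mathcal{V}) \cong A[\![z]\!] \cap (W_+ \times W_-)$ and $\tH^1(\mathcal{V}) = 0$, and with the vanishing of $\tH^1$ of the skyscraper terms, the sequence collapses to
\begin{equation*}
0 \to \tH^0(\sheafA) \to (A[\![z]\!] \cap (W_+ \times W_-)) \oplus W/(W_+^\bot\times W_-^\bot) \xrightarrow{\ \phi\ } W_+/W_+^\bot \times W_-/W_-^\bot \to \tH^1(\sheafA) \to 0.
\end{equation*}
The key observation is that $\phi$ is precisely the canonical map appearing in the Manin-triple decomposition of \ref{lem:manin_triples}.(3): on the first summand $\phi$ is induced by $(\theta_+,\theta_-)$ from Lemma \ref{lemm:thetas}.(2), sending a global section $(w,w(0))$ with $w \in W_+\cap A[\![z]\!]$ to its diagonal image $([w],[w(0)])$, whereas on the second summand $\phi$ is the inclusion $W/(W_+^\bot\times W_-^\bot) \hookrightarrow W_+/W_+^\bot \times W_-/W_-^\bot$ coming from \eqref{eq:Wbot}. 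As \ref{lem:manin_triples}.(3) asserts exactly that these two images form a direct-sum decomposition of the target, $\phi$ is an isomorphism, whence $\tH^0(\sheafA) = 0 = \tH^1(\sheafA)$.

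For the bilinear form, symmetry and associativity of $\beta_\sheafA$ are inherited coefficient-wise from those of $\beta$ through the local identifications $\zeta$, exactly as in Subsection \ref{sec:properties_geometriaclly_admissible_pairing}.(2). For non-degeneracy I would show the adjoint $\beta_\sheafA^{\mathrm{a}}\colon \sheafA \to \sheafA^*$ has torsion kernel: at the generic point of $X$ the form is identified with the $\Bbbk(\!(z)\!)$-bilinear extension \eqref{eq:beta_extension} of the non-degenerate $\beta$ (since $\sheafA$ agrees with $\nu_*\mathcal{V} = \nu_*\mathcal{W}$ away from the node $s$, and the generic point lies there), so $\beta_\sheafA^{\mathrm{a}}$ is injective generically; as $\sheafA \subseteq \nu_*\mathcal{V}$ is torsion-free ($\mathcal{V}\subseteq\mathcal{W}$ is torsion-free and $\nu$ is the normalization, hence finite), the torsion kernel vanishes. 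With $\tH^1(\sheafA)=0$ established and $X$ an irreducible cubic plane curve, Lemma \ref{lemm:geometric_A_CYBE_datum} then upgrades non-degeneracy to perfectness and yields fiberwise non-degeneracy of $\beta_\sheafA$ at every point.

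Finally, for a smooth closed point $q \in X$, which is necessarily distinct from the node $s$, the preimage $q' := \nu^{-1}(q)$ is a single point of $\mathbb{P}^1\setminus\{s_+,s_-\}$, and since \eqref{eq:A_qt_case_kernel_def} modifies $\nu_*\mathcal{V}$ only at $s$ we get $\sheafA|_q \cong \mathcal{W}|_{q'} \cong W_+/\mathfrak{m}_{q'}W_+$; here $\mathcal{W}_{q'}$ is free over the regular local ring $\sheafO_{\mathbb{P}^1,q'}$ and $\beta_\sheafA|_q$ is non-degenerate by the previous step, so strong geometric admissibility (Subsection \ref{sec:geometrically_M_admissible}) gives $W_+/\mathfrak{m}_{q'}W_+ \cong A$, just as in the proof of Lemma \ref{lemm:sheafA_has_fiber_A}. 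I expect the main obstacle to be the cohomology step, specifically verifying that $\phi$ genuinely coincides with the decomposition morphism of \ref{lem:manin_triples}.(3): this requires tracking how global sections $(w,w(0))$ of $\mathcal{V}$ are mapped by $\theta_+$ and $\theta_-$ into the fibers and matching this with the diagonal embedding of $A[\![z]\!]\cap(W_+\times W_-)$, where the compatibility of $\theta_\pm$ with $\beta$ recorded in Lemma \ref{lemm:thetas}.(2) is what makes the identification correct.
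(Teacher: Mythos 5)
Your cohomology computation and your identification of the smooth fibers follow the paper's proof essentially step for step, and your non-degeneracy argument (injectivity of the adjoint morphism $\sheafA \to \sheafA^*$ at the generic point, plus torsion-freeness of $\sheafA$) is a valid alternative to the paper's, which instead produces a closed point $q \in \mathbb{P}^1\setminus\{s_+,s_-\}$ with $\sheafA|_{\nu(q)} \cong A$ and a non-zero associative fiber form via Proposition \ref{lemm:semisimple_fiber_implies_etale_triviality}, and then uses simplicity of $A$ to get fiberwise non-degeneracy there.

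There is, however, a genuine gap: you never verify that $\beta_\sheafA$ takes values in $\sheafO_X$. By construction $\beta_\sheafA$ is the restriction of $\nu_*\beta_{\mathcal{V}}$ to $\sheafA \subseteq \nu_*\mathcal{V}$, so a priori it is only a form $\sheafA \times \sheafA \to \nu_*\sheafO_{\mathbb{P}^1}$, and $\sheafO_X \subsetneq \nu_*\sheafO_{\mathbb{P}^1}$ is a proper subsheaf: at the node $s$ the fiber of $\nu_*\sheafO_{\mathbb{P}^1}$ is $\Bbbk \times \Bbbk$ (one copy for each preimage $s_\pm$), while $\sheafO_X$ picks out the diagonal. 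Without this step the triple $(X,(\sheafA,\beta_\sheafA))$ does not even satisfy the definition of a geometric $A$-CYBE datum, and Lemma \ref{lemm:geometric_A_CYBE_datum} --- which you invoke to upgrade non-degeneracy to perfectness, and which presupposes an $\sheafO_X$-valued form --- cannot be applied. This is not a formality; it is precisely where the Lagrangian property of $W$ enters the argument. The paper takes $a,b \in \sheafA|_s$ with representatives $(a_+,a_-),(b_+,b_-) \in W$ and computes the two branch components of $\beta_\sheafA|_s(a,b)$ to be $\beta^+_{(1,1)}(a_+,b_+)$ and $\beta^-_{(1,1)}(a_-,b_-)$, which agree exactly because $0 = \beta_{(1,1)}((a_+,a_-),(b_+,b_-)) = \beta^+_{(1,1)}(a_+,b_+) - \beta^-_{(1,1)}(a_-,b_-)$ for elements of the Lagrangian $W$. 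Your appeal to Subsection \ref{sec:properties_geometriaclly_admissible_pairing}.(2) does not cover this point, since that construction produces the pairing on the curve built directly from a ringed lattice (here $\mathbb{P}^1$ from $(\Bbbk[z^{-1}],W_+)$), not on the nodal curve $X$ obtained by the pushforward-and-pullback modification at $s$; the descent of the form from $\nu_*\sheafO_{\mathbb{P}^1}$ to $\sheafO_X$ is an additional claim that must be proved.
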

\begin{proof}
The long exact sequence in cohomology of \eqref{eq:A_qt_case_kernel_def} is given by
\begin{equation}\label{eq:long_exact_sequence_cohomology_A_qt}
    0 \longrightarrow \textnormal{H}^0(\mathcal{A}) \longrightarrow \textnormal{H}^0(\mathcal{V}) \oplus (W/(W_+^\bot \times W_-^\bot)) \longrightarrow W_+/W_+^\bot \times W_-/W_-^\bot \longrightarrow \textnormal{H}^1(\mathcal{A})\longrightarrow 0.
\end{equation}
Here, we used that the first cohomology group of torsion sheaves vanishes and \(\textnormal{H}^1(\mathcal{V}) = 0\); see Lemma \ref{lemm:thetas}.(1). 
The canonical map \(\textnormal{H}^0(\mathcal{V})\to W_+/W_+^\bot \times W_-/W_-^\bot\) thereby coincides with the inclusion 
\[
A[\![z]\!] \cap (W_+ \times W_-) \longrightarrow W_+/W_+^\bot \times W_-/W_-^\bot
\]
under the identification \(\textnormal{H}^0(\mathcal{V}) \cong A[\![z]\!] \cap (W_+ \times W_-)\).
Therefore, Subsection \ref{lem:manin_triples}.(3) implies that the middle arrow in \eqref{eq:long_exact_sequence_cohomology_A_qt} is an isomorphism. Consequently,
\(\textnormal{H}^0(\mathcal{A}) = 0 = \textnormal{H}^1(\mathcal{A})\), so property (1) of in the definition of a geometric \(A\)-CYBE datum in Section \ref{sec:geometric_A_CYBE_data} is satisfied.

Next, we want to see that \(\beta_\sheafA\) actually takes values in \(\mathcal{O}_X \subseteq \nu_*\mathcal{O}_{\mathbb{P}^1}\). Let \(a,b\in\mathcal{A}|_s\) and \(a_\pm,b_\pm \in W_\pm\) be representatives of the images of \(a,b\) under the canonical maps \(\mathcal{A}|_s \to \mathcal{V}|_{s_\pm} \to W_\pm/W_\pm^\bot\). Then
\begin{equation}
    \beta_\sheafA|_s(a,b) = (\beta^+_{(1,1)}(a_+,b_+),\beta_{(1,1)}^-(a_-,b_-)) \in \Bbbk \times \Bbbk \cong \nu_*\mathcal{O}_{\mathbb{P}^1}|_s
\end{equation}
holds, since the \(\theta_\pm \colon \mathcal{V}|_{s_\pm} \to W_\pm/W_\pm^\bot\) intertwine
the forms \(\beta_{(1,1)}^\pm\) and \(\beta_{\mathcal{V}}|_{s_\pm}\). 
The definition of \(\mathcal{A}\) implies that \(
    (a_+,a_-),(b_+,b_-) \in W
    \),
and the Lagrangian property of \(W\) gives
\begin{equation}
    0 = \beta_{(1,1)}((a_+,a_-),(b_+,b_-)) = \beta_{(1,1)}^+(a_+,b_+) - \beta^-_{(1,1)}(a_-,b_-).
\end{equation}
We obtain \(\beta_{\sheafA}|_s(a,b)\in \{(\lambda,\lambda)\mid \lambda \in \Bbbk\}\). This implies that \(\beta_{\sheafA}\) takes values in \(\mathcal{O}_X\).

Since \(\beta_{\mathcal{W}}|_{s_-}\) can be identified with the algebra metric \(\beta\) on \(\mathcal{W}|_{s_-} \cong A\), \(\beta_{\mathcal{W}}\), and consequently \(\beta_{\sheafA}\), is non-vanishing on an open subset of \(X\). Combined with Proposition \ref{lemm:semisimple_fiber_implies_etale_triviality} this implies that there exists a closed point \(q \in \mathbb{P}^1\setminus\{s_+,s_-\}\) such that \(A \cong \mathcal{W}|_q \cong \mathcal{A}|_{\nu(q)}\) and \(\beta_{\sheafA}|_{\nu(q)}\) is a non-zero associative bilinear form on this space. In particular, \(\beta_{\sheafA}|_q\) is automatically non-degenerate. Therefore, the kernel of the canonical morphism \(\sheafA \to \sheafA^*\) induced by \(\beta_\sheafA\) is a torsion subsheaf of the torsion free sheaf \(\sheafA\), hence zero. Consequently, \(\beta_\sheafA\) is non-degenerate.

Lemma \ref{lemm:geometric_A_CYBE_datum} now states that \(\beta_\sheafA\) is perfect. Consequently, for all closed \(q \in \mathbb{P}^1\setminus\{s_+,s_-\}\) the bilinear form \(\beta_{\mathcal{W},q}\), which can be identified with \(\beta_{\sheafA,\nu(q)}\) via \(\mathcal{W}_q   \cong\sheafA_{\nu(q)}\), is perfect. Since \((A,\beta)\) is strongly geometrically admissible, this implies that \(W_+/(z-a)W_+ \cong \mathcal{W}|_q   \cong\sheafA|_{\nu(q)}\) is isomorphic to \(A\). Here, for \(a \in \Bbbk^\times\) the maximal ideal \((z-a) \subseteq \Bbbk[z,z^{-1}]\) defines the point \(q \in \mathbb{P}^1\setminus\{s_+,s_-\} \cong \textnormal{Spec}(\Bbbk[z,z^{-1}])\). In conclusion, \((X,(\sheafA,\beta_\sheafA))\) is a geometric \(A\)-CYBE datum.
\end{proof}

\begin{proposition}\label{lemm:concluding_qtcase}
There exists a \(\varphi \in \textnormal{Aut}_{\Bbbk[\![z]\!]\textnormal{-alg}}(A[\![z]\!])\), unique 
\[\{t_{k,i} \in A[z] \mid k \in \bN,i\in\overline{1,n}\},\]
and \(N \in \bN\) such that
\begin{equation}
    \varphi(W) = \textnormal{Span}_{\Bbbk}\{w_{k,i} + t_{k,i} \mid k \in \bN,i\in \overline{1,n}\}
\end{equation}
and \(t_{k,i} = 0\) for all \( k\ge N\), \(i \in \overline{1,n}\). Here, the \(w_{k,i}\) were defined in \eqref{eq:def_wki}.
\end{proposition}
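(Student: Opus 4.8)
The plan is to upgrade the sheaf‑theoretic description of \(W\) from Subsection \ref{sec:geometrization_n=1} to the explicit normal form by trivializing the sheaf of algebras \(\mathcal{W}\) on an affine chart and lifting that trivialization to the sought \(\varphi\). I identify \(D_1(A) = A(\!(z)\!)\times A\) and write \(\mathbb{P}^1 = \textnormal{Spec}(\Bbbk[z])\cup\textnormal{Spec}(\Bbbk[z^{-1}])\) with \(s_- = p\) and \(s_+ = (z^{-1})\), so that \(\Gamma(\mathbb{P}^1\setminus\{s_-\},\mathcal{W}) = W_+\), \(\textnormal{Spec}(\Bbbk[z]) = \mathbb{P}^1\setminus\{s_+\}\), and \(\zeta\colon\widehat{\mathcal{W}}_{s_-}\cong A[\![z]\!]\).

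First I would check that every closed fibre of \(\mathcal{W}\) over \(\textnormal{Spec}(\Bbbk[z])\) is isomorphic to \(A\) as an algebra: at \(s_-\) (the point \(z=0\)) this is \(\widehat{\mathcal{W}}_{s_-}/z\widehat{\mathcal{W}}_{s_-}\cong A[\![z]\!]/zA[\![z]\!]\cong A\), while at a point \(q\) of \(\mathbb{G}_m := \mathbb{P}^1\setminus\{s_+,s_-\}\) it equals \(\mathcal{A}|_{\nu(q)}\cong A\) by Lemma \ref{lemm:A_weakly_g_locally_free_qtcase} (using \(\mathcal{V}|_{\mathbb{P}^1\setminus\{s_-\}} = \mathcal{W}|_{\mathbb{P}^1\setminus\{s_-\}}\) from Lemma \ref{lemm:thetas}.(1)). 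Proposition \ref{thm:classification_sheaves_of_algebras}.(2) then produces an isomorphism \(\psi\colon A[z]\xrightarrow{\sim}\widetilde{W}:=\Gamma(\textnormal{Spec}(\Bbbk[z]),\mathcal{W})\) of \(\Bbbk[z]\)-algebras, where \(\widetilde{W}\subseteq A[\![z]\!]\) via \(\zeta\). Completing \(\psi\) at \((z)\) gives a \(\Bbbk[\![z]\!]\)-algebra endomorphism of \(A[\![z]\!]\) which is an automorphism, since it induces the isomorphism on the fibre at \(s_-\); I set \(\varphi\) to be its inverse, so \(\varphi(\widetilde{W}) = A[z]\). Extending \(\varphi\) \(\Bbbk(\!(z)\!)\)-linearly and inverting \(z^{-1}\) yields \(\varphi(\Gamma(\mathbb{G}_m,\mathcal{W})) = A[z,z^{-1}]\), and since \(W_+\) restricts into \(\Gamma(\mathbb{G}_m,\mathcal{W})\) we obtain \(\varphi(W_+)\subseteq A[z,z^{-1}]\).

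Next I would read off the normal form. As \(\varphi\) preserves \(A[\![z]\!]\) and is an algebra automorphism of \(D_1(A)\), the subalgebra \(\varphi(W)\) is again complementary to \(A[\![z]\!]\), hence by Theorem \ref{thm:series_and_subspaces}.(1) equals \(A(r')\) for a unique series \(r' = \tfrac{y\gamma}{x-y}+t'\) of type \((1,1)\), with \(r'_{k,i} = w_{k,i}+\iota(t'_{k,i})\) for the diagonal embedding \(\iota\). Projecting to the first factor sends \(r'_{k,i}\) to \(w_{k,i}^+ + t'_{k,i}\in\varphi(W_+)\subseteq A[z,z^{-1}]\); as \(w_{k,i}^+ = b_i^* z^{-k}\in A[z,z^{-1}]\) and \(t'_{k,i}\in A[\![z]\!]\), this forces \(t'_{k,i}\in A[z,z^{-1}]\cap A[\![z]\!] = A[z]\), i.e.\ polynomiality in \(x\). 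Since \(\varphi(W_+)\) is a finitely generated \(\Bbbk[z^{-1}]\)-module inside \(A[z,z^{-1}]\), the \(z\)-degrees of its elements are bounded by some \(D\), whence \(\deg_z t'_{k,i}\le D\). Arranging \(\varphi\) to be \(\beta\)-orthogonal (possible because associative symmetric forms on the simple fibre \(A\) are unique up to a constant scalar and \(\Bbbk[z]^\times = \Bbbk^\times\)) makes \(r'\) a skew-symmetric solution of the \(A\)-CYBE by Corollary \ref{thm:solutions_of_CYBE_and_manin_triples}; the resulting relation \(t'(x,y)+\tau\, t'(y,x) = \gamma\) upgrades \(\deg_x t'\le D\) to \(\deg_y t'\le D\), giving \(t'_{k,i} = 0\) for \(k\ge N := D+1\). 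Finally, uniqueness of the \(t_{k,i}:=t'_{k,i}\) is immediate from the direct sum \(D_1(A) = A[\![z]\!]\oplus\varphi(W)\) together with the bijection of Theorem \ref{thm:series_and_subspaces}.(1).

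The main obstacle I anticipate is the interaction of \(\varphi\) with the metric: Proposition \ref{thm:classification_sheaves_of_algebras}.(2) only supplies an algebra isomorphism, so some care is needed to arrange \(\beta\)-orthogonality (up to a constant that is absorbed by the \(n=1\) normalization of \(\lambda\)) so that \(\varphi(W)\) stays Lagrangian and \(r'\) remains skew-symmetric of type \((1,1)\) — this is exactly what the finiteness step relies on. A secondary technical point is matching the completed trivialization \(\psi\) with the chart datum \((c,\zeta)\) of Section \ref{sec:geometrization}, so that the \(w_{k,i}\) are genuinely the standard type-\((1,1)\) coefficients in the \(\varphi\)-straightened picture.
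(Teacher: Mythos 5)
Your proposal is correct, and its skeleton coincides with the paper's proof: identify all closed fibres of \(\mathcal{W}\) over \(\mathbb{P}^1\setminus\{s_+\}\) with \(A\) (via Lemma \ref{lemm:A_weakly_g_locally_free_qtcase} on \(\mathbb{P}^1\setminus\{s_+,s_-\}\) and via \(\zeta\) at \(s_-\)), trivialize \(B=\zeta(\Gamma(\mathbb{P}^1\setminus\{s_+\},\mathcal{W}))\cong A[z]\) by Proposition \ref{thm:classification_sheaves_of_algebras}.(2), complete \((z)\)-adically to obtain \(\varphi\), deduce \(\varphi(W_+)\subseteq A[z,z^{-1}]\) by restriction of sections, and bound \(z\)-degrees using that \(W_+\) is a finitely generated \(\Bbbk[z^{-1}]\)-module. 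The only genuine divergence is the last step. The paper stays on the subspace side: from \(\varphi(W_+)\subseteq z^{N-1}A[z^{-1}]\) it gets the chain \eqref{eq:W_bounded_qt_case}, and then \(w_{k,i}\in \varphi(W_+^\bot)\times\{0\}\subseteq\varphi(W)\) for \(k\ge N\) via \eqref{eq:Wbot}, forcing \(t_{k,i}=0\). You instead pass to the series side and use the skew-symmetry identity \(t'(x,y)+\tau t'(y,x)=\gamma\) to convert the \(x\)-degree bound into the required \(y\)-degree bound; by Theorem \ref{thm:series_and_subspaces}.(2) the two mechanisms are equivalent (Lagrangian \(\varphi(W)\) \(\iff\) skew-symmetric \(r'\)). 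Both versions hinge on the same compatibility, which the paper uses silently when it writes \(\varphi(W_+)^\bot\subseteq\varphi(W_+)\) and which you address explicitly: that \(\varphi\) transforms \(\beta\), hence \(\beta_{(1,1)}\), by a scalar. One small repair to your phrasing: what your parenthetical actually establishes is that the pullback of \(\beta\) along the trivialization is \(\mu\beta\) with \(\mu\in\Bbbk[z]\) invertible at every fibre (non-degeneracy coming from Lemmas \ref{lemm:A_weakly_g_locally_free_qtcase} and \ref{lemm:geometric_A_CYBE_datum}), so \(\mu\in\Bbbk[z]^\times=\Bbbk^\times\) is a constant; exact \(\beta\)-orthogonality can then indeed be arranged by composing \(\varphi\) with the constant automorphism \(\bar\varphi^{-1}\otimes\id_{\Bbbk[\![z]\!]}\), where \(\bar\varphi\in\Aut_{\Bbbk\textnormal{-alg}}(A)\) is the induced fibre automorphism, but it is also unnecessary: a constant rescaling of \(\beta_{(1,1)}\) leaves all orthogonal complements unchanged, so \(\varphi(W)\) stays Lagrangian and \(r'\) stays skew-symmetric of type \((1,1)\) in any case. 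With that clarified, your argument is complete and has the added virtue of making explicit a step the paper's proof leaves implicit.
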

\begin{proof}
Lemma \ref{lemm:A_weakly_g_locally_free_qtcase} implies \( \mathcal{W}|_{q}\cong \mathcal{A}|_{\nu(q)} \cong A\) for all \(q \in \mathbb{P}^1\setminus\{s_+,s_-\}\). Combined with \(\mathcal{W}|_{s_-} \cong A\), this implies that \(B \coloneqq \zeta(\Gamma(\mathbb{P}^1\setminus\{s_+\},\mathcal{W}))\subseteq A[\![z]\!]\) is a free \(\Bbbk[z] = c(\Gamma(\mathbb{P}^1\setminus\{s_+\},\mathcal{O}_{\mathbb{P}^1}))\)-algebra satisfying \(B/(z-\lambda)B \cong A\) for all \(\lambda \in \Bbbk\). Therefore, \(B \cong A[z]\) by virtue of Theorem \ref{thm:classification_sheaves_of_algebras}.(2). Completing said automorphism in the \((z)\)-adic topology yields \(\varphi \in \textnormal{Aut}_{\Bbbk[\![x]\!]\textnormal{-alg}}(A[\![z]\!])\) with the property \(\varphi(B) = A[z]\). Since \(\mathcal{W}\) is a sheaf, we have
\begin{equation}
    \varphi(W_+) = \varphi(\zeta(\Gamma(\mathbb{P}^1\setminus\{s_-\},\mathcal{W})) \subseteq \varphi(\zeta(\Gamma(\mathbb{P}^1\setminus\{s_+,s_-\},\mathcal{W}))) = \varphi(B)[z^{-1}] = A[z,z^{-1}].
\end{equation}
This, combined with the fact that \(W_+\) is a free \(\Bbbk[z^{-1}]\)-module, implies that \(\varphi(W_+) \subseteq z^{N-1}A[z^{-1}]\) holds for a sufficiently large integer \(N \in \bN\). Consequently,
\begin{equation}\label{eq:W_bounded_qt_case}
    z^{-N}A[z^{-1}] \subseteq \varphi(W_+)^\bot \subseteq \varphi(W_+) \subseteq z^{N-1}A[z^{-1}].  
\end{equation}
Since \(A[\![z]\!] \oplus \varphi(W_+) = D_1(A)\), we can now write
\begin{equation}
    \varphi(W) = \textnormal{Span}_{\Bbbk}\{w_{k,i} + t_{k,i} \mid k \in \bN,i\in \overline{1,n}\} 
\end{equation}
for uniquely determined \(\{t_{k,i} \in A[\![z]\!] \mid k \in \bN,i\in\overline{1,n}\}\). Equation \eqref{eq:W_bounded_qt_case} now implies that \(t_{k,i} \in A[z]\) and \(t_{k,i} = 0\) for all \(k \ge N\).
\end{proof}

\subsubsection{Geometrization in case \(n = 2\)}\label{sec:geometrization_n=2}
Similar to the previous case, 
\[
W \subseteq D_2(A) = A(\!(z)\!) \times A[z]/x^2A[z]    
\]
and we denote by
\( W_+ \) (resp.\ \(W_-\)) the projection of \(W\) to
\( A(\!(z)\!)\) (resp.\
\(A[z]/z^2A[z]\)).

\begin{lemma}\label{lemm:quasirational_lemma}
The following facts are true.
\begin{enumerate}
    \item \(W = W_+ \times W_-\);
    
    \item \(W_+ \cap z^2A[\![z]\!] = \{0\}\), so \(W_+\cap A[\![z]\!]\) can be identified with a subalgebra of \(A[z]/z^2A[z]\);
    
    \item \((W_+ \cap A[\![z]\!]) \oplus W_- = A[z]/z^2A[z] \).
\end{enumerate}
\end{lemma}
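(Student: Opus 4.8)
The plan is to work throughout with \(\lambda = 1\), as we may by the reduction recorded at the start of Subsection~\ref{sec:geometrization_of_manin_triples}, and to exploit that the refinement of Theorem~\ref{thm:categorization_of_manin_triples} in Subsection~\ref{sec:geometric_categorization_general} places us in the case \(M = \{f \in \Bbbk(\!(z)\!) \mid fW_+ \subseteq W_+\} = \Bbbk[z^{-1}]\). In particular \(W_+\) is a \(\Bbbk[z^{-1}]\)-lattice, so Subsection~\ref{sec:properties_geometriaclly_admissible_pairing}.(1) yields the containment \(\beta(W_+, W_+) \subseteq \Bbbk[z^{-1}]\). The decisive observation, and the feature that separates \(n = 2\) from \(n \in \{0,1\}\), is that \(W_+\) is \emph{Lagrangian} in \(A(\!(z)\!)\) for \(\beta_{(2,1)}^+\), i.e.\ \(W_+^\bot = W_+\). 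Granting this, all three assertions will drop out of the general properties collected in Subsection~\ref{lem:manin_triples} together with the complementarity \(A[\![z]\!] \oplus W = D_2(A)\).

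First I would establish \(W_+^\bot = W_+\). The inclusion \(W_+^\bot \subseteq W_+\) is Subsection~\ref{lem:manin_triples}.(1). For the converse, given \(a_1, a_2 \in W_+\) the element \(z^{-2}\beta(a_1, a_2)\) lies in \(z^{-2}\Bbbk[z^{-1}]\), every monomial of which has \(z\)-degree \(\le -2\); hence
\[
\beta_{(2,1)}^+(a_1, a_2) = \textnormal{res}_0\, z^{-2}\beta(a_1, a_2) = 0,
\]
so \(W_+ \subseteq W_+^\bot\). This is precisely where \(\lambda = 1\) enters: a general \(\lambda^{-1} \in \Bbbk[\![z]\!]^\times\) would contribute a \(z^{-1}\)-term and the residue need not vanish. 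Statement (2) is then immediate, since the proof of Subsection~\ref{lem:manin_triples}.(2) already gives \(z^2 A[\![z]\!] \cap W_+^\bot = (A[\![z]\!] + W_+)^\bot = \{0\}\); replacing \(W_+^\bot\) by \(W_+\) yields \(W_+ \cap z^2 A[\![z]\!] = \{0\}\), whence \(a \mapsto [a]\) embeds \(W_+ \cap A[\![z]\!]\) into \(A[z]/z^2A[z]\). For statement (1) I would feed \(W_+^\bot = W_+\) into the isomorphism \(W_+/W_+^\bot \cong W_-/W_-^\bot\) of Subsection~\ref{lem:manin_triples}.(3): its left-hand side vanishes, forcing \(W_-^\bot = W_-\), and then \eqref{eq:Wbot} collapses to \(W_+ \times W_- = W_+^\bot \times W_-^\bot \subseteq W \subseteq W_+ \times W_-\), i.e.\ \(W = W_+ \times W_-\).

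For statement (3) I would argue directly from \(A[\![z]\!] \oplus W = D_2(A)\) and \(W = W_+ \times W_-\). Given \([c] \in A[z]/z^2A[z]\) with lift \(c \in A[\![z]\!]\), I decompose \((0,[c]) = (a,[a]) + (w_+, w_-)\) with \(a \in A[\![z]\!]\) and \((w_+, w_-) \in W_+ \times W_-\); the first coordinate gives \(w_+ = -a \in A[\![z]\!] \cap W_+\) and the second gives \([c] = [-w_+] + w_-\), which displays \([c]\) as an element of \((W_+ \cap A[\![z]\!]) + W_-\). Directness is clear, since if \(a \in W_+ \cap A[\![z]\!]\) satisfies \([a] \in W_-\), then \((a, [a]) \in (W_+ \times W_-) \cap A[\![z]\!] = W \cap A[\![z]\!] = \{0\}\), so \(a = 0\). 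I expect the only genuine obstacle to be the Lagrangian identity \(W_+^\bot = W_+\); every later step is formal. Its proof hinges on two inputs already at hand: the containment \(\beta(W_+, W_+) \subseteq \Bbbk[z^{-1}]\) furnished by geometric admissibility, and the vanishing of \(\textnormal{res}_0\, z^{-2}\beta(a_1,a_2)\) forced by the degree bound once \(\lambda = 1\).
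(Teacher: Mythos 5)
Your proposal is correct and takes essentially the same route as the paper: you establish the Lagrangian identity \(W_+^\bot = W_+\) from the containment \(\beta(W_+,W_+)\subseteq \Bbbk[z^{-1}]\) (geometric admissibility over \(M=\Bbbk[z^{-1}]\)) and the vanishing of \(\textnormal{res}_0\,z^{-2}\beta(a_1,a_2)\), then deduce (1) from Subsection \ref{lem:manin_triples}.(3) together with \eqref{eq:Wbot}, and (2) from \(z^2A[\![z]\!]\cap W_+^\bot = (A[\![z]\!]+W_+)^\bot = \{0\}\), exactly as in the paper's proof. The only difference is cosmetic: you spell out the surjectivity and directness argument for part (3), which the paper compresses into the remark that it follows from (2) and \(A[\![z]\!]\oplus(W_+\times W_-) = A(\!(z)\!)\times A[z]/z^2A[z]\).
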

 \begin{proof}
For (1), observe that \(\beta(W_+,W_+) \subseteq \Bbbk[z^{-1}]\) holds since  \((A,\beta)\) is geometrically admissible and \(W_+\) is a free \(\Bbbk[z^{-1}]\)-algebra by virtue of Section \ref{sec:geometric_categorization_general}.(5). Therefore, \(z^{-2}\beta(W_+,W_+) \subseteq z^{-2}\Bbbk[z^{-1}]\) implies \(
    \beta_{(2,1)}(a,b) = \textnormal{res}_0 z^{-2}\beta(a,b) = 0
    \)
for all \(a,b \in W_+\). Consequently, \(W_+ \subseteq W_+^\bot\). Together with \(W_+^\bot \subseteq W_+\) we arrive at \(W_+ = W_+^\bot\). Subsection \ref{lem:manin_triples}.(3) implies \(W_- = W_-^\bot\), so 
\[W_+^\bot \times W_-^\bot \subseteq W \subseteq W_+ \times W_-\] concludes the proof of (1).

The identities \(\{0\} = (A[\![z]\!] + W_+)^\bot = z^2A[\![z]\!] \cap W_+^\bot = z^2A[\![z]\!] \cap W_+\) imply (2). Part (3) now follows from (2) and \(A[\![z]\!] \oplus (W_+ \times W_-) = A(\!(z)\!) \times A[z]/z^2A[z]\).
\end{proof}

\noindent
Consider the geometrization \(((Y,\mathcal{W}),(p,c,\zeta))\) of \((\Bbbk[z^{-1}],W_+)\), where as in the last section we have \(Y = \mathbb{P}^1\). 
Let \(X\) be an irreducible plane cubic curve with cuspidal singularity \(s\) and chose the normalization \(\nu \colon \mathbb{P}^1 \to X\) in such a way that \(\nu(p) = s\). 
The isomorphism 
\(\zeta \colon \widehat{\mathcal{W}}_p \to A[\![z]\!]\) implies that 
\begin{equation}
    \nu_*\mathcal{W}|_s \cong \zeta(\widehat{\mathcal{W}}_p)/z^2\zeta(\widehat{\mathcal{W}}_p) = A[z]/z^2A[z].  
\end{equation}
This yields a surjective morphism \(\nu_* \mathcal{W} \to A[z]/z^2A[z]\).
Let \(\mathcal{A}\) be the sheaf of algebras defined as the pull-back
\begin{equation}\label{eq:def_sheafA_qrcase}    \xymatrix{\mathcal{A} \ar[r]\ar[d] & W_- \ar[d] \\ \nu_*\mathcal{W} \ar[r] & A[z]/z^2A[z]}
\end{equation}
where \(A[z]/z^2A[z]\) and \(W_-\) are understood as skyscraper sheaves at \(s\). Equivalently, \(\sheafA\) fits into the short exact sequence
\begin{equation}\label{eq:short_exact_sequence_quasirational_case}
    0 \longrightarrow \mathcal{A} \longrightarrow \nu_*\mathcal{W} \oplus W_- \longrightarrow A[x]/z^2A[z] \longrightarrow 0.
\end{equation}
Let \(\beta_{\mathcal{W}} \colon \mathcal{W} \times \mathcal{W}\to \sheafO_{\mathbb{P}^1}\) be the pairing induced by \(\beta\) in Subsection \ref{sec:properties_geometriaclly_admissible_pairing} and let \(\beta_\sheafA\colon \mathcal{A}\times \mathcal{A} \to \nu_*\mathcal{O}_{\mathbb{P}^1}\) be the the restriction of \(\nu_*\beta_{\mathcal{W}}\) to \(\mathcal{A} \subseteq \nu_*\mathcal{\mathcal{W}}\).

\begin{lemma}\label{lem:geometic_A_CYBE_datum_qrcase}
The datum \((X,(\sheafA,\beta_\sheafA))\) is a geometric \(A\)-CYBE datum. In particular, \(\sheafA|_q \cong A\) for all smooth closed \(q \in X\). Furthermore, there exists \( \varphi\in \textnormal{Aut}_{\Bbbk(\!(z)\!)\textnormal{-alg}}(A(\!(z)\!))\) such that the identity \(\varphi(W_+) = A[z^{-1}]\) holds.
\end{lemma}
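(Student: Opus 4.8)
The plan is to transcribe, step by step, the nodal case treated in Lemma~\ref{lemm:A_weakly_g_locally_free_qtcase} and Proposition~\ref{lemm:concluding_qtcase}, adapting each argument to the cuspidal curve \(X\) and to the \emph{single} point \(p\) lying over the cusp \(s\). Concretely, I would first verify the three defining properties of a geometric \(A\)-CYBE datum from Subsection~\ref{sec:geometric_A_CYBE_data} — vanishing of cohomology, perfectness of the symmetric associative form \(\beta_\sheafA\), and constancy of the fibres at smooth points — and then extract the automorphism \(\varphi\) from the classification of sheaves of algebras on the affine line in Theorem~\ref{thm:classification_sheaves_of_algebras}.(2).

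For the cohomology I would feed \eqref{eq:short_exact_sequence_quasirational_case} into its long exact sequence, using that \(\nu\) is finite (so \(\textnormal{H}^i(X,\nu_*\mathcal{W}) = \textnormal{H}^i(\mathbb{P}^1,\mathcal{W})\)) and that \(W_-\) and \(A[z]/z^2A[z]\) are skyscrapers. Since \(A(\!(z)\!) = A[\![z]\!]+W_+\) (project \(A[\![z]\!]\oplus W = D_2(A)\) to the first factor), \eqref{eq:cohomology} gives \(\textnormal{H}^1(\mathcal{W}) = 0\) and \(\textnormal{H}^0(\mathcal{W}) \cong A[\![z]\!]\cap W_+\), so the sequence collapses to
\[0 \longrightarrow \textnormal{H}^0(\sheafA) \longrightarrow (A[\![z]\!]\cap W_+)\oplus W_- \stackrel{\phi}\longrightarrow A[z]/z^2A[z] \longrightarrow \textnormal{H}^1(\sheafA)\longrightarrow 0,\]
where \(\phi\) is, up to sign, the comparison map whose bijectivity is exactly Lemma~\ref{lemm:quasirational_lemma}.(3). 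Hence \(\textnormal{H}^0(\sheafA) = 0 = \textnormal{H}^1(\sheafA)\).

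Next, \(\beta_\sheafA\) inherits symmetry and associativity from \(\beta_{\mathcal{W}}\), and the real point is that it lands in \(\sheafO_X \subseteq \nu_*\sheafO_{\mathbb{P}^1}\). Away from \(s\) this is automatic, so I only check germs at \(s\), where \(\widehat{\sheafO}_{X,s}\) is the cuspidal ring \(\Bbbk \oplus z^2\Bbbk[\![z]\!] \subseteq \Bbbk[\![z]\!] = \nu_*\widehat{\sheafO}_{\mathbb{P}^1,s}\). A germ of \(\sheafA\) at \(s\) is an \(a = a_0 + a_1 z + \dots \in A[\![z]\!]\) with reduction \([a] = a_0 + a_1 z \in W_-\); for two such germs the coefficient of \(z\) in \(\beta(a,b)\) equals \(\beta(a_0,b_1)+\beta(a_1,b_0) = \textnormal{res}_0 z^{-2}\beta(a,b) = \beta_{(2,1)}^-([a],[b])\), which vanishes since \(W_- = W_-^\bot\) is Lagrangian (see \eqref{eq:beta_nlambda_pm} and Lemma~\ref{lemm:quasirational_lemma}.(1)); thus \(\beta(a,b) \in \Bbbk \oplus z^2\Bbbk[\![z]\!]\). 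For non-degeneracy I note that the fibre of \(\beta_{\mathcal{W}}\) at \(p\) is the non-degenerate form \(\beta\), so \(\beta_{\mathcal{W}}\), and hence \(\beta_\sheafA\) (which agrees with \(\nu_*\beta_{\mathcal{W}}\) on the dense open \(X\setminus\{s\}\)), is non-degenerate at the generic point; the kernel of \(\sheafA\to\sheafA^*\) is therefore a torsion subsheaf of the torsion-free sheaf \(\sheafA\), hence zero. With \(\textnormal{H}^1(\sheafA)=0\), Lemma~\ref{lemm:geometric_A_CYBE_datum} upgrades this to perfectness. Finally, perfectness of \(\beta_\sheafA\) at each smooth closed \(q = \nu(q')\) identifies \(\beta_{\sheafA,q}\) with a perfect pairing on the localisation of \(W_+\), so strong geometric admissibility gives \(\sheafA|_q \cong W_+/\mathfrak{m}_{q'}W_+ \cong A\); this completes the verification that \((X,(\sheafA,\beta_\sheafA))\) is a geometric \(A\)-CYBE datum.

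For the automorphism, the previous step shows \(\mathcal{W}|_{q'} \cong A\) for every closed \(q' \in \mathbb{P}^1\setminus\{p\} = \textnormal{Spec}(\Bbbk[z^{-1}])\), including \(z^{-1}=0\), since \(\nu\) is an isomorphism over the smooth locus \(X\setminus\{s\}\). Thus \(W_+ = \zeta(\Gamma(\mathbb{P}^1\setminus\{p\},\mathcal{W}))\) is a \(\Bbbk[z^{-1}]\)-algebra with \(W_+/(z^{-1}-\mu)W_+ \cong A\) for all \(\mu \in \Bbbk\), so Theorem~\ref{thm:classification_sheaves_of_algebras}.(2) (in the variable \(z^{-1}\)) yields a \(\Bbbk[z^{-1}]\)-algebra isomorphism \(W_+ \cong A[z^{-1}]\); as \(W_+\) contains a \(\Bbbk(\!(z)\!)\)-basis of \(A(\!(z)\!)\) and the multiplication is \(\Bbbk(\!(z)\!)\)-bilinear, this extends uniquely and \(\Bbbk(\!(z)\!)\)-linearly to the desired \(\varphi \in \textnormal{Aut}_{\Bbbk(\!(z)\!)\textnormal{-alg}}(A(\!(z)\!))\) with \(\varphi(W_+) = A[z^{-1}]\). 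I expect the one genuinely new step to be the \(\sheafO_X\)-valuedness of \(\beta_\sheafA\) at the cusp: one must pin down \(\widehat{\sheafO}_{X,s} = \Bbbk \oplus z^2\Bbbk[\![z]\!]\) and recognise that the obstruction to \(\beta(a,b)\) lying in it, namely its linear coefficient, is precisely \(\beta_{(2,1)}^-\) on the \(W_-\)-reductions, so that the Lagrangian property plays the role that the diagonal argument played in the nodal case. Everything else is a routine transcription of the \(n=1\) arguments.
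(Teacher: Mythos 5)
Your proposal is correct and matches the paper's proof essentially step for step: the cohomology vanishing via the long exact sequence of \eqref{eq:short_exact_sequence_quasirational_case} together with Lemma \ref{lemm:quasirational_lemma}.(3), the \(\sheafO_X\)-valuedness of \(\beta_\sheafA\) at the cusp via the Lagrangian property of \(W_-\) (the paper runs the identical computation on the fibre \(\Bbbk[z]/(z^2)\) rather than on the completed stalk \(\Bbbk \oplus z^2\Bbbk[\![z]\!]\), which is equivalent), perfectness via Lemma \ref{lemm:geometric_A_CYBE_datum}, the fibres at smooth points via strong geometric admissibility, and \(\varphi\) from Theorem \ref{thm:classification_sheaves_of_algebras}.(2) applied in the variable \(z^{-1}\). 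The only cosmetic deviation is the non-degeneracy step, where you use the fibre of \(\beta_{\mathcal{W}}\) at \(p\) directly to get generic non-degeneracy (exactly as the paper itself does in the \(n=0\) case), whereas the paper here instead cites the rigidity argument closing the proof of Lemma \ref{lemm:A_weakly_g_locally_free_qtcase}; both are valid.
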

\begin{proof}
The global section of \(\nu_* \mathcal{W} \to A[z]/z^2A[z]\) coincides with the canonical morphism \[A[\![z]\!]\cap W_+ \to A[z]/z^2A[z]\]
if  \(\textnormal{H}^0(\mathcal{W})\) is identified with \(A[\![z]\!] \cap W_+\). Therefore, the middle arrow in the long exact sequence in cohomology 
\begin{equation}\label{eq:long_exact_sequence_cohomology_A_qr}
    0 \longrightarrow \textnormal{H}^0(\mathcal{A}) \longrightarrow \textnormal{H}^0(\mathcal{W}) \oplus W_- \longrightarrow A[z]/z^2A[z] \longrightarrow \textnormal{H}^1(\mathcal{A})\longrightarrow 0
\end{equation}
of \eqref{eq:short_exact_sequence_quasirational_case} is an isomorphism by virtue of Lemma \ref{lemm:quasirational_lemma}.(3). 
Here we used again that:
\begin{itemize}
    \item \(\textnormal{H}^1(\mathcal{W}) = 0\) by virtue of Subsection \ref{lem:manin_triples}.(2) and \eqref{eq:cohomology};
    \item The first cohomology group of torsion sheaves vanishes.
\end{itemize}
Consequently, \(\textnormal{H}^0(\mathcal{A}) = 0 = \textnormal{H}^1(\mathcal{A})\).

Let us now show that \(\beta_\sheafA \colon \sheafA \times \sheafA \to \nu_*\sheafO_X\) takes values in \(\sheafO_X\).
For any \(a,b \in \mathcal{A}|_s\) we have  
\begin{equation}
    \nu_*\beta_{\mathcal{W}}|_{s}(a,b) = \beta(a_1,b_1) + [z](\beta(a_1,b_2) +  \beta(a_2,b_1))  \in \Bbbk[z]/(z^2),
\end{equation}
where \(a_1 + [z]a_2\) and \(b_1 + [z]b_2 \in A[z]/z^2A[z]\) are the images of \(a \) and \(b\) respectively
under 
\[\mathcal{A}|_s \to \nu_*\mathcal{W}|_s \cong A[z]/z^2A[z].
\]
By definition of \(\mathcal{A}\), \(a_1 + [z]a_2\), \(b_1 + [z]b_2 \in W_- \) and \(\beta(a_1,b_2) +  \beta(a_2,b_1) = 0\) since \(W_-\subseteq A[z]/z^2A[z]\) is Lagrangian. Therefore, \(\beta_\sheafA|_{s}(a,b) = \beta(a_1,b_1)\in \Bbbk\),
implying that \(\beta_\sheafA\) takes values in \(\mathcal{O}_X \subseteq \nu_*\mathcal{O}_{\mathbb{P}^1}\). 

Repeating the arguments in the end of the proof of Lemma \ref{lemm:A_weakly_g_locally_free_qtcase}, we can deduce that \((X,(\sheafA,\beta_\sheafA))\) is a geometric \(A\)-CYBE datum and \(\sheafA|_q\cong A\) for all smooth closed \(q\in X\). Now \eqref{eq:def_sheafA_qrcase} implies that \( \mathcal{W}|_{q}\cong \mathcal{A}|_{\nu(q)} \cong A\) for all \(q \in \mathbb{P}^1\setminus\{p\}\). Consequently,
\begin{equation*}
    W_+ = \zeta(\Gamma(\mathbb{P}^1\setminus\{p\},\mathcal{W}))\subseteq A(\!(z)\!)
\end{equation*}
is a free \(\Bbbk[z^{-1}] = c(\Gamma(\mathbb{P}^1\setminus\{p\},\mathcal{O}_{\mathbb{P}^1}))\)-algebra satisfying \(W_+/(z^{-1}-\lambda)W_+ \cong A\) for all \(\lambda \in \Bbbk\). Therefore, \(W_+ \cong A[z^{-1}]\) by virtue of Theorem \ref{thm:classification_sheaves_of_algebras}.(2). This induces the automorphism \(\varphi\).
\end{proof}

\noindent
We can now copy the arguments of Proposition \ref{lemm:concluding_qtcase} to deduce that.
\begin{proposition}\label{lemm:concluding_qrcase}
There exists a \(\varphi \in \textnormal{Aut}_{\Bbbk[\![z]\!]\textnormal{-alg}}(A[\![z]\!])\), a set
\begin{equation*}
    \{t_{k,i} \in A[z] \mid k \in \bN,i\in\overline{1,n}\},
\end{equation*}
and a natural number \(N \in \bN\) such that
\begin{equation}
    \varphi(W) = \textnormal{Span}_{\Bbbk}\{w_{k,i} + t_{k,i} \mid k \in \bN,i\in \overline{1,n}\}
\end{equation}
and \(t_{k,i} = 0\) for all \( k\ge N\), \(i \in \overline{1,n}\). Here, the \(w_{k,i}\) were defined in \eqref{eq:def_wki}.
\end{proposition}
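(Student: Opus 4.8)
The plan is to transcribe the proof of Proposition~\ref{lemm:concluding_qtcase}, substituting the cuspidal geometry assembled in Lemma~\ref{lem:geometic_A_CYBE_datum_qrcase} for the nodal geometry of the quasi-trigonometric case. Recall that \(\mathcal{W}\) is a coherent, torsion-free, hence locally free sheaf of algebras on \(\mathbb{P}^1\) with \(\mathcal{W}|_q \cong A\) for every closed \(q \in \mathbb{P}^1 \setminus \{p\}\); since \(\zeta \colon \widehat{\mathcal{W}}_p \stackrel{\cong}{\longrightarrow} A[\![z]\!]\) and \(p = (z)\), we also have \(\mathcal{W}|_p \cong A[\![z]\!]/zA[\![z]\!] \cong A\). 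Writing \(\textnormal{Spec}(\Bbbk[z]) = \mathbb{P}^1 \setminus \{s_+\}\) for the chart complementary to the point \(s_+ \coloneqq (z^{-1})\) (so that \(p\) lies in this chart), the algebra \(B \coloneqq \zeta(\Gamma(\mathbb{P}^1 \setminus \{s_+\},\mathcal{W})) \subseteq A[\![z]\!]\) is a free \(\Bbbk[z]\)-algebra with \(B/(z-\lambda)B \cong \mathcal{W}|_{z = \lambda} \cong A\) for all \(\lambda \in \Bbbk\). Proposition~\ref{thm:classification_sheaves_of_algebras}.(2) gives \(B \cong A[z]\), and completing this isomorphism \((z)\)-adically under \(\widehat{B} = \widehat{\mathcal{W}}_p \cong A[\![z]\!]\) produces \(\varphi \in \textnormal{Aut}_{\Bbbk[\![z]\!]\textnormal{-alg}}(A[\![z]\!])\) with \(\varphi(B) = A[z]\). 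Note that the automorphism supplied by Lemma~\ref{lem:geometic_A_CYBE_datum_qrcase} cannot be reused directly, as it is only \(\Bbbk(\!(z)\!)\)-linear and need not preserve \(A[\![z]\!]\).

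Next I would transfer the support bound, exactly as in \eqref{eq:W_bounded_qt_case}. Since \(\mathcal{W}\) is a sheaf, \(W_+ = \zeta(\Gamma(\mathbb{P}^1 \setminus \{p\},\mathcal{W}))\) restricts into \(\zeta(\Gamma(\mathbb{P}^1 \setminus \{p,s_+\},\mathcal{W})) = B[z^{-1}]\), so \(\varphi(W_+) \subseteq \varphi(B)[z^{-1}] = A[z,z^{-1}]\) after extending \(\varphi\) \(\Bbbk(\!(z)\!)\)-linearly. As \(\varphi(W_+)\) is then a free \(\Bbbk[z^{-1}]\)-module of rank \(d = \dim A\) inside \(A[z,z^{-1}]\), its positive \(z\)-degrees are bounded and \(\varphi(W_+) \subseteq z^{N-1}A[z^{-1}]\) for some \(N \in \bN\). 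Because the algebra metric of a central simple \(A\) is, up to scalar, the trace form (see Subsection~\ref{sec:geom_admissible_metrics_examples}), it is invariant under every algebra automorphism; hence \(\varphi\) is \(\beta_{(2,1)}^+\)-orthogonal and the self-duality \(W_+ = W_+^\bot\) from Lemma~\ref{lemm:quasirational_lemma}.(1) transports to \(\varphi(W_+) = \varphi(W_+)^\bot\). Dualizing the degree bound yields \(z^{-N}A[z^{-1}] \subseteq \varphi(W_+)\), so that altogether \(z^{-N}A[z^{-1}] \subseteq \varphi(W_+) \subseteq z^{N-1}A[z^{-1}]\).

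Finally I would read off the standard form. Since \(\varphi\) preserves \(A[\![z]\!]\) and acts on \(D_2(A)\) as \((\varphi, \varphi \bmod z^2)\), the decomposition \(W = W_+ \times W_-\) of Lemma~\ref{lemm:quasirational_lemma}.(1) gives \(\varphi(W) = \varphi(W_+) \times (\varphi \bmod z^2)(W_-)\), again a complement of \(A[\![z]\!]\) in \(D_2(A)\). By Theorem~\ref{thm:series_and_subspaces}.(1) this complement is \(\textnormal{Span}_\Bbbk\{w_{k,i} + t_{k,i}\}\) for uniquely determined \(t_{k,i} \in A[\![z]\!]\), the \(w_{k,i}\) being those of \eqref{eq:def_wki}. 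The inclusion \(\varphi(W_+) \subseteq z^{N-1}A[z^{-1}]\) forces each correction \(t_{k,i}\) to be polynomial, i.e.\ \(t_{k,i} \in A[z]\), while \(z^{-N}A[z^{-1}] \subseteq \varphi(W_+)\) places \(w_{k,i}\) itself in \(\varphi(W)\) once \(k \ge N\), whence \(t_{k,i} = 0\) there by uniqueness.

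The step I expect to be most delicate is producing a genuine \(\Bbbk[\![z]\!]\)-algebra automorphism rather than the \(\Bbbk(\!(z)\!)\)-linear one of Lemma~\ref{lem:geometic_A_CYBE_datum_qrcase}: this requires re-trivializing \(\mathcal{W}\) over the chart \(\textnormal{Spec}(\Bbbk[z])\) containing \(p\) and completing there, and then verifying that the self-duality \(W_+ = W_+^\bot\) is preserved by \(\varphi\)---for which the automorphism-invariance of the trace form is essential---so that the full two-sided support bound, and not merely its upper half, is available in the concluding step.
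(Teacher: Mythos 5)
Your proof is correct and is essentially the paper's own proof: the paper's argument for this proposition consists precisely of the instruction to repeat the proof of Proposition~\ref{lemm:concluding_qtcase}, which is exactly the transcription you carry out (trivialize \(\mathcal{W}\) over the chart \(\mathbb{P}^1\setminus\{s_+\}\) containing \(p\) via Proposition~\ref{thm:classification_sheaves_of_algebras}.(2), complete \((z)\)-adically to get \(\varphi \in \textnormal{Aut}_{\Bbbk[\![z]\!]\textnormal{-alg}}(A[\![z]\!])\), bound \(\varphi(W_+)\) on both sides as in \eqref{eq:W_bounded_qt_case}, and read off the \(t_{k,i}\) from the resulting complement of \(A[\![z]\!]\) in \(D_2(A)\)). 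Your two added clarifications --- that the \(\Bbbk(\!(z)\!)\)-linear automorphism of Lemma~\ref{lem:geometic_A_CYBE_datum_qrcase} cannot be reused, and that \(\varphi\) preserves \(\beta\) (hence orthogonal complements), a point the paper uses silently in the quasi-trigonometric case --- are refinements of, not deviations from, the intended argument.
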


\subsection{Proof of Theorem \ref{thm:categorization_refined}}\label{sec:categorization_general}
Recall the notation and statement of Theorem \ref{thm:categorization_refined}:
\begin{itemize}
    \item \(\Bbbk\) is an algebraically closed field of characteristic 0;
    
    \item \((A,\beta)\) is a unital strongly geometrically admissible metric \(\Bbbk\)-algebra and \(\gamma \in A \otimes A\) is its canonical \(A\)-invariant element;
    
    \item \(((D_n(A),\beta_{(n,\lambda)}),A[\![z]\!],W)\) is a Manin triple of the form \ref{eq:manin_triples_over_series_explicit} for some \(n \in \bN\) and \(\lambda \in \Bbbk[\![z]\!]^\times\).
    
    \item \(r\) is the solution of the \(A\)-CYBE associated to the Manin triple \(((D_n(A),\beta_{(n,\lambda)}),A[\![z]\!],W)\) via Theorem \ref{thm:solutions_of_CYBE_and_manin_triples}. 
\end{itemize}
Then precisely one of the following cases occurs: \begin{enumerate}
    \item If \(n = 0\), the curve \(X\) from the \(A\)-CYBE datum \((X,\sheafA)\) of \(((D_n(A),\beta_n),A[\![z]\!],W)\) constructed in Subsection \ref{sec:geometrization_n=0} is either a nodal or cuspidal irreducible cubic plane curve. Furthermore:
    \begin{enumerate}
        \item \(X\) is nodal if and only if \(r\) is  {trigonometric} in the sense of Theorem \ref{thm:categorization_refined};
    
    \item \(X\) is cuspidal if and only if \(r\) is {rational} in the sense in the sense of Theorem \ref{thm:categorization_refined};
    \end{enumerate}
    
    \item \(n = 1\) if and only if \(r\) is {quasi-trigonometric} in the sense of Theorem \ref{thm:categorization_refined};
    
    \item \(n = 2\) if and only if \(r\) is {quasi-rational} in the sense of Theorem \ref{thm:categorization_refined}.
\end{enumerate}

\subsubsection{Proof of (1)}
First of all, since \(A\) is unital, \(X\) cannot be elliptic by virtue of Proposition \ref{lemm:concluding_bd_trich_case}.(1). Therefore, \(X\) is either a nodal or a cuspidal irreducible plane cubic curve. Let \(s \in X\) be the unique singularity in both cases.

Let \(\eta\) and \(\rho\) be as in Proposition \ref{lemm:concluding_bd_trich_case}.(2) and chose isomorphisms
\begin{equation}
    C \coloneqq X \setminus \{s\} \stackrel{f}\longrightarrow \begin{cases}
        \textnormal{Spec}(\Bbbk[v,v^{-1}]) & \textnormal{if }X \textnormal{ is nodal}; \\
        \textnormal{Spec}(\Bbbk[z]) & \textnormal{if }X \textnormal{ is cuspidal}
    \end{cases}
\end{equation}
such that 
\begin{equation}
    \eta = \begin{cases}
        v^{-1}dv &\textnormal{if }X \textnormal{ is nodal};\\
        dz&\textnormal{if }X\textnormal{ if cuspidal}.
    \end{cases}
\end{equation}
In both cases we can chose \(U = C\) in \eqref{eq:rho_local_form} in order to obtain
\begin{equation}
    \rho|_{C \times C} = \frac{(1 \otimes \mu)\chi}{u_1 - u_2} + s
\end{equation}
where \(u = v\) (resp.\ \(u = z\)) and \(\mu = v\) (resp.\ \(\mu = 1\)) if \(X\) is nodal (resp.\ if \(X\) is cuspidal). Recall that \(s\) is some element in \(\textnormal{H}^0(\sheafA|_C \boxtimes \sheafA|_C) = \textnormal{H}^0(\sheafA|_C) \otimes \textnormal{H}^0(\sheafA|_C)\) and \(\chi\) is some preimage of \(\textnormal{id}_{\sheafA|_C}\) under
\[\textnormal{H}^0(\sheafA|_C \boxtimes \sheafA|_C) \longrightarrow \textnormal{H}^0(\sheafA|_C \otimes_{\sheafO_C}\sheafA|_C) \longrightarrow \End_{\sheafO_C}(\sheafA|_C).\]

Using Lemma \ref{lemm:sheafA_has_fiber_A} and Theorem \ref{thm:classification_sheaves_of_algebras} we can see that there exists a \(f^\sharp\)-equivariant isomorphism
\begin{equation}
    \textnormal{H}^0(\sheafA|_{C}) \stackrel{\phi_1}\longrightarrow \begin{cases}
        L(A,\sigma) &\textnormal{if }X\textnormal{ is nodal};\\
        
        A[z]&\textnormal{if }X\textnormal{ is cuspidal},
    \end{cases}
\end{equation}
where in the nodal case \(\sigma \in \textnormal{Aut}_{\Bbbk\textnormal{-alg}}(A)\) is of finite order.
Here, \(f^\sharp\) is the map \(\Bbbk[v,v^{-1}] \to \Gamma(C,\sheafO_X)\) (resp.\ \(\Bbbk[z] \to \Gamma(C,\sheafO_X)\)) defined by \(f\) if \(X\) is nodal (resp.\ cuspidal). Let us conclude the proof of (1) in a case by case fashion.

\begin{proof}[Case (a): \(X\) is nodal]
Let \(A_j \coloneqq \{a \in A \mid \sigma(a) = \varepsilon^ja\}\) for the \(m\)-th root of unity
\(\varepsilon \in \Bbbk\) from Theorem \ref{thm:categorization_refined}. 
Note that \(\beta_\sheafA\) induces an algebra metric \(L(A,\sigma) \times L(A,\sigma) \to \Bbbk[v,v^{-1}]\) defined by the coefficient-wise application of \(\beta\). In particular, since \(v = \widetilde{v}^m\) and
\begin{equation}
    \beta(\widetilde{v}^ka,\widetilde{v}^\ell) = \beta(a,b)\widetilde{v}^{k+\ell} \in \Bbbk[v,v^{-1}]
\end{equation}
holds for all \(a\in A_k,b \in A_\ell\), we have \(\beta(A_k,A_\ell) = \{0\}\) if \(k + \ell \notin m \bZ\). Furthermore,
\[\beta(\sigma(a),b) = \varepsilon^k \beta(a,b) = \varepsilon^{k + \ell - \ell}\beta(a,b) = \beta(a,\sigma^{-1}(b))\]
holds for \(k+\ell \in m\bZ\). Combined, we see that \(\sigma\) is orthogonal with respect to \(\beta\).

Since \(\sigma\) is orthogonal with respect to \(\beta\), it is easy to see that \(\gamma = \sum_{j = 0}^{m-1}\gamma_j \in \bigoplus_{j = 0}^{m-1}(A_j \otimes A_{-j})\). We can choose \(\chi\) as the preimage of 
\begin{equation}
    \sum_{j = 0}^{m-1}\left(\frac{\widetilde{v}}{\widetilde{w}}\right)^j \gamma_j \in L(A,\sigma) \otimes L(A,\sigma)
\end{equation}
under the isomorphism \(\phi_1 \otimes \phi_1 \colon \textnormal{H}^0(\sheafA|_C) \otimes \textnormal{H}^0(\sheafA|_C) = \textnormal{H}^0(\sheafA|_C \boxtimes \sheafA|_C) \to L(A,\sigma) \otimes L(A,\sigma)\). Then
\begin{equation}
     (\phi_1 \otimes \phi_1)\rho|_{C \times C} = \frac{1}{(v/w)-1}\left(\frac{\widetilde{v}}{\widetilde{w}}\right)^j \gamma_j + t 
\end{equation}
holds for \(t \coloneqq (\phi_1\times \phi_1)s \in L(A,\sigma) \otimes L(A,\sigma)\). 

Let \(\exp\) be the completion of \(\Bbbk[v,v^{-1}] \to \Bbbk[\![z]\!]\), \(v \mapsto \exp(z)\) with respect to the ideal \((v-1)\) and \(\phi_2\in \textnormal{Aut}_{\Bbbk\textnormal{-alg}}(A[\![z]\!])\) be the \(\exp\)-equivariant isomorphism obtained by completing the map \(L(A,\sigma) \to A[\![z]\!]\), \(f \mapsto f(\exp(z/m))\) at the same ideal. Using Proposition \ref{lemm:concluding_bd_trich_case}.(2), we can see that the automorphism \(\phi \coloneqq \phi_2\phi_1\zeta^{-1} \in \textnormal{Aut}_{\Bbbk\textnormal{-alg}}(A[\![z]\!])\) satisfies 
\begin{equation}
    (\phi \otimes \phi)r(x,y) =\frac{1}{\exp\left(x-y\right)-1}\sum_{j = 0}^{m-1}\textnormal{exp}\left(\frac{x-y}{m}\right) \gamma_j + s\left(\exp\left(\frac{x}{m}\right),\exp\left(\frac{y}{m}\right)\right).
\end{equation}
This concludes the proof in the nodal case.
\end{proof}

\begin{proof}[Case (b): \(X\) is cuspidal.] We can chose \(\chi \in \textnormal{H}^0(\sheafA|_C) \otimes \textnormal{H}^0(\sheafA|_C)\) as the preimage of \(\gamma \in (A \otimes A)[x,y]\) under the isomorphism \(\phi_1 \otimes \phi_1\).
Then  
\begin{equation}
    (\phi_1 \otimes \phi_1)\rho|_{C\times C} = \frac{\gamma}{x-y} + t
\end{equation}
holds for \(t \coloneqq (\phi_1 \otimes \phi_1)s \in (A \otimes A)[x,y]\). 

Let \(\phi_2 \in \textnormal{Aut}_{\Bbbk[\![z]\!]\textnormal{-alg}}(A[\![z]\!])\) be the completion of \(A[z] \to A[\![z]\!]\). Using Proposition \ref{lemm:concluding_bd_trich_case}.(2), we can see that 
\begin{equation}
    (\phi \otimes \phi)r = \frac{\gamma}{x-y} + t
\end{equation}
holds for \(\phi \coloneqq \phi_2 \phi_1 \zeta^{-1} \in \textnormal{Aut}_{\Bbbk[\![z]\!]\textnormal{-alg}}(A[\![z]\!]) \). This concludes the proof in the cuspidal case.
\end{proof}

\subsubsection{Proof of (2) and (3)}
By virtue of Proposition \ref{lemm:concluding_qtcase} and Proposition \ref{lemm:concluding_qrcase} there exist 
\[\{t_{k,i} \in A[z] \mid k \in \bN,i\in\overline{1,n}\}\]
and \(N \in \bN\) such that, up to isomorphism of Manin triples,
\begin{equation}
    W = \textnormal{Span}_{\Bbbk}\{w_{k,i} + t_{k,i} \mid k \in \bN,i\in \overline{1,n}\}
\end{equation}
and \(t_{k,i} = 0\) for all \( k\ge N\). Here, the \(w_{k,i} \in D_n(A)\) are defined in \eqref{eq:def_wki}.
The solution \(r\) of the \(A\)-CYBE of \(W\) can now be determined by 
\begin{equation}
    r(x,y) = \sum_{k = 0}^\infty \sum_{i = 1}^d(w_{k,i} + t_{k,i}) \otimes b_iy^k = \frac{y^n\gamma}{x-y} + t(x,y),
\end{equation}
where \(t = \sum_{k = 0}^N \sum_{i = 1}^d t_{k,i}(x) \otimes b_iy^k \in (A\otimes A)[x,y]\).

\section{Classification of associative \(D\)-bialgebra structures over series}\label{sec:classification_associatice_Dbialgebras}

\subsection{Non-triangular topological associative \(D\)-bialgebras on series are non-degenerate}\label{sec:associative_topological_Dbialgebras_nondegenerate}
The final goal of this paper is the classification of all non-triangular topological associative \(D\)-bialgebra structures on \(A[\![z]\!]\) (i.e.\ topological \(D\)-bialgebra structures in the category of associative algebras) for any finite-dimensional simple associative algebra \(A\) over an algebraically closed field \(\Bbbk\) of characteristic 0. 
Recall that these are exactly the co-opposites of (non-triangular) topological balanced infinitesimal \(D\)-bialgebra structures on \(A[\![z]\!]\). Therefore, the classification of the latter is equivalent.

In order to use Theorem \ref{thm:categorization_refined}, we begin by proving that, as in the case of a simple Lie algebra over \(\Bbbk\), these are all non-degenerate.

\begin{proposition}\label{prop:associative_doubles}
Let \(\Bbbk\) be algebraically closed of characteristic 0 and \((A,\beta)\) be a finite-dimensional, simple, associative, metric \(\Bbbk\)-algebra, i.e.\ \(A \cong \textnormal{M}_{n}(\Bbbk)\) is the space of \(n \times n\)-matrices with entries in \(\Bbbk\) and \(\beta\) is a scalar multiple of the algebra metric defined by the trace of matrices.

Any non-triangular topological associative \(D\)-bialgebra structure \(\delta \colon A[\![z]\!] \to (A\otimes A)[\![x,y]\!]\) is non-degenerate in the sense of Section \ref{sec:manin_triples_over_series_explicit}.
\end{proposition}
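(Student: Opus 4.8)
The plan is to show that a non-triangular $\delta$ is necessarily of the form $\delta_r$ for a series $r$ of type $(n,\lambda)$ with a genuine pole along the diagonal, and then to invoke the dictionary between such series and Manin triples of the form \eqref{eq:manin_triples_over_series_explicit}. Recall from \Cref{thm:solutions_of_CYBE_and_manin_triples} and the construction in the following subsection that as soon as $\delta = \delta_r$ for a solution $r$ of the $A$-CYBE of type $(n,\lambda)$, the double $(D(A[\![z]\!],\delta),\textnormal{ev})$ is isomorphic, as a Manin pair over $A[\![z]\!]$, to $(D_n(A),\beta_{(n,\lambda)})$, which is exactly non-degeneracy in the sense of \Cref{sec:manin_triples_over_series_explicit}; conversely, by \Cref{rem:triangular_bialgebras_and_rmatrices} the triangular structures are precisely the $\delta_r$ with $r \in (A\otimes A)[\![x,y]\!]$ regular. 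It therefore suffices to produce such an $r$ and to argue that its pole is non-trivial precisely because $\delta$ is non-triangular.

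First I would use that $\delta$ is an associative $D$-bialgebra structure, so by \Cref{sec:survey_manin_triples_balanced_infinitesimal_bialgebras} the map $\widetilde\delta \coloneqq \tau\delta$ is a (topological) infinitesimal $1$-cocycle, $\widetilde\delta(ab) = a^{(1)}\widetilde\delta(b) + \widetilde\delta(a)b^{(2)}$. Writing $c \coloneqq \widetilde\delta(z\cdot 1) \in (A\otimes A)[\![x,y]\!]$ and applying the cocycle identity to the two factorizations of the central series $z\cdot b = b\cdot z$, the terms $(z\cdot 1)^{(1)}\widetilde\delta(b) = x\,\widetilde\delta(b)$ and $(z\cdot 1)^{(2)}\widetilde\delta(b) = y\,\widetilde\delta(b)$ combine to give
\begin{equation*}
    (x-y)\,\widetilde\delta(b) = b^{(1)}c - c\,b^{(2)} \qquad \text{for all } b\in A[\![z]\!].
\end{equation*}
Thus $\widetilde\delta$, and hence $\delta = \tau\widetilde\delta$, is completely determined by the single coefficient $c$. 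Comparing with the definition \eqref{eq:def_delta} exhibits $\widetilde\delta(a) = a(x)^{(1)}r - r\,a(y)^{(2)}$ for $r = c/(x-y)$, whence $\delta = \delta_{r'}$ for a series $r'$ obtained from $r$ by the involutions $\tau$ and $\overline{(\cdot)}$ of \eqref{eq:skew-series}, both of which preserve the class of type-$(n,\lambda)$ series.

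Next I would pin down the residue. Since the left-hand side of the displayed identity is divisible by $(x-y)$ and $\widetilde\delta(b)$ is a genuine power series, the right-hand side vanishes on the diagonal; evaluating at $x=y$ yields $(b\otimes 1)c(z,z) = c(z,z)(1\otimes b)$ for all $b\in A$. As $A \cong \textnormal{M}_n(\Bbbk)$ is central simple, the space of tensors satisfying this invariance \eqref{eq:A_invariance_of_gamma} is one-dimensional and spanned by the canonical invariant $\gamma$ of \Cref{sec:series_in_standard_form}, so $c(z,z) = \mu(z)\gamma$ for some $\mu \in \Bbbk[\![z]\!]$. By \eqref{eq:series_vanishing_at_diagonal} we may then write $r' = \tfrac{\widetilde\mu(x,y)\gamma}{x-y} + s$ with $s\in(A\otimes A)[\![x,y]\!]$ and $\widetilde\mu(z,z)=\mu(z)$, i.e.\ $r'$ has diagonal residue proportional to $\gamma$ exactly as required for the standard form \eqref{eq:standard_form}, and by \Cref{rem:series_nlambda_type} it is of type $(n,\lambda)$ iff $\mu\neq 0$. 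The dichotomy now closes the argument: if $\mu\equiv 0$ then $r'$ is regular and $\delta=\delta_{r'}$ is triangular by \Cref{rem:triangular_bialgebras_and_rmatrices}, contradicting the hypothesis; hence $\mu\not\equiv 0$, so $r'$ is of type $(n,\lambda)$, and since the given double is a Manin triple the subspace $A(r')\subseteq D_n(A)$ is a Lagrangian subalgebra, so that $r'$ solves the $A$-CYBE (\Cref{thm:series_and_subspaces}, \Cref{thm:solutions_of_CYBE_and_manin_triples}) and the double is isomorphic to $(D_n(A),\beta_{(n,\lambda)})$, as desired.

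The genuinely structural input is immediate and specific to the associative case: centrality of $z$ together with the cocycle identity already forces $\delta$ into the ``difference'' form $\delta_r$, which is what makes matrix algebras so rigid here. Accordingly, the main obstacle I anticipate is bookkeeping rather than conceptual, namely tracking the slot-wise left/right multiplications and the $x,y$ conventions carefully enough to match $(x-y)\widetilde\delta(b) = b^{(1)}c - c\,b^{(2)}$ against the precise definition \eqref{eq:def_delta} of $\delta_r$, including the $\tau$- and $\overline{(\cdot)}$-twists, and confirming via simplicity of $A$ that the extracted diagonal residue is the canonical $\gamma$ rather than an arbitrary invariant tensor.
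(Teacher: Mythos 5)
Your route is genuinely different from the paper's, but it has a gap at the decisive step. The identity \((x-y)\widetilde\delta(b) = b^{(1)}c - c\,b^{(2)}\) with \(c = \widetilde\delta(z\cdot 1)\) is correct (and nicely exploits unitality of \(A\) and centrality of \(z\)), and so is its diagonal consequence \(b^{(1)}c(z,z) = c(z,z)\,b^{(2)}\) for all \(b \in A\). However, your claim that this invariance forces \(c(z,z) \in \Bbbk[\![z]\!]\gamma\) is false: for \(A = \textnormal{M}_n(\Bbbk)\) the space of tensors \(t \in A\otimes A\) with \(b^{(1)}t = t\,b^{(2)}\) for all \(b\) is \(n^2\)-dimensional, namely \(\{d^{(2)}\gamma \mid d \in A\}\). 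Indeed, under \(A \otimes A \cong \End_\Bbbk(A)\), \(u\otimes v \mapsto (x \mapsto uxv)\), the condition says precisely that the image of the associated operator lies in \(Z(A) = \Bbbk\), so the solution space is isomorphic to \(A^*\); concretely, for \(n = 2\) the tensor \(e_{11}^{(2)}\gamma = e_{11}\otimes e_{11} + e_{21}\otimes e_{12}\) satisfies the invariance but is not proportional to \(\gamma\). One-dimensionality holds only if one imposes \emph{both} identities of \eqref{eq:A_invariance_of_gamma}, and your diagonal argument yields only one of them; the cocycle identity cannot supply the other, since \(\widetilde\delta(b) \coloneqq (b^{(1)}c - c\,b^{(2)})/(x-y)\) is a well-defined continuous \(1\)-cocycle for \emph{every} \(c\) whose diagonal value satisfies the single invariance. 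Hence your dichotomy does not close: a priori \(c(z,z) = d(z)^{(2)}\gamma\) with \(d \in A[\![z]\!]\) non-scalar, in which case \(r'\) is neither regular nor of type \((n,\lambda)\), and the dictionary of Theorem \ref{thm:series_and_subspaces} is unavailable.

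The missing input is exactly the part of the associative \(D\)-bialgebra axioms beyond the cocycle identity. For instance, the balancedness condition \eqref{eq:balance} (see Section \ref{sec:survey_manin_triples_balanced_infinitesimal_bialgebras}), applied with \(a_2 = z\cdot 1\), gives \((x-y)\widetilde\delta(b) = b^{(1)}(\tau c) - (\tau c)\,b^{(2)}\); comparing with your identity and testing on \(b = z\cdot 1\) yields \((x-y)(c-\tau c) = 0\), i.e.\ \(c = \tau c\), and applying \(\tau\) to your diagonal identity then produces the second invariance of \eqref{eq:A_invariance_of_gamma}, which really does pin \(c(z,z)\) to \(\Bbbk[\![z]\!]\gamma\) (it also gives the skew-symmetry \(r' = \overline{r'}\) that you implicitly need to invoke Section \ref{rem:triangular_bialgebras_and_rmatrices} in the regular case — another point left unaddressed). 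With that repair your argument would work and would be an interesting alternative to the paper's proof, which never computes a residue: there one shows that the double \(D(A[\![z]\!],\delta)\), being an associative algebra containing \(A\), is of the form \(A \otimes R\) with evaluation form \(\beta \otimes t\) (Lemma \ref{lem:extensions_of_scalers}), that Lagrangianity of \(A[\![z]\!]\) forces \(\Bbbk[\![z]\!] \subseteq R\) to be Lagrangian and hence \(R\) to be commutative and associative (Lemma \ref{lem:alternative_are_commutative}), so that \((R,t)\) is a trace extension of \(\Bbbk[\![z]\!]\), and the classification in Proposition \ref{prop:trace_extensions} delivers exactly the alternative ``triangular or non-degenerate''.
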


\subsubsection{Proof of Proposition \ref{prop:associative_doubles}}
Let us begin by proving.

\begin{lemma}\label{lem:extensions_of_scalers}
Let \(\Bbbk\) be algebraically closed of characteristic 0 and \(A\) be a finite dimensional associative \(\Bbbk\)-algebra.
Every associative algebra \(B\) containing \(A\) as subalgebra is isomorphic to \(A \otimes R\) for some unital associative \(\Bbbk\)-algebra \(R\). Furthermore, if \(B\) is equipped with an algebra metric \(\widetilde{\beta}\), then for all \(a,b \in A\) and \(r,s \in R\)
    \begin{equation}
        \widetilde{\beta}(a \otimes f,b\otimes g) = \beta(a,b) t(rs) 
    \end{equation}
    for some \(t \colon R \to \Bbbk\) such that the associated pairing \((r,s) \mapsto t(rs)\) is an algebra metric of \(R\).
\end{lemma}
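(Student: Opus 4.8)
The plan is to treat the two assertions separately, exploiting that the standing hypothesis of this section makes $A$ simple over the algebraically closed field $\Bbbk$, so that $A \cong \textnormal{M}_n(\Bbbk)$ is a split simple (hence separable) algebra carrying a full system of matrix units $\{e_{ij}\}_{i,j=1}^n \subseteq A \subseteq B$ with $e_{ij}e_{kl} = \delta_{jk}e_{il}$ and $\sum_i e_{ii} = 1$. The crucial working hypothesis, which I will make explicit, is that $A$ shares its unit with $B$; this holds in the intended application, and the statement genuinely requires it, since for instance $B = A \times \Bbbk$ with $A$ embedded in the first factor is not of the form $A \otimes R$ for dimension reasons.

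First I would prove the structural isomorphism $B \cong A \otimes R$ by the classical double-centralizer computation. Set $R := \{r \in B \mid ra = ar \text{ for all } a \in A\}$, a unital associative subalgebra of $B$ containing $1_B$. For $b \in B$ put $r_{ij} := \sum_{k=1}^n e_{ki}\,b\,e_{jk}$; the matrix-unit relations give $e_{ab}r_{ij} = e_{ai}be_{jb} = r_{ij}e_{ab}$, so each $r_{ij} \in R$, and $\sum_{i,j} e_{ij}r_{ij} = \sum_{i,j} e_{ii}be_{jj} = (\sum_i e_{ii})\,b\,(\sum_j e_{jj}) = b$. Hence the multiplication map $A \otimes_\Bbbk R \to B$, $a \otimes r \mapsto ar$, which is an algebra homomorphism because $R$ centralises $A$, is surjective; injectivity follows by applying $e_{pp}(-)e_{qq}$ to a relation $\sum_{i,j} e_{ij}c_{ij} = 0$ with $c_{ij} \in R$, which isolates $e_{pq}c_{pq}$ and then forces each $c_{pq} = 0$. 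This yields $B \cong A \otimes R$ as unital associative $\Bbbk$-algebras.

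For the metric statement I would use the trace-functional description of associative symmetric forms on a unital algebra. Since $\widetilde\beta$ is associative and symmetric, the map $\ell := \widetilde\beta(-,1) \colon B \to \Bbbk$ satisfies $\widetilde\beta(x,y) = \widetilde\beta(xy,1) = \ell(xy)$ and $\ell(xy) = \ell(yx)$, i.e.\ $\ell$ is a trace functional. Identifying $B \cong A \otimes R = \textnormal{M}_n(R)$, I would invoke the standard fact that the matrix trace $\textnormal{Tr}\colon \textnormal{M}_n(R) \to R$, $(m_{ij}) \mapsto \sum_i m_{ii}$, induces an isomorphism $\textnormal{M}_n(R)/[\textnormal{M}_n(R),\textnormal{M}_n(R)] \cong R/[R,R]$; consequently $\ell$ factors as $\ell = t \circ \textnormal{Tr}$ for a unique trace functional $t \colon R \to \Bbbk$. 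Evaluating on simple tensors, $\widetilde\beta(a \otimes r, b \otimes s) = \ell(ab \otimes rs) = t(\textnormal{Tr}(ab \otimes rs)) = \textnormal{tr}(ab)\,t(rs)$, where $\textnormal{tr}$ is the ordinary trace on $\textnormal{M}_n(\Bbbk)$. Writing $\beta$ for the metric on $A$, which by simplicity is a nonzero multiple of $\textnormal{tr}(ab)$, gives the claimed formula after absorbing a scalar into $t$; and the induced pairing $(r,s) \mapsto t(rs)$ on $R$ is symmetric because $t$ is a trace, associative by construction, and non-degenerate, since $t(rs) = 0$ for all $s$ would make $a \otimes r$ pair trivially with all of $B$ under $\widetilde\beta$, forcing $r = 0$. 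Thus it is an algebra metric on $R$.

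I expect the main obstacle to be conceptual rather than computational: pinning down and justifying the shared-unit hypothesis $1_A = 1_B$ on which the whole argument rests (without it the conclusion fails), together with citing cleanly the identification of the commutator quotient of $\textnormal{M}_n(R)$ with that of $R$ over a possibly non-commutative coefficient algebra $R$. Once this framework is in place, the matrix-unit bookkeeping of the first step and the non-degeneracy verification of the last are routine.
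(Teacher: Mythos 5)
Your proposal is correct (with the unit hypothesis you add) but takes a genuinely different route from the paper in both halves, and in one place it is more careful than the paper itself. For the structural isomorphism the paper does not use matrix units or the centralizer: it decomposes \(B\) into irreducible \(A\)-bimodules \(B = \bigoplus_{i} Ar_iA\), each isomorphic to \(A\) because \(A\) is central simple, identifies \(B \cong A \otimes R\) with \(R\) the multiplicity space, and then shows \(R\) closes under multiplication because the coefficients \(a_k\) in \((1\otimes r_i)(1 \otimes r_j) = \sum_k a_k \otimes r_k\) commute with \(A\) and are therefore scalars. This carries the same double-centralizer content as your matrix-unit computation, and it silently needs exactly the hypothesis you isolate: without \(1_A\) acting as the identity on \(B\), the decomposition into pieces of the form \(ArA\) fails (your example \(B = A \times \Bbbk\) defeats the paper's argument too), while in the intended application the hypothesis holds because \(\delta(1) = 0\) for associative \(D\)-bialgebra structures, by the infinitesimal-bialgebra cocycle identity. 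For the metric part the two arguments truly diverge, and yours is the sound one: the paper asserts \(\widetilde{\beta}(a \otimes 1, b \otimes 1) = \lambda\beta(a,b)\) with \(\lambda \in \Bbbk^\times\) and sets \(t(r) = \frac{1}{n\lambda}\widetilde{\beta}(1\otimes 1,1\otimes r)\); but the restriction of \(\widetilde{\beta}\) to the original copy of \(A\) may well be degenerate, and indeed it vanishes identically in the paper's own application, where \(A \subseteq A[\![z]\!]\) lies inside a Lagrangian subalgebra of the double, so that construction divides by zero. Your factorization \(\widetilde{\beta}(\cdot,1_B) = t \circ \textnormal{Tr}\) through the \(R\)-valued matrix trace (Morita invariance of traces, proved by writing off-diagonal and trace-zero elements as commutators), together with your non-degeneracy check for \((r,s) \mapsto t(rs)\), is valid without any assumption on \(\widetilde{\beta}|_{A \otimes 1}\). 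In short, your route buys an explicit statement of the hypothesis the lemma actually needs and a construction of \(t\) that survives the degenerate case the paper overlooks.
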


\begin{proof} 
The algebra \(B\) splits into a direct sum of irreducible \(A\)-bimodules: \(B = \bigoplus_{i \in I} A r_i A\), where \(I \coloneqq \{r \in B\mid ArA\textnormal{ is irreducible}\}/\sim\) for \(r \sim s\) if \(ArA = AsA\) and \(i \mapsto r_i\) is some choice function \(I \to R\). The modules \(Ar_iA\) are all isomorphic to \(A\) itself, so \(B \cong A \otimes R\) as \(A\)-bimodule, for the vector space \(R\) over \(\Bbbk\) with basis \(\{r_i\}_{i\in I}\). Let us write the original copy of \(A\) in \(B\) as \(A \otimes 1\) for some distinguished element \(1 \in R\) and note that \((a \otimes 1)(b \otimes r) = ab \otimes r\) for all \(a,b \in A\) and \(r \in R\) by construction. Consider \((1 \otimes r_i)(1 \otimes r_j) = \sum_{k \in I}a_k \otimes r_k\), where only finitely many \(a_k\) are non-zero. Now \([a \otimes 1,1 \otimes r_i] = 0 = [a\otimes 1,1 \otimes r_j]\) implies \([a,a_k] = 0\) for all \(a \in A, k \in I\). Therefore, \(a_k \in \Bbbk 1 \subseteq A\) for all \(k \in I\), so 

\begin{equation}
    (1 \otimes r_i)(1 \otimes r_j) = 1 \otimes \sum_{k \in I} C_{ij}^k r_k
\end{equation}
for some \(\{C_{ij}^k\}_{k \in I} \subseteq \Bbbk\) which are almost all 0. In particular, \(R\) is a \(\Bbbk\)-algebra with multiplication determined by \((1 \otimes r)(1 \otimes s) = 1 \otimes rs\). Then \(1 \in R\) is a unit and since \(B\) is associative, \(R\) is too. 

For the second part of the statement, note that \(\widetilde{\beta}(a \otimes 1,b \otimes 1) = \lambda \beta(a,b)\) for some \(\lambda \in \Bbbk^\times\), so
\begin{equation}
    t(r) \coloneqq \frac{1}{n\lambda}\widetilde{\beta}(1 \otimes 1, 1\otimes r) 
\end{equation}
is the desired map \(t \colon R \to \Bbbk\).
\end{proof}

\begin{lemma}\label{lem:alternative_are_commutative}
Let \(R\) be an alternative algebra over a field of characteristic larger then 3 equipped with a linear map \(t \colon R \to \Bbbk\) such that \((r,s) \mapsto t(rs)\) is an algebra metric.
Assume there exists a reduced, commutative, associative subalgebra \(S \subseteq R\) satisfying \(S^\bot \subseteq S\). 

The algebra \(R\) is commutative and associative. 
\end{lemma}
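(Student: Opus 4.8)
The plan is to first reduce the statement to commutativity, and then to attack commutativity directly. Writing \([a,b]=ab-ba\) and \((x,y,z)=(xy)z-x(yz)\), a routine expansion valid in \emph{any} algebra gives
\[
[xy,z]-x[y,z]-[x,z]y=(x,y,z)-(x,z,y)+(z,x,y).
\]
Since \(R\) is alternative the associator is alternating, so the right-hand side collapses to \(3(x,y,z)\) and hence \(3(x,y,z)=[xy,z]-x[y,z]-[x,z]y\) (here \(\mathrm{char}\,\Bbbk>3\) is used to divide by \(3\)). In particular, once I know \([R,R]=0\) this identity forces \((x,y,z)=0\), i.e.\ \(R\) is associative. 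Thus it suffices to prove that \(R\) is commutative.

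Next I would extract the consequences of \(\beta_t\) being symmetric and associative. From \(t(ab)=t(ba)\) and \(t((ab)c)=t(a(bc))\) one obtains the adjunction \(\beta_t(ab,c)=\beta_t(a,bc)\) and, after a short manipulation, \(\beta_t([a,b],c)=t(a[b,c])=:T(a,b,c)\), so that \(T\) is an alternating trilinear form. Using in addition that \(S\) is commutative and associative, a direct computation shows \(\beta_t([s,x],s')=0\) for all \(s,s'\in S\), \(x\in R\), whence
\[
[S,R]\subseteq S^\bot\subseteq S,
\]
and the same type of computation shows that \(S^\bot\) is an \(S\)-submodule of \(R\) contained in \(S\), hence an ideal of the commutative associative algebra \(S\). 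Because \([s',x]\in S^\bot\subseteq S\) commutes with \(s\in S\), the operators \(C_s(x):=[s,x]\) satisfy \(C_sC_{s'}=0\), in particular \(C_s^2=0\); and feeding two elements of \(S\) into the associator identity gives \((S,S,R)\subseteq S^\bot\).

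For commutativity I would analyse \(S\) through the radical of \(\beta_t|_S\), which equals \(S^\bot\). Any nontrivial idempotent \(e\in S\) is central in \(R\): indeed \(C_e^2=0\) together with the Peirce decomposition \(R=R_{11}\oplus R_{10}\oplus R_{01}\oplus R_{00}\) of \(e\) forces the mixed components \(R_{10},R_{01}\) to vanish. Decomposing along a maximal family of orthogonal (hence central) idempotents of the reduced algebra \(S\) reduces the problem to the case where \(S\) is indecomposable. The case matching the intended application—\(S=\Bbbk[\![z]\!]\) sitting inside a trace extension—is that \(S\) is a reduced domain which is Lagrangian, \(S=S^\bot\); since \(\mathrm{char}\,\Bbbk\neq 2\) the space \((R,\beta_t)\) is then metabolic and splits as a polarization \(R=S\oplus C\) into complementary isotropic subspaces perfectly paired by \(\beta_t\), and I would study the multiplication of \(R\) across this polarization using only \([S,R]\subseteq S\), \((S,S,R)\subseteq S\), and reducedness of \(S\).

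The hard part will be precisely this last step: propagating commutativity from \(S\), where everything is controlled by \([S,R]\subseteq S^\bot\), to arbitrary commutators \([c,c']\) with \(c,c'\in C\) lying off \(S\). The alternating form \(T\) and the identity \(\beta_t([s,x],y)=\beta_t(s,[x,y])\) only move \emph{one} \(S\)-entry at a time, so the purely formal manipulations stall as soon as no factor lies in \(S\); the genuine input must be that a non-commutative (or non-associative) direction in \(C\) would, through the perfect \(S\)–\(C\) pairing and alternativity, produce a nonzero nilpotent element of \(S\), contradicting that \(S\) is reduced. Making this incompatibility precise—equivalently, ruling out the octonionic and matrix-type local models, exactly as a dimension count does in the finite-dimensional semisimple situation—is where the real work lies and is the step I expect to require the most care.
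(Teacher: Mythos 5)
There is a genuine gap, and you in fact flag it yourself: your proposal never proves \([S,R]=0\), let alone \([R,R]=0\). Everything you establish rigorously --- the reduction of associativity to commutativity via \(3(x,y,z)=[xy,z]-x[y,z]-[x,z]y\), the inclusion \([S,R]\subseteq S^\bot\subseteq S\), and the vanishing \(C_sC_{s'}=0\) --- also appears (implicitly or explicitly) in the paper's argument, but the engine that actually kills the commutators is missing from your plan. Your closing intuition is exactly right: non-commutativity must produce a nonzero nilpotent of \(S\), contradicting reducedness. The paper makes this precise in two short steps, both times by producing a \emph{square} of a commutator as an iterated commutator. First, for \(q\in S\), \(r\in R\), Artin's theorem lets one compute inside the associative subalgebra generated by \(q,r\): since \([q,r]\) and \([q,r^2]\) lie in \(S\) (by your own inclusion) and hence commute with \(q\), one gets \(0=[q,[q,r^2]]=[q,[q,r]r+r[q,r]]=2[q,r]^2\), so \([q,r]^2=0\) and reducedness of \(S\) gives \([S,R]=0\). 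Second, with \([S,R]=0\) in hand, symmetry and associativity of \(\beta_t\) give \(t(q[r,s])=t([qr,s])=0\) for all \(q\in S\), i.e.\ \([R,R]\subseteq S^\bot\subseteq S\); then \([[r,s],s]=0=[[r,sr],s]\) (as these are commutators of elements of \(S\) with elements of \(R\)), and the same Artin computation yields \([r,s]^2=[[r,s]r,s]=[[r,sr],s]=0\), so reducedness finishes: \([R,R]=0\). Note that both applications of reducedness are applied to elements of \(S\), which is why only \(S\) (not \(R\)) needs to be reduced.

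Your alternative route --- central idempotents, Peirce decomposition, and a metabolic polarization \(R=S\oplus C\) --- is not only incomplete at the decisive step but also quietly strengthens the hypothesis: the polarization argument assumes \(S=S^\bot\) (Lagrangian), whereas the lemma only requires \(S^\bot\subseteq S\), and nothing in the paper's proof needs \(S\) to be Lagrangian, a domain, or indecomposable. The lesson is that no structure theory of \(R\) (octonionic or matrix local models) is required at all: the two squaring identities above, combined with reducedness of \(S\), do all the work in a few lines.
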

\begin{proof}
Let \(p,q \in S\) and \(r,s \in R\) be arbitrary elements. The identities
\begin{equation}
    t(p(qr)) = t((pq)r) = t((qp)r) = t(r (qp)) = t((r q) p) = t(p (r q))
\end{equation}
show that \(t(p [q,r]) = 0\). As a consequence we see that \([S,R] \subseteq S^\bot \subseteq S\). Furthermore, since \(R\) is alternative, the subalgebra \(\Bbbk[q,r] \subseteq R\) is associative 
and we see that
\begin{align*}
    0 &= [q,[q,r^2]] = 
    [q,[q,r]r + r[q,r]] = [q,[q,r]]r + [q,r]^2 + [q,r]^2 + r[q,[q,r]] =
    2[q,r]^2,
\end{align*}
where we used that \([q,r],[q,r^2] \in S\) implies \([q,[q,r^2]] = 0 = [q,[q,r]]\). Since \(R\) is reduced, we deduce that \([S,R] = 0\). Consequently, 
\begin{equation}
    t(q [r,s]) = t([qr,s]) = 0    
\end{equation}
so \([R,R] \subseteq S^\bot \subseteq S\). Combined with \([R,S] = 0\), this implies \([[r,s],s] = 0 = [[r,sr],s]\), so
\begin{equation}
    [r,s]^2 = [[r,s]r,s] = [[r,sr],s] = 0
\end{equation}
holds. Since \(R\) is reduced, we deduce that \([R,R] = 0\) and the fact that any unital commutative associative algebra over a field of characteristic larger 3 is associative concludes the proof. 
\end{proof}

\noindent
We can now proof Proposition \ref{prop:associative_doubles}. By virtue of Lemma \ref{lem:extensions_of_scalers}, we have \(D(A[\![z]\!],\delta) \cong A \otimes R\) for some unital associative \(\Bbbk\)-algebra \(R\) and \(\textnormal{ev}(a \otimes r, b \otimes s) = \beta(a,b)t(rs)\) for some \(t \colon R \to \Bbbk\) which defines an algebra metric. Since \(A[\![z]\!] \subseteq D(A[\![z]\!],\delta)\) is a Lagrangian subalgebra, \(\Bbbk[\![z]\!]\subseteq R\) is a Lagrangian subalgebra. Therefore, Lemma \ref{lem:alternative_are_commutative} implies that \(R\) is commutative. It is now easy to see that \((R,t)\) is a trace extension of \(\Bbbk[\![z]\!]\) in the sense of Section \ref{sec:trace_extension} and Proposition \ref{prop:trace_extensions} concludes the proof.

\subsection{Categorization of topological associative \(D\)-algebra structures on series}\label{sec:categorization_associative}
Let \(\Bbbk\) be an algebraically closed field of characteristic 0.
Recall that any finite-dimensional simple associative \(\Bbbk\)-algebra is isomorphic to the algebra \(A = \textnormal{M}_{n}(\Bbbk)\) of \(n \times n\)-matrices with entries in \(\Bbbk\) and the bilinear form \(\beta \colon A \times A \to A\) defined by the trace \((a,b) \mapsto \textnormal{tr}(ab)\) is strongly geometrically admissible; see Corollary \ref{cor:geometrically_M_admissible}.

Theorem \ref{thm:categorization_refined} states that
we have four different types of non-triangular associative topological \(D\)-bialgebra structures on \(A[\![z]\!]\). Namely, those associated to solutions of the \(A\)-CYBE which are either trigonometric, rational, quasi-trigonometric, or quasi-rational. 

In Subsection \ref{sec:trigonometric_solutions_ACYBE}, we will show that there are no trigonometric nor quasi-trigonometric solutions of the \(A\)-CYBE. So we are left with two different types of non-triangular associative topological \(D\)-bialgebra structures on \(A[\![z]\!]\). Namely, those associated to solutions of the \(A\)-CYBE which are either rational or quasi-rational. 

In the subsections \ref{sec:rational_A_CYBE_soltuions} and \ref{sec:qrational_A_CYBE_soltuions}, we will establish the structure theory of (quasi-)rational solutions of the \(A\)-CYBE by combining the methods from \cite{aguiar_associative} with the approach of \cite{stolin_sln} to the structure theory of (quasi-)rational solutions of the \(\mathfrak{sl}_n(\Bbbk)\)-CYBE.

An important result to conclude this plan will be the following adaptation of Stolin's theory of maximal orders from \cite{stolin_sln} over \(\mathfrak{sl}_n(\Bbbk)\) to the complete matrix algebra \(A = \textnormal{M}_n(\Bbbk)\).

\begin{proposition}\label{prop:maximal_order}
Let \(W \subseteq A[z,z^{-1}]\) be a subalgebra satisfying \(z^{\pm N}A[z^{\pm 1}] \subseteq W \subseteq z^{\mp N}A[z^{\pm 1}]\) for some \(N \in \bN\). Then there exists \(g \in \textnormal{SL}_n(\Bbbk[z,z^{-1}])\) such that \(\textnormal{Ad}(g)W \subseteq A[z^{-1}]\)
\end{proposition}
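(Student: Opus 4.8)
The plan is to treat $W$ through the theory of orders and lattices in the central simple $\Bbbk(z)$-algebra $M_n(\Bbbk(z))$, adapting Stolin's strategy to the full matrix algebra. First I would record what the hypothesis gives. The inclusions exhibit $W$ as an order commensurable with the maximal order $A[z^{-1}] = M_n(\Bbbk[z^{-1}])$, and since the bounding lattices $z^{\pm N}A[z^{\pm 1}]$ become the full matrix algebra after inverting $z$, localizing at every closed point $\mathfrak m$ of $\textnormal{Spec}(\Bbbk[z,z^{-1}])$ shows $W_{\mathfrak m} = M_n(\mathcal O_{\mathfrak m})$ there; consequently $W\cdot\Bbbk[z,z^{-1}] = M_n(\Bbbk[z,z^{-1}])$. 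Replacing $W$ by the $\Bbbk[z^{-1}]$-subalgebra it generates is harmless, since conjugating the larger algebra into $A[z^{-1}]$ carries $W$ along, so I may assume $W$ is a $\Bbbk[z^{-1}]$-order.

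Next I would produce a $W$-stable lattice and read off a conjugating matrix. Put $L_0 = \Bbbk[z^{-1}]^n$ and $L = W L_0 \subseteq \Bbbk(z)^n$. Because $W$ is an algebra, $L$ is automatically $W$-stable, and the bounds sandwich it as $z^{-N}L_0 \subseteq L \subseteq z^N L_0 \subseteq \Bbbk[z,z^{-1}]^n$, so $L$ is a full $\Bbbk[z^{-1}]$-lattice. As $\Bbbk[z^{-1}]$ is a PID, $L$ is free; choosing a basis gives $g_0 \in M_n(\Bbbk[z,z^{-1}])$ with $L = g_0 L_0$, and $W L \subseteq L$ translates into $\textnormal{Ad}(g_0^{-1})W \subseteq \End_{\Bbbk[z^{-1}]}(L_0) = A[z^{-1}]$. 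To upgrade $g_0$ to an element of $\textnormal{GL}_n(\Bbbk[z,z^{-1}])$ I would use that $W\cdot\Bbbk[z,z^{-1}]$ is the whole matrix algebra: this forces $L\otimes_{\Bbbk[z^{-1}]}\Bbbk[z,z^{-1}]$ to be a homothety $d\,\Bbbk[z,z^{-1}]^n$ of the standard lattice for some $d \in \Bbbk(z)^\times$, and rescaling $L$ by the central scalar $d^{-1}$ (which preserves $W$-stability) arranges $L\otimes\Bbbk[z,z^{-1}] = \Bbbk[z,z^{-1}]^n$. Then the columns of $g_0$ form a $\Bbbk[z,z^{-1}]$-basis of $\Bbbk[z,z^{-1}]^n$, i.e. $g_0 \in \textnormal{GL}_n(\Bbbk[z,z^{-1}])$.

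The hard part will be descending from $\textnormal{GL}_n$ to $\textnormal{SL}_n$. At this stage $\det g_0 \in \Bbbk[z,z^{-1}]^\times = \Bbbk^\times z^{\bZ}$, say $\det g_0 = c z^m$. Since $\Bbbk$ is algebraically closed I can absorb $c$ by scaling $g_0$ by a constant and shift $m$ by any multiple of $n$ by twisting $L$ with a power of $z$, both of which leave $\textnormal{Ad}(g_0)$ unchanged; hence the only genuine invariant is the class $m \bmod n \in \bZ/n$. This class is exactly the degree modulo $n$ of the determinant of the rank-$n$ bundle $\mathcal L$ underlying the sheaf of algebras attached to $W$, for which $\sheafEnd(\mathcal L)$ recovers $W$ locally. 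The key point to verify is that it vanishes, and I would deduce this from the symmetry of the hypothesis, namely the self-duality $W = W^\perp$ with respect to the trace form supplied by the algebra metric, which balances the top- and bottom-degree parts of $W$ and forces $n \mid m$.

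Once $n \mid m$ is established, I would finish by replacing $L$ by $z^{-m/n}L$ and rescaling by a suitable constant, producing $g \in \textnormal{SL}_n(\Bbbk[z,z^{-1}])$ with $\textnormal{Ad}(g)W \subseteq A[z^{-1}]$. I expect the divisibility $n \mid m$ to be the only delicate point: everything preceding it is the standard dictionary between $\Bbbk[z^{-1}]$-orders, $W$-stable lattices, and conjugation, while the $\textnormal{SL}_n$ refinement is where the precise (symmetric) shape of the bounds, rather than mere commensurability with $A[z^{-1}]$, must be used.
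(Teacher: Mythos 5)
Your first two steps are correct and form a genuinely different route from the paper's. The paper never touches lattices directly: it arranges \(\Bbbk[z^{-1}] \subseteq W\), splits \(W = \pi(W)\oplus\Bbbk[z^{-1}]\) along the trace, observes that \(\pi(W)\) is a sandwiched Lie subalgebra of \(\mathfrak{sl}_n(\Bbbk[z,z^{-1}])\), and then imports Stolin's theory of maximal orders for \(\mathfrak{sl}_n\) wholesale. You instead rebuild the order--lattice dictionary in the associative setting: the \(W\)-stable lattice \(L = WL_0\), its freeness over the PID \(\Bbbk[z^{-1}]\), the rescaling by a central scalar so that \(L\cdot\Bbbk[z,z^{-1}] = \Bbbk[z,z^{-1}]^n\), and hence \(g_0 \in \textnormal{GL}_n(\Bbbk[z,z^{-1}])\) with \(\textnormal{Ad}(g_0^{-1})W \subseteq A[z^{-1}]\). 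This is sound, self-contained, and cleanly proves the \(\textnormal{GL}_n\)-version of Proposition \ref{prop:maximal_order}.

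The gap is the \(\textnormal{SL}_n\)-descent, and it cannot be repaired in the way you propose. First, the proposition carries no self-duality hypothesis: \(W\) is an arbitrary subalgebra satisfying the two inclusions, and no bilinear form appears in the statement, so ``\(W = W^\perp\)'' is data you are not given. Second, and more fundamentally, adding the Lagrangian condition would not help, because the implication ``self-duality \(\Rightarrow n \mid m\)'' is false. Take \(W = N_k = d_k^{-1}A[z^{-1}]d_k\) with \(d_k = \textnormal{diag}(1,\dots,1,z,\dots,z)\) (\(k\) entries equal to \(z\), \(0<k<n\)), exactly the orders of \eqref{eq:N_k}: this is a subalgebra with \(z^{-1}A[z^{-1}] \subseteq W \subseteq zA[z^{-1}]\), and it \emph{is} Lagrangian for the residue--trace pairing since conjugation leaves the trace invariant (the paper records this in Subsection \ref{sec:qrational_A_CYBE_soltuions}), yet its invariant is \(k \bmod n \neq 0\). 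Indeed, if \(\textnormal{Ad}(g)N_k \subseteq A[z^{-1}]\) for some \(g \in \textnormal{GL}_n(\Bbbk[z,z^{-1}])\), then both sides are maximal \(\Bbbk[z^{-1}]\)-orders in \(\textnormal{M}_n(\Bbbk(z))\), so the inclusion is an equality; since all \(\Bbbk[z^{-1}]\)-algebra automorphisms of \(A[z^{-1}]\) are inner, \(g = \lambda h d_k\) with \(\lambda \in \Bbbk(z)^\times\), \(h \in \textnormal{GL}_n(\Bbbk[z^{-1}])\), whence \(\det g = \lambda^n \det(h)\, z^k\) has degree \(\equiv k \pmod n\) and can never equal \(1\). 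So the \(\bZ/n\)-valued class you isolated is a genuine invariant; no symmetry of the bounds kills it.

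What your machinery honestly delivers is the corrected conclusion: writing \(m = qn + k\) and absorbing \(z^q\) and the leading constant into the scalar rescaling of \(L\), you obtain \(g \in \textnormal{SL}_n(\Bbbk[z,z^{-1}])\) and \(k \in \overline{0,n-1}\) with \(\textnormal{Ad}(g)W\) contained in one of the \(n\) standard maximal orders \(N_k\), which are pairwise non-conjugate under \(\textnormal{Ad}(\textnormal{SL}_n(\Bbbk[z,z^{-1}]))\). This is precisely the content of the Stolin maximal-order theory that the paper's proof cites at the corresponding point, and it is also how the proposition is de facto used in Subsections \ref{sec:rational_A_CYBE_soltuions} and \ref{sec:qrational_A_CYBE_soltuions}, where the type \(k\) resurfaces through the Sauvage/Birkhoff factorization; conjugation into \(A[z^{-1}]\) itself is only available with \(\textnormal{GL}_n(\Bbbk[z,z^{-1}])\). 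So your instinct about where the delicacy lies was right, but the proposed resolution of that step is both unavailable as stated and mathematically false.
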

\begin{proof}
Without loss of generality, we prove the ``+'' case, i.e.\ \(z^{-N}A[z^{-1}] \subseteq W \subseteq z^NA[z^{-1}]\) for some \(N \in \bN\) implies the existence of \(g \in \textnormal{SL}(n,\Bbbk[z,z^{-1}])\) such that \(\textnormal{Ad}(g)W \subseteq A[z^{-1}]\).

Without loss of generality, we may assume that \(\Bbbk[z^{-1}] \subseteq W\), since we can pass to the algebra \(\Bbbk[z^{-1}]W + \Bbbk[z^{-1}]\) which contains \(W\) and satisfies \(z^{-N}A[z^{-1}] \subseteq \Bbbk[z^{-1}]W + \Bbbk[z^{-1}] \subseteq z^NA[z^{-1}]\). Now, \(W = \pi(W) \oplus \Bbbk[z^{-1}]\) as vector spaces, where \(\pi \colon A[z,z^{-1}] \to \fg[z,z^{-1}]\) is the coefficient-wise application of \(a \mapsto a -\frac{\textnormal{tr}(a)}{n} \in \fg \coloneqq \mathfrak{sl}_n(\Bbbk)\).

The subalgebra \(\pi(W) \subseteq \fg[z,z^{-1}]\) satisfies \(z^{-N}\fg[z^{-1}] \subseteq \pi(W) \subseteq z^N\fg[z^{-1}]\). By virtue of \cite[Theorem 4']{stolin_geometry} and the description of maximal orders for \(\fg = \mathfrak{sl}_n(\Bbbk)\) from \cite{stolin_sln}, there exists \(g \in \textnormal{SL}(n,\Bbbk[z,z^{-1}])\) such that \(\textnormal{Ad}(g)\pi(W) \subseteq \fg[z^{-1}]\). Since \(\textnormal{Ad}(g)\Bbbk[z^{-1}] = \Bbbk[z^{-1}]\), this implies that \(\textnormal{Ad}(g)W \subseteq A[z^{-1}]\).
\end{proof}

\subsection{Absence of (quasi-)trigonometric solutions of the \(A\)-CYBE} \label{sec:trigonometric_solutions_ACYBE}
Let \(\Bbbk\) be an algebraically closed field of characteristic 0, \(A = \textnormal{M}_n(\Bbbk)\) be the \(\Bbbk\)-algebra of \(n\times n\)-matrices, and \(\beta\) be the trace pairing of \(A\). In this section, we prove the following result.

\begin{theorem}\label{thm:no_trigonometric}
    There are no quasi-trigonometric nor trigonometric solutions of the \(A\)-CYBE.
\end{theorem}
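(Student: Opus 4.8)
The plan is to reduce Theorem \ref{thm:no_trigonometric} to a single geometric assertion: for $A=\textnormal{M}_n(\Bbbk)$ the geometric $A$-CYBE datum $(X,(\sheafA,\beta_\sheafA))$ attached to a solution can never have a \emph{nodal} curve $X$. Indeed, by the proof of Theorem \ref{thm:categorization_refined}, trigonometric solutions are precisely those with $n=0$ and $X$ nodal, while quasi-trigonometric solutions are precisely those with $n=1$, whose geometrization in Subsection \ref{sec:geometrization_n=1} again produces a nodal cubic. Since the rational and quasi-rational families correspond to the cuspidal curve (and the elliptic case is already excluded for unital $A$ by Proposition \ref{lemm:concluding_bd_trich_case}.(1)), excluding nodal $X$ rules out exactly the two unwanted families at once.

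The contradiction I would extract from nodality is a nonzero global section of $\sheafA$, clashing with $\tH^0(\sheafA)=0$. Let $\nu\colon \mathbb{P}^1\to X$ be the normalization with $\nu^{-1}(s)=\{s_+,s_-\}$ over the node $s$, and let $\widetilde{\sheafA}$ be the locally free sheaf of algebras on $\mathbb{P}^1$ underlying the construction (the sheaf $\mathcal{W}$ of Subsections \ref{sec:geometrization_n=0} and \ref{sec:geometrization_n=1}), so that $\sheafA$ sits inside $\nu_*\widetilde{\sheafA}$. A torsion-free sheaf of algebras on a nodal curve is recovered from $\widetilde{\sheafA}$ together with a gluing identification $\psi\colon \widetilde{\sheafA}|_{s_+}\to \widetilde{\sheafA}|_{s_-}$ of the two branch fibers, and because $\sheafA\hookrightarrow \nu_*\widetilde{\sheafA}$ is a morphism of sheaves of \emph{algebras}, $\psi$ is forced to be an \emph{algebra} isomorphism. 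If both branch fibers are isomorphic to $A$ as algebras, then $\widetilde{\sheafA}$ is étale $A$-locally free (Proposition \ref{prop:weak_locall_free_implies_etale}), hence unital with a global unit section (Lemma \ref{lemm:etale_locally_free_implies_unital}); since any algebra isomorphism $\psi$ sends unit to unit, this unit satisfies the gluing condition and descends to a nonzero element of $\tH^0(\sheafA)$. This is exactly where unitality of $A$ is essential, consistent with the existence of genuine (quasi-)trigonometric solutions in the non-unital Lie case.

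The hard part, and the only place where $A=\textnormal{M}_n(\Bbbk)$ truly enters, is showing that the branch fibers at $s_+,s_-$ are isomorphic to $A$. This is \emph{not} a consequence of strong geometric admissibility, because the induced metric degenerates exactly there: for instance $z^{-1}W_+\subseteq W_+^\bot$ makes $\beta_{\sheafA}$ degenerate at $s_+$ (Lemma \ref{lemm:thetas}.(2)), so the perfect-pairing criterion says nothing at these points. To control the fiber I would put the lattice in standard form. In the quasi-trigonometric case, Proposition \ref{lemm:concluding_qtcase} with the bounds \eqref{eq:W_bounded_qt_case} places $\varphi(W_+)$ between $z^{-N}A[z^{-1}]$ and $z^{N-1}A[z^{-1}]$, so the maximal-order theorem (Proposition \ref{prop:maximal_order}) furnishes $g\in \textnormal{SL}_n(\Bbbk[z,z^{-1}])$ with $\textnormal{Ad}(g)\varphi(W_+)\subseteq A[z^{-1}]$; standard maximal-order theory then identifies the fiber of this standard order at $z^{-1}=0$ with $A[z^{-1}]/z^{-1}A[z^{-1}]\cong A$. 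In the trigonometric case I would first invoke the Skolem--Noether theorem: every automorphism of $\textnormal{M}_n(\Bbbk)$ is inner, so the twisted loop algebra $L(A,\sigma)\cong \Gamma(X\setminus\{s\},\sheafA)$ becomes the \emph{untwisted} algebra $A[v,v^{-1}]$ as a $\Bbbk[v,v^{-1}]$-algebra (the diagonal twist $\textnormal{diag}(\widetilde v^{d_i})$ conjugates it away), after which the same maximal-order normalization and fiber computation apply. Because $\textnormal{Ad}(g)$ is an isomorphism of sheaves of algebras, it leaves $\tH^0(\sheafA)$ and the fiber isomorphism types unchanged, so once the fibers at $s_\pm$ are identified with $A$ the unit-section argument of the previous paragraph closes both cases simultaneously.
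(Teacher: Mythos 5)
Your high-level strategy is a legitimate geometric reformulation of the paper's endgame, and it correctly isolates unitality as the source of the contradiction: the paper's own proof ends by showing $1 \in W_+ \cap W_-$ and $\theta(1)=1$, so that $(1,1) \in W$ contradicts $W \cap D = \{0\}$, which is precisely the algebraic shadow of your "unit section descends through the gluing, contradicting $\tH^0(\sheafA)=0$." Your reduction of both families to "the curve cannot be nodal" is also consistent with the paper's geometrization (Subsections \ref{sec:geometrization_n=0} and \ref{sec:geometrization_n=1}). You even correctly flag the one step that strong geometric admissibility cannot deliver, namely the structure of the fibers at the two branch points $s_\pm$ over the node, where the induced pairing degenerates.

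The genuine gap is that your proposed proof of exactly that step fails, and it is the step carrying all the weight of the theorem. From Proposition \ref{prop:maximal_order} you only get an inclusion $\textnormal{Ad}(g)\varphi(W_+) \subseteq A[z^{-1}]$; the fiber of $W_+$ at $s_+$ is $W_+/z^{-1}W_+$, and the map $W_+/z^{-1}W_+ \to A[z^{-1}]/z^{-1}A[z^{-1}] \cong A$ induced by the inclusion is in general neither injective nor surjective, so "standard maximal-order theory" identifies the fiber of the ambient order, not of $W_+$. That this inference is formally invalid can be seen already in the parallel Lie framework, where the same lattice bounds and Stolin's maximal-order theorem hold: the order $W_+ = \fn^- \oplus z^{-1}\fg[z^{-1}] \subseteq \fg[z^{-1}]$ (with $\fn^-$ the nilradical of an opposite parabolic) has fiber $\fn^- \ltimes (\fg/\fn^-)$ with the second summand abelian, which is not isomorphic to $\fg$; orders of this degenerate type do occur for genuine quasi-trigonometric Lie solutions, so no formal argument can force the branch fibers to be the generic fiber --- one must use the specific structure of $A = \textnormal{M}_n(\Bbbk)$. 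That is precisely what the bulk of the paper's proof supplies: the eigenspace decomposition of $R = R_-$, the solvability of $L'$ via the fixed-point-free automorphism argument, the Birkhoff factorization and conjugation into $B_\pm \oplus v^{\pm 1}A[v^{\pm 1}]$, the invariance $R(H) \subseteq H$ of the diagonal subalgebra, and finally $H_\pm = H$, which is what actually puts $1$ into $W_\pm$. Your proposal has no substitute for this analysis. A secondary inaccuracy: your descent picture assumes $\sheafA$ is obtained from $\widetilde{\sheafA}$ by identifying the two full branch fibers, i.e.\ that $\sheafA$ is locally free at the node; in the paper's $n=1$ construction the gluing is along the quotients $W_\pm/W_\pm^\bot$ of the fibers via $\theta$ (Lemma \ref{lemm:thetas}), which can be proper quotients. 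This part is repairable once the branch fibers are known to be the simple algebra $A$, but as stated it is another unproven claim.
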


We will thereby proceed in two steps. First, we show that (quasi-)trigonometric solutions of the \(A\)-CYBE define certain subalgebras of \(A[v,v^{-1}] \times A[v,v^{-1}]\). Then, using similar methods as in the classification of trigonometric solutions of the \(\fg\)-CYBE for simple Lie algebras \(\fg\) from \cite{belavin_drinfeld_solutions_of_CYBE_paper}, we prove that these subalgebras cannot exist.

\subsubsection{(Quasi-)trigonometric solutions of the \(A\)-CYBE and subalgebras of \(L \times L\) for \(L\coloneqq A[v,v^{-1}]\)} 
Consider \(L \coloneqq A[v,v^{-1}]\). Let us prove that any (quasi-)trigonometric solution \(r\) of the \(A\)-CYBE defines a subspace \(W_r \subseteq L \times L\) such that:
    \begin{enumerate}
            \item \(W_r\) is a subalgebra complementary to the diagonal \(D \coloneqq \{(a,a)\mid a \in L\}\), i.e.\ \(L \times L = D \oplus W_r\);

            \item \(W_r\) is Lagrangian with respect to the algebra metric \(\widetilde{\beta}\) on \(L \times L\) defined by 
            \begin{equation}\label{eq:LxL_bilinearform}
                \widetilde{\beta}((a_1,a_2),(b_1,b_2)) \coloneqq \beta_{(1,1)}^+(a_1,b_1) - \beta_{(1,1)}^+(a_2,b_2)
            \end{equation}
            for \(a_1,a_2,b_1,b_2 \in L\), where \(\beta_{(1,1)}^+\) is defined in \eqref{eq:beta_nlambda_pm};

            \item \(W_r\) is commensurable with \(V \coloneqq A[v] \times A[v^{-1}]\) in the sense that \(\dim((W_r + V)/(W_r \cap V)) < \infty\).
        \end{enumerate}
        
\begin{proof}[Construction of \(W_r\)]
Since all automorphisms of \(A\) are inner, \(L(A,\sigma) \cong A[v,v^{-1}]\) for all \(\sigma \in \textnormal{Aut}_{\Bbbk\textnormal{-alg}}(A)\) of finite order (see \cite{pianzola}). Therefore, both trigonometric and quasi-trigonometric solutions of the \(A\)-CYBE are described by expressions of the form
\begin{equation}\label{eq:trig_sol_of_ACYBE}
    r(v,w) = \frac{w\gamma}{v-w} + t(v,w) \textnormal{ for some }t \in (A\otimes A)[v,v^{-1},w,w^{-1}]
\end{equation}
such that \(r(\exp(x),\exp(y))\) is a solution of the \(A\)-CYBE.
We construct a subspace \(W_r\) to \(r\) in a similar fashion as subalgebras were associated to solutions of the \(A\)-CYBE in Section \ref{sec:proof_solutions_of_CYBE_and_manin_triples}. 

Note that the natural embedding \(L \otimes L \to (L \otimes A)(\!(w^{\pm 1})\!)\) extends to
\begin{equation}\label{eq:laurent_expansion}
    (L \otimes L)[(v-w)^{-1}] \longrightarrow (L \otimes A)(\!(w^{\pm 1})\!)
\end{equation}
by interpreting \((v-w)^{-1}\) as 
\begin{equation}
    \sum_{k \in \bN}v^{-k-1}w^k \in \Bbbk[v,v^{-1}](\!(w)\!) \textnormal{ and }-\sum_{k \in \bN}v^{k}w^{-k-1} \in \Bbbk[v,v^{-1}](\!(w^{-1})\!)
\end{equation}
respectively. These embeddings can be understood as the Laurent expansions in \(w = 0\) and \(w = \infty \) respectively.

Let us consider an \(r\) of the form \eqref{eq:trig_sol_of_ACYBE},
chose an orthonormal basis \(\{b_i\}_{i = 1}^d\subseteq A\) with respect to the trace pairing \(\beta\), and let 
\begin{equation}
    \sum_{k \in \bN}\sum_{i = 1}^d r_{k,i}^+(v) \otimes b_iw^k \in (L \otimes A)(\!(w^{-1})\!) \textnormal{ and }\sum_{k \in \bN}\sum_{i = 1}^d r_{k,i}^-(v) \otimes b_iw^k \in (L \otimes A)(\!(w)\!)
\end{equation}
be the Laurent expansions of \(r\) in \(w = \infty\) and \(w = 0 \) respectively.

If \(t = \sum_{k \in \bZ}\sum_{i = 1}^d t_{k,i}(v) \otimes w^kb_i\), where only finitely many \(t_{k,i}\) are non-zero, and
\begin{equation}
w^-_{k,i} \coloneqq \begin{cases} b_i v^{-k} & k > 0\\ 0 & k \le 0 
\end{cases} \textnormal{ and } w^+_{k,i} \coloneqq \begin{cases} 0 & k > 0\\ -b_iv^{-k} & k \le 0 
\end{cases}
\end{equation}
we have \(r^\pm_{k,i} = w^\pm_{k,i} + t_{k,i}\) for \(k \in \bZ\) and \(i \in \overline{1,d}\).

Let us define 
\begin{equation}\label{eq:W_trigonometric}
    W_r \coloneqq \textnormal{Span}_{\Bbbk}\{(r_{k,i}^+,r_{k,i}^-) \mid k \in \bZ, i \in \overline{1,d}\}.
\end{equation}
Clearly, \(W_r\) is commensurable with \(V\), so we have to verify that conditions (1) and (2) hold.
\end{proof}

\begin{proof}[\(W_r\) satisfies (1)]
It is easy to see that \(L \times L = D \oplus W_r\), so we have to show that \(W_r \subseteq L \times L\) is a subalgebra. Similar to Section \ref{sec:definition_CYBE}, we can define for every 
\[s \in (L \otimes L)[(v-w)^{-1}] = (A \otimes A)[v,v^{-1},w,w^{-1},(v-w)^{-1}]\] the expression
\begin{equation}\label{eq:A_cybe_trig}
    \textnormal{CYB}(s) = s^{13}s^{12} - s^{12}s^{23} + s^{23}s^{13} \in (L \otimes L \otimes L)\left[\frac{1}{(v_1-v_2)(v_1-v_3)(v_2-v_3)}\right].
\end{equation}
by using the notations \eqref{eq:ij_notations_constant} coefficient-wise. Then \(s\) satisfies the \(\textnormal{CYB}(s) = 0\) if and only if \(s(\exp(x),\exp(y))\) satisfies the usual \(A\)-CYBE \eqref{eq:cybe}. In particular, we can see that \(\textnormal{CYB}(s) = 0\) implies already that \(s\) is skew-symmetric. 

Similar arguments as in the Section \ref{sec:proof_solutions_of_CYBE_and_manin_triples} show that, since \(r\) is skew-symmetric, we have
\begin{equation}
    \textnormal{CYB}(r) \in L \otimes L \otimes L.
\end{equation}
Therefore, we can rewrite this expression using the Laurent expansions \eqref{eq:laurent_expansion} in \(v_3 = \infty\) and \(v_3 = 0\) to obtain
\begin{equation}\label{eq:cybe_subalgebra_trigonometric}
    \begin{split}
     \textnormal{CYB}^\pm(r) &=  \sum_{k, \ell \in \bZ} \sum_{i,j = 1}^d
    r^\pm_{\ell, j}r^\pm_{k,i} \otimes b_i z_2^k \otimes b_j z_3^\ell \\& - \sum_{m \in \bN} \sum_{i = 1}^d
    r^\pm_{k,i} \otimes \left(z_2^kb_i^{(1)}r(z_2,z_3) - r(z_2,z_3)b_i^{(2)}z_3^k\right).
    \end{split}
\end{equation}

If \(\textnormal{CYB}(r) = 0\), then \(\textnormal{CYB}^+(r) = 0 = \textnormal{CYB}^-(r)\) and \eqref{eq:cybe_subalgebra_trigonometric} implies that \(W_r \subseteq L \times L\) is a subalgebra. 
\end{proof}

\begin{proof}[\(W_r\) satisfies (2)]
The fact that \(r\) is skew-symmetric is equivalent to \(t = \overline{t} - \gamma\), which means
\begin{equation}\label{eq:t_skew_trig}
     t_{k,i}^{\ell,j} = - t_{\ell,j}^{k,i} - \delta_{ij}\delta_{k0}\delta_{\ell0}
\end{equation}
if \(t = \sum_{k,\ell \in \bZ}t_{k,i}^{\ell,j}b_jv^\ell \otimes b_iw^k\). Furthermore, the identities
\begin{equation}
    \begin{split}
        \widetilde{\beta}((w_{k,i}^+,w_{k,i}^-),(w_{\ell,j}^+,w_{\ell,j}^-)) = \delta_{ij}\delta_{k0}\delta_{\ell0} \textnormal{ and }
        \widetilde{\beta}((w_{k,i}^+,w_{k,i}^-),(b_jv^\ell,b_jv^\ell) ) = -\delta_{ij}\delta_{k\ell}
    \end{split}
\end{equation}
are easily verified.

This implies that, if \(t\) is identified with its image under \(L \otimes L \cong D \otimes D\),
\begin{equation}
    \begin{split}
        \widetilde{\beta}((r_{k,i}^+,r_{k,i}^-),(r_{\ell,j}^+,r_{\ell,j}^-)) &=  \underbrace{\widetilde{\beta}((w_{k,i}^+,w_{k,i}^-),(w_{\ell,j}^+,w_{\ell,j}^-))}_{= \delta_{ij}\delta_{k0}\delta_{\ell0}} + \underbrace{\widetilde{\beta}((t_{k,i},t_{k,i}),(t_{\ell,j},t_{\ell,j}))}_{= 0} \\&+ \underbrace{\widetilde{\beta}((w_{k,i}^+,w_{k,i}^-),(t_{\ell,j},t_{\ell,j}) )}_{t_{\ell,j}^{k,i}} + \underbrace{\widetilde{\beta}((w_{k,i}^+,w_{k,i}^-),(t_{\ell,j},t_{\ell,j}))}_{t_{k,i}^{\ell,j}}\\
    &= t_{k,i}^{\ell,j} + t_{\ell,j}^{k,i} + \delta_{ij}\delta_{k0}\delta_{\ell0} = 0.
    \end{split}
\end{equation}
We conclude that \(W_r\subseteq W_r^\bot\). This, \(L \times L = D \oplus W_r\), and \(D^\bot = D\) imply \(W_r = W_r^\bot\).
\end{proof}

\subsubsection{Proof of Theorem \ref{thm:no_trigonometric}} We will now apply the methods used to classify trigonometric solutions of the \(\fg\)-CYBE for simple Lie algebras \(\fg\) from \cite{belavin_drinfeld_solutions_of_CYBE_paper} (see also \cite[Section 8.2]{abedin_thesis}) to our associative setting and conclude that the constructed \(W \coloneqq W_r\) cannot exist.

Let us define \(R_\pm \in \textnormal{End}_{\Bbbk}(L)\) by \(R_\pm(v^{k}b_i) = r_{k,i}^\pm\) for all \(k \in \bZ\) and \(i \in \overline{1,d}\). Since by definition \(r_{k,i}^\pm = w_{k,i}^\pm + t_{k,i}\), we have \(R_- = 1+R_+\). Then \(W = W^\bot\) can be rewritten as
\begin{equation}
    \begin{split}
        0 &= \widetilde\beta(((R_--1)a),R_-a),((R_--1)b,R_-b)) \\&= \beta_{(1,1)}^+((R_--1)a,(R_--1)b) - \beta_{(1,1)}^+(R_-a,R_-b) \\&= -\beta_{(1,1)}^+(R_-a,b) - \beta_{(1,1)}^+(a,R_-b) +  \beta_{(1,1)}^+(a,b),
    \end{split}  
\end{equation}
for all \(a,b \in L\). Therefore, \(R_- + R_-^* = 1\), where \((\cdot)^*\) denotes the adjoint with respect to \(\beta_{(1,1)}^+\) from \eqref{eq:beta_nlambda_pm}. Furthermore, \(W = \{(R_+a,R_-a)\mid a\in L\}\) holds by definition. Write \(W_\pm \coloneqq \textnormal{Im}(R_\pm)\). Then \(W_\pm^\bot = \textnormal{Ker}(R_\mp) 
\subseteq W_\pm\) follows immediately from \(R_-^* + R_- = 1\) and \(R_+ = R_- - 1\). Moreover, 
\begin{equation}\label{eq:W_cayley_form}
    \begin{split}
        W &= \{(R_+a,R_-b)\mid a,b\in L \textnormal{ s.t.\ }\theta(a + W_+^\bot) = b + W_-^\bot\} \\&= \{(R_+a,R_-b)\mid R_-(a-b) \in W_-^\bot\}
    \end{split}
\end{equation}
holds, where \(\theta \colon W_+/W_+^\bot \to W_-/W_-^\bot\) is defined by \(R_+a + W_+^\bot \mapsto R_-a + W_-^\bot\). Indeed, `` \(\subseteq\) '' is clear and `` \(\supseteq\) '' follows from the fact that \((R_--1)a = R_+a = R_-b\) and \(R_-(a-b)\in W_-^\bot = \textnormal{Ker}(R_+)\) implies \(a = R_-(a-b) \in \textnormal{Ker}(R_+)\), so \(R_-b = R_+a = 0\), which means that 
\begin{equation}
    \{(R_+a,R_-b)\mid R_-(a-b) \in \textnormal{Ker}(R_+)\} \cap D = \{0\}.
\end{equation}

Let us write now \(R \coloneqq R_-\) and recall that \(R_+ = R - 1\). The fact that \(W \subseteq L \times L\) is a subalgebra implies immediately that \(\theta\) is a \(\Bbbk\)-algebra isomorphism and \(W_+, W_- \subseteq L\) are subalgebras. In particular, \(W_- = \textnormal{Im}(R)\) is a subalgebra, so for all \(a,b\in L\) exists \(c \in L\) such that \((Ra)(Rb) = Rc\). Applying \(\theta^{-1}\) gives \(((R-1)a)((R-1)b) = (R-1)c + d\) for some \(d \in W_+^\bot = \textnormal{Ker}(R)\). Subtracting the second from the first equation and applying \(R\) gives
\begin{equation}\label{eq:R_equation_trig}
    (Ra) (Rb) = R((Ra) b + a (Rb) - ab)
\end{equation}
for all \(a ,b \in L\). From this one can deduce 
\begin{equation}\label{eq:R_equation_lamba_mu_form}
    ((R-\lambda)a) ((R-\mu)b) = (R-\mu)((R-\lambda)a b) + (R-\lambda)(a (R-\mu)b) + ((\lambda + \mu -1)R - \lambda \mu) (ab)
\end{equation}
for all \(\lambda,\mu \in \Bbbk\) and \(a,b\in L\).

Let \(L^\lambda \coloneqq \bigcup_{k = 1}^\infty\textnormal{Ker}((R-\lambda)^k)\) be the generalized eigenspace of \(R\) to \(\lambda \in \Bbbk\). Since only finitely many \(t_{k,i}\) are non-zero, we have \(L = \bigoplus_{\lambda \in \Bbbk}L^\lambda\),
\begin{equation}\label{eq:generalized_eigenspaces_bounded}
    v^{N}A[v] \subseteq L^0 \subseteq v^{-N}A[v] \textnormal{ and }v^{-N}A[v^{-1}] \subseteq L^1 \subseteq v^{N}A[v^{-1}].
\end{equation}
Furthermore, \(L^\lambda = \textnormal{Ker}((R-\lambda)^{k})\) for some sufficiently large \(k = k(\lambda) \in \bN\) and \(R + R^* = 1\) implies 
\begin{equation}\label{eq:orthogonality_generalized_eigenspaces}
    \beta_{(1,1)}^+(L^\lambda,L^\mu) \neq 0 \textnormal{ if and only if }\lambda + \mu = 1,    
\end{equation}
where we recall that \(\beta_{(1,1)}^+\) was defined in  \eqref{eq:beta_nlambda_pm}.

Using induction on \(\ell = k_1 + k_2\) and \eqref{eq:R_equation_lamba_mu_form} shows that 
\((R-\lambda)^{k_1}a = 0 = (R-\mu)^{k_2}b\) implies
\begin{equation}
    ((\lambda + \mu -1)R - \lambda \mu)^\ell(ab) = 0.
\end{equation}
As a consequence, we have 
\begin{equation}\label{eq:multiplication_generalized_eigenspaces}
    \lambda + \mu \neq 1 \implies L^\lambda L^\mu = L^{\frac{\lambda\mu}{\lambda + \mu -1}} \textnormal{ and } \lambda \neq \{0,1\} \implies L^\lambda L^{1 - \lambda} = \{0\}.
\end{equation}
Therefore, \(L' \coloneqq \bigoplus_{\lambda \in \Bbbk\setminus\{0,1\}} L^\lambda \subseteq L\) is a finite-dimensional subalgebra. Furthermore, by construction \(R|_{L'},(R-1)|_{L'} \colon L' \to L'\) are both invertible and one can deduce from \eqref{eq:R_equation_trig} that \(R|_{L'}((R-1)|_{L'})^{-1}\) defines an automorphism of \(L'\) without fixed point. In particular, \(L'\) is solvable as Lie algebra by virtue of \cite[Section 9]{belavin_dirnfeld_triangle}.

Combining \eqref{eq:generalized_eigenspaces_bounded} and \eqref{eq:multiplication_generalized_eigenspaces} we see that \(L^0 \oplus L' \subseteq L\) is a subalgebra satisfying \(v^NA[v] \subseteq L^0 \oplus L' \subseteq v^{-N}A[v]\) for some sufficiently large \(N\in \bN\).
This and Proposition \ref{prop:maximal_order} implies that
\(\textnormal{Ad}_{g_0}(L^0 \oplus L') \subseteq A[v]\) holds for some \(g_0 \in \textnormal{SL}_n(\Bbbk[v,v^{-1}])\). Furthermore, \eqref{eq:orthogonality_generalized_eigenspaces} implies that 
\(L^0 = (L^0 \oplus L')^\bot\), so 
\begin{equation}
    vA[v] = A[v]^\bot \subseteq \textnormal{Ad}_{g_0}(L^0) \subseteq \textnormal{Ad}_{g_0}(L^0 \oplus L') \subseteq A[v].
\end{equation}
Consider \(K = \textnormal{Ad}_{g_0}(L^0)/vA[v] \subseteq A[v]/vA[v] = A\). Then \(L^{0,\bot} = L^0 \oplus L' \supseteq L^0\) implies that \(K\) is a subalgebra of \(A\) such that \(0 = \beta(a,b) = \textnormal{tr}(ab)\) for all \(a,b\in K\). Therefore, \(K\) is a solvable Lie ideal of the Lie algebra \(K \oplus \textnormal{Ad}_{g_0}(L') \subseteq A\) by Cartan's criterion and the quotient \(\textnormal{Ad}_{g_0}(L')=(K \oplus \textnormal{Ad}_{g_0}(L'))/K\) is solvable as Lie algebra as mentioned above. In conclusion, \(K \oplus \textnormal{Ad}_{g_0}(L')\) is a solvable sub-Lie algebra of \(A\). Hence, there exists \(g_0' \in \textnormal{SL}_n(\Bbbk)\) such that \(\textnormal{Ad}_{g_0'}(K\oplus \textnormal{Ad}_{g_0}(L'))\) is contained in the standard Borel subalgebra \(B_+ \subseteq A\) of upper triangular matrices. Consequently, \(\textnormal{Ad}_{\widetilde{g}_0}(L^0 \oplus L') \subseteq B_+ \oplus vA[v]\) for \(\widetilde{g}_0 \coloneqq g_0'g_0\).

In a similar fashion we can see that \(\textnormal{Ad}_{\widetilde{g}_1}(L^1 \oplus L') \subseteq B_- \oplus v^{-1}A[v^{-1}]\) for an appropriate \(\widetilde{g}_1 \in \textnormal{SL}_n(\Bbbk[v,v^{-1}])\). 
In particular,
\begin{equation}
    L = L^0 \oplus L' \oplus L^1 = \textnormal{Ad}_{\widetilde{g}_0^{-1}}(B_+ \oplus vA[v]) + \textnormal{Ad}_{\widetilde{g}_1^{-1}}(B_- \oplus v^{-1}A[v^{-1}])
\end{equation}
or \(L = (B_+ \oplus vA[v]) + \textnormal{Ad}_{\widetilde{g}}(B_- \oplus v^{-1}A[v^{-1}])\) for \(\widetilde{g} = \widetilde{g}_0\widetilde{g}_1^{-1}\). By virtue of the Birkhoff decomposition for \(\textnormal{SL}_n(\Bbbk[v,v^{-1}])\), there exists a factorization \(\widetilde{g} = b_+wb_-\) such that 
\begin{equation}
    \textnormal{Ad}_{b_\pm}(B_\pm \oplus v^{\pm 1}A[v^{\pm 1}]) = B_\pm \oplus v^{\pm 1}A[v^{\pm 1}]    
\end{equation}
and \(w = P\textnormal{diag}(v^{k_1},\dots,v^{k_n})\) for \(k_1,\dots,k_n \in \bZ\) and a permutation matrix \(P \in \textnormal{SL}_n(\Bbbk)\); see e.g.\ \cite[6.8.2 Theorem and Section 13.2.2]{kumar}. Therefore, we obtain 
\begin{equation}
    L = (B_+ \oplus vA[v]) + \textnormal{Ad}_{w}(B_- \oplus v^{-1}A[v^{-1}]).    
\end{equation}
It is easy to see that this can only hold if \(\textnormal{Ad}_w = 1\). Consequently, we have 
\begin{equation}\label{eq:conjugate_to_borel}
    \textnormal{Ad}_g(L^0 \oplus L') \subseteq B_+ \oplus vA[v] \textnormal{ and }\textnormal{Ad}_g(L^1 \oplus L') \subseteq B_- \oplus v^{-1}A[v^{-1}]  
\end{equation}
for \(g = b_+\widetilde{g}_0\), where \(B_- \subseteq A\) is the Borel subalgebra of lower triangular matrices.

Let us replace \(R\) with \(\textnormal{Ad}_gR\textnormal{Ad}_{g^{-1}}\), then \(W\) is replaced with \((\textnormal{Ad}_g \times \textnormal{Ad}_g)W\) and \(L^\lambda\) is replaced with \(\textnormal{Ad}_gL^\lambda\). In particular, we can assume \(g = 1\) in \eqref{eq:conjugate_to_borel}.

Let \(\widetilde{L}^\lambda = \{a \in L \mid [a, L^{\lambda}] \subseteq L^\lambda\}\) be the Lie normalizer of \(L^\lambda\). By induction on \(k \in \bN\), we can prove that for every \(a,b\in L\) satisfying \((R-\lambda)^kb = 0\) and \(aL^{\lambda} \subseteq L^\lambda\), we have \((Ra)b \in L^\lambda\). Indeed, we have
\begin{equation}
    (R-\lambda)((Ra)b) = (Ra)((R-\lambda)b) - R(a(Rb) - ab) \in L^\lambda
\end{equation}
by induction assumption, where \eqref{eq:R_equation_trig} and the fact that \(R(L^\lambda) \subseteq L^\lambda\) were used. Similarly, we can prove that \(L^{\lambda}a \subseteq L^\lambda\) implies \(b(Ra) \in L^\lambda\). This proves that 
\begin{equation}\label{eq:R_and_idealizers}
    R(\widetilde{L}^\lambda)\subseteq \widetilde{L}^\lambda.    
\end{equation}
Since 
\begin{equation}
    [B_+,B_+] \oplus vA[v] = (B_+ \oplus vA[v])^\bot \subseteq (L^0 \oplus L')^\bot = L^0 \subseteq L^0\oplus L' \subseteq B_+ \oplus vA[v]
\end{equation}
holds, the Lie normalizer of \(L^0\oplus L'\) is \(B_+ \oplus vA[v]\). Similarly, the Lie normalizer of \(L^1 \oplus L'\) is \(B_- \oplus v^{-1}A[v^{-1}]\). In conclusion, 
\begin{equation}
    B_+ \oplus vA[v] = \bigoplus_{\lambda \neq 1}\widetilde{L}^\lambda\textnormal{ and }B_- \oplus v^{-1}A[v^{-1}] = \bigoplus_{\lambda \neq 0}\widetilde{L}^\lambda
\end{equation}
and \eqref{eq:R_and_idealizers} implies \(R(B_\pm \oplus v^{\pm 1}A[v^{\pm1}]) \subseteq B_\pm \oplus v^{\pm 1}A[v^{\pm1}]\) and consequently \(R(H) \subseteq H\) for the subalgebra \(H \subseteq A\) of diagonal matrices. Therefore,
\begin{equation}\label{eq:Wpm_triangular_decomposition}
    W_\pm = ((N_+ \oplus vA[v]) \cap W_\pm) \oplus (H \cap W_\pm) \oplus ((N_- \oplus v^{-1}A[v^{-1}]) \cap W_\pm) 
\end{equation}
since \(W_+ = \textnormal{Im}(R-1)\) and \(W_- = \textnormal{Im}(R)\), where \(N_+\) (resp.\ \(N_-\)) is the subalgebra of \(A\) of strictly upper (resp. lower) matrices.

Let \(H_\pm \coloneqq H\cap W_\pm\).
Since \(W_\pm^\bot \subseteq W_\pm\) with respect to \(\beta^+_{(1,1)}\) from \eqref{eq:beta_nlambda_pm}, we can deduce from \eqref{eq:Wpm_triangular_decomposition} that \((H^\bot_\pm \cap H)\subseteq H_\pm\) holds with respect to the trace pairing \(\beta\). However, \(H_\pm\) are both Artinian \(\Bbbk\)-algebras without nilpotents, hence \(H_\pm \cong \Bbbk^{\ell_\pm}\) for some \(\ell_\pm \le n\) as algebras. Observe that any embedding \(\Bbbk \to H\) is of the form \(a \mapsto a\delta_I\), where \(I = \{i_1,\dots,i_k\}\subseteq \overline{1,n}\), \(\delta_I \coloneqq h_{i_1} + \dots + h_{i_k}\), and \(h_i\) is the diagonal matrix with 1 at the \(i\)-th row and column as only non-zero entry. Therefore, 
\[H_\pm = \textnormal{Span}_\Bbbk\left\{\delta_{I^\pm_1},\dots,\delta_{I^\pm_{\ell^\pm}}\right\}\] for some pairwise disjoint \({I^\pm_1},\dots,{I^\pm_{\ell^\pm}} \subseteq \overline{1,n}\). This implies \(H = H_\pm \oplus (H_\pm^\bot \cap H)\) and combined with \((H_\pm^\bot \cap H) \subseteq H_\pm\) we are left with \(H_\pm = H\). 

Finally, we see that \(1 \in W_+ \cap W_-\) and, since \(\theta\) is a \(\Bbbk\)-algebra isomorphism, \(\theta\) is unital and \(\theta(1) = 1\). Consequently, \((1,1) \in W\) which contradicts \(W \cap D = \{0\}\). In conclusion, \(W\) cannot exist. This proves that \(r\) cannot exist. Since \(r\) was an arbitrary (quasi-)trigonometric solution of the \(A\)-CYBE, we have proven Theorem \ref{thm:no_trigonometric}.

\subsection{Structure theory of rational \(D\)-bialgebra structures over \(A\)}\label{sec:rational_A_CYBE_soltuions}
As in the previous subsection, let \(\Bbbk\) be an algebraically closed field of characteristic 0, \(A = \textnormal{M}_n(\Bbbk)\) be the \(\Bbbk\)-algebra of \(n\times n\)-matrices, and \(\beta\) be the trace pairing of \(A\). 

The assignment \(r \mapsto A(r)\) defines a bijection between rational solutions of the \(A\)-CYBE and Lagrangian subalgebras \(W \subseteq A(\!(z)\!)\) satisfying \(A[\![z]\!] \oplus W = A(\!(z)\!)\) and \(z^{-N}A[z^{-1}] \subseteq A(r) \subseteq z^NA[z^{-1}]\) for some sufficiently large \(N \in \bN_0\). 
By virtue of Proposition \ref{prop:maximal_order}, there exists \(g \in \textnormal{SL}(n,\Bbbk(\!(z)\!))\) such that \(\textnormal{Ad}(g)A(r) \subseteq A[z^{-1}]\). By virtue of e.g.\ \cite[2.2 Sauvage Lemma]{stolin_sln}, there exists \(d = \textnormal{diag}(z^{d_1},\dots, z^{d_n})\) such that \(g = g_-dg_+\) for \(g_+ \in \textnormal{SL}(n,\Bbbk[\![z]\!])\) and \(g_- \in \textnormal{SL}(n,\Bbbk[z^{-1}])\). Therefore, up to equivalence, \(A(r) \subseteq d^{-1} A[z^{-1}] d\). The fact that \(A[\![z]\!] + A(r) = A(\!(z)\!)\) holds implies that \(0 \le d_i \le 1\) for all \( i \in \overline{1,n}\). Thus, after reordering the indices, we can assume that \(d = d_k \coloneqq (1,\dots,1, z,\dots, z)\), where \(z\) appears \(k\)-times on the right hand side. 

We call \(r\) rational solution of \emph{type \(k\)}, if \(A(r) \subseteq N_k \coloneqq d_k^{-1} A[z^{-1}] d_k\), where we remark that
\begin{equation}\label{eq:N_k}
N_k \coloneqq \left\{\begin{pmatrix} A & B \\ C & D \end{pmatrix}\in L = \textnormal{M}_n(\Bbbk[z,z^{-1}])\,\Bigg|\, \substack{ A \in \textnormal{M}_{n-k}(\Bbbk[z^{-1}]),\, B \in z\textnormal{M}_{(n-k)\times k}(\Bbbk[z^{-1}])\\  C \in z^{-1}\textnormal{M}_{k\times (n-k)}(\Bbbk[z^{-1}]),\,D\in \textnormal{M}_{k}(\Bbbk[z^{-1}])}\right\}.
\end{equation}
We will now show that these solutions are parametrized by associative versions of Stolin pairs, which parameterize rational solutions of the \(\mathfrak{sl}_n(\Bbbk)\)-CYBE. To this end, let 
\begin{equation}
    P_k \coloneqq \left\{\begin{pmatrix} A & B \\ 0 & D \end{pmatrix}\in A = \textnormal{M}_n(\Bbbk)\,\Bigg|\, A \in \textnormal{M}_{n-k}(\Bbbk),\, B \in \textnormal{M}_{(n-k)\times k}(\Bbbk), \textnormal{ and }  D\in \textnormal{M}_{k}(\Bbbk)\right\}
\end{equation}
Then \((S,\chi)\) is called \emph{associative Stolin pair of type \(k\)} if \(S \subseteq A\) is a subalgebra and \(\chi \colon S \times S \to \Bbbk\) is a bilinear form such that

\begin{itemize}
    \item \(S + P_k = A\);
    
    \item \(\chi\) is a \emph{Connes 2-cocycle}, i.e.\ \(\chi\) is skew-symmetric and
    \begin{equation*}
        \chi(a_1a_2,a_3) + \chi(a_2a_3,a_1) + \chi(a_3a_1,a_2) = 0
    \end{equation*}
    holds for all \(a_1,a_2,a_3 \in S\),
    and \(\chi\) restricts to a non-degenerate bilinear form on \(S \cap P_k\).
\end{itemize}
By adjusting the arguments in \cite{stolin_sln}, we will prove the following result.

\begin{theorem}\label{thm:classification_rational_case}
Rational solutions of the \(A\)-CYBE of type \(k\) are in bijection with associative Stolin pairs of type \(k\).
\end{theorem}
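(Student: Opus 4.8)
The plan is to adapt Stolin's classification of rational solutions of the $\mathfrak{sl}_n(\Bbbk)$-CYBE, transporting it to the associative residue form on the full matrix algebra. Throughout, I identify a rational solution $r$ of type $k$ with the Lagrangian subalgebra $W := A(r)\subseteq A(\!(z)\!)$ provided by Corollary \ref{thm:solutions_of_CYBE_and_manin_triples} (for $n=0$, so that $D_0(A)=A(\!(z)\!)$ and $\beta_{(0,1)}(a,b)=\textnormal{res}_0\,\textnormal{tr}(ab)$). By the normalization preceding the theorem, $W$ is a subalgebra with $A[\![z]\!]\oplus W = A(\!(z)\!)$, $W=W^\bot$ with respect to $\beta_{(0,1)}$, and $z^{-N}A[z^{-1}]\subseteq W\subseteq N_k$. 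I would first record the commensurability $z^{-M}N_k\subseteq W\subseteq N_k$ for some $M$ (using $z^{-1}N_k\subseteq A[z^{-1}]$, whence $z^{-N-1}N_k\subseteq z^{-N}A[z^{-1}]\subseteq W$), which reduces all of the combinatorics to the finite-dimensional quotient $N_k/z^{-M}N_k$.

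\emph{Forward map.} The key structural observation is that $z^{-1}N_k$ is a two-sided ideal of $N_k$ with $N_k/z^{-1}N_k\cong A$, and that under this identification the image of the parahoric lattice $Q_k := N_k\cap A[\![z]\!]$ is exactly the parabolic $P_k$; both are verified by the explicit conjugation $N_k = d_k^{-1}A[z^{-1}]d_k$. From $A[\![z]\!]\oplus W = A(\!(z)\!)$ one gets the vector-space decomposition $N_k = Q_k\oplus W$: any $x\in N_k$ writes as $x=a+w$ with $a\in A[\![z]\!]$ and $w\in W\subseteq N_k$, forcing $a=x-w\in Q_k$. Setting $S := \pi(W)\subseteq A$ for $\pi\colon N_k\to N_k/z^{-1}N_k\cong A$, the algebra $S$ is a subalgebra of $A$, and applying $\pi$ to $N_k = Q_k\oplus W$ yields $A = P_k + S$, i.e.\ $S+P_k=A$. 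The cocycle $\chi$ on $S$ is then manufactured from the residue form: for $\bar a,\bar b\in S$ I would pick the isotropic lift $a,b\in W$ together with a parahoric lift into $Q_k$, and set $\chi(\bar a,\bar b)$ equal to the residue pairing of these lifts. The isotropy of $W$ together with $W^\bot=W$ should make this independent of the chosen lifts and skew-symmetric, while the Connes $2$-cocycle identity should follow from the associativity of $W$ and the associativity of $\beta$. The non-degeneracy of $\chi$ on $S\cap P_k$ will be forced by the maximal isotropy of $W$ and the dimension identity $\dim S+\dim P_k-\dim(S\cap P_k)=\dim A$.

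\emph{Inverse map and bijectivity.} Conversely, given an associative Stolin pair $(S,\chi)$ of type $k$, I would reconstruct $W$ as the isotropic subalgebra of $N_k$ lying over $S$ and twisted by $\chi$: take the preimage $\pi^{-1}(S)\subseteq N_k$ and cut out the half-dimensional isotropic piece of $z^{-1}N_k$ prescribed by the graph of $\chi$. The cocycle and non-degeneracy conditions are exactly what is needed to make the resulting $W$ a Lagrangian subalgebra complementary to $A[\![z]\!]$, which then corresponds to a rational solution $r$ of type $k$ via Theorem \ref{thm:series_and_subspaces}. One checks the two assignments are mutually inverse by comparing reductions modulo $z^{-1}N_k$ and the induced forms, and that equivalence of solutions (conjugation by $\textnormal{SL}_n(\Bbbk(\!(z)\!))$ preserving the normal form $N_k$) matches isomorphism of Stolin pairs.

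\emph{Main obstacle.} I expect the crux to be the construction and verification of $\chi$: proving that the residue-form recipe is well defined on the quotient $S$, that it satisfies the Connes cocycle identity, and above all that it is non-degenerate on $S\cap P_k$, together with the reverse statement that every such $(S,\chi)$ integrates to an honest Lagrangian subalgebra contained in $N_k$ and complementary to $A[\![z]\!]$. A secondary but genuine technical point is the passage from $\mathfrak{sl}_n(\Bbbk)$ to $A=\textnormal{M}_n(\Bbbk)$: Proposition \ref{prop:maximal_order} reduces the order normalization to the trace-zero case, so throughout one must keep track of the extra central $\Bbbk[z^{-1}]$-direction when transferring Stolin's maximal-order arguments from $\mathfrak{sl}_n(\Bbbk)[z,z^{-1}]$ to $A[z,z^{-1}]$, and when matching the associative Connes cocycle to Stolin's Lie-theoretic $2$-cocycle.
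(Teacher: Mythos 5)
Your overall framework agrees with the paper's (pass from a rational solution of type \(k\) to a Lagrangian subalgebra \(W \subseteq N_k\) complementary to \(A[\![z]\!]\), and read off \(S\) as the image of \(W\) in \(N_k/z^{-1}N_k \cong A\)), but the proposal has a genuine gap: everything you defer as ``the crux'' is the entire mathematical content of the theorem, and the reduction you set up is too coarse to deliver it. The key fact you miss is that, with respect to \(\beta_{(0,1)}(a,b) = \textnormal{res}_0\,\textnormal{tr}(ab)\), one has \emph{exactly} \(N_k^\bot = z^{-2}N_k\) (conjugate \(A[z^{-1}]^\bot = z^{-2}A[z^{-1}]\) by \(d_k\)). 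Hence \(W = W^\bot\) forces the sharp containment \(z^{-2}N_k = N_k^\bot \subseteq W^\bot = W \subseteq N_k\), not merely the commensurability bound \(z^{-M}N_k \subseteq W\) that you derive from the type-\(k\) normalization. This is what reduces the problem on the nose to the dual numbers \(D_\epsilon \coloneqq N_k/z^{-2}N_k \cong A \oplus \epsilon A\), equipped with the metric \(\beta_\epsilon(a_1 + \epsilon a_2, b_1 + \epsilon b_2) = \beta(a_1,b_2) + \beta(a_2,b_1)\), in which the image of \(A[\![z]\!]\cap N_k\) is \(P_k \oplus \epsilon P_k^\bot\). In your truncation \(N_k/z^{-M}N_k\) with \(M\) large, Lagrangian complements admit no comparably clean description, so the ``combinatorics'' you hope to do there does not obviously terminate in a pair \((S,\chi)\).

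Once in \(D_\epsilon\), the construction of \(\chi\) is forced, and your hinted recipe fails as stated. Writing \(V \coloneqq W/z^{-2}N_k\) and \(S\) for its image under \(\epsilon \mapsto 0\), Lagrangian-ness gives \(\epsilon S^\bot = (S \oplus \epsilon A)^\bot \subseteq V \subseteq S \oplus \epsilon A\), so \(V/\epsilon S^\bot\) is the graph \(\{a + \epsilon f(a) \mid a \in S\}\) of a linear map \(f \colon S \to A/S^\bot\), and one sets \(\chi(a,b) \coloneqq \beta(f(a),b)\). Then isotropy of \(V\) is precisely skew-symmetry of \(\chi\); \(V\) being a subalgebra is precisely \(f(ab) = f(a)b + af(b)\), which (using associativity of \(\beta\) and the non-degenerate pairing between \(S\) and \(A/S^\bot\)) is precisely the Connes cocycle identity; and \(D_\epsilon = (P_k \oplus \epsilon P_k^\bot) \oplus V\) is precisely the conjunction of \(S + P_k = A\) and non-degeneracy of \(\chi\) on \(S \cap P_k\). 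By contrast, your recipe of pairing ``the isotropic lift in \(W\) against a parahoric lift into \(Q_k\)'' cannot define \(\chi\) on all of \(S \times S\): since \(\pi(Q_k) = P_k\), elements of \(S\) outside \(P_k\) admit no parahoric lift at all. Finally, your worry about matching equivalence of solutions with isomorphism of Stolin pairs is unnecessary for this statement: the theorem asserts a bijection between solutions of type \(k\) and Stolin pairs themselves, not between equivalence classes.
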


\begin{remark}
Let us note that Stolin pairs of type 0 are simply subalgebras \(S \subseteq A\) which admit a non-degenerate Connes 2-cocycle.

Theorem \ref{thm:classification_rational_case}.(1) states that these are in bijection with rational solutions \(r\) of the \(A\)-CYBE satisfying \(A(r) \subseteq A[z^{-1}]\). It is easy to see that \(r(x,y) = \frac{\gamma}{x-y} + t\) for a constant tensor \(t \in A\otimes A\). Then \(r\) is a solution of the \(A\)-CYBE if and only if \(t\) is a skew-symmetric solution of the \(A\)-CYBE. The fact that these are in bijection with Stolin pairs of type 0 is actually exactly \cite[Proposition 2.7]{aguiar_associative}.
\end{remark}

\subsubsection{Proof of Theorem \ref{thm:classification_rational_case}}
It suffices to prove that there is a bijection between Lagrangian subalgebras \(W \subseteq N_k\) such that \(A(\!(z)\!) = A[\![z]\!] \oplus W\) and Stolin pairs of type \(k\). 

Observe that the image of \(A[\![z]\!] \cap N_k\) in \(D_{\epsilon} \coloneqq N_k/z^{-2}N_k \cong A[\epsilon]/\epsilon^2A[\epsilon] = A \oplus \epsilon A\) is exactly \(P_k \oplus \epsilon P_k^\bot\) and \(D_{\epsilon}\) inherits the algebra metric
\begin{equation}
    \beta_\epsilon(a_1 + \epsilon a_2,b_1 + \epsilon b_2) \coloneqq \beta(a_1,b_2) + \beta(a_2,b_1)
\end{equation}
from \(A(\!(z)\!)\).

Since \(z^{-2}N_k = N_k^\bot \subseteq W^\bot = W \subseteq N_k\) holds, we can see that \(W \mapsto W/z^{-2}N_k\) defines a bijection between Lagrangian subalgebras \(W \subseteq N_k\) such that \(A(\!(z)\!) = A[\![z]\!] \oplus W\) and Lagrangian subalgebras \(V \subseteq D_\epsilon\) such that \(D_\epsilon = (P_k \oplus \epsilon P_k^\bot) \oplus V\).
Therefore, it suffices to establish a bijection between the latter Lagrangian subalgebras and associative Stolin pairs of type \(k\).

Let \(V \subseteq D_\epsilon\) be any Lagrangian subspace and \(S\) be the image of \(V\) under \(\epsilon \mapsto 0\). Then 
\begin{equation}
    \epsilon S^\bot = (S \oplus \epsilon A)^\bot \subseteq V^\bot = V \subseteq S \oplus \epsilon A.
\end{equation}
A dimension argument implies that the mapping \(\epsilon \mapsto 0\) defines an isomorphism \(V/\epsilon S^\bot \to S\). In other words, there exists a linear map \(f\colon S \mapsto  A/ S^\bot\) such that \(V/\epsilon S^\bot = \{a + \epsilon f(a) \mid a \in S \}\). Consider the bilinear form on \(S\) defined by \(\chi(a,b) \coloneqq \beta(f(a),b)\) for \(a,b \in S\). Observe that, since \(\beta\) pairs \(S\) and \(A/S^\bot\) non-degenerately, \(f\) is completely determined by \(\chi\). Furthermore, \(\chi\) is skew-symmetric since
\begin{equation}
    0 = \beta_{\epsilon}(a + \epsilon f(a), b + \epsilon f(b)) = \chi(a,b) + \chi(b,a).
\end{equation} 
Note that \(V\) can be uniquely reconstructed from \(S\) and \(f\) and hence from the pair \((S,\chi)\). This establishes a bijection between Lagrangian subspaces \(V \subseteq D_\epsilon\) and pairs \((S,\chi)\) consisting of subspaces \(S \subseteq A\) equipped with a skew-symmetric bilinear form \(\chi\).

It remains to prove that \(V \subseteq D_\epsilon\) is a subalgebra satisfying \(D_\epsilon = (P_k \oplus \epsilon P_k) \oplus V\) if and only if \((S,\chi)\) is a Stolin pair of type \(k\).

Observe that \(V \subseteq D_\epsilon\) is a subalgebra if and only if for all \(a,b\in S\)
\begin{equation}
    (a + \epsilon f(a))(b + \epsilon f(b)) = ab + \epsilon (f(a)b + af(b)) \in V/\epsilon S^\bot.
\end{equation}
and this is equivalent to \(f(ab) = f(a)b + af(b)\).
Now let us note that
\begin{equation}
    \begin{split}
        &\chi(a_1a_2,a_3) + \chi(a_2a_3,a_1) + \chi(a_3a_1,a_2) = \chi(a_1a_2,a_3) - \chi(a_1,a_2a_3) - \chi(a_2,a_3a_1) \\&= \beta(f(a_1a_2),a_3) - \beta(f(a_1),a_2a_3) - \beta(f(a_2),a_3a_1) \\&= \beta(f(a_1a_2),a_3) - \beta(f(a_1)a_2,a_3) - \beta(a_1f(a_2),a_3) \\&= \beta(f(a_1a_2) - f(a_1)a_2 - a_1f(a_2),a_3),
    \end{split}
\end{equation}
where we used the skew-symmetry of \(\chi\) and the associativity of \(\beta\).
Since \(S\) and \(A/S^\bot\) are non-degenerately paired by \(\beta\), this identity shows that \(f(ab) = f(a)b + af(b)\) for all \(a,b\in S\) is equivalent to the fact that \(\chi\) is a Connes 2-cocycle. 

To conclude the proof, we have to show that \(D_\epsilon = (P_k \oplus \epsilon P_k) \oplus V\) is equivalent to the facts that \(S + P_k = A\) holds and \(\chi\) is non-degenerate on \(S \cap P_k\). 

Assume first that \(D_\epsilon = (P_k \oplus \epsilon P_k) \oplus V\) and
observe that \(S+ P_k = A\) immediately follows from \(D_\epsilon = (P_k \oplus \epsilon P_k^\bot) + V\). Assume that \(a \in S \cap P_k\) satisfies \(\chi(a,b) = 0\) for all \(b \in S \cap P_k\). In other words, \(a_f \in (S \cap P_k)^\bot = S^\bot + P_k^\bot\) for any representative \(a_f\) of \(f(a)\), so \(a_f = a_1 - a_2\) for \( a_1 \in P_k^\bot\) and \(a_2 \in S^\bot\). Then \(a + \epsilon( a_f + a_2) \in V \cap (P_k \oplus \epsilon P_k^\bot) = \{0\}\). This proves that \(\chi\) is non-degenerate on \(S \cap P_k\). 

Conversely, assume that \(S + P_k = A\) and \(\chi\) is non-degenerate on \(S \cap P_k\). Let 
\[a + \varepsilon(a_f + a^\bot) = p + \epsilon p^\bot \in (P_k \oplus \epsilon P_k) \cap V,\]
for \(a \in S, a^\bot \in S^\bot, p \in P_k\), \(p^\bot \in P_k^\bot\), and a representative \(a_f \in A\) of \(f(a) \in A/S^\bot\). Then \(a = p \in S\cap P_k\) and \(a_f = p^\bot - a^\bot \in S^\bot + P_k^\bot = (S \cap P_k)^\bot\). This implies that \(\chi(a,b) = \beta_\epsilon(f(a),b) = 0\) for all \(b \in S \cap P_k\), so \(a = 0\) since \(\chi\) is non-degenerate on \(S \cap P_k\). Therefore, 
\[a_f + a^\bot = p^\bot \in S^\bot \cap P_k^\bot = (S + P_k)^\bot = \{0\}.\]
Summarized, \((P_k \oplus \epsilon P_k) \cap V = \{0\}\) and by dimension reasoning we see that \(D_\epsilon = (P_k \oplus \epsilon P_k^\bot) \oplus V\).

\subsection{Structure theory of quasi-rational \(D\)-bialgebra structures over \(A\)}\label{sec:qrational_A_CYBE_soltuions}
Recall that \(\Bbbk\) is an algebraically closed field of characteristic 0, \(A = \textnormal{M}_n(\Bbbk)\) is the \(\Bbbk\)-algebra of \(n\times n\)-matrices, and \(\beta\) is the trace pairing of \(A\).

The assignment \(r \mapsto ((D_2(A),\beta_{(2,1)}),A[\![z]\!],A(r))\) defines a bijection between quasi-rational solutions of the \(A\)-CYBE and Manin triples \(((D_2(A),\beta_{(2,1)}),A[\![z]\!],W)\) satisfying \(z^{-N}A[z^{-1}] \subseteq W_+ \subseteq z^NA[z^{-1}]\) for some sufficiently large \(N \in \bN\). Here, \(W_+\) is the projection of \(W \subseteq D_2(A) = A(\!(z)\!) \times A[z]/z^2A[z]\) onto \(A(\!(z)\!)\). 

Repeating the arguments in Section \ref{sec:rational_A_CYBE_soltuions}, we obtain \(W_+ \subseteq N_k = d_k^{-1}A[z^{-1}]d_k\) for some \(k \in \overline{1,n}\) up to equivalence. Here, \(N_k\) is explicitly given in \eqref{eq:N_k}.

We call a quasi-rational solution \(r\) of the \(A\)-CYBE of \emph{type \(k\)}, if \(A(r) \subseteq N_k \times A[z]/z^2A[z]\). 

\begin{theorem}\label{thm:classification_qrational_case}
Quasi-rational solutions of the \(A\)-CYBE of type \(k\) are in bijection with associative Stolin pairs of type \(k\).
\end{theorem}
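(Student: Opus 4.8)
The plan is to mirror the proof of Theorem~\ref{thm:classification_rational_case}, reducing the quasi-rational problem to the \emph{same} finite-dimensional classification carried out there. Recall that the text already reduces the claim to exhibiting a bijection between Manin triples \(((D_2(A),\beta_{(2,1)}),A[\![z]\!],W)\) with \(W_+\subseteq N_k\) and associative Stolin pairs of type \(k\). My first step is to observe that, in contrast to the rational case, the component \(W_+\) is rigid. Since \(\textnormal{Ad}(d_k)\) preserves \(\textnormal{tr}\), it preserves \(\beta_{(2,1)}^+\), so \(N_k^\bot=\textnormal{Ad}(d_k^{-1})\big(A[z^{-1}]^\bot\big)\); and a direct residue computation shows \(A[z^{-1}]^\bot=A[z^{-1}]\) with respect to \(\beta_{(2,1)}^+\), because the factor \(z^{-2}\) shifts the pairing just enough to make \(A[z^{-1}]\) self-orthogonal. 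Hence \(N_k^\bot=N_k\). Combining this with \(W_+\subseteq N_k\) and the identity \(W_+=W_+^\bot\) from Lemma~\ref{lemm:quasirational_lemma}.(1) gives \(N_k=N_k^\bot\subseteq W_+^\bot=W_+\subseteq N_k\), so that \(W_+=N_k\) is forced and all the data of the Manin triple sits in \(W_-\).

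By Lemma~\ref{lemm:quasirational_lemma}.(1) we have \(W=N_k\times W_-\), while Lemma~\ref{lemm:quasirational_lemma}.(2),(3) show that the injective image of \(N_k\cap A[\![z]\!]\) in \(A[z]/z^2A[z]\) is a complement to \(W_-\). Thus \(W\mapsto W_-\) is a bijection onto the set of Lagrangian subalgebras \(W_-\subseteq A[z]/z^2A[z]\) (for \(\beta_{(2,1)}^-\)) complementary to that image. The second step is to identify everything explicitly. A block computation based on the description \eqref{eq:N_k} of \(N_k\) shows that \(N_k\cap A[\![z]\!]\) maps onto \(P_k\oplus zP_k^\bot\) inside \(A[z]/z^2A[z]=A\oplus zA\), with \(P_k^\bot\) taken for the trace form. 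Moreover, writing \(\epsilon=z\), the algebra \(A[z]/z^2A[z]\) becomes \(A[\epsilon]/\epsilon^2A[\epsilon]\) and the form \(\beta_{(2,1)}^-\) becomes \(\beta_\epsilon(a_1+\epsilon a_2,b_1+\epsilon b_2)=\beta(a_1,b_2)+\beta(a_2,b_1)\). Consequently the datum \((A[z]/z^2A[z],\beta_{(2,1)}^-,P_k\oplus zP_k^\bot)\) is isomorphic to the datum \((D_\epsilon,\beta_\epsilon,P_k\oplus\epsilon P_k^\bot)\) appearing in the proof of Theorem~\ref{thm:classification_rational_case}.

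Finally, I invoke the finite-dimensional correspondence already established there: Lagrangian subalgebras of \(A[\epsilon]/\epsilon^2A[\epsilon]\) complementary to \(P_k\oplus\epsilon P_k^\bot\) are in bijection with associative Stolin pairs of type \(k\), via \(W_-\mapsto(S,\chi)\) where \(S\) is the image of \(W_-\) under \(\epsilon\mapsto0\) and \(\chi(a,b)=\beta(f(a),b)\) for the linear map \(f\) encoding \(W_-\). Composing the three bijections \(r\mapsto W\mapsto W_-\mapsto(S,\chi)\) proves the theorem. The main obstacle is the rigidity \(W_+=N_k\) in the first step: one must verify the self-orthogonality \(A[z^{-1}]^\bot=A[z^{-1}]\) for \(\beta_{(2,1)}^+\) and confirm that the reduction to \(W_+\subseteq N_k\) (via Proposition~\ref{prop:maximal_order} and the Sauvage lemma, exactly as in Section~\ref{sec:rational_A_CYBE_soltuions}) uses only trace-preserving conjugations, so that the Lagrangian property \(W_+=W_+^\bot\) is preserved. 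Once this is in place, the remaining steps reuse the rational computation essentially verbatim.
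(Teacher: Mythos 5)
Your proposal is correct and follows essentially the same route as the paper's proof: use Lemma \ref{lemm:quasirational_lemma} to split \(W = W_+ \times W_-\), force \(W_+ = N_k\) from \(W_+ \subseteq N_k\) together with the Lagrangian property of \(N_k\) and of \(W_+\), and then reduce via the embedding \((W_+\cap A[\![z]\!]) \hookrightarrow A[z]/z^2A[z]\) with image \(P_k \oplus [z]P_k^\bot\) to the finite-dimensional bijection with Stolin pairs already established in the proof of Theorem \ref{thm:classification_rational_case}. The additional verifications you flag (self-orthogonality of \(A[z^{-1}]\) under \(\beta_{(2,1)}^+\), trace-invariance of the conjugations used in the reduction to \(N_k\)) are details the paper leaves implicit, and they check out.
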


\begin{remark}
In general, the rational and quasi-rational solution of the \(A\)-CYBE associated to the same Stolin pair have no obvious connection. 

However, if \((S,\chi)\) is a Stolin pair of type \(0\), then the associated rational solution is \(r(x,y) = \frac{\gamma}{x-y} + t\) for some \(t \in A \otimes A\) and the associated quasi-rational solution is \(\widetilde{r}(x,y) = \frac{xy\gamma}{x-y} + t = \frac{y^2\gamma}{x-y} - x\Omega + t\). In particular, \(r(x^{-1},y^{-1}) = \widetilde{r}(x,y)\). Observe that \(z\mapsto z^{-1}\) is not an admissible coordinate transformation of \(A(\!(z)\!)\) and \(D_2(A)\).
\end{remark}

\subsubsection{Proof of Theorem \ref{thm:classification_qrational_case}}
Let \(r\) be a quasi-rational solution of the \(A\)-CYBE of type \(k\) and \(((D_2(A),\beta_{(2,1)}),A[\![z]\!],W)\) be the associated Manin triple.

Recall from Lemma \ref{lemm:quasirational_lemma}.(1) and its proof that \(W = W_+ \times W_-\) for some Lagrangian subalgebras \(W_+ \subseteq A(\!(z)\!)\) and \(W_- \subseteq A[z]/z^2A[z]\). Since \(A[z^{-1}]\) and consequently \(N_k\) is Lagrangian in \(A(\!(z)\!)\), where \(A(\!(z)\!)\) is equipped with \(\beta_{(2,1)}^+\) from \eqref{eq:beta_nlambda_pm}, \(W_+ \subseteq N_k\) implies \(N_k = N_k^\bot \subseteq W_+^\bot = W_+\) and thus \(W_+ = N_k\). 

Now Lemma \ref{lemm:quasirational_lemma}.(2),(3) states that \(W_+ \cap A[\![z]\!]\) can be embedded into \(A[z]/z^2A[z]\) in such a way that \((W_+ \cap A[\![z]\!]) \oplus W_- = A[z]/z^2A[z]\). But we have seen in the proof of Theorem \ref{thm:classification_rational_case} that this image of \(W_+ \cap A[\![z]\!]\) is precisely \(P_k \oplus [z]P_k^\bot\) and that the decompositions \((P_k \oplus [z]P_k^\bot) \oplus W_- = A[z]/z^2A[z]\) into Lagrangian subalgebras are in bijection with Stolin pairs of type \(k\).

Since all steps made are invertible, we obtain the desired bijection.

\subsection{Outlook on Jordan bialgebras}\label{sec:outlook_jordan}
Let \(A\) be a finite-dimensional simple algebra over an algebraically closed field \(\Bbbk\) of characteristic 0. If \(A\) is a Lie algebra, the classification of non-triangular topological Lie bialgebra structures \((A[\![z]\!],\delta)\) was concluded in \cite{abedin_maximox_stolin_zelmanov} and is based on several previous results in the theory of the classical Yang-Baxter equation and topological Lie bialgebra structures such as e.g.\ \cite{belavin_drinfeld_solutions_of_CYBE_paper, stolin_sln, stolin_maximal_orders, montaner_stolin_zelmanov, abedin_universal_geometrization}. In this section, we considered the case of an associative algebra \(A\) and established the analogous classification of non-triangular topological associative \(D\)-bialgebra structures \((A[\![z]\!],\delta)\) by adapting the above mentioned results to this setting. 

It is natural to ask for the analogous result in the Jordan case. In particular, if \(A\) is a Jordan algebra, is it possible to achieve a similar classification of non-triangular topological Jordan bialgebra \((A[\![z]\!],\delta)\)? In order to achieve such a classification, one would first need to check that, similar to the associative and Lie case, \((A[\![z]\!],\delta)\) is automatically non-degenerate in the sense of Section \ref{sec:manin_triples_over_series_explicit}. Then the classification would reduce to the classification of trigonometric, rational, quasi-trigonometric, and quasi-rational solutions of the \(A\)-CYBE \eqref{eq:cybe} by virtue of Theorem \ref{thm:categorization_refined}. 

We conjecture that, similar to the associative case considered in this section, the unitality of \(A\) obstructs the existence of trigonometric and quasi-trigonometric solutions of the \(A\)-CYBE. If so, the classification could then be concluded by adapting the structure theory of rational and quasi-rational solutions from \cite{stolin_sln,stolin_maximal_orders} to the Jordan setting.

A potentially fruitful approach to this plan could be by using the Kantor-Koecher-Tits construction, which associates to the Jordan algebra \(A\) a Lie algebra. This might reduce some of the above problems to the Lie algebra case. Indeed, it was shown in \cite{zhelyabin} that there exists a compatibility between the Kantor-Koecher-Tits construction, Jordan bialgebra structures, and Lie bialgebra structures. On the other hand, the absence of (quasi-)trigonometric solutions of the \(A\)-CYBE could also be approached by trying to adapt the proof of Theorem \ref{thm:no_trigonometric} directly to the Jordan setting.

We will consider these ideas in future research.

\appendix
\section{Notations and conventions} \label{sec:notation}

\noindent
Throughout this document \(\Bbbk\) denotes the base field we are working over. From Section \ref{sec:general_categorization} onward it will be of characteristic 0 and from Section \ref{sec:categorization_general_whole_section} onward it will be additionally algebraically closed. By convention the set of natural numbers \(\bN = \{0,1,2,\dots\}\) include 0 and we use the notation \(\overline{m,n} = \{m, \dots, n\}\) for the set of natural numbers between a number \(m\) and larger number \(n\).

\subsubsection*{Commutative algebra} In this text, rings are always unital, associative, and commutative. For a ring \(R\) and \(R\)-modules \(M,N\), the space of \(R\)-linear maps \(M \to N\) (resp. \(M \to M\)) is denoted by \(\Hom_R(M,N)\) (resp. \(\End_R(M)\)), while the tensor product of \(M\) and \(N\) is written as \(M \otimes_R N\). For \(R = \Bbbk\) the indices are omitted. The invertible elements of \(R\) are denoted by \(R^\times\), and \(M^* \coloneqq \Hom_R(M,R)\) is the dual module of \(M\).  If \(R\) is a domain, \(\textnormal{Q}(R) \coloneqq (R\setminus \{0\})^{-1}R\) denotes its quotient field and we write \(\textnormal{Q}(M) \coloneqq M \otimes_R \textnormal{Q}(R)\). Let \(f\colon R \to \widetilde{R}\) be a morphism of rings and \(\widetilde{M}\) be an \(\widetilde{R}\)-module. We say that a map \(g \colon M\to \widetilde{M}\) is \(f\)-equivariant if it is a group homomorphism satisfying \(g(r m) = f(r) g(m)\) for all \(r\in R,m\in M\).

\subsubsection*{Non-associative algebra} Let \(R\) be a ring.
In this text, an \(R\)-algebra \(A\) satisfies no additional assumptions if not mentioned explicitly, i.e.\ \(A = (A,\mu_A)\) consists of an \(R\)-module \(A\) equipped with a multiplication map \(\mu_A \colon A \otimes_R A \to A\). The left (resp.\ right) multiplication maps with respect to an element \(a \in A\) are denoted by \(L_a\) (resp.\ \(R_a\)), i.e.
\begin{equation}
    L_a(b) = ab = R_b(a) \textnormal{ for all }a,b \in A.
\end{equation}
The group of invertible \(R\)-algebra endomorphisms of \(A\), i.e.\ invertible \(R\)-linear maps \(f \colon A \to A\) satisfying \(f\mu_A = \mu_A(f\otimes f)\), will be denoted by \(\Aut_{R\textnormal{-alg}}(A)\). We note that `` \(\oplus\) '' will always denote the direct sum of modules and not of algebras, while `` \(\times\) '' is used for the latter.
For any \(a,a_1,\dots,a_n \in A\), we write
\begin{equation}
    \begin{split}
        &a^{(i)}(a_1\otimes \dots \otimes a_n) = a_1\otimes \dots \otimes aa_i \otimes \dots \otimes a_n\\&(a_1\otimes \dots \otimes a_n)a^{(i)} = a_1\otimes \dots \otimes a_ia \otimes \dots \otimes a_n.    
    \end{split}
\end{equation}
We say that a map \(\beta \colon A \times A \to R\) is an algebra metric if it is a non-degenerate symmetric \(R\)-bilinear map such that
\begin{align}
    \beta(ab,c) = \beta(a,bc) \textnormal{ for all }a,b,c \in A.
\end{align}
In this case, we call the pair \((A,\beta)\) metric \(R\)-algebra.

\subsubsection*{Formal series.} For a module \(M\) over a ring \(R\), the module of formal power series in the formal variable \(z\) with coefficients in \(M\) is denoted by 
\begin{equation}
    M[\![z]\!] \coloneqq \left\{m = \sum_{k \in \bN}m_kz^k\,\Bigg|\, m_k \in M\textnormal{ for }k \in \bN\right\}.    
\end{equation}
Furthermore, we write \(M[\![z_1,\dots,z_k]\!] \coloneqq M[\![z_1]\!]\dots[\![z_k]\!]\).
The \(R\)-module \(R[\![z]\!]\) (resp.\ \(R[\![z_1,\dots,z_k]\!]\))
is a ring extension of \(R\) and \(M[\![z]\!]\) (resp.\ \(M[\![z_1,\dots,z_k]\!]\)) is an \(R[\![z]\!]\)-module (resp.\ \(R[\![z_1,\dots,z_k]\!]\)-module). Then \(M(\!(z)\!) \coloneqq M[\![z]\!][z^{-1}] = \textnormal{Q}(M[\![z]\!])\) is the module of formal Laurent series. We note that if \(M\) is an \(R\)-algebra the module \(M[\![z]\!]\) (resp. \(M(\!(z)\!)\)) is naturally an \(R[\![z]\!]\)-algebra  (resp.\ \(R(\!(z)\!)\)-algebra). Elements \(p\) in \(M(\!(z)\!)\) (resp. \(M(\!(z_1)\!)\dots(\!(z_k)\!)\)) will sometimes be denoted with the formal variable (resp. variables) for convenience, i.e.\ \(p = p(z)\) (resp. \(p = p(z_1,\dots,z_k)\)). A generic element \(p \in M(\!(z)\!)\) is written \(p(z) = \sum_{k \in \bZ}p_kz^k\) and \(p'(z) = \sum_{k \in \bZ} kp_kz^{k-1}\) denotes the formal derivative of \(p\). If \(p(z) \in mz^{-k} + z^{-k+1}M[\![z]\!]\), it is said to be of order \(k\) with main part \(mz^{-k}\). Finally, 
\begin{equation*}
    M[\![z_1,\dots,z_k]\!]^\vee \coloneqq \{f \in M[\![z_1,\dots,z_k]\!]^*\mid f((z_1,\dots,z_k)^mM[\![z_1,\dots,z_k]\!]) = \{0\} \textnormal{ for some }m\in\bN\}
\end{equation*}
is the continuous dual of \(M[\![z_1,\dots,z_k]\!]\).

\subsubsection*{Algebraic geometry.} Let \(X = (X,\sheafO_X)\) be a ringed space and \(\sheafF,\sheafG\) be two \(\sheafO_X\)-modules. For a morphism \(f\colon X \to Y = (Y,\sheafO_Y)\) of ringed spaces, we denote the additional structure morphism by \(f^{\flat}\colon \sheafO_Y \to f_*\sheafO_X\) and write \(f^\sharp \colon f^{-1}\sheafO_Y \to \sheafO_X\) for the induced morphism. The set of \(\sheafO_X\)-module homomorphisms \(\sheafF \to \sheafG\) (resp. \(\sheafF \to \sheafF\)) is denoted by \(\Hom_{\sheafO_X}(\sheafF,\sheafG)\) (resp. \(\End_{\sheafO_X}(\sheafF)\)) while its sheaf counterpart is denoted by \(\sheafHom_{\sheafO_X}(\sheafF,\sheafG)\) (resp. \(\sheafEnd_{\sheafO_X}(\sheafF))\). In particular, we write \(\sheafF^* \coloneqq \sheafHom_{\sheafO_X}(\sheafF,\sheafO_X)\). The tensor product of \(\sheafF\) and \(\sheafG\) is written as \(\sheafF\otimes_{\sheafO_X}\sheafG\).

Assume that \(X\) and \(Y\) are \(S\)-schemes. The fiber product of \(X\) and \(Y\) over \(S\) is denoted by \(X\times_S Y\) and \(\sheafF|_p\) is the fiber of \(\sheafF\) in a point \(p\in X\). If \(S = \textnormal{Spec}(\Bbbk)\), the index \(S\) is omitted and \(\textnormal{H}^n(\sheafF)\) denotes the \(n\)-th global cohomology group of \(\sheafF\), while \(\textnormal{h}^n(\sheafF)\) denotes its dimension over \(\Bbbk\), if said space is finite-dimensional. In particular, \(\textnormal{H}^0(\sheafF) = \Gamma(X,\sheafF)\) is the space of global sections of \(\sheafF\).

\printbibliography

@book{kumar,
	author = {Kumar, S.},
	title = {{Kac-Moody Groups, their Flag Varieties and Representation Theory}},
	publisher = {Birkh{\ifmmode\ddot{a}\else\"{a}\fi}user},
	year = {2002}
}

@article{zhelyabin_jordan_yang_baxter,
	author = {Zhelyabin, V.},
	title = {{On a class of Jordan D-bialgebras}},
	journal = {St. Petersburg Math. J.},
	volume = {11},
	year = {1999},
}

@article{schafer,
	author = {Schafer, R. D.},
	title = {{Noncommutative Jordan Algebras of Characteristic 0}},
	journal = {Proc. Amer. Math. Soc.},
	volume = {6},
	number = {3},
	pages = {472--475},
	year = {1955}
}

@article{albert,
	author = {Albert, A. A.},
	title = {{A Theory of Trace-Admissible Algebras}},
	journal = {Proc. Natl. Acad. Sci. U.S.A.},
	volume = {35},
	number = {6},
	pages = {317--322},
	year = {1949}
}

@article{zhelyabin_symplectic,
	author = {Zhelyabin, V.},
	title = {{Jordan D-bialgebras and symplectic forms on Jordan algebras}},
	journal = {Matematicheskie Trudy},
	volume = {3},
	year = {2000},
}

@article{zhelyabin_symmetric_elements,
	author = {Zhelyabin, V.},
	title = {{Jordan bialgebras of symmetric elements and Lie bialgebras}},
	journal = {Siberian Math. J.},
	volume = {39},
	number = {2},
	pages = {261--276},
	year = {1998},
}

@article{aguiar_combinatoris,
	author = {Aguiar, M.},
	title = {{Infinitesimal Hopf Algebras and the cd-Index of Polytopes}},
	journal = {Discrete Comput. Geom.},
	volume = {27},
	number = {1},
	pages = {3--28},
	year = {2002},
}

@article{odesskii_sokolov,
	author = {Odesskii, A. and Sokolov, V.},
	title = {{Pairs of Compatible Associative Algebras, Classical Yang-Baxter Equation and Quiver Representations}},
	journal = {Commun. Math. Phys.},
	volume = {278},
	number = {1},
	pages = {83--99},
	year = {2008},
}

@article{finston,
author = {Finston, D.},
title = {Rigidity and compact real forms of semisimple complex Jordan algebras},
journal = {Communications in Algebra},
volume = {18},
number = {10},
pages = {3323-3338},
year  = {1990}
}

@misc{abedin_maximov_stolin_quasibialgebras,
  author = {Abedin, R. and Maximov, S. and Stolin, A.},
  title = {Topological quasi-Lie bialgebras and $(n,s)$-type series},
  note="preprint",
  archivePrefix={arXiv},
    eprint = {2211.08807},
  year = {2022},
}

@article{shestakov,
    author = {Shestakov, I.},
	title = {Certain classes of noncommutative Jordan ring},
	year = {1971},
	jornal = {Algebra Logika},
	volume = {10},
	number = {4},
	pages = {407--448}
}

@phdthesis{abedin_thesis,
    author = {Abedin, R.},
	title = {{Algebraic geometry of the classical Yang-Baxter equation and its generalizations}},
	year = {2022},
	url = {https://digital.ub.uni-paderborn.de/hs/content/titleinfo/6660394}
}

@misc{abedin_maximox_stolin_zelmanov,
  author = {Abedin, R. and Maximov, S. and Stolin, A. and Zelmanov, E.},
  title = {Topological Lie bialgebra structures and their classification over $ \mathfrak{g}[\![x]\!] $},
  note="preprint",
  archivePrefix={arXiv},
    eprint = {2203.01105},
  year = {2022},
}

@article{braun_koecher,
	author = {H. Braun and M. Koecher},
	title = {Jordan-Algebren},
	journal={Zamm-zeitschrift Fur Angewandte Mathematik Und Mechanik},
    year={1966},
    volume={46},
    pages={558-558}
}

@article{zhelyabin,
	author = {Zhelyabin, V.},
	title = {{Jordan bialgebras and their relation to Lie bialgebras}},
	journal = {Algebra And Logic},
	volume = {36},
	number = {1},
	pages = {1--15},
	year = {1997},
}

@article{aguiar_associative,
title = {On the Associative Analog of Lie Bialgebras},
journal = {Journal of Algebra},
volume = {244},
number = {2},
pages = {492-532},
year = {2001},
author = {Aguiar, M.}
}

@article{gelfand_cherednik,
	year = 1983,
	volume = {38},
	number = {3},
	pages = {1--22},
	author = {Gelfand, I. and  Cherednik, I.},
	title = {The abstract Hamiltonian formalism for the classical Yang-Baxter bundles},
	journal = {Russian Mathematical Surveys},
}

@article{abedin_universal_geometrization,
    title= {Geometrization of solutions of the generalized classical Yang-Baxter equation and a new proof of the Belavin-Drinfeld trichotomy},
    author={Abedin, R.},
    note="prerpint",
    archivePrefix={arXiv},
    eprint = "2012.05678",
    year={2021}
}

@book{conrad,
  title={Grothendieck Duality and Base Change},
  author={Conrad, B.},
  year={2000},
  publisher={Springer}
}

@article{pianzola,
title = "Vanishing of $H^1$ for Dedekind rings and applications to loop algebras",
journal = "Comptes Rendus Mathematique",
year = "2005",
author = "Pianzola, A.",
volume = "340",
number = "9",
pages = "633-638"
}

@article{kirangi_semi_simple,
    author={Kiranagi, B. S.},
    title={Semi-simple Lie algebra bundles}, 
    journal = {Bull. Math. de la Sci. Math. de la R. S. de Roumanie},
    number = {3},
    pages = {253--257},
    volume = {27(75)},
    year = {1983},
}

@article{kirangi_lie_algebra_bundles, 
    author={Kiranagi, B. S.},
    title={Lie Algebra Bundles}, 
    journal={Bull. Sci. Math.}, 
    volume="102",
    number="2",
    pages="57-62",
    year = {1978}
}

@book{milne_etale_cohomology,
    author = {Milne, J. S.},
    publisher = {Princeton University Press},
    year = {1980},
    title = {\'Etale Cohomology}
}

@article{montaner_stolin_zelmanov,
	author = {Montaner, F. and Stolin, A. and Zelmanov, E.},
	title = {{Classification of Lie bialgebras over current algebras}},
	journal = {Selecta Mathematica},
	volume = "16",
	number = "4",
	pages = "935--962",
	year = {2010},
}

@book{hazewinkel_gerstenhaber,
	author = {Hazewinkel, M. and Gerstenhaber, M.},
	title = {Deformation Theory of Algebras and Structures and Applications},
	year = {1988},
	publisher = {Springer Netherlands},
}

@article{drinfeld_quantum_groups,
    title={Quantum groups},
    author={Drinfeld, V.},
    year={1988},
    journal={Journal of Soviet Mathematics},
    volume="41",
    number="2",
    pages="898--915",
}

@article{drinfeld_hamiltonian_structures,
    title={Hamiltonian structures on Lie groups, Lie bialgebras and the geometric meaning of the classical Yang–Baxter equations},
    author={Drinfeld, V.},
    journal = "Sov. Math. Dokl.",
    volume = "27",
    pages = "68--71",
    year = "1983"
}

@article{stolin_geometry,
    title="A geometrical approach to rational solutions of the classical Yang-Baxter equation. Part I",
    author="Stolin, A.",
    journal="Conference A on Mathematics and Theoretical Physics, at the 2nd Gauss Symposium",
    pages ="347-357",
    year="1995",
}

@article{belavin_drinfeld_solutions_of_CYBE_paper,
  	title="Solutions of the classical Yang-Baxter equation for simple Lie algebras",
  	author="Belavin, A. and Drinfeld, V.",
	journal = "Funct. Anal. Appl.",
	year="1983",
    volume="16",
    number="3",
}

@ARTICLE{polishchuk_CYBE,
   author = "Polishchuk, A.",
    title = "Classical Yang-Baxter equation and the $A_{\infty}$-constraint",
   jourrnal = "Advances in Mathematics",
   volume = "168",
   number= "1",
   pages="56-95",
     year = "2002",
}

@Inbook{polishchuk_trigonometric,
author="Polishchuk, A.",
editor="Tschinkel, Yuri
and Zarhin, Yuri",
title="Massey Products on Cycles of Projective Lines and Trigonometric Solutions of the Yang--Baxter Equations",
bookTitle="Algebra, Arithmetic, and Geometry: Volume II: In Honor of Yu. I. Manin",
year="2009",
publisher="Birkh{\"a}user Boston",
}

@article{stolin_maximal_orders,
	author = "Stolin, A.",
	journal = "Communications in Mathematical Physics",
	title = "On rational solutions of Yang-Baxter equations. Maximal orders in loop algebra",
	volume = "141",
	pages= "533--548",
	year = "1991"
}

@article{stolin_sln,
 	author = "Stolin, A.",
	journal = "Mathematica Scandinavica",
	title = "On rational solutions of Yang-Baxter equation for $\mathfrak{sl}(n)$",
	year = "1991",
	volume ="69",
	pages = "57-80",
}

@article{burban_galinat,
	author = {Burban, I. and Galinat, L.},
	title = {{Torsion Free Sheaves on Weierstrass Cubic Curves and the Classical Yang{\textendash}Baxter Equation}},
	journal = {Communications in Mathematical Physics},
	volume = "364",
	pages = "123-169",
	year = {2018},
}

@book{belavin_dirnfeld_triangle,
  	title="Triangle Equations and Simple Lie Algebras",
  	author="Belavin, A. and Drinfeld, V.",
	series="Soviet scientific reviews: Mathematical physics reviews",
	year="1984",
	publisher="Harwood Academic Publishers"
}

@book{chari_pressley,
	title="A Guide to Quantum Groups",
	author="Chari, V. and Pressley, A.",
	year="1995",
	publisher="Cambridge University Press"
}

@article{skrypnyk_infinite_dimensional_Lie_algebras,
author = {Skrypnyk, T.},
title = {Infinite-dimensional Lie algebras, classical r-matrices, and Lax operators: Two approaches},
journal = {Journal of Mathematical Physics},
number = {10},
volume = {54},
pages = {103507},
year = {2013},
}
\end{document}